\documentclass{article}
\usepackage{amsmath}
\usepackage{amsthm}
\usepackage{amssymb}
\usepackage[thinlines]{easytable}
\usepackage{graphicx}  

\newtheorem{theorem}{Theorem}
\newtheorem{proposition}[theorem]{Proposition}
\newtheorem{corollary}[theorem]{Corollary}
\newtheorem{lemma}[theorem]{Lemma}

\addtolength{\oddsidemargin}{-.875in}
\addtolength{\evensidemargin}{-.875in}
\addtolength{\textwidth}{1.75in}
\addtolength{\topmargin}{-.875in}
\addtolength{\textheight}{1.75in}

\graphicspath{{Effect of Recombination on the Speed of Evolution/}}

\begin{document}

\title{The Effect of Recombination on the Speed of Evolution}
\author{Nantawat Udomchatpitak\footnote{The author is supported in part by NSF grant DMS-1707953.}} 
\date{\today}
\maketitle

\begin{abstract}
	It has been a puzzling question why some organisms reproduce sexually. Fisher and Muller hypothesized that reproducing by sex can speed up the evolution. They explained that in the sexual reproduction, recombination can combine beneficial alleles that lie on different chromosomes, which speeds up the time that those beneficial alleles spread to the entire population. We consider a population model of fixed size $N$, in which we will focus on two loci on a chromosome. Each allele at each locus can mutate into a beneficial allele at rate $\mu_N$. The individuals with 0, 1, and 2 beneficial alleles die at rates $1, 1-s_N$ and $1-2s_N$ respectively. When an individual dies, with probability $1-r_N$, the new individual inherits both alleles from one parent, chosen at random from the population, while with probability $r_N$, recombination occurs, and the new individual receives its two alleles from different parents. Under certain assumptions on the parameters $N, \mu_N, s_N$ and $r_N$, we obtain an asymptotic approximation for the time that both beneficial alleles spread to the entire population. When the recombination probability is small, we show that recombination does not speed up the time that the two beneficial alleles spread to the entire population, while when the recombination probability is large, we show that recombination decreases the time, which agrees with Fisher-Muller hypothesis, and confirms the advantage of reproducing by sex.
\end{abstract}

{\it Keywords}. beneficial mutations, evolution, fixation time, recombination, selection

{\it AMS 2010 subject classifications}.  Primary 92D15;
Secondary 60J27, 60J75, 60J85

\section{Introduction}
	It has been a puzzle in evolutionary biology why many organisms reproduce sexually. Sexually reproducing parents transmit just half of their genes to the offspring, which means that all beneficial alleles that the parent has might not be fully transmitted to the offspring. This does not happen to parents who reproduce asexually, since they transmit all their genes to the offspring. An advantage of sexual reproduction might come from recombination, which can combine portions of different chromosomes together. Fisher \cite{Fis} and Muller \cite{Mul}  hypothesized that sexual reproduction can speed up the evolution. They explained that in an asexual population, for two beneficial mutations to survive, the second beneficial mutation has to occur in an individual that already has the first beneficial mutation, while in a sexually reproducing population, both beneficial mutations might occur on different individuals and recombination can later combine both mutations, which leads to an evolutionary advantage over asexual reproduction.   
	
\subsection{The model}

	We consider a population of fixed size $N$ consisting of $N$ chromosomes, which come from $N/2$ organisms of the same species.  We are interested in two loci on the chromosome.  One of the two loci contains either an $a$ or $A$ allele, and another locus contains either a $b$ or $B$ allele. Both the $A$ and $B$ alleles are beneficial. At time 0, all individuals have $a$ and $b$ alleles. Independently, each $a$ allele mutates to $A$ at exponential rate $\mu_N$, and each $b$ allele mutates to $B$ at exponential rate $\mu_N$. Individuals with 0, 1 and 2 beneficial alleles will die independently at exponential rates $1, 1-s_N$ and $1-2s_N$, respectively. A new individual is created immediately to replace the individual who dies, in order to keep the population size fixed. With probability $1-r_N$, no recombination occurs, in which case the new individual receives both alleles from a randomly chosen individual in the population at that time. With probability $r_N$, recombination occurs, in which case the new individual receives the $a/A$ allele from a randomly chosen individual, and receives the $b/B$ allele from another independently randomly chosen individual. We will give an approximation for the first time that all individuals in the population have both beneficial alleles, when the population size is large. The result shows that this time is shorter when $r_N$ is large, consistent with the Fisher-Muller hypothesis.

\subsection{Previous works}
	Takahata \cite{Taka} considered a model of a population of finite size, where each individual consists of one chromosome. This model focuses on two loci on the chromosome. One locus contains either an $a$ or $A$ allele, and another locus contains either a $b$ and $B$ allele. The fitnesses of individuals of types $ab, Ab, aB$ and $AB$ are assumed to be $1, 1+s, 1+s$ and $1+t$ respectively. The model also assumed recurrent mutations from $a$ to $A$ and from $b$ to $B$, which means that mutations will never be exhausted. In the beginning, the frequency of type $ab$ is assumed to be 1. Via simulation, the numerical fixation time of both $A$ and $B$ is given for some values of $s$ and $t$ in the following parameter regimes: 1) $t=s=0$, 2) $t=2s>0$, 3) $t=2s<0$, 4) $t>2s>0$, and 5) $t>0>s$.  
	
	Some non-rigourous works discuss the benefits of recombination. Crow and Kimura \cite{Crow} argued that in large populations, sexual reproduction can incorporate more mutations due to recombination than asexual reproduction can. Several works pursued finding the relation between the speed of adaptation and the recombination rate. Neher, Kessinger, and Shraiman \cite{NKS} considered a linear chromosome model assuming a large mutation rate and a weak selective effect. They obtained that the rate of adaptation is proportional to the square root of the recombination rate. Weissman and Barton \cite{WB} considered the regime where the mutation rate is small, and they obtained that the rate of adaptation is proportional to the recombination rate. Weissman and Hallatschek \cite{Weis} considered the  intermediate mutation rate regime and obtained that the rate of adaptation is proportional to the recombination rate. Lastly, Neher, Shraiman, and Fisher \cite{Neh} considered a population model, where a large number of loci was considered. The recombination mechanism in this model is different from the other works mentions before. Under the assumptions that the selective advantage is weak and the recombination rate is much larger than the selective advantage, they obtained that in large populations, the rate of adaptation increases as the square of recombination rate.   

	We will now discuss some rigourous results. Cuthbertson, Etheridge, and Yu \cite{Cuth} considered a two loci model with finite population size $N$. Each individual can be one of the four possible types: $ab, Ab, aB$ and $AB$. Both $A$ and $B$ are considered to be beneficial, and they increase the fitness by $s_1$ and $s_2$ respectively, with the assumption that $s_1<s_2$. The mutation from $b$ to $B$ randomly occurs during the the time interval that $Ab$ is spreading in the population, and it appears as a type $aB$.  For both $A$ and $B$ to spread to the entire population, there are three requirements. First, the number of type $aB$ should become significant. Second, recombination between $A$ and $B$ must occur. Lastly, the number of type $AB$ should become significant, after which $AB$ is almost certain to fixate. The result shows that the fixation probability of $AB$ can be approximated by the solution to a specific system of ODEs.

	Bossert and Pfaffelhuber \cite{Boss} considered a diffusion model with 4 types: $ab, Ab, aB$ and $AB$, where the fitnesses of $ab, Ab, aB$ and $AB$ are in increasing order. The frequencies of these four types evolve according to a system of SDEs. In the beginning, the frequencies of types $Ab$ and $aB$ are assumed to be small, and there is no type $AB$ yet. They obtain approximate formulas for the fixation probability and fixation time of type $AB$.
	
	Both Cuthbertson, Etheridge, and Yu \cite{Cuth} and Bossert and Pfaffelhuber \cite{Boss} assume that at least one beneficial mutation is present at the beginning, and they do not allow an unlimited supply of new mutations. In the model studied in this paper, we assume that all individuals in the beginning do not have any beneficial mutations, and both beneficial mutations occur according to a Poisson process.  This model is similar to the model given by Takahata in the case $t=2s>0$, but with finite population size. 
	
	Lastly, we mention another work by Berestycki and Zhao \cite{Ber}. In their model, which involves branching Brownian motion in two dimensions, they showed that the fitnesses on two loci are negative correlated. They explained that recombination can reduce this negative correlation, and leads to a fitter population.
   
\subsection{Conditions of the parameters}
	There are four parameters in our model: $N, \mu_N, r_N$ and $s_N$. We assume that $\mu_N\in (0,1), s_N\in(0,1/2]$ and $r_N \in [0,1)$. For any two sequences $a_N$ and $b_N$, we say that $a_N\ll b_N$ if
		$$
		\lim_{N\rightarrow \infty}\frac{a_N}{b_N} = 0.
		$$ 
	We will assume that $\mu_N$ and $s_N$ satisfy the following conditions:
		\begin{equation}\label{Con1.0}
			s_N \ll 1,
		\end{equation}
		\begin{equation}\label{Con1.1}
			1\ll N\mu_N,
		\end{equation}
		\begin{equation} \label{Con1.2}
			N\mu_N^2 \ll s_N,
		\end{equation}
	and
		\begin{equation}\label{Con1.3}
			r_N\ln_+(Nr_N)\ll s_N,
		\end{equation}
	where $\ln_+(x)$ is defined to be $\ln(x)$ if $x\in(1,\infty)$, and 0 if $x \in [0,1]$. Note that (\ref{Con1.1}) and (\ref{Con1.2}) imply that
		$$
				\mu_N \ll s_N.
		$$

\subsection{Main theorem}
	\begin{theorem}\label{THM}
		Let $T$ be the first time that all individuals in the population are type $AB$, which we also call the fixation time of $AB$. For every positive integer $N$, and $r\in[0,1]$, we define
		\begin{equation}\label{t*}
  			t^*_N(r)=\frac{1}{s_N}\ln\bigg(\frac{Ns_N^3}{\mu_N\cdot\max\{N\mu_N^2,r\ln_+(Nr)\}}\bigg).
		\end{equation}
		Then, for every $\theta\in(0,1)$, we have that
			$$
			\lim_{N\rightarrow \infty}P\big((1-\theta)t^*_N(r_N) \leq T \leq (1+\theta)t^*_N(r_N)\big)=1.
			$$
	\end{theorem}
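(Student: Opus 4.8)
The plan is to track the three ``beneficial'' type counts $X_{Ab}(t)$, $X_{aB}(t)$, $X_{AB}(t)$ (with $X_{ab}(t)=N-X_{Ab}(t)-X_{aB}(t)-X_{AB}(t)$) through three successive regimes, establishing matching lower and upper bounds on $T$. Throughout, the parameter conditions are used to justify, in the appropriate asymptotic sense, the basic heuristics that a beneficial type with death rate $1-ks_N$ grows in frequency at rate $\approx ks_N$ and that a freshly produced individual of such a type escapes early extinction with probability $\Theta(ks_N)$, with errors small enough to be summed into an overall $o(1)$ relative error in $T/t^*_N(r_N)$: condition (\ref{Con1.0}) makes the branching approximation valid, (\ref{Con1.1}) supplies concentration, (\ref{Con1.2}) keeps the double‑mutant supply weak enough that $X_{AB}$ stays negligible until a distinguished founding event, and (\ref{Con1.3}) controls the analogous effect of recombination.

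\emph{Regime I (build-up of single mutants).} While $X_{Ab},X_{aB}=o(N)$, each of $X_{Ab}$ and $X_{aB}$ is, up to lower-order corrections, a slightly supercritical branching process with immigration: new copies immigrate by mutation at total rate $\approx N\mu_N$, and the subpopulation branches with net rate $\approx s_N$ coming from its death-rate deficit against the $ab$-bulk. Stochastic domination by slightly super- and sub-critical immigration processes, together with $N\mu_N\to\infty$ for concentration, gives that $X_{Ab}(t)$ and $X_{aB}(t)$ are each $(1+o(1))\,\tfrac{N\mu_N}{s_N}e^{s_Nt}$ on the window $[C/s_N,\,T]$ as long as they remain $o(N)$; in particular $p_A(t),p_B(t)$ reach a fixed constant at time $\approx\tfrac1{s_N}\ln\tfrac{s_N}{\mu_N}$, after which $X_{ab}$ has been replaced by $Ab$ and $aB$ in roughly equal proportion. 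During this regime $X_{AB}$ is $0$ or negligible until a ``successful'' $AB$-lineage — one avoiding early extinction — is founded.

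\emph{Regime II (founding a successful $AB$).} A new $AB$ arises either by a mutation ($a\to A$ inside an $aB$, or $b\to B$ inside an $Ab$) at total rate $\approx\mu_N(X_{Ab}+X_{aB})$, or by a recombination pairing an $A$-carrier with a $B$-carrier at rate $\approx \tfrac{r_N}{N}(X_{Ab}+X_{AB})(X_{aB}+X_{AB})$, and each such $AB$ escapes early extinction with probability $\Theta(s_N)$. Integrating the resulting intensity for successful-$AB$ creation against the Regime I profiles $X_{Ab}\asymp X_{aB}\asymp\tfrac{N\mu_N}{s_N}e^{s_Nt}$ shows that the first successful $AB$ is founded when this accumulated intensity reaches order one; the mutational contribution to the intensity scales like $N\mu_N^2\,e^{s_Nt}$ and the recombinational one like $r_N\,\mu_N^2 s_N^{-1}e^{2s_Nt}$, and carrying the computation through gives a founding time governed by $\max\{N\mu_N^2,\,r_N\ln_+(Nr_N)\}$, the $\max$ selecting the dominant mechanism. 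The logarithm $\ln_+(Nr_N)$ is the delicate point of the recombinational case: a nascent $AB$ lineage is itself disassembled by recombination at rate $\approx r_N$ per individual (it dies at rate $\approx1$ and, with probability $r_N$, is replaced by a recombinant, which is non-$AB$ with probability $\approx1$ since $p_Ap_B$ is still small), so establishing $AB$ requires the selective growth at rate $\approx s_N$ to beat disassembly over the $\approx\ln(Nr_N)$ doublings it takes to reach size $\gg1/s_N$; condition (\ref{Con1.3}) is precisely what keeps $r_N\ln_+(Nr_N)\ll s_N$, so this race is won and the heuristic survives. A separate argument shows the first such lineage is in fact the one that eventually fixes (it is the fittest type, so once it avoids drift it sweeps).

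\emph{Regime III (sweep of $AB$), and the lower bound.} Once a successful $AB$ lineage exists, $X_{AB}$ grows — first as a supercritical branching process of rate $\approx2s_N$ while the bulk is $ab$, then (after $X_{AB}\gg1/s_N$) along a logistic deterministic trajectory whose instantaneous rate is $\approx2s_N(1-p_A)$, decreasing to $\approx s_N$ once $Ab$ and $aB$ have taken over the bulk — and the final elimination of the residual $Ab$ and $aB$ is handled by a time-reversal argument showing $X_{Ab}+X_{aB}$ decays at rate $\approx s_N$; summing the durations shows that the sweep together with the Regime II founding time combine to $(1+o(1))\,t^*_N(r_N)$. The lower bound runs the same estimates in reverse: since all of the transition rates above are dominated by the deterministic profiles, no successful $AB$ can be founded, and $X_{AB}$ cannot reach $N$, before $(1-\theta)\,t^*_N(r_N)$. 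I expect the main obstacle to be Regime II together with its interface to Regime III — quantifying the survival-probability heuristics, pinning down the exact mutation-versus-recombination competition (in particular extracting the $\ln_+$ factor from recombinational disassembly), and then transferring control from the stochastic small-$X_{AB}$ phase to the deterministic moderate-$X_{AB}$ phase and back to a stochastic phase at the very end, all while $X_{Ab}$ and $X_{aB}$ are themselves evolving nontrivially under selection — with the numerous lower-order error terms controlled uniformly enough to locate $T$ within the factor $1\pm\theta$.
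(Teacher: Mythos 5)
Your three-regime plan follows essentially the same route as the paper (its five phases are your Regime I spread over phases 1--2, your Regime II occurring inside phase 1, and your Regime III spread over phases 3--5), and the heuristics you list — immigration--branching growth $\tfrac{N\mu_N}{s_N}e^{s_Nt}$ for the single mutants, survival probability $\Theta(s_N)$ for a fresh $AB$, founding intensities of order $N\mu_N^2e^{s_Nt}$ and $r_N\mu_N^2s_N^{-1}e^{2s_Nt}$ — are the right ones. However, your treatment of the recombination-dominating case contains a genuine conceptual error. The factor $\ln_+(Nr_N)$ does \emph{not} come from recombinational disassembly of a nascent $AB$ lineage. Disassembly is the term $-r\tilde X_0(t)\big(1-s\tilde X_1(t)-s\tilde X_2(t)-2s\tilde X_3(t)\big)$ in the growth rate $G_3$ of (\ref{G3}); since $r_N\ll s_N$ it perturbs the growth rate $2s_N$ by a relative $o(1)$, and its cumulative effect over the entire sweep is $e^{-O(r_Nt^*_N)}=1+o(1)$ by (\ref{rslogNs}), so it generates no logarithm. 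The logarithm actually arises because successful recombinant $AB$ lineages are founded at rate $\propto s_N\cdot r_NN\tilde X_1\tilde X_2\propto e^{2s_Nt}$ while each already-founded lineage also grows at rate $2s_N$: in $\int_{t_{0,r}}^{t_1}(\text{founding rate at }u)\,e^{2s_N(t_1-u)}\,du$ the integrand is constant in $u$, every founding epoch contributes equally, and the integral picks up the length $t_1-t_{0,r}\asymp\ln(Nr_N)/(2s_N)$ of the founding window — this is exactly the computation behind (\ref{X3t1rec}) in Proposition \ref{@t1}. Consequently your claims that the first successful lineage is the one that fixes and that the timing is set by when the accumulated founding intensity first reaches order one are wrong in this case: the first lineage accounts for only a $1/\ln(Nr_N)$ fraction of $X_{AB}$ at the end of the build-up, and carrying your single-lineage accounting through yields $\max\{N\mu_N^2,\,r_N\}$ rather than the stated $\max\{N\mu_N^2,\,r_N\ln_+(Nr_N)\}$.

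This particular error happens not to be fatal to the theorem as stated, because replacing $r_N\ln_+(Nr_N)$ by $r_N$ changes $t^*_N$ only by $O\big(s_N^{-1}\ln\ln(Nr_N)\big)=o\big(t^*_N\big)$ — but you would need to observe this explicitly, and for the lower bound $T\geq(1-\theta)t^*_N(r_N)$ you must in any case \emph{upper}-bound $X_{AB}$ by summing over all founding cohorts, not just the first, so the multi-cohort computation cannot be avoided. Relatedly, the role you assign to condition (\ref{Con1.3}) (winning a race against disassembly) is off: together with (\ref{Con1.2}) its function is to guarantee that $X_{AB}$ at the end of the single-mutant build-up, of order $Nr_N\ln(Nr_N)/s_N$ or $N^2\mu_N^2/s_N$, is still $o(N)$, which is what keeps the double-mutant sweep cleanly separated from, and subsequent to, the single-mutant sweep. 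The remaining ingredients of your plan (domination by sub/supercritical immigration--branching processes, logistic ODE comparison in the dense phases, a subcritical cleanup phase of duration $\sim s_N^{-1}\ln(Ns_N)$) match the paper's martingale, Darling--Norris, and random-walk-coupling arguments respectively.
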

	
	This theorem suggests that the time that both beneficial alleles spread to the entire population is approximately $t^*_N(r_N)$, when $N$ is large. From (\ref{t*}), when there is no recombination,
		$$
		t^*_N(0)=\frac{1}{s_N}\ln\bigg(\frac{s_N^3}{\mu_N^3}\bigg).
		$$
	When $r_N\ln_+(Nr_N)>N\mu_N^2$, we observe that $t^*_N(r_N)<t^*_N(0)$. This means that when $r_N$ is large enough, it decreases the fixation time of $AB$, compared with when there is no recombination. From (\ref{Con1.2}) and (\ref{Con1.3}), for sufficiently large $N$, we have that $\max\{N\mu_N^2, r_N\ln_+(Nr_N)\}<s_N$, and
		$$
		t^*_N(0)=\frac{1}{s_N}\ln\bigg(\frac{s_N^3}{\mu_N^3}\bigg)\geq t^*_N(r_N) >\frac{1}{s_N}\ln\bigg(\frac{Ns_N^2}{\mu_N}\bigg)=\frac{2}{3}\cdot\frac{1}{s_N}\ln\bigg(\frac{s_N^3}{\mu_N^3}\bigg)+\frac{1}{s_N}\ln(N\mu_N)>\frac{2}{3}t^*_N(0).
		$$ 
	This implies that under our assumptions, which assume small recombination rates, in large populations, recombination can decrease the fixation time of $AB$ by no more than a factor of one-third.
	
	\begin{figure}
\includegraphics[width=0.7\linewidth]{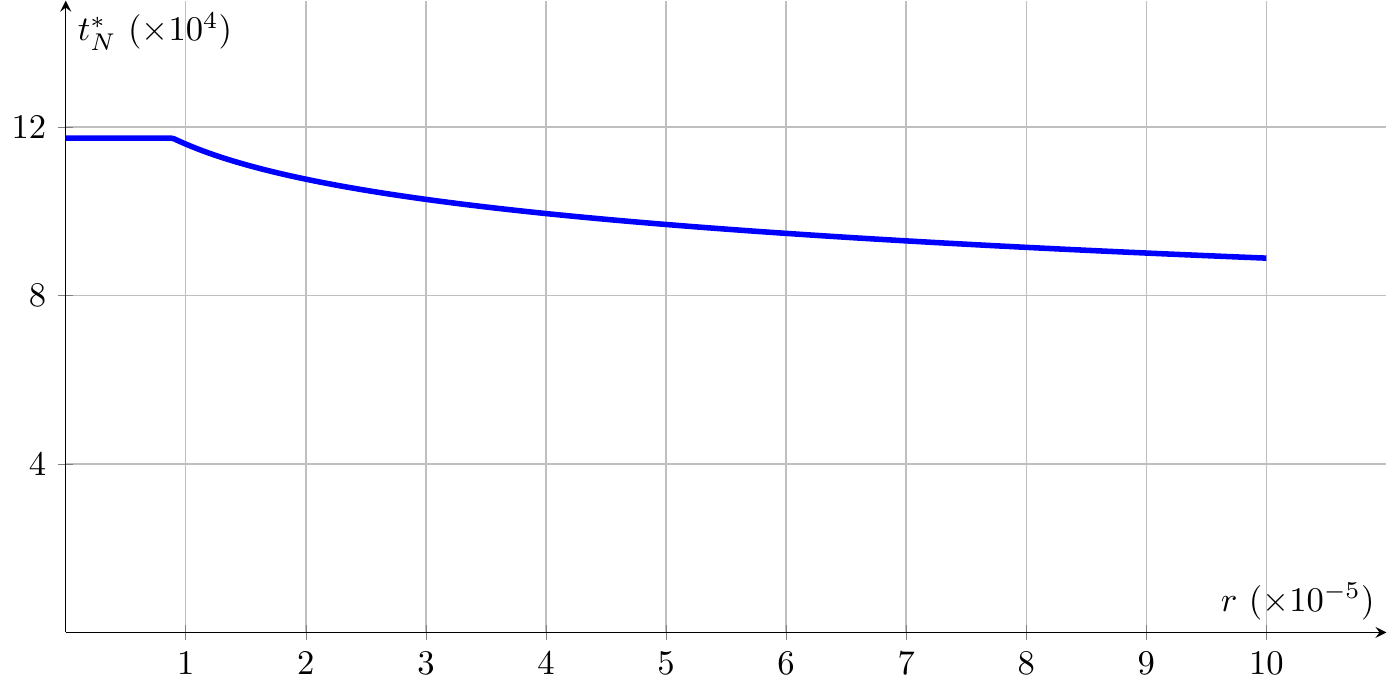}
\centering
\caption{\hspace{2pt}The graph of $t^*_N$ as the function of $r$, when $N=10^7, \mu=2\times 10^{-6}$ and $s=10^{-4}$. The $r$-axis is scaled by $10^{-5}$ and the $t^*_N$-axis is scaled by $10^4$.}
	\end{figure}	
	
	Lastly, we will show that these assumptions on the parameters are attainable. We consider when $\mu_N=N^{-a}, r_N=N^{-b}$ and $s_N=N^{-c}$ for some positive numbers $a, b$ and $c$. One can check that (\ref{Con1.0}), (\ref{Con1.1}), (\ref{Con1.2}) and (\ref{Con1.3}) are equivalent to $0<c<b$ and $(1+c)/2<a<1$. 
	
\section{Overview of the proof}
	From now on, we will refer to an individual with $ab$, $Ab$, $aB$, and $AB$ as type 0, 1, 2, and 3 respectively, and we will omit writing the subscript $N$ in $\mu_N, s_N$ and $r_N$. For $i=0,1,2,3$ and $t\geq 0$, we define $X_i(t)$ as the number of type $i$ individuals at time $t$ and define $\tilde X_i(t)=X_i(t)/N$, which is the fraction of type $i$ individuals in the population at time $t$. 

	Before we consider the behavior of the process $((X_0(t), X_1(t), X_2(t), X_3(t)),t\geq 0)$, we will first look at the condition $1\ll N\mu$. Intuitively, we don't want the mutations to occur too slowly, so that we see one beneficial mutation spread to the entire population, before any other mutations take hold. The process by which a beneficial allele spreads to the entire population is also known as a selective sweep. Suppose that a mutation from $a$ to $A$ is the first to occur, and assume that it doesn't go extinct. It will take time about $\frac{2}{s}\ln(N)$  to complete its selctive sweep (see section 6.1 of \cite{Dur}). During this time, a mutation from $b$ to $B$ occurs at total rate of $N\mu$. The number of descendants of one of these new mutations can be approximated by an asymmetric random walk. So, the chance that each of these mutations survives is about $s$. Hence, the number of mutations to $B$ that survive during the selective sweep of $A$ is approximately 
		$$
		N\mu\cdot s\cdot \frac{1}{s}\ln(N)=N\mu \ln(N).
		$$
	So, if $N\mu\ln(N)\ll 1$, then there is no $B$ that survives during the sweep of $A$. Hence, we will see $A$ spread to the entire population first, before $B$ appears and spreads. In this case, recombination does not speed up the time needed for the type $AB$ to take hold in the population. So, we should consider when $N\mu\ln(N)\gg 1$. Here, we make a slightly stronger assumption that $N\mu \gg 1$.

	Now, we will consider our process $((X_0(t), X_1(t), X_2(t), X_3(t)),t\geq 0)$. The behavior of our process is essentially reduced to two cases. For the first case, which we will call the \textit{recombination dominating case}, we assume that
		\begin{equation} \label{Con2.1}
			N\mu_N^2\ll r_N\ln(Nr_N) \ll s_N.
		\end{equation}
	For the second case, which we will call the \textit{mutation dominating case}, we assume that there is a positive constant $C$ such that for sufficiently large $N$, 
		\begin{equation} \label{Con2.2}
			r_N\ln_+(Nr_N)\leq CN\mu_N^2.
		\end{equation}
 	The reason for these names is that in the recombination dominating case, type 3 individuals start to appear from recombination between $A$ alleles from type 1 individuals and $B$ alleles from type 2 individuals, while in the mutation dominating case, the type 3 individuals start to appear from mutations from type 1 and type 2 individuals.
	 
	In the following table, we define times when we see significant changes in the behavior of the process.
	\begin{center}
 		\begin{TAB}[5pt]{|c|l|l|}{|c|c|c|c|c|c|}
Time & recombination dominating & mutation dominating\\ 
 			$t_0$ & $\displaystyle{\frac{1}{s}\ln\bigg(\frac{s}{\mu\sqrt{Nr}}\bigg)-\frac{C_{0,r}}{s}}$ & $\displaystyle{\frac{1}{s}\ln\bigg(\frac{s}{N\mu^2}\bigg)-\frac{C_{0,m}}{s}}$\\ 
			$t_1$ & $\displaystyle{\frac{1}{s}\ln\bigg(\frac{s}{\mu}\bigg)-\frac{C_1}{s}}$ & $\displaystyle{\frac{1}{s}\ln\bigg(\frac{s}{\mu}\bigg)-\frac{C_1}{s}}$ \\ 
			$t_2$ & $\displaystyle{\frac{1}{s}\ln\bigg(\frac{s}{\mu}\bigg)+\frac{C_2}{s}}$ & $\displaystyle{\frac{1}{s}\ln\bigg(\frac{s}{\mu}\bigg)+\frac{C_2}{s}}$ \\ 
			$t_3$ & $\displaystyle{\frac{1}{s}\ln\bigg(\frac{s^2}{\mu r\ln(Nr)}\bigg)+\frac{C_3}{s}}$ & $\displaystyle{\frac{1}{s}\ln\bigg(\frac{s^2}{N\mu^3}\bigg)+\frac{C_3}{s}}$ \\ 
			$t_4$ & $\displaystyle{\frac{1}{s}\ln\bigg(\frac{s^2}{\mu r\ln(Nr)}\bigg)+\frac{C_4}{s}}$ & $\displaystyle{\frac{1}{s}\ln\bigg(\frac{s^2}{N\mu^3}\bigg)+\frac{C_4}{s}}$ 
		\end{TAB}
	\end{center}
	The constants $C_{0,r}, C_{0,m}, C_1, C_2, C_3$, and $C_4$  are defined in (\ref{C0r}), (\ref{C0m}), (\ref{C1}), (\ref{C2}), (\ref{C3}), and (\ref{C4}). The reader does not need to know what these constants are exactly at this point, but should notice that $C_i/s$ is the lower order term in the definition of the $t_i$. From now on, all statements are assumed to be true in both the recombination dominating case and the mutation dominating case, unless specified otherwise.

	\begin{figure}
\includegraphics[width=\linewidth]{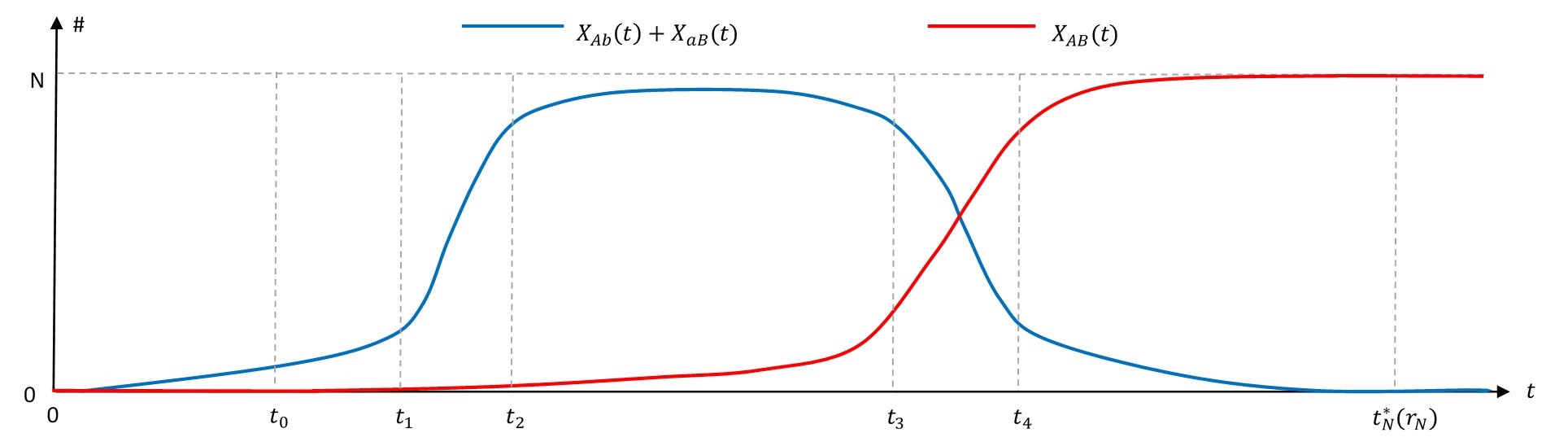}
\caption{The graphs of approximate numbers of individuals with one beneficial mutation (red) and two beneficial mutations (blue).}
\end{figure}

	Overall, the behavior of the numbers of type 1, 2 and 3 are similar in the sense that they first grow exponentially, then grow logistically. Both types 1 and 2 grow simultaneously, but type 3 will start to grow later, due to the late appearance of type 3 individuals. The behavior of the process is split into five time intervals, which will be discussed below. During the time interval $[0,t_1]$, which we will call \textit{phase 1}, most individuals are type 0. The type 1 and type 2 individuals appear from mutations from type 0 individuals.  Since type 1 and type 2 individuals die at rate $1-s$, while the majority of the population, which is type 0, dies at rate 1, the numbers of descendants of these type 1 and 2 ancestors grow exponentially at rate approximately $s$. Since the total rate of mutation from type 0 to type 1 is approximately $N\mu$, we have
		$$
		X_i(t)\approx \int_0^t N\mu\cdot e^{s(t-u)}du\approx \frac{N\mu}{s}e^{st}.
		$$ 
The type 3 individuals appear around time $t_0$. From this time, the number of type 3 individuals will grow exponentially at rate about $2s$, due to the fact that each type 3 individual dies at rate $1-2s$, while most individuals in the population die at rate 1. The following proposition describes the process at time $t_1$.
	\begin{proposition}\label{@t1}
	For $\epsilon>0$ and $\delta\in(0,1)$, there is an  event $A_{(1)}$, such that for sufficiently large $N$, we have that $P(A_{(1)})\geq 1-17\epsilon$, and the following statements hold:
		\begin{enumerate}
		\item On the event $A_{(1)}$, when $N$ is sufficiently large, for $i=1,2$, 
			\begin{equation}\label{Xit1}
			(1-\delta^2)e^{-C_1}N \leq X_i(t_1) \leq (1+\delta^2)e^{-C_1}N
			\end{equation}
		\item In the recombination dominating case, on the event $A_{(1)}$, there are positive constants $K^+_{1r}$ and $K^-_{1r}$ such that for sufficiently large $N$, 
			\begin{equation}\label{X3t1rec}
			\frac{K^-_{1r}Nr\ln(Nr)}{s}\leq X_3(t_1) \leq \frac{K^+_{1r}Nr\ln(Nr)}{s}.
			\end{equation}
		\item In the mutation dominating case, on the event $A_{(1)}$, there are positive constants $K^+_{1m}$ and $K^-_{1m}$ such that for sufficiently large $N$, 
			\begin{equation}\label{X3t1mut}
			\frac{K^-_{1m}N^2\mu^2}{s}\leq X_3(t_1) \leq \frac{K^+_{1m}N^2\mu^2}{s}.
			\end{equation}
		\end{enumerate}
	\end{proposition}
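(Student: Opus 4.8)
The plan is to write each of $X_1,X_2,X_3$ as a sum of essentially independent family-size processes indexed by their founding events — an $a\to A$ mutation for type $1$, a $b\to B$ mutation for type $2$, and a mutation or a recombination for type $3$ — to sandwich each family between near-critical birth-death processes, and to combine branching-process estimates with second moments. Everything is carried out on the stopped interval $[0,t_1\wedge\tau]$, where $\tau=\inf\{t:X_1(t)+X_2(t)+X_3(t)\ge\delta N\}$; on it $X_0(t)\ge(1-\delta)N$, so every birth, death and recombination rate is pinned down up to a factor $1+O(\delta)$. A crude stopping-time bootstrap first yields the a priori bound $X_i(u)\le 2\tfrac{N\mu}{s}e^{su}$ ($i=1,2$) on the interval, which is then fed back in: the error in the effective growth rate of $X_i$ at time $u$ has order $\tilde X_1(u)+\tilde X_2(u)+\tilde X_3(u)$, hence order at most $\tfrac{\mu}{s}e^{su}$, whose integral over $[0,t_1]$ has order $e^{-C_1}$, so the compounded multiplicative error is $1+O(e^{-C_1})$ rather than a power of $s/\mu$. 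At the end one checks $\tau>t_1$ with probability $\ge1-O(\epsilon)$, which is automatic once $C_1$ is large enough that $3(1+\delta^2)e^{-C_1}<\delta$.

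For part (1), writing $X_1(t)=\sum_j Z_j(t)$ with founders arriving at rate $\mu X_0(t)\in[(1-\delta)N\mu,N\mu]$, each family while $t\le t_1\wedge\tau$ is squeezed between birth-death chains with birth rate $1+O(\delta)$ and death rate $1-s+O(\mu)$, hence with growth rate and survival probability both of order $s$. This gives $E[X_1(t)]=(1+o(1))\tfrac{N\mu}{s}(e^{st}-1)$; since the number of surviving founders by $t_1$ has order $N\mu\gg1$ — which is exactly where (\ref{Con1.1}) is used — the relative variance is $O(1/N\mu)$, so Chebyshev yields $X_1(t_1)=(1\pm\delta^2)e^{-C_1}N$ with probability $\ge1-O(\epsilon)$, i.e.\ (\ref{Xit1}). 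The same argument gives the statement for $X_2$ verbatim, by the exact symmetry of the model under interchanging the two loci.

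For parts (2) and (3), a type-$3$ individual is produced by a $b\to B$ (resp.\ $a\to A$) mutation in a type-$1$ (resp.\ type-$2$) individual, at total rate $\approx\mu(X_1+X_2)$, or by a recombination whose $A$-donor carries $A$ and whose $B$-donor carries $B$, at total rate $\approx rX_1X_2/N$ (using $X_3\ll X_1,X_2$, which follows from (\ref{Con1.2}) and (\ref{Con1.3})); substituting $X_1,X_2\approx\tfrac{N\mu}{s}e^{st}$ from part (1), the founding rate at time $u$ is $\approx\tfrac{2N\mu^2}{s}e^{su}+\tfrac{rN\mu^2}{s^2}e^{2su}$, and each type-$3$ family is squeezed between birth-death chains of growth rate $\approx2s$ and survival probability $\approx2s$. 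The key observation is that the expected number of \emph{surviving} type-$3$ founders by time $t$ is $\approx\tfrac{rN\mu^2}{s^2}e^{2st}$ (recombination case), resp.\ $\approx\tfrac{4N\mu^2}{s}e^{st}$ (mutation case), so it first reaches a prescribed value exactly at the time $t_0$ from the table. I would then (i) show that with probability $\ge1-O(\epsilon)$ there is no type-$3$ founder before $t_0-O(1/s)$ (the expected number there has order $e^{-2C_{0,r}}$, resp.\ $e^{-C_{0,m}}$), and (ii) on that event bound $X_3(t_1)$ above and below by summing, over founders born in $[t_0-O(1/s),t_1]$, the family sizes, a founder born at time $u$ having a family of size $\approx\tfrac1{2s}e^{2s(t_1-u)}$ at $t_1$. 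In the recombination case $e^{2su}\cdot e^{2s(t_1-u)}$ is constant in $u$, so each width-$1/s$ window of founders contributes a comparable amount and the total has order $\tfrac{Nr}{s}\cdot(t_1-t_0)s=\tfrac{Nr\ln(Nr)}{s}$, which is (\ref{X3t1rec}); in the mutation case $e^{su}\cdot e^{2s(t_1-u)}$ decreases in $u$, so the founders born near $t_0$ dominate and the total has order $\tfrac{N\mu^2}{s^2}e^{2st_1}e^{-st_0}\approx\tfrac{N^2\mu^2}{s}$, which is (\ref{X3t1mut}); condition (\ref{Con2.2}) is what forces the recombination contribution to be of no larger order in the mutation case. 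The matching lower bounds follow because a window $[t_0,t_0+M/s]$ with $M$ a large constant contains, with probability $\ge1-O(\epsilon)$, at least a fixed number of surviving founders, each reaching size of order $\tfrac{Nr}{s}e^{-2M}$, resp.\ $\tfrac{N^2\mu^2}{s}e^{-2M}$, together with — in the recombination case — a second-moment estimate summed over the $\Theta(\ln(Nr))$ windows, whose union bound costs only a vanishing factor since the later windows contain exponentially many founders.

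The hardest part is step (ii): $E[X_3(t_1)]$ is \emph{not} representative of the typical value, being inflated by the rare event that a type-$3$ lineage is founded well before $t_0$ and then grows for an anomalously long time, so a naive ``compute the mean, apply Chebyshev'' argument would yield $\tfrac1s Nr\ln(s/\mu)$ in place of the correct $\tfrac1s Nr\ln(Nr)$; one must first exclude those early lineages and only then concentrate the bulk of the lineages founded in $[t_0,t_1]$. The remaining difficulty is purely bookkeeping: one must verify that the $1+O(\delta)$ errors in all the rates integrate to something of order $e^{-C_1}$ (which is why the a priori estimate off the crude bound is needed) rather than a power of $s/\mu$, and one must fix the constants $C_{0,r},C_{0,m},C_1,\dots$ and the relation between $\delta$ and $\epsilon$ in the right order so that the various ``for $N$ large'' and ``with probability $\ge1-O(\epsilon)$'' assertions are mutually consistent.
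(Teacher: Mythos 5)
Your proposal is correct in outline and its architecture coincides with the paper's: an a priori stopping time that pins $X_i(t)\lesssim \frac{N\mu}{s}e^{st}$ on $[0,t_1]$, first and second moments plus Chebyshev for types $1$ and $2$, and --- crucially --- the identification of $t_{0}$ as the time at which the expected number of \emph{surviving} type-$3$ founders becomes order one, followed by the exclusion of earlier founders. Your diagnosis of why a bare mean-plus-Chebyshev argument fails for $X_3(t_1)$ (the mean integrates the founding rate over all of $[0,t_1]$ and picks up a spurious $\ln(s/\mu)$ in place of $\ln(Nr)$) is exactly the point of the paper's Lemmas on $m(t_{0,m}\wedge T_{(1)})$, $\rho(t_{0,r}\wedge T_{(1)})$ and the coupling of each early ancestor's family to a birth--death process with survival probability $O(s)$. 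Where you differ is in the concentration machinery: you decompose each $X_i$ into founder-indexed families and rely on (approximate) independence across families for the variance, whereas the paper compensates the counting process to build explicit martingales $Z_{im}^{(a,b]},Z_{ir}^{(a,b]}$, computes their quadratic variation, and applies the $L^2$-maximal inequality. The martingale route buys you exact variance formulas that make no independence claim about families competing for the same $N$ slots, and it handles the recombination lower bound in one stroke over $[t_{0,r},t_1]$ rather than by your union bound over $\Theta(\ln(Nr))$ windows; your family decomposition buys a more transparent probabilistic picture and makes the role of $t_0$ and of condition (\ref{Con1.1}) (order $N\mu$ surviving type-$1$ founders, hence relative variance $1/(N\mu)$) immediately visible.

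Two points in your sketch need tightening before they close. First, in a fixed-size population the families are \emph{not} independent (they compete for the same replacement slots), so the second-moment step must either be routed through the explicit stochastic domination by independent birth--death processes that you allude to, or replaced by a quadratic-variation computation as in the paper. Second, the a priori bound $X_i(u)\le 2\frac{N\mu}{s}e^{su}$ cannot be established with the constant $2$ by a Markov-at-the-stopping-time argument, since $E[e^{-s(t\wedge T)}X_i(t\wedge T)]\le \frac{N\mu}{s}$ only yields failure probability $\le 2/K$ for the threshold $K\frac{N\mu}{s}e^{su}$; you need $K$ of order $1/\epsilon$ (the paper takes $K>6/\epsilon$), which is harmless but must be tracked, since $K$ then propagates into the constants $K_{1r}^{\pm},K_{1m}^{\pm}$ of (\ref{X3t1rec}) and (\ref{X3t1mut}).
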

	This proposition says that when $N$ is sufficiently large, at time $t_1$, both type 1 and type 2 have established themselves in the population by having their numbers reaching the level of order $N$. However, $\tilde X_3(t_1)$ is only of order $r\ln(Nr)/s$ in the recombination dominating case, and is only of order $N\mu^2/s$ in the mutation dominating case, which from (\ref{Con1.2}) and (\ref{Con1.3}), means that number of type 3 at time $t_1$ is not yet comparable to those of type 1 and 2.

	During the time interval $[t_1,t_2]$, which we will call \textit{phase 2}, the numbers of type 1 and 2 now grow logistically, or more precisely,
		$$
		\tilde X_i(t)\approx \frac{1}{2}\bigg(\frac{1}{1+Be^{-s(t-t_1)}}\bigg),
		$$
	for $i=1,2$, where $B$ is some positive constant. The following proposition describes the process at time $t_2$.
	\begin{proposition}\label{@t2}
	For $\epsilon>0$ and $\delta\in(0,1)$, there is an  event $A_{(2)}$, such that for sufficiently large $N$, we have that $P(A_{(2)})\geq 1-21\epsilon$, and the following statements hold:
		\begin{enumerate}
		\item On the event $A_{(2)}$, for sufficiently large $N$, for $i=1,2$,
			$$
			\Big(\frac{1}{2}-\frac{3\delta^2}{2}\Big)N \leq X_i(t_2) \leq \Big(\frac{1}{2}-\frac{\delta^4}{4}\Big)N.
			$$
		\item In the recombination dominating case, on the event $A_{(2)}$, there are positive constants $K^+_{2r}$ and $K^-_{2r}$ such that for sufficiently large $N$, 
			$$
			\frac{K^-_{2r}Nr\ln(Nr)}{s}\leq X_3(t_2) \leq \frac{K^+_{2r}Nr\ln(Nr)}{s}.
			$$
		\item In the mutation dominating case, on the event $A_{(2)}$, there are positive constants $K^+_{2m}$ and $K^-_{2m}$ such that for sufficiently large $N$, 
			$$
			\frac{K^-_{2m}N^2\mu^2}{s}\leq X_3(t_2) \leq \frac{K^+_{2m}N^2\mu^2}{s}.
			$$
		\end{enumerate} 
	\end{proposition}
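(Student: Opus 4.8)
The plan is to carry out the \emph{phase 2} analysis on $[t_1,t_2]$, using Proposition~\ref{@t1} to supply the state of the process at $t_1$. We work on the event $A_{(1)}$ and let $A_{(2)}$ be $A_{(1)}$ intersected with a bounded number of further events — controlling the martingale parts of $X_1+X_2$, of $X_1-X_2$ and of $X_3$ over $[t_1,t_2]$ — each of probability at least $1-\epsilon$ for $N$ large, so that $P(A_{(2)})\ge P(A_{(1)})-4\epsilon\ge 1-21\epsilon$; all claims below are on $A_{(2)}$, for $N$ large. The argument has three passes: (i) a crude estimate gives $\tilde X_3(t)=o(1)$ throughout $[t_1,t_2]$; (ii) with type $3$ negligible, a logistic analysis of $Y:=X_1+X_2$ together with a symmetry argument for $Z:=X_1-X_2$ gives part~1; (iii) with $\tilde X_1,\tilde X_2$ now pinned down on all of $[t_1,t_2]$, a sharp two-sided estimate of $X_3$ gives parts~2 and~3.

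For~(i): Dynkin's formula gives $X_3(t)=X_3(t_1)+\int_{t_1}^t b_3(u)\,du+M_3(t)$, where, with $\tilde X_A=\tilde X_1+\tilde X_3$ and $\tilde X_B=\tilde X_2+\tilde X_3$, the drift satisfies $b_3=2sX_3+Nr\tilde X_A\tilde X_B+\mu(X_1+X_2)+(\text{lower-order corrections})\le 2sX_3+Nr+\mu N$, and $M_3$ is a martingale whose predictable quadratic variation over $[t_1,t_2]$ is at most the total jump rate times the length of the interval, i.e.\ $O\big((X_3(t_1)+Nr+\mu N)/s\big)$. A Gronwall estimate for $\dot v=2sv+Nr+\mu N$, the identity $s(t_2-t_1)=C_1+C_2$, and Doob's inequality for $M_3$ give
\[
\sup_{t_1\le t\le t_2}X_3(t)\ \le\ e^{2(C_1+C_2)}\Big(X_3(t_1)+\tfrac{Nr+\mu N}{s}\Big)+o(N),
\]
which by (\ref{X3t1rec}), (\ref{X3t1mut}), (\ref{Con1.2}) and (\ref{Con1.3}) is $o(N)$; in particular $\tilde X_3(t)=o(1)$ uniformly on $[t_1,t_2]$.

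For~(ii): expanding the drift of $Y$ — type $0$ deaths replaced by a type $1$ or $2$, minus type $1$ or $2$ deaths replaced by a type $0$ or $3$, plus $O(\mu)$ and $O(r)$ corrections from mutation and recombination — and using $\tilde X_3=o(1)$ yields $\tilde Y(t)=\tilde Y(t_1)+\int_{t_1}^t s\tilde Y(u)(1-\tilde Y(u))\,du+\zeta(t)$ with $\sup_{t_1\le t\le t_2}|\zeta(t)|=o(1)$ on $A_{(2)}$ (the martingale part of $\zeta$ being $O(\sqrt{1/(Ns)})$ by Doob, with $Ns\to\infty$ since $N\mu\to\infty$ and $\mu\ll s$, and $\mu/s,r/s\to0$). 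By monotonicity of the logistic flow and a Gronwall comparison, using (\ref{Xit1}) to get $\tilde Y(t_1)\in[2(1-\delta^2)e^{-C_1},2(1+\delta^2)e^{-C_1}]$ and using $s(t_2-t_1)=C_1+C_2$, we obtain $\tilde Y(t_2)=1-\rho+o(1)$ where $\rho$ is a positive quantity of order $e^{-C_2}$ (with implied constant depending on $C_1$). For the asymmetry $Z$: the dynamics is invariant under exchanging the two loci, its drift equals $s\tilde X_0Z+O((\mu+r)|Z|)$ — a \emph{destabilizing} term, as rare beneficial types grow exponentially — and since on $A_{(2)}$ we have $\int_{t_1}^{t_2}s\tilde X_0(u)\,du\le C_1-\ln2+O(\delta^2)+o(1)$ from the logistic bound just obtained, Gronwall together with $|Z(t_1)|\le2\delta^2e^{-C_1}N$ from (\ref{Xit1}) and Doob for the martingale part give $\sup_{t_1\le t\le t_2}|Z(t)|\le\delta^2N+O(\delta^4N)+o(N)$. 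Hence, since $X_i(t_2)=\tfrac12Y(t_2)\pm\tfrac12Z(t_2)$,
\[
\Big|X_i(t_2)-\big(\tfrac12-\tfrac{\rho}{2}\big)N\Big|\ \le\ \tfrac12\big(\delta^2+O(\delta^4)\big)N+o(N)\qquad(i=1,2),
\]
and part~1 follows because $C_1$ and $C_2$ (see (\ref{C1}), (\ref{C2})) are chosen so that $\tfrac{\rho}{2}$ lies, with room for the $O(\delta^4)$ and $o(1)$ errors, strictly between $\tfrac12\delta^2$ and $\delta^2$, which places $X_i(t_2)$ inside $\big[(\tfrac12-\tfrac32\delta^2)N,(\tfrac12-\tfrac14\delta^4)N\big]$.

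For~(iii): with $\tilde X_1(u),\tilde X_2(u)$ now known on all of $[t_1,t_2]$ — each increasing from $\approx e^{-C_1}$ at $t_1$ to $\approx\tfrac12$ at $t_2$, hence confined to $[\gamma^-,\gamma^+]$ with $\gamma^->0$ — the Dynkin decomposition of $X_3$ can be revisited: now $\tilde X_A\tilde X_B\in[\alpha^-,\alpha^+]$ and $X_1+X_2\in[\beta^-,\beta^+]N$ with explicit positive constants, so $X_3(t)$ is sandwiched between the solutions of $\dot v=2sv+\alpha^\pm Nr+\beta^\pm\mu N$ started from the two-sided bounds on $X_3(t_1)$ in Proposition~\ref{@t1}, up to a martingale error of smaller order than $\sup_{[t_1,t_2]}X_3$ (its quadratic variation being $O((\sup X_3+Nr+\mu N)/s)$, and $Nr\to\infty$, resp.\ $N\mu\to\infty$, in the recombination, resp.\ mutation, dominating case). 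In the recombination dominating case $e^{2(C_1+C_2)}X_3(t_1)\asymp Nr\ln(Nr)/s$ dominates the recombination input $O(Nr/s)$ by the diverging factor $\ln(Nr)$ and dominates $O(\mu N/s)$ since $\mu\ll r$; in the mutation dominating case $e^{2(C_1+C_2)}X_3(t_1)\asymp N^2\mu^2/s$ dominates the mutation input $O(\mu N/s)$ by the diverging factor $N\mu$. This gives parts~2 and~3 with suitable constants $K^\pm_{2r}$ and $K^\pm_{2m}$. The main obstacle is step~(ii): one must make the logistic approximation for $\tilde Y$ precise enough to land $X_i(t_2)$ in that narrow window while simultaneously keeping $\tilde X_3=o(1)$ (which feeds back into the drift of $\tilde Y$) and controlling the asymmetry $Z$, which has no restoring force — only the destabilizing drift $s\tilde X_0Z$ — so that its amplification over phase~2, a factor $\exp(\int_{t_1}^{t_2}s\tilde X_0\,du)\le\tfrac12e^{C_1}(1+O(\delta^2))$, must be tracked sharply rather than crudely; once all perturbations are shown to be $o(1)$ uniformly on $[t_1,t_2]$ and $C_1,C_2$ are fixed accordingly, the rest is routine Gronwall and Doob.
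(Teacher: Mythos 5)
Your overall strategy is sound and is essentially a hand-rolled version of what the paper does: the paper invokes the Darling--Norris approximation (Theorem \ref{DNthm}) to compare $(\tilde X_1,\tilde X_2,\tilde X_3)$ on $[t_1,t_2]$ with the explicit logistic solution $x(t)$ of (\ref{x}) started from the (conditioned-on) state at $t_1$, and then handles $X_3(t_2)$ by the decomposition $X_3=X_3^{[t_1]}+X_{3m}^{(t_1,t_2]}+X_{3r}^{(t_1,t_2]}$ with a Chebyshev bound on $Z_3^{[t_1]}$ and Gronwall/Markov bounds on the two new contributions. Your passes (i) and (iii) match that $X_3$ analysis, and your pass (ii) in the coordinates $Y=X_1+X_2$, $Z=X_1-X_2$ is a legitimate reparametrization of the same logistic comparison. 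The probability accounting ($4$ extra events, $1-17\epsilon-4\epsilon=1-21\epsilon$) also matches.

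However, there is a genuine gap in the error bookkeeping for part 1. The upper bound $X_i(t_2)\le(\tfrac12-\tfrac{\delta^4}{4})N$ leaves a margin of only $\tfrac{\delta^4}{4}N$ below $N/2$, and this margin comes from a multiplicative cancellation: the paper bounds $x_1(t_2)=\frac{\tilde X_1(t_1)}{\tilde X_1(t_1)+\tilde X_2(t_1)}f(t_2)\le\frac{1+\delta^2}{2}\cdot(1-\delta^2)=\frac{1-\delta^4}{2}$, and then allows a stochastic error of $\tfrac{\delta^4}{4}$. In your additive decomposition $X_1(t_2)=\tfrac12Y(t_2)+\tfrac12Z(t_2)$, the bounds $\tilde Y(t_2)\le 1-\delta^2+o(1)$ and $|Z(t_2)|\le\delta^2N+O(\delta^4)N+o(N)$ combine only to $X_1(t_2)\le\tfrac N2+O(\delta^4)N+o(N)$, which does \emph{not} yield $(\tfrac12-\tfrac{\delta^4}{4})N$ unless you establish that the $O(\delta^4)$ term is negative with the right magnitude. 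The correct statement is $|Z(t_2)|\le\delta^2\,Y(t_2)(1+o(1))\le(\delta^2-\delta^4)N(1+o(1))$ --- i.e.\ the asymmetry \emph{ratio} $Z/Y$ is asymptotically conserved under the logistic flow, because $Y$ and $Z$ are driven by the same per-capita factor $s\tilde X_0$ to leading order. This is exactly what the paper's parametrization $x_i(t)=\big(\tilde X_i(t_1)/(\tilde X_1(t_1)+\tilde X_2(t_1))\big)f(t)$ with a time-independent ratio builds in automatically. Your own bound on the amplification, $\exp(\int_{t_1}^{t_2}s\tilde X_0)\le\tfrac12e^{C_1}(1+O(\delta^2))$, applied to $|Z(t_1)|\le2\delta^2e^{-C_1}N$, gives $\delta^2N(1+O(\delta^2))$ with an error of \emph{unspecified sign}, which is one order of $\delta$ too coarse. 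The same issue, in mirror image, affects the lower bound $(\tfrac12-\tfrac{3\delta^2}{2})N$, where the paper's slack is the term $\tfrac{2\delta^4}{1+\delta^2}$ appearing in its computation of $x_1(t_2)\ge\tfrac{1-\delta^2}{2}\cdot\tfrac{1-\delta^2}{1+\delta^2}$. The fix is to track $Z/Y$ (equivalently, adopt the paper's parametrization) rather than $Z$ itself; once that is done, parts 2 and 3 go through as you describe, since there the required constants $K_{2r}^{\pm},K_{2m}^{\pm}$ absorb all multiplicative losses and only the divergences $Nr\ln(Nr)\gg Nr\gg N\mu$ (resp.\ $N\mu\gg1$, $r\ll N\mu^2$) are needed.
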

	This proposition says that at time $t_2$, almost half of the population becomes type 1, and almost the other half becomes type 2, while the number of type 3 individuals doesn't change much from time $t_1$.

	During the time interval $[t_2,t_3]$, which we will call \textit{phase 3}, the majority of the population has become type 1 or type 2. The number of type 3 individuals continues to grow exponentially from time $t_2$. However, since the majority of the population dies at rate $1-s$, and a type 3 individual dies at rate $1-2s$, the type 3 population grows exponentially at approximately rate $s$. The following proposition describes the behavior of the process at time $t_3$. 
	\begin{proposition} \label{@t3}
	For $\epsilon>0$ and $\delta\in(0,1)$, there is an  event $A_{(3)}$, such that for sufficiently large $N$, we have that $P(A_{(3)})\geq 1-25\epsilon-7\delta-\delta^2$, and the following statements hold:
		\begin{enumerate}
		\item For sufficiently large $N$, on the event $A_{(3)}$, we have
			$$ 
			X_0(t_3)<
				\begin{cases}
				\displaystyle{\delta e^{-(1-3\delta)(C_3-C_2)}N\cdot \bigg(\frac{r\ln(Nr)}{s}\bigg)^{1-3\delta}} &\mbox{in the recombination dominating case}\\
				\displaystyle{\delta e^{-(1-3\delta)(C_3-C_2)}N\cdot \bigg(\frac{N\mu^2}{s}\bigg)^{1-3\delta}} &\mbox{in the mutation dominating case}.
				\end{cases}
			$$
		\item In both cases, there is a positive constant $K_3$ such that for sufficiently large $N$, on the event $A_{(3)}$, we have
			$$
			K_3N\leq X_3(t_3)\leq \delta^2 N.
			$$
		\end{enumerate} 
	\end{proposition}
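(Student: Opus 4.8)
\noindent\emph{Proof idea.}\quad The plan is to run phase~3 on the event $A_{(2)}$ of Proposition~\ref{@t2} (intersected with a few further high‑probability events), using a stopping time to close a bootstrap. Write $\rho=r\ln(Nr)$ in the recombination dominating case and $\rho=N\mu^2$ in the mutation dominating case, so that by Proposition~\ref{@t2} there are constants $0<K^-\le K^+$ with $K^-N\rho/s\le X_3(t_2)\le K^+N\rho/s$, while $X_0(t_2)\le 3\delta^2 N$ and $X_1(t_2)+X_2(t_2)\ge(1-3\delta^2)N$. In both cases the definitions of $t_2$ and $t_3$ give the identity $e^{s(t_3-t_2)}=(s/\rho)\,e^{C_3-C_2}$; in particular $X_3(t_2)e^{s(t_3-t_2)}$ is of order $N$ — the order we want for $X_3(t_3)$ — and the bound asserted in part~(1) is exactly $\delta N\,e^{-(1-3\delta)s(t_3-t_2)}$. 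We may assume $\delta<1/3$, since for $\delta\ge 1/3$ we have $1-3\delta\le 0$ and, as $\rho/s\to 0$, the bound in part~(1) then exceeds $N$ and is vacuous. Let $\tau=\inf\{t\ge t_2:\ X_3(t)\ge\delta^2 N\ \text{or}\ X_0(t)\ge 4\delta^2 N\}$. I would establish drift estimates valid on $[t_2,\tau]$, use them to show $\tau>t_3$ on a high‑probability event, and then read off both parts of the proposition from the same estimates at $t=t_3$.

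On $[t_2,\tau]$ one has $X_1+X_2=N-X_0-X_3\ge(1-5\delta^2)N$, so the mean death rate is $\bar w=1-s(1-\tilde X_0+\tilde X_3)=1-s+O(s\delta^2)$. A generator computation then shows that the drift of $X_0$ is $-s(1-\tilde X_0+\tilde X_3)X_0$ plus lower‑order terms (recombination creates type~$0$ at rate $O(Nr)$, mutation removes it at rate $O(\mu X_0)$), so $X_0$ decays at per‑capita rate at least $s(1-4\delta^2)\ge s(1-3\delta)$; and the drift of $X_3$ is $sX_3(1-\tilde X_3+\tilde X_0)$ plus source terms of rate $O(Nr)+O(\mu N)$, which are negligible compared with $sX_3$ on $[t_2,\tau]$ because $\max\{r,\mu\}\ll\rho$ while $X_3\ge K^-N\rho/s$. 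Since $X_3(t_2)\to\infty$ (from the lower bound of Proposition~\ref{@t2} together with (\ref{Con1.1}), and (\ref{Con2.1}) in the recombination case), $X_3/N$ tracks the logistic‑type equation $\dot c=sc(1-c+\tilde X_0)$ with multiplicative error $1+o(1)$; the corresponding, easier, estimate (upper bound only) holds for $X_0$. The crucial point is that, although $s(t_3-t_2)=\ln(s/\rho)+C_3-C_2\to\infty$, the correction in the exponent stays of constant order: $\tilde X_0$ decays at rate $\ge s(1-4\delta^2)$ from $\le 3\delta^2$, so $\int_{t_2}^{t}s\tilde X_0\,du=O(\delta^2)$; and by the logistic structure, while $c\le\delta^2$ one has $c(u)\approx c(t)\,e^{-s(1-\delta^2)(t-u)}$ for $u\le t$, so $\int_{t_2}^{t}sc\,du\le c(t)/(1-\delta^2)=O(\delta^2)$. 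Hence $X_3(t)=X_3(t_2)e^{s(t-t_2)}e^{O(\delta^2)}(1+o(1))$ throughout $[t_2,t_3\wedge\tau]$. A fixed‑proportional slack in the growth rate would be fatal here, since $e^{\pm c\delta\, s(t_3-t_2)}=(\rho/s)^{\mp c\delta}\to\{0,\infty\}$; it is essential that the error factor is a bounded $e^{O(\delta^2)}$, which is exactly what the self‑limiting logistic term and $X_3(t_2)\to\infty$ provide.

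Evaluating at $t=t_3$, with $X_3(t_2)e^{s(t_3-t_2)}\in[K^-,K^+]e^{C_3-C_2}N$: provided $C_3$ is chosen (as in (\ref{C3})) small enough that $K^+e^{C_3-C_2}e^{O(\delta^2)}<\delta^2$, we get $X_3(t)<\delta^2 N$ on all of $[t_2,t_3\wedge\tau]$, which is the upper bound in part~(2) and shows $\tau$ is not triggered by $X_3$ before $t_3$; meanwhile the $X_0$ estimate keeps $X_0$ below $4\delta^2 N$ on $[t_2,t_3]$, so $\tau>t_3$. With $\tilde X_3\le\delta^2$ now known on $[t_2,t_3]$, the bound $\int_{t_2}^{t_3}s\tilde X_3\,du=O(\delta^2)$ yields $X_3(t_3)\ge K^-e^{C_3-C_2-O(\delta^2)}(1+o(1))\,N\ge K_3 N$ for a suitable constant $K_3>0$, which with the above gives part~(2); and evaluating the $X_0$ estimate gives $X_0(t_3)\le 3\delta^2 N\,e^{-(1-3\delta)s(t_3-t_2)}(1+o(1))+O(Nr/s)$, which, since $3\delta^2<\delta$ and $Nr/s\ll N(\rho/s)^{1-3\delta}$ (which one verifies in both cases using $r\ln_+(Nr)\ll s$), is at most $\delta\,e^{-(1-3\delta)(C_3-C_2)}N(\rho/s)^{1-3\delta}$ for large $N$ — this is part~(1). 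The real difficulty, I expect, lies in making the two concentration statements rigorous uniformly up to $t\wedge\tau$: for $X_3$, via a martingale decomposition, a bound on the quadratic variation, and Doob's inequality, the relative fluctuation being $O(1/\sqrt{X_3(t_2)})\to0$ — which is what forces the error to be multiplicative rather than a power of $\rho/s$; for $X_0$, via a coupling with a subcritical branching process with immigration and an exponential tail bound — and in handling the circularity (the drift bounds are proved only for $t\le\tau$, yet $\tau>t_3$ is deduced from them) by carrying every estimate up to $t\wedge\tau$. The finitely many additional exceptional events this requires (for the comparisons, for $X_1-X_2$, and for the recombination and mutation source terms) account for the extra $4\epsilon$ and the $7\delta+\delta^2$ in the stated bound on $P(A_{(3)})$. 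The mutation dominating case is identical, with $\rho=N\mu^2$ throughout.
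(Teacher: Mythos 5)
Your proposal matches the paper's proof in all essentials: the paper also runs a stopping-time bootstrap on $[t_2,t_3]$ (its $T_{(3)}=T_4\wedge T_5\wedge T_6$ plays the role of your $\tau$, with the decaying bound on $X_0$ built directly into the definition of $T_6$ and with $s\int\tilde X_3\,dv$ capped at $1$ via $T_5$ and a first-moment Markov bound rather than your sharper $O(\delta^2)$ logistic self-consistency argument), controls $X_3^{[t_2]}$ through the martingale $Z_3^{[t_2]}=e^{-\int G_3}X_3^{[t_2]}$ with an $L^2$-maximal inequality, shows the freshly created $X_{0r}^{(t_2,t_3]}$, $X_{3m}^{(t_2,t_3]}$, $X_{3r}^{(t_2,t_3]}$ are negligible for exactly the reasons you give, and your central observation --- that the exponent corrections $\int s\tilde X_0\,du$ and $\int s\tilde X_3\,du$ must stay bounded because any fixed proportional slack in the growth rate produces a fatal factor $(\rho/s)^{\pm c\delta}$ --- is precisely what Lemma \ref{Gt2-3} and the events $A_{17}$, $A_{18}$ encode. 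The one quantitative slip is that the relative fluctuation of the martingale is $O(1/\sqrt{sX_3(t_2)})$ rather than $O(1/\sqrt{X_3(t_2)})$, since the quadratic variation accumulates over a window of length of order $1/s$; the conclusion survives because $sX_3(t_2)\asymp N\max\{N\mu^2,r\ln(Nr)\}\geq (N\mu)^2\gg 1$, which is exactly where (\ref{Con1.1}) enters.
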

	This proposition says that by the time $t_3$, the number of type 3 individuals has reached order $N$. Moreover, from (\ref{Con1.2}) and (\ref{Con1.3}), there are almost no type 0 individuals left by time $t_3$.
	
	During the time interval $[t_3,t_4]$, which we will call \textit{phase 4}, the number of type 3 individuals grows logistically. The following proposition describes the behavior of the process at time $t_4$. 
	\begin{proposition} \label{@t4}
	For $\epsilon>0$ and $\delta\in(0,1)$, there is an  event $A_{(4)}$, such that for sufficiently large $N$, we have that $P(A_{(4)})\geq 1-26\epsilon-7\delta-\delta^2$, and on the event $A_{(4)}$, 
			$$
			\bigg(1-\frac{5\delta^2}{4}\bigg)N 
			\leq X_3(t_4)
			\leq \bigg(1-\frac{3K_3}{4}\bigg)N,
			$$
		and 
			$$
			X_1(t_4)+X_2(t_4)\geq \frac{K_3N}{2}.
			$$
	\end{proposition}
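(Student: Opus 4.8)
The plan is to treat phase $4$ as a perturbed logistic sweep of type $3$ against types $1$ and $2$, and then to read off the statement about $X_1+X_2$ from the identity $X_1+X_2=N-X_0-X_3$. Throughout we work on the event $A_{(3)}$ of Proposition~\ref{@t3}, on which $K_3N\le X_3(t_3)\le\delta^2N$ (part (2)) and $X_0(t_3)\le\phi_NN$ for a null sequence $\phi_N$ (part (1); we may assume $\delta<1/3$, so that the exponent $1-3\delta$ there is positive), whence also $X_1(t_3)+X_2(t_3)=N-X_0(t_3)-X_3(t_3)\ge(1-2\delta^2)N$ once $N$ is large. Fix a small constant $\eta>0$, to be chosen in terms of $\delta$, $K_3$ and $C_4-C_3$, with $\eta<K_3/4$.

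\textbf{Controlling type $0$ during $[t_3,t_4]$.} A direct generator computation shows that, in the absence of recombination, the drift of $X_0$ equals $-\frac{s}{N}X_0(X_1+X_2+2X_3)\le 0$, while recombination creates type-$0$ individuals at rate at most $rN/4$ (a recombinant is type $0$ only when neither of the two sampled parents carries the relevant beneficial allele). Hence the drift of $X_0$ never exceeds $rN/4$, so $X_0(t)-\frac{rN}{4}(t-t_3)+\frac{rN(C_4-C_3)}{4s}$ is a nonnegative supermartingale on $[t_3,t_4]$ started from $X_0(t_3)+\frac{rN(C_4-C_3)}{4s}=o(N)$ (using $r\ll s$). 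By Doob's maximal inequality, the event $B:=\{\sup_{t\in[t_3,t_4]}X_0(t)<\eta N\}$ satisfies $P(B)\ge1-\epsilon/2$ once $N$ is large.

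\textbf{Logistic comparison for $X_3$.} Ignoring recombination and mutation, the drift of $X_3$ equals $sX_3(1-\tilde X_3)+sX_3\tilde X_0$; restoring recombination changes this by at most $O(rN)$ in absolute value, and mutation adds a further nonnegative amount $\mu(X_1+X_2)\le\mu N$. Hence on $B$, for $t\in[t_3,t_4]$,
$$
sX_3(1-\tilde X_3)-O((r+\mu)N)\ \le\ (\text{drift of }X_3)\ \le\ sX_3(1-\tilde X_3)+s\eta N+O((r+\mu)N).
$$
Since phase $4$ has length $(C_4-C_3)/s$ and $(r+\mu)/s\to0$, the cumulative effect of the non-logistic terms on $\tilde X_3$ is $O(\eta)+o(1)$; the crucial point is that the $\tilde X_0$-term carries a factor of $s$, so its contribution is $\eta(C_4-C_3)+o(1)$ and not $O(\eta/s)$. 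The martingale part of $\tilde X_3$ over $[t_3,t_4]$ has predictable quadratic variation $O(1/(Ns))$, and $Ns\gg N\mu\to\infty$, so a Doob or Freedman inequality bounds its supremum by $o(1)$ with probability at least $1-\epsilon/2$. Feeding these estimates into the Gronwall-type comparison already used for the logistic phases in Propositions~\ref{@t2} and \ref{@t3}, and writing $x_0:=\tilde X_3(t_3)\in[K_3,\delta^2]$ and $y(t):=1-\frac{1-x_0}{1-x_0+x_0e^{st}}$ for the corresponding logistic curve, we obtain on $B$ intersected with this concentration event
$$
y(t_4-t_3)-O(\eta)-o(1)\ \le\ \tilde X_3(t_4)\ \le\ y(t_4-t_3)+O(\eta)+o(1).
$$

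\textbf{Choosing $C_4$ and concluding.} Since $x\mapsto\frac{1-x}{1-x+xe^{c}}$ is decreasing, $y(t_4-t_3)$ is increasing in $x_0$, so for every $x_0\in[K_3,\delta^2]$ it lies between $1-\frac{1-K_3}{1-K_3+K_3e^{C_4-C_3}}$ and $1-\frac{1-\delta^2}{1-\delta^2+\delta^2e^{C_4-C_3}}$. The constant $C_4$ is fixed in (\ref{C4}) so that $e^{C_4-C_3}$ is large enough that the first of these exceeds $1-\frac54\delta^2$ by a fixed margin and small enough that the second falls below $1-\frac34K_3$ by a fixed margin; the interval of admissible values is nonempty because $K_3\le\delta^2$ and $\delta$ is small, so the range $[K_3,\delta^2]$ is short relative to the target window. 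Choosing $\eta$ small enough (given this now-fixed $C_4$) to absorb the $O(\eta)+o(1)$ slack, we conclude that on $A_{(3)}\cap B$ intersected with the concentration event, $(1-\frac54\delta^2)N\le X_3(t_4)\le(1-\frac34K_3)N$. On the same event, $X_1(t_4)+X_2(t_4)=N-X_0(t_4)-X_3(t_4)\ge N-\eta N-(1-\frac34K_3)N\ge\frac12K_3N$, and in particular $X_3(t_4)<N$, so fixation has not occurred by $t_4$. Taking $A_{(4)}$ to be this intersection, the two new bad events contribute at most $\epsilon$, so $P(A_{(4)})\ge P(A_{(3)})-\epsilon\ge1-26\epsilon-7\delta-\delta^2$. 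The main obstacle is the logistic comparison step: keeping every error term uniformly small over a time interval of length $\propto1/s$, and in particular verifying that the type-$0$ perturbation $sX_3\tilde X_0$—whose crude size $\eta N$ gets multiplied by that length—is genuinely tamed by the factor of $s$, so its net effect on $\tilde X_3(t_4)$ is only $O(\eta)$ and can be made negligible by taking $\eta$ small.
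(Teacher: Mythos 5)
Your proposal is correct and reaches the bounds by the same underlying mechanism as the paper: run the logistic curve started from the (random) value $\tilde X_3(t_3)\in[K_3,\delta^2]$ for the duration $(C_4-C_3)/s$, observe that the paper's choice of $C_4$ in (\ref{C4}) makes the endpoint exactly $1-\delta^2$ when $x_0=K_3$ and exactly $1-K_3$ when $x_0=\delta^2$ (leaving margins $\delta^2/4$ and $K_3/4$, with $K_3\le\delta^2$ absorbing the fluctuation error), and note that the recombination/mutation perturbations of the drift are $O(r+\mu)$ and hence contribute $o(1)$ after integration over a time of order $1/s$. The implementation differs in two respects worth noting. First, the paper does not control $X_0(t)$ on $(t_3,t_4]$ at all: it applies the Darling--Norris fluid-limit theorem (Theorem \ref{DNthm}, exactly as in phase 2) to the three-dimensional ODE $\dot x^*=b(x^*)$ whose initial condition lumps the small type-$0$ mass into $x_1^*(t_3)=\tilde X_0(t_3)+\tilde X_1(t_3)$, so the only place $X_0$ enters is the initial discrepancy $|\mathbf X(t_3)-x^*(t_3)|=\tilde X_0(t_3)=o(1)$; the lower bound on $X_1(t_4)+X_2(t_4)$ then falls out of $x_1^*+x_2^*=1-f^*$. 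You instead prove a separate supermartingale bound $\sup_{[t_3,t_4]}X_0<\eta N$ (your drift computation for $X_0$, including the $r\xi_1\xi_2\le r/4$ recombination source, is correct) and run a one-dimensional Gronwall--Doob comparison for $\tilde X_3$ alone, which is legitimate because the selection drift of $\tilde X_3$ is $s\tilde X_3(1-\tilde X_3)+s\tilde X_3\tilde X_0$, a function of $\tilde X_3$ and $\tilde X_0$ only; your observation that the $\tilde X_0$ perturbation carries a factor of $s$ and therefore integrates to $O(\eta)$ rather than $O(\eta/s)$ is the right point to stress. Your route is more elementary and self-contained (no citation of the fluid-limit theorem) at the cost of the extra type-$0$ lemma and the extra parameter $\eta$; the paper's route reuses the phase-2 machinery verbatim and avoids tracking $X_0$ in phase 4 entirely. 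Either way the probability bookkeeping ($P(A_{(4)})\ge P(A_{(3)})-\epsilon$) and the final arithmetic agree with the statement.
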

	This proposition implies that by time $t_4$, almost all individuals have become type 3, and only small fractions of type 1 and 2 individuals remain in the population.

	After time $t_4$, which we will call \textit{phase 5}, the number of individuals that are not type 3 can be approximated by a subcritical branching process. The non-type 3 population is heading toward extinction, and type 3 becomes fixated in the population. The fixation of type 3 will occur around time $t^*_N(r_N)$.

	In section \ref{trans}, we will discuss about transition rates of the process. In section \ref{M}, we construct martingales and submartigales, and give expectation and variance formulas. They will be used in the proofs of phases 1, 2, and 3 in sections \ref{phase1}, \ref{phase2}, and \ref{phase3}. In section \ref{phase1}, we will prove several lemmas on the process during phase 1, and at the end of the section, we give the proof of Proposition \ref{@t1}. Proposition \ref{@t2}, \ref{@t3} and \ref{@t4} will be proved in sections \ref{phase2}, \ref{phase3}, and \ref{phase4} respectively. Finally, the proof of Theorem \ref{THM} will be given at the end of section \ref{phase5}.
	
\section{On parameters and transition rates of the process}\label{trans}
\subsection{More inequalities on the parameters}
	\begin{lemma} The following statements hold.
		\begin{enumerate}
			\item In the recombination dominating case, 
				\begin{equation} \label{1<Nr}
				1 \ll Nr.
				\end{equation}
			\item In the mutation dominating case, 
				$$
				r\ll N\mu^2.
				$$ 
			\item In both cases,
				\begin{equation} \label{r<s}
				r \ll s,
				\end{equation} 
				\begin{equation}\label{rslogNs}
				\frac{r}{s}\ln(Ns)\ll 1, 
				\end{equation}
			and
				\begin{equation}\label{rslogsu}
				\frac{r}{s}\ln\bigg(\frac{s}{\mu}\bigg)\ll 1.\
				\end{equation}
		\end{enumerate}
	\end{lemma}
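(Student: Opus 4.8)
The plan is to derive all five estimates from the standing hypotheses (\ref{Con1.0})--(\ref{Con1.3}) together with the case assumptions (\ref{Con2.1}) or (\ref{Con2.2}); the one recurring device is a dichotomy according to whether $Nr$ is large. First I would record two facts used throughout: since $1\ll N\mu$ by (\ref{Con1.1}), we have $(N\mu)^2\to\infty$, hence $N\mu^2=(N\mu)^2/N\gg 1/N$; and since $\mu\ll s$ (a consequence of (\ref{Con1.1}) and (\ref{Con1.2})), we have $Ns=(N\mu)(s/\mu)\to\infty$, so $\ln(Ns)\to\infty$ and is positive for $N$ large.

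To prove $1\ll Nr$ in the recombination dominating case I would argue by contradiction: if $Nr$ stays bounded along a subsequence, say $Nr\le M$ with $M>1$, then along it $r\ln(Nr)\le (M\ln M)/N$ (trivially if $Nr\le 1$), so the left inequality of (\ref{Con2.1}) forces $(N\mu)^2=N\cdot N\mu^2\le M\ln M$ eventually, contradicting $(N\mu)^2\to\infty$. To prove $r\ll N\mu^2$ in the mutation dominating case, given $\varepsilon>0$ I would fix $M>e$ with $C/\ln M<\varepsilon$ (with $C$ as in (\ref{Con2.2})) and split: if $Nr>M$ then (\ref{Con2.2}) gives $r\le CN\mu^2/\ln(Nr)<\varepsilon N\mu^2$, and if $Nr\le M$ then $r/(N\mu^2)\le M/(N\mu)^2<\varepsilon$ for $N$ large; hence $r/(N\mu^2)\to 0$.

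For (\ref{r<s}): in the recombination dominating case, $1\ll Nr$ gives $\ln(Nr)>1$ eventually, so $r<r\ln(Nr)\ll s$ by (\ref{Con2.1}); in the mutation dominating case, $r\ll N\mu^2\ll s$ by the previous step and (\ref{Con1.2}). For (\ref{rslogNs}), merely bounding $\ln(Ns)\le\ln N$ does not help, since the inequality $\ln(Nr)<\ln N$ runs the wrong way to transfer (\ref{Con2.1})--(\ref{Con2.2}); instead I would split at the threshold $r\asymp\sqrt{s/N}$. If $r<\sqrt{s/N}$, then $\tfrac{r}{s}\ln(Ns)<\tfrac{\ln(Ns)}{\sqrt{Ns}}\to 0$, using $\ln x/\sqrt x\to 0$ and $Ns\to\infty$. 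If $r\ge\sqrt{s/N}$, then $Nr\ge\sqrt{Ns}\to\infty$, so $\ln(Nr)\ge\tfrac12\ln(Ns)$, and therefore $\tfrac{r}{s}\ln(Ns)\le\tfrac{2r\ln(Nr)}{s}\to 0$ by (\ref{Con2.1}) in the recombination dominating case, while in the mutation dominating case $r\ln_+(Nr)\le CN\mu^2$ yields $r\ln(Ns)\le 2CN\mu^2$ and hence $\tfrac{r}{s}\ln(Ns)\le\tfrac{2CN\mu^2}{s}\to 0$ by (\ref{Con1.2}). In each case $\tfrac{r}{s}\ln(Ns)$ is dominated by the larger of two sequences tending to $0$, so it tends to $0$. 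Finally, (\ref{rslogsu}) follows from (\ref{rslogNs}): since $N\mu>1$ for $N$ large, $s/\mu=(Ns)/(N\mu)<Ns$, so $\ln(s/\mu)<\ln(Ns)$ and $\tfrac{r}{s}\ln(s/\mu)\le\tfrac{r}{s}\ln(Ns)\to 0$.

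The only mildly subtle point is (\ref{rslogNs}): because $\ln(Nr)$ can be far smaller than $\ln N$, one cannot replace $\ln(Nr)$ by $\ln N$ in (\ref{Con2.1})--(\ref{Con2.2}), and it is precisely the split at $r\asymp\sqrt{s/N}$, combined with the elementary limit $\ln x/\sqrt x\to 0$, that closes the gap. Everything else is routine bookkeeping with the relation $\ll$.
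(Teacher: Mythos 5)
Your proof is correct, and for part 3 it takes a genuinely different route from the paper's. For statements 1 and 2 the content is essentially the same as the paper's (the paper proves 1 directly from $1\ll(N\mu)^2\ll Nr\ln(Nr)$ and proves 2 by a subsequence contradiction; your threshold argument for 2 is if anything a little cleaner). The real divergence is in statement 3. For (\ref{rslogNs}) in the recombination dominating case the paper decomposes $\tfrac{r}{s}\ln(Ns)=\tfrac{r}{s}\ln(Nr)+\tfrac{r}{s}\ln(s/r)$, killing the first term with (\ref{Con2.1}) and the second with $\ln(x)/x\to 0$ applied to $x=s/r$; and in the mutation dominating case it introduces an auxiliary rate $r^*_N$ with $Nr^*_N$ solving $x\ln(x)=\sqrt{(N\mu)^2\cdot Ns}$, shows $r_N\le r^*_N$ eventually, and transfers the recombination-case estimates to $r^*_N$. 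Your dichotomy at $r\asymp\sqrt{s/N}$ — using $\ln(Ns)/\sqrt{Ns}\to 0$ below the threshold and $\ln(Nr)\ge\tfrac12\ln(Ns)$ above it — handles both cases uniformly and dispenses with the auxiliary rate altogether; it also sidesteps the degeneracy of the paper's decomposition when $r$ is extremely small or zero (which is exactly what forces the paper into the $r^*_N$ device in the mutation dominating case). The paper's approach buys a reusable comparison rate $r^*_N$ satisfying the recombination-case hypotheses, but for this lemma your argument is more elementary and self-contained. The derivations of (\ref{1<Nr}), (\ref{r<s}), and (\ref{rslogsu}) from (\ref{rslogNs}) match the paper's.
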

		
	\begin{proof}
	We will first prove statement 1. In the recombination dominating case, from conditions (\ref{Con1.1}) and (\ref{Con2.1}), 
		$$
		1 \ll (N\mu)^2 \ll Nr\ln(Nr),
		$$
	which implies that $1\ll Nr$. 
	
	Now, we will prove statement 2 by contradiction. Suppose there is a $c>0$ and an increasing sequence $\{N_k\}_{k=1}^{\infty}$ of natural numbers such that for all $k=1,2,3,...$, we have
			$$
			r_{N_k}>cN_k\mu^2_{N_k}.
			$$
	From (\ref{Con2.2}), we have that for all $k=1,2,3,..$,
			$$
			cN_k\mu^2_{N_k}\ln_+(cN_k^2\mu^2_{N_k})\leq r_{N_k}\ln_+(N_kr_{N_k})\leq CN_k\mu^2_{N_k}.
			$$
	This leads to a contradiction, since $1\ll N\mu$ implies that
			$$
			\ln_+(cN_k^2\mu^2_{N_k})\rightarrow \infty,
			$$
	as $k\rightarrow \infty$. 
		
	Lastly, we will prove statement 3. First, we will consider the recombination dominating case. By (\ref{Con1.3}) and (\ref{1<Nr}), 
		$$
		r\ll r\ln(Nr)\ll s.
		$$
	From (\ref{Con2.1}) and (\ref{r<s}), it follows that
		$$
		\frac{r}{s}\ln(Ns)= \frac{r}{s}\ln(Nr)+\frac{r}{s}\ln\bigg(\frac{s}{r}\bigg)\ll 1, 
		$$
	and because of (\ref{Con1.1}), for sufficiently large $N$,
		$$
		\frac{r}{s}\ln\bigg(\frac{s}{\mu}\bigg)\leq \frac{r}{s}\ln(Ns)\ll 1,
		$$
	which implies (\ref{rslogsu}). For the mutation dominating case, we define $r^*_N$ such that $Nr^*_N$ is the solution of
		$$
		x\ln(x)=\sqrt{(N\mu)^2\cdot Ns}.
		$$
	It follows that $N\mu^2 \ll r^*_N\ln(Nr^*_N) \ll s$. Therefore, by the same argument above, 
		\begin{equation}\label{-3.1}
		r^*_N \ll s,			
		\end{equation}
		\begin{equation}\label{-3.2}
		\frac{r^*_N}{s}\ln(Ns)\ll 1,
		\end{equation}
	and
		\begin{equation}\label{-3.3}
		\frac{r^*_N}{s}\ln\bigg(\frac{s}{\mu}\bigg)\ll 1.
		\end{equation}
	Also, from (\ref{Con2.2}) and the fact that $N\mu^2 \ll r^*_N\ln(Nr^*_N)$, for sufficiently large $N$,  we have $r_N\leq r^*_N$. This fact along with (\ref{-3.1}), (\ref{-3.2}) and (\ref{-3.3}) imply (\ref{r<s}), (\ref{rslogNs}) and (\ref{rslogsu}).
		\end{proof}
		
\subsection{Transition rates of the process} \label{rate}
	For the proof, we need to separate type 1 individuals into two 	groups: one that comes from mutation from type 0 individuals and another that comes from recombination between type 0 and type 3 individuals. We need to do the same for the other three types. The precise definitions are given below.

	\begin{enumerate}
	\item A type 1 (or 2) individual is called a \textbf{type 1m (or 2m) ancestor}, if it appears by mutation from a type 0 individual. 
	\item A type 1 (or 2) individual is called a \textbf{type 1r (or 2r) ancestor}, if it appears by recombination between a $b$ (or an $a$) allele from a type 0 individual and an $A$ (or a $B$) allele from a type 3 individual.  
	\item A type 1 individual $x$ is called an offspring of another type 1 individual $y$ if
		\begin{itemize}
		\item $x$ receives the $A$ allele from $y$, or
		\item $x$ receives the $b$ allele from $y$ and receives the $A$ allele from a type 3.
		\end{itemize}
	\item A type 2 individual $x$ is called an offspring of another type 2 individual $y$ if a
		\begin{itemize} 
		\item $x$ receives the $B$ allele from $y$, or
		\item $x$ receives the $a$ allele from $y$ and receives the $B$ allele from a type 3.
		\end{itemize}
	\item A type 1 (or 2) individual is called \textbf{type 1m (or 2m)}, if it descends from a type 1m (or 2m) ancestor. A type 1 (or 2) individual is called \textbf{type 1r (or 2r)}, if it descends from a type 1r (or 2r) ancestor.
	\item A type 3 individual is called a \textbf{type 3m ancestor}, if it appears from mutation from a type 1 individual or a type 2 individual. 
	\item A type 3 individual is called a \textbf{type 3r ancestor}, if it appears by recombination between an $A$ allele from a type 1 individual and a $B$ allele from a type 2 individual.
	\item A type 3 individual $x$ is called an offspring of another type 3 individual $y$ if
		\begin{itemize}
		\item $x$ receives the $A$ allele from $y$, or
		\item $x$ receives the $B$ allele from $y$ and receives the $A$ allele from a type 1 individual.
		\end{itemize}
	\item A type 3 individual is called \textbf{type 3m}, if it descends from a type 3m ancestor. A type 3 individual is called \textbf{type 3r}, if it descends from a type 3r ancestor.
	\item A type 0 individual is called a \textbf{type 0r ancestor}, if it appears from recombination between an $a$ allele from a type 1  individual and a $b$ allele from a type 2 individual.
	\item A type 0 individual $x$ is called an offspring of another type 0 individual $y$ if
		\begin{itemize}
		\item $x$ receives the $a$ allele from $y$, or
		\item $x$ receives the $b$ allele from $y$ and receives the $a$ allele from a type 2. 
		\end{itemize}
	\item  A type 0 individual is called a \textbf{type 0r} if it descends from a type 0r ancestor. 
	\end{enumerate}

	For $i=1,2,3$, we define $X_{im}(t)$ as the number of type $im$ at time $t$, and for $i=0,1,2,3$, we define $X_{ir}(t)$ as the number of type $ir$ at time $t$. Note that for $i=1,2,3$ and $t\geq 0$, we have  $X_i(t)=X_{im}(t)+X_{ir}(t)$. Next, we define $X^{(a,b]}_{im}(t)$ and $X^{(a,b]}_{ir}(t)$ to be the number of type $im$ and $ir$ individuals at time $t$, whose ancestor appears in the time interval $(a,b]$. It follows that if $0\leq t\leq b$, for $i=1,2,3$, we have that $X_{im}^{(0,b]}(t)=X_{im}(t)$, and for $i=0,1,2,3$, we have that $X_{ir}^{(0,b]}(t)=X_{ir}(t)$. We will call an individual type \textbf{im(a,b]} (or \textbf{ir(a,b]}), if it is of type im (or type ir) and its ancestor appears in the time interval $(a,b]$.  Lastly, we define $\tilde X_{im}(t), \tilde X_{ir}(t), \tilde X^{(a,b]}_{im}(t)$, and $\tilde X^{(a,b]}_{ir}(t)$ to be the fractions of type im, ir, im(a,b] and ir(ab] in the population at time $t$ respectively. 

	Now, consider the process $(X^{(a,b]}_{1m}(t),t\geq 0)$, First, we consider the rate that $X^{(a,b]}_{1m}(t)$ increases by 1. There are two ways to increase $X^{(a,b]}_{1m}(t)$.  First, a type 0 individual can mutate to a type 1 individual during the time interval $(a,b]$, creating a type 1m(a,b] ancestor, which occurs at total rate
		\begin{equation} \label{M1ab}
		M_1^{(a,b]}(t)=\mu X_0(t)1_{(a,b]}(t).
		\end{equation} 
	Second, an individual that is not of type 1m(a,b] can die, which occurs at total rate
		\begin{equation}\label{***2.1}
		X_0(t)+(1-s)(X_1(t)-X_{1m}^{(a,b]}(t))+(1-s)X_2(t)+(1-2s)X_3(t),
		\end{equation}
	and the new individual must be a type 1m(a,b]. The probability that recombination doesn't occur and the new individual has type 1m(a,b] is $(1-r)\tilde X^{(a,b]}_{1m}(t)$. If recombination occurs, the new individual can come from combining an $A$ allele from a type 1m(a,b] individual with a $b$ allele from a type 0 or 1 individual, or combining an $A$ allele from a type 3 individual with a $b$ allele from a type 1m(a,b] individual. (Note that recombination between an $A$ allele from a type 3 individual and a $b$ alelle from a type 0 individual creates an ancestor of type 1r.) So, the probability that recombination occurs and the new individual has type 1m(a,b] is
		\begin{equation}\label{***2.2}
		r\Big(\tilde X^{(a,b]}_{1m}(t)\tilde X_0(t)+\tilde X^{(a,b]}_{1m}(t)\tilde X_1(t)+\tilde X_3(t)\tilde X^{(a,b]}_{1m}(t)\Big)=r\tilde X^{(a,b]}_{1m}(t)\Big(\tilde X_0(t)+\tilde X_1(t)+\tilde X_3(t)\Big).
		\end{equation} 
	 Hence, the total rate that the number of descendants of type 1m(a,b] increases by 1 is
		\begin{align*}
		&\Big(X_0(t)+(1-s)(X_1(t)-X_{1m}^{(a,b]}(t))+(1-s)X_2(t)	+(1-2s)X_3(t)\Big)\\
		&\hspace{4 cm} \cdot\Big((1-r)\tilde X_{1m}^{(a,b]}(t)+r\tilde X_{1m}^{(a,b]}(t)(\tilde X_0(t)+\tilde X_1(t)+\tilde X_3(t))\Big). 
		\end{align*}
	Let us define
		\begin{equation} \label{B1mab}
		B_{1m}^{(a,b]}(t)=\Big( \tilde X_0(t)+(1-s)(\tilde X_1(t)-\tilde X_{1m}^{(a,b]}(t))+(1-s)\tilde X_2(t)+(1-2s)\tilde X_3(t)\Big)\Big(1-r\tilde X_2(t)\Big),
		\end{equation}
	and note that $X^{(a,b]}_{1m}(t)$ increases by 1 at rate $M_1^{(a,b]}(t)+B_{1m}^{(a,b]}(t)X_{1m}^{(a,b]}(t)$. 

	Similarly, the rate that the number of type 1m(a,b] individuals decreases by 1 is given by
		\begin{equation}\label{***2.3}
		(1-s)X_{1m}^{(a,b]}(t)\Big(1-(1-r)\tilde X_{1m}^{(a,b]}(t)-r\tilde X_{1m}^{(a,b]}(t)(\tilde X_0(t)+\tilde X_1(t)+\tilde X_3(t))\Big)+\mu X_{1m}^{(a,b]}(t),
		\end{equation}
	where $(1-s)X_{1m}^{(a,b]}(t)$ is the total rate that type 1m(a,b] individuals die at time $t$, 
		$$
		1-(1-r)\tilde X_{1m}^{(a,b]}(t)-r\tilde X_{1m}^{(a,b]}(t)(\tilde X_0(t)+\tilde X_1(t)+\tilde X_3(t))
		$$
	is the probability that we don't create a type 1m(a,b] individual, and $\mu X_{1m}^{(a,b]}(t)$ corresponds to the total rate that type 1m(a,b] mutates to type 3. We define
		\begin{equation} \label{D1mab}
		D_{1m}^{(a,b]}(t)=(1-s)\Big(1-\tilde X_{1m}^{(a,b]}(t)+r\tilde X_2(t)\tilde X_{1m}^{(a,b]}(t)\Big)+\mu,
		\end{equation}
	and note that the number of type 1m(a,b] individuals decreases by 1 at rate $D_{1m}^{(a,b]}(t)X_{1m}^{(a,b]}(t)$.

	We will now consider the process $(X^{(a,b]}_{1r}(t),t\geq 0)$. We will first consider the rate that $X^{(a,b]}_{1r}(t)$ increases by 1. There are two ways to increase $X^{(a,b]}_{1r}(t)$ by 1. First, an individual that is not of type 1r(a,b] dies, and the recombination between an $A$ allele from a type 3 individual and a $b$ allele from a type 0 individual occurs during the time interval $(a,b]$, which creates a type 1r(a,b] ancestor. This occurs at total rate of
		$$
		R_1^{(a,b]}(t)=\Big(X_0(t)+(1-s)\big(X_1(t)-X_{1r}^{(a,b]}(t)\big)+(1-s)X_2(t)+(1-2s)X_3(t)\Big)\Big(r\tilde X_0(t)\tilde X_3(t)1_{(a,b]}(t)\Big).
		$$
	Second, an individuals that is not of type 1r(a,b] dies, and a new type 1r(a,b] individual is born from the type 1r(a,b] individuals at that time. Similar to the way we obtain (\ref{***2.1}) and (\ref{***2.2}), by defining 
		\begin{equation} \label{B1rab}
		B_{1r}^{(a,b]}(t)=\Big( \tilde X_0(t)+(1-s)(\tilde X_1(t)-\tilde X_{1r}^{(a,b]}(t))+(1-s)\tilde X_2(t)+(1-2s)\tilde X_3(t)\Big)\Big(1-r\tilde X_2(t)\Big),
		\end{equation}
	one can see that the rate that $X^{(a,b]}_{1r}(t)$ increases by 1 is $R_1^{(a,b]}(t)+B_{1r}^{(a,b]}(t)X^{(a,b]}_{1r}(t)$.
	
	We will now consider the rate that  $X^{(a,b]}_{1r}(t)$ decreases by 1. One way that $X^{(a,b]}_{1r}(t)$ decreases by 1 is when a type 1r(a,b] individual dies and the new individual is not of type 1r(a,b] (i.e, the new individual is not born from a type 1r(a,b] individual, and it is not a type 1r(a,b] ancestor). Another way is when a type $1r(a,b]$ individual mutates to a type 3 individual. By the same reason we used to obtain (\ref{***2.3}), the rate that $X^{(a,b]}_{1r}(t)$ decreases by 1 is
		$$
		(1-s)X_{1r}^{(a,b]}(t)\Big(1-(1-r)\tilde X_{1r}^{(a,b]}(t)-r\tilde X_{1r}^{(a,b]}(t)(\tilde X_0(t)+\tilde X_1(t)+\tilde X_3(t))-r\tilde X_0(t)\tilde X_3(t)1_{(a,b]}(t)\Big)+\mu X_{1r}^{(a,b]}(t),
		$$
	and note that the term $	r\tilde X_0(t)\tilde X_3(t)1_{(a,b]}(t)$ is precisely the probability that a type 1r(a,b] ancestor is created. By defining	
		\begin{equation} \label{D1rab}
		D_{1r}^{(a,b]}(t)=(1-s)\Big(1-\tilde X_{1r}^{(a,b]}(t)+r\tilde X_2(t)\tilde X_{1r}^{(a,b]}(t)-r\tilde X_0(t)\tilde X_3(t)1_{(a,b]}(t)\Big)+\mu,
		\end{equation}
	one can see that the rate that $X^{(a,b]}_{1r}(t)$ decreases by 1 is $D_{1r}^{(a,b]}(t)X_{1r}^{(a,b]}(t)$.
	
	Now, we define
		\begin{align}
		B_{2m}^{(a,b]}(t)&=\Big( \tilde X_0(t)+(1-s)\tilde X_1(t)+(1-s)(\tilde X_2(t)-\tilde X_{2m}^{(a,b]}(t))+(1-2s)\tilde X_3(t)\Big)\Big(1-r\tilde X_1(t)\Big),\nonumber\\
		D_{2m}^{(a,b]}(t)&=(1-s)\Big(1-\tilde X_{2m}^{(a,b]}(t)+r\tilde X_1(t)\tilde X_{2m}^{(a,b]}(t)\Big)+\mu,\nonumber\\
		M_2^{(a,b]}(t)&=\mu X_0(t)1_{(a,b]}(t),\nonumber\\
		B_{2r}^{(a,b]}(t)&=\Big( \tilde X_0(t)+(1-s)\tilde X_1(t)+(1-s)(\tilde X_2(t)-\tilde X_{2r}^{(a,b]}(t))+(1-2s)\tilde X_3(t)\Big)\Big(1-r\tilde X_1(t)\Big),\nonumber\\
		D_{2r}^{(a,b]}(t)&=(1-s)\Big(1-\tilde X_{2r}^{(a,b]}(t)+r\tilde X_1(t)\tilde X_{2r}^{(a,b]}(t)-r\tilde X_0(t)\tilde X_3(t)1_{(a,b]}(t)\Big)+\mu,\nonumber\\
		R_2^{(a,b]}(t)&=\Big( X_0(t)+(1-s)X_1(t)+(1-s)(X_2(t)-X_{2r}^{(a,b]}(t))+(1-2s)X_3(t)\Big)\Big(r\tilde X_0(t)\tilde X_3(t)1_{(a,b]}(t)\Big),\nonumber\\
		B_{3m}^{(a,b]}(t)&=\Big( \tilde X_0(t)+(1-s)(\tilde X_1(t)+\tilde X_2(t))+(1-2s)\big(\tilde X_3(t)-\tilde X_{3m}^{(a,b]}(t)\big)\Big)\Big(1-r\tilde X_0(t)\Big),\label{B3mab}\\
		D_{3m}^{(a,b]}(t)&=(1-2s)\Big(1-\tilde X_{3m}^{(a,b]}(t)+r\tilde X_0(t)\tilde X_{3m}^{(a,b]}(t)\Big),\label{D3mab}\\
		M_3^{(a,b]}(t)&=\mu (X_1(t)+X_2(t))1_{(a,b]}(t),\label{M3ab}\\
		B_{3r}^{(a,b]}(t)&=\Big( \tilde X_0(t)+(1-s)(\tilde X_1(t)+\tilde X_2(t))+(1-2s)\big(\tilde X_3(t)-\tilde X_{3r}^{(a,b]}(t)\big)\Big)\Big(1-r\tilde X_0(t)\Big),\label{B3rab}\\
		D_{3r}^{(a,b]}(t)&=(1-2s)\Big(1-\tilde X_{3r}^{(a,b]}(t)+r\tilde X_0(t)\tilde X_{3r}^{(a,b]}(t)-r\tilde X_1(t)\tilde X_2(t)1_{(a,b]}(t) \Big),\label{D3rab}\\
		R_3^{(a,b]}(t)&=\Big(X_0(t)+(1-s)(X_1(t)+X_2(t))+(1-2s)\big( X_3(t)-X_{3r}^{(a,b]}(t)\big)\Big)\Big(r\tilde X_1(t)\tilde X_2(t)1_{(a,b]}(t)\Big),\label{R3ab}\\
		B^{(a,b]}_{0r}(t)&=\Big((\tilde X_0(t)-\tilde X^{(a,b]}_{0r}(t))+(1-s)(\tilde X_1(t)+\tilde X_2(t))+(1-2s)\tilde X_3(t)\Big)\Big(1-r\tilde X_3(t)\Big),\nonumber\\
		D^{(a,b]}_{0r}(t)&=\Big(1-\tilde X^{(a,b]}_{0r}(t)+r\tilde X_3(t)\tilde X^{(a,b]}_{0r}(t)-r\tilde X_1(t)\tilde X_2(t)1_{(a,b]}(t)\Big) +2\mu, \nonumber\\
		R^{(a,b]}_0(t)&=\Big((X_0(t)-X^{(a,b]}_{0r}(t))+(1-s)(X_1(t)+ X_2(t))+(1-2s)X_3(t)\Big)\Big(r\tilde X_1(t)\tilde X_2(t)1_{(a,b]}(t)\Big).\nonumber
		\end{align}
	By analogy, one can check that for $i=2, 3$, we have that $X_{im}^{(a,b]}(t)$ increases by 1 at rate $M_i^{(a,b]}(t)+B_{im}^{(a,b]}(t)X_{im}^{(a,b]}(t)$ and decreases by 1 at rate $D_{im}^{(a,b]}(t)X_{im}^{(a,b]}(t)$. Also, for $i=0, 2$ and 3, $X_{ir}^{(a,b]}(t)$ increases by 1 at rate $R_i^{(a,b]}(t)+B_{ir}^{(a,b]}(t)X_{ir}^{(a,b]}(t)$ and decreases by 1 at rate $D_{ir}^{(a,b]}(t)X_{ir}^{(a,b]}(t)$.
	
	For $i=1,2,3$, and $0\leq a < b\wedge t$, we define $G_i(t)=B_{im}^{(a,b]}(t)-D_{im}^{(a,b]}(t)$, which is the growth rate of the type im(a,b] population at time $t$. For $i=0,1,2,3$, and $0\leq a < b\wedge t$, we define $G_{ir}^{(a,b]}(t)=B_{ir}^{(a,b]}(t)-D_{ir}^{(a,b]}(t)$. This is the growth rate of the type ir(a,b] population at time $t$. Note that $G_i(t)$ does not depended on the interval $(a,b]$, because from (\ref{B1mab}), (\ref{D1mab}), and the fact that $\tilde X_0(t)+\tilde X_1(t)+\tilde X_2(t)+\tilde X_3(t) =1$, 
		\begin{align}
		G_1(t)&=B_{1m}^{(a,b]}(t)-D_{1m}^{(a,b]}(t)\nonumber \\
		&=\Big(1-(1-s)\tilde X_{1m}^{(a,b]}(t)-s\tilde X_1(t)-s\tilde X_2(t)-2s\tilde X_3(t)\Big)\Big(1-r\tilde X_2(t)\Big)\nonumber \\
		&\hspace{0.5 cm}-(1-s)\Big(1-\tilde X_{1m}^{(a,b]}(t)+r\tilde X_2(t)\tilde X_{1m}^{(a,b]}(t)\Big)-\mu\nonumber \\
		&=s\Big(1-\tilde X_1(t)-\tilde X_2(t)-2\tilde X_3(t)\Big)-r\tilde X_2(t)\Big(1-s\tilde X_1(t)-s\tilde X_2(t)-2s\tilde X_3(t)\Big)-\mu. \label{G1}
		\end{align}
	Similarly, we have
		\begin{align} 
		G_2(t)&=s\Big(1-\tilde X_1(t)-\tilde X_2(t)-2\tilde X_3(t)\Big)-r\tilde X_1(t)\Big(1-s\tilde X_1(t)-s\tilde X_2(t)-2s\tilde X_3(t)\Big)-\mu,\nonumber\\
		G_3(t)&=s\Big(2-\tilde X_1(t)-\tilde X_2(t)-2\tilde X_3(t)\Big)-r\tilde X_0(t)\Big(1-s\tilde X_1(t)-s\tilde X_2(t)-2s\tilde 	X_3(t)\Big).\label{G3}
		\end{align}
	Also, by similar calculation, we have
		\begin{align}
		G^{(a,b]}_{1r}(t)&=G_1(t)+(1-s)r \tilde X_0(t)\tilde X_3(t)1_{(a,b]}(t)\label{G1r}\\
		G^{(a,b]}_{2r}(t)&=G_2(t)+(1-s)r \tilde X_0(t)\tilde X_3(t)1_{(a,b]}(t)\label{G2r}\\
		G^{(a,b]}_{3r}(t)&=G_3(t)+(1-2s)r\tilde X_1(t)\tilde X_2(t)1_{(a,b]}(t)\label{G3r}\\
		G^{(a,b]}_{0r}(t)&=-s\Big(\tilde X_1(t)+\tilde X_2(t)+2\tilde X_3(t)\Big)-r\tilde X_3(t)\Big(1-s\tilde X_1(t)-s\tilde X_2(t)-2s\tilde X_3(t)\Big)\nonumber \\
&\hspace{0.7 cm}-2\mu+r\tilde X_1(t)\tilde X_2(t)1_{(a,b]}(t). \label{G0r} 
		\end{align}

	From the fact that $\tilde X_0(t)+\tilde X_1(t)+\tilde X_2(t)+\tilde X_3(t)=N$, and $s\ll 1$, it follows that for sufficiently large $N$,
		\begin{align}
		R_1^{(a,b]}(t)&\leq Nr\tilde X_0(t)\tilde X_3(t)1_{(a,b]}(t),\label{R1<}\\
		R_2^{(a,b]}(t)&\leq Nr\tilde X_0(t)\tilde X_3(t)1_{(a,b]}(t),\nonumber\\
		R_3^{(a,b]}(t)&\leq Nr\tilde X_1(t)\tilde X_2(t)1_{(a,b]}(t),\label{R3<}\\
		R_0^{(a,b]}(t)&\leq Nr\tilde X_1(t)\tilde X_2(t)1_{(a,b]}(t).\label{R0<}
		\end{align}
	
	Lastly, for $i=0,1,2,3$ and $0\leq a\leq t$,  we define $X_i^{[a]}(t)$ to be the number of type $i$ individuals at time $t$ that descend from one of the type $i$ individuals at time $a$. It follows that for $0\leq a \leq t \leq b$ and $i=1,2,3$, 
		$$
		X_i(t)=X_i^{[a]}(t)+X_{im}^{(a,b]}(t)+X_{ir}^{(a,b]}(t),
		$$
	and
		$$
		X_0(t)=X_0^{[a]}(t)+X_{0r}^{(a,b]}(t).
		$$
	Following the argument we used to obtain $B_{im}^{(a,b]}(t)$ and $D_{im}^{(a,b]}(t)$, for $0\leq a \leq t$, we define
		\begin{align}
		B_1^{[a]}(t)&=\Big( \tilde X_0(t)+(1-s)(\tilde X_1(t)-X_1^{[a]}(t))+(1-s)\tilde X_2(t)+(1-2s)\tilde X_3(t)\Big)\Big(1-r\tilde X_2(t)\Big),\nonumber\\
		D_1^{[a]}(t)&=(1-s)\Big(1-\tilde X_1^{[a]}(t)+r\tilde X_2(t)\tilde X_1^{[a]}(t)\Big)+\mu,\nonumber\\
		B_2^{[a]}(t)&=\Big( \tilde X_0(t)+(1-s)\tilde X_1(t)+(1-s)(\tilde X_2(t)-X_2^{[a]}(t))+(1-2s)\tilde X_3(t)\Big)\Big(1-r\tilde X_1(t)\Big),\nonumber\\
		D_2^{[a]}(t)&=(1-s)\Big(1-\tilde X_2^{[a]}(t)+r\tilde X_1(t)\tilde X_2^{[a]}(t)\Big)+\mu,\nonumber\\
		B_3^{[a]}(t)&=\Big( \tilde X_0(t)+(1-s)\tilde X_1(t)+(1-s)\tilde X_2(t)+(1-2s)(\tilde X_3(t)-X_3^{[a]})(t)\Big)\Big(1-r\tilde X_0(t)\Big),\label{B3a}\\
		D_3^{[a]}(t)&=(1-2s)\Big(1-\tilde X_3^{[a]}(t)+r\tilde X_0(t)\tilde X_3^{[a]}(t)\Big),\label{D3a}\\
		B_0^{[a]}(t)&=\Big( (\tilde X_0(t)-X_0^{[a]}(t))+(1-s)\tilde X_1(t)+(1-s)\tilde X_2(t)+(1-2s)\tilde X_3(t)\Big)\Big(1-r\tilde X_3(t)\Big),\nonumber\\
		D_0^{[a]}(t)&=\Big(1-\tilde X_0^{[a]}(t)+r\tilde X_3(t)\tilde X_0^{[a]}(t)\Big)+2\mu,\nonumber
\end{align} 
	and note that for $i=0,1,2,3$, the process $\big(X_i^{[a]}(t),t\geq a)$ increases by 1 at rate $B_i^{[a]}(t)X_i^{[a]}(t)$, and decreases by 1 at rate $D_i^{[a]}(t)X_i^{[a]}(t)$. Also, for all $t\geq a$ and $i=1,2,3$, we can check that 
		$$
		B_i^{[a]}(t)-D_i^{[a]}(t)=G_i(t).
		$$
	Lastly, we define $G_0(t)=B_0^{[a]}(t)-D_0^{[a]}(t)$ for all $t\geq a$. It follows that
		\begin{equation} \label{G0}
		G_0(t)= -s\Big(\tilde X_1(t)+\tilde X_2(t)+2\tilde X_3(t)\Big)-r\tilde X_3(t)\Big(1-s\tilde X_1(t)-s\tilde X_2(t)-2s\tilde X_3(t)\Big)-2\mu,
		\end{equation}
	and note that from (\ref{G0r}),
		\begin{equation}\label{G0,0r}
		G_{0r}^{(a,b]}(t)=G_0(t)+r\tilde X_1(t)\tilde X_2(t)1_{(a,b]}(t).
		\end{equation}

\section{Important Martingales and Submartingales} \label{M}
	In this section, we will define several martingales and submartingales that will be used frequently in the proof. First, for $i=1,2,3$ and for $0\leq a <b$, when $0\leq t < a$, we define $Z_{im}^{(a,b]}(t)=0$, and when $0\leq a < t$, we define
		\begin{equation} \label{Zm}
		Z_{im}^{(a,b]}(t)=e^{-\int_a^tG_i(v)dv}X_{im}^{(a,b]}(t)-\int_a^t M_i^{(a,b]}(u)e^{-\int_a^uG_i(v)dv}du.
		\end{equation}
	 Also, for $i=0,1,2,3$ and for $0\leq a <b$, when $0\leq t < a$, we define $Z_{ir}^{(a,b]}(t)=0$, and when $0\leq a < t$, we define
		\begin{equation} \label{Zr}
		Z_{ir}^{(a,b]}(t)=e^{-\int_a^tG_{ir}^{(a,b]}(v)dv}X_{ir}^{(a,b]}(t)-\int_a^t R_i^{(a,b]}(u)e^{-\int_a^uG_{ir}^{(a,b]}(v)dv}du.
		\end{equation}
	It follows that for $t\geq a$,
		\begin{align}
		X^{(a,b]}_{im}(t)&=\int_a^t M_i^{(a,b]}(u)e^{\int_u^tG_i(v)dv}du+Z^{(a,b]}_{im}(t)e^{\int_a^tG_i(v)dv},\label{Xm}\\
		X^{(a,b]}_{ir}(t)&=\int_a^t R^{(a,b]}_i(u)e^{\int_u^tG^{(a,b]}_{ir}(v)dv}du+Z^{(a,b]}_{ir}	(t)e^{\int_a^tG^{(a,b]}_{ir}(v)dv}.\label{Xr}
		\end{align}
	Let $(\mathcal{F}_t)_{t\geq 0}$ be the natural filtration of the process $((X_0(t),X_1(t),X_2(t),X_3(t)),t\geq 0)$.

\begin{proposition} \label{ZMar}
	For $i=1, 2,3$, the process $(Z_{im}^{(a,b]}(t),t\geq a)$ is a mean-zero martingale, and for $a \leq t$,
		$$
		\textup{Var}\Big(Z_{im}^{(a,b]}(t)\Big)=E\bigg[\int_a^t e^{-2\int_a^u G_i(v)dv}\Big(M_i^{(a,b]}(u)+\Big(B_{im}^{(a,b]}(u)+D_{im}^{(a,b]}(u)\Big)X_{im}^{(a,b]}(u)\Big) du\bigg].
		$$
	Also, For $i=0,1,2,3$ the process $(Z_{ir}^{(a,b]}(t),t\geq a)$ is a mean-zero martingale, and for $a \leq t$, 
		$$
		\textup{Var}\Big(Z_{ir}^{(a,b]}(t)\Big)=E\bigg[\int_a^t e^{-2\int_a^u G_{ir}^{(a,b]}(v)dv}\Big( R_i^{(a,b]}(u)+\Big(B_{ir}^{(a,b]}(u)+D_{ir}^{(a,b]}(u)\Big)X_{ir}^{(a,b]}(u)\Big) du\bigg].
		$$
	Moreover, if $T$ is a stopping time and $T\geq a$, then for $i=1, 2,3$, the process $(Z_{im}^{(a,b]}(t\wedge T),t\geq a)$ is a mean-zero martingale, and for $a \leq t$, 
		$$
		\textup{Var}\Big(Z_{im}^{(a,b]}(t\wedge T)\Big)=E\bigg[\int_a^{t\wedge T} e^{-2\int_a^u G_i(v)dv}\Big(M_i^{(a,b]}(u)+\Big(B_{im}^{(a,b]}(u)+D_{im}^{(a,b]}(u)\Big)X_{im}^{(a,b]}(u)\Big) du\bigg].
		$$
	Also, for $i=0,1,2,3$, the process $(Z_{ir}^{(a,b]}(t\wedge T),t\geq a)$ is a mean-zero martingale, and for $a \leq t$, 
		$$
		\textup{Var}\Big(Z_{ir}^{(a,b]}(t\wedge T)\Big)=E\bigg[\int_a^{t\wedge T} e^{-2\int_a^u G_{ir}^{(a,b]}(v)dv}\Big(R_i^{(a,b]}(u)+\Big(B_{ir}^{(a,b]}(u)+D_{ir}^{(a,b]}(u)\Big)X_{ir}^{(a,b]}(u)\Big) du\bigg].
		$$
\end{proposition}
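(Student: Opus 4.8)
The plan is to realise $X_{im}^{(a,b]}$ and $X_{ir}^{(a,b]}$ as one-dimensional pure-jump processes adapted to $(\mathcal{F}_t)_{t\ge0}$ and then apply the classical compensation-plus-integrating-factor argument. First I would record the structural facts from Section~\ref{rate}. The full process --- all the coordinates $X_i$, $X_{im}^{(a,b]}$, $X_{ir}^{(a,b]}$, $X_i^{[a]}$, and so on --- is a time-inhomogeneous pure-jump Markov process whose state space is contained in $\{0,1,\dots,N\}^{d}$ for some finite $d$, with jump rates that are bounded (every $\tilde X_{\bullet}(t)\in[0,1]$, and for fixed $N$ the parameters $\mu,r,s$ are constants, so all the rate functions displayed in Section~\ref{rate} are bounded by a deterministic constant times $N$); hence the process is non-explosive and the number of jumps in any interval $[a,t]$ is stochastically dominated by a Poisson variable of finite mean. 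In particular the count $X_{im}^{(a,b]}$ changes only by $\pm1$, going up at rate $\lambda^{+}(u):=M_i^{(a,b]}(u)+B_{im}^{(a,b]}(u)X_{im}^{(a,b]}(u)$ and down at rate $\lambda^{-}(u):=D_{im}^{(a,b]}(u)X_{im}^{(a,b]}(u)$. By the standard theory of point processes (equivalently Dynkin's formula applied to the coordinate map), and using $X_{im}^{(a,b]}(a)=0$ --- no type-$im(a,b]$ ancestor has yet appeared at time $a$, since the interval $(a,b]$ does not contain $a$ --- the compensated process
$$
M^{im}(t):=X_{im}^{(a,b]}(t)-\int_a^t\bigl(\lambda^{+}(u)-\lambda^{-}(u)\bigr)\,du,\qquad t\ge a,
$$
is a square-integrable $(\mathcal F_t)$-martingale with predictable quadratic variation $\langle M^{im}\rangle_t=\int_a^t\bigl(\lambda^{+}(u)+\lambda^{-}(u)\bigr)\,du$; the $L^2$ bound is supplied by $X_{im}^{(a,b]}(t)\le N$ together with the boundedness of the rates.

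Next comes the integrating-factor step. Since $G_i(t)=B_{im}^{(a,b]}(t)-D_{im}^{(a,b]}(t)$ does not depend on $(a,b]$ (this is the computation around (\ref{G1})), we have $\lambda^{+}(u)-\lambda^{-}(u)=M_i^{(a,b]}(u)+G_i(u)X_{im}^{(a,b]}(u)$, so $dX_{im}^{(a,b]}(t)=\bigl(M_i^{(a,b]}(t)+G_i(t)X_{im}^{(a,b]}(t)\bigr)\,dt+dM^{im}(t)$. Writing $\phi(t):=e^{-\int_a^tG_i(v)\,dv}$, which is absolutely continuous with $\phi'=-G_i\phi$, integration by parts gives $d\bigl(\phi(t)X_{im}^{(a,b]}(t)\bigr)=\phi(t)M_i^{(a,b]}(t)\,dt+\phi(t)\,dM^{im}(t)$, the drift terms in $\phi G_i X_{im}^{(a,b]}$ cancelling. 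Integrating from $a$ to $t$ and using $\phi(a)=1$, $X_{im}^{(a,b]}(a)=0$, the left-hand side is precisely $Z_{im}^{(a,b]}(t)$ as defined in (\ref{Zm}), and the right-hand side is the stochastic integral $\int_a^t\phi(u)\,dM^{im}(u)$ of the bounded deterministic integrand $\phi$ against a square-integrable martingale. Hence $Z_{im}^{(a,b]}$ is a mean-zero square-integrable martingale, and the It\^o isometry for jump martingales gives
$$
\textup{Var}\bigl(Z_{im}^{(a,b]}(t)\bigr)=E\Bigl[\langle Z_{im}^{(a,b]}\rangle_t\Bigr]=E\Bigl[\int_a^t\phi(u)^2\,d\langle M^{im}\rangle_u\Bigr]=E\Bigl[\int_a^t e^{-2\int_a^uG_i(v)\,dv}\bigl(\lambda^{+}(u)+\lambda^{-}(u)\bigr)\,du\Bigr],
$$
which is the asserted formula once $\lambda^{+}+\lambda^{-}$ is written out. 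The statements for $Z_{ir}^{(a,b]}$ are obtained verbatim, replacing $M_i^{(a,b]},B_{im}^{(a,b]},D_{im}^{(a,b]},G_i$ by $R_i^{(a,b]},B_{ir}^{(a,b]},D_{ir}^{(a,b]},G_{ir}^{(a,b]}$ and taking the integrating factor $e^{-\int_a^tG_{ir}^{(a,b]}(v)\,dv}$, again with $X_{ir}^{(a,b]}(a)=0$ (that $G_{ir}^{(a,b]}$ may itself depend on $(a,b]$ is harmless here).

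For the stopped versions I would invoke optional stopping: for a stopping time $T\ge a$, the stopped process $\bigl(Z_{im}^{(a,b]}(t\wedge T)\bigr)_{t\ge a}$ is again a mean-zero martingale, with $\langle Z_{im}^{(a,b]}(\cdot\wedge T)\rangle_t=\langle Z_{im}^{(a,b]}\rangle_{t\wedge T}$, which yields the displayed variance with the upper limit replaced by $t\wedge T$; the uniform integrability needed to pass the variance identity through the stopping time is once more delivered by $X_{im}^{(a,b]}\le N$ and bounded rates, and identically for $Z_{ir}^{(a,b]}$. The only point that needs a little care is exactly this integrability bookkeeping --- confirming that $M^{im}$, and hence $Z_{im}^{(a,b]}$, is a genuine square-integrable martingale rather than merely a local one, so that the quadratic-variation identities hold as stated; given the a priori bound $X_{im}^{(a,b]}(t)\le N$ and the deterministic bound on the total jump intensity this is routine, and everything else is a standard application of the semimartingale calculus for finite-state jump processes.
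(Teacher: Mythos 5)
Your proposal is correct and follows essentially the same route as the paper: compensate the $\pm1$ jump process to get a square-integrable martingale with predictable quadratic variation $\int_a^t(\lambda^++\lambda^-)\,du$, apply the integrating factor $e^{-\int_a^t G\,dv}$ via integration by parts (the cross bracket vanishing because the integrating factor is continuous and of finite variation) to write $Z^{(a,b]}$ as a stochastic integral against that martingale, and conclude with the isometry $\textup{Var}(Z)=E[\langle Z\rangle]$ and optional stopping. The paper merely makes your "standard theory of point processes" step explicit by splitting the compensated process into its up- and down-jump parts $W_+ - W_-$ and identifying $\langle W\rangle$ from the compensator of $U+V$, and supplies the same integrability bound you note ($X^{(a,b]}_{im}\le N$ with bounded rates).
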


\begin{proof}
	The technique used in this proof was previously used in section 5.1 of \cite{Sch}. We will prove the result for the process $(Z^{(a,b]}_{1m}(t),t\geq 0)$. The results for the other processes can be proved in the same manner.

	For $t\geq a$, let $U(t)$ be the number of times in $[a,t]$ that the number of type $1m(a,b]$ individuals increases, and let $V(t)$ be the number of times in $[a,t]$ that the number of type $1m(a,b]$ individuals decreases. Then, $X^{(a,b]}_{1m}(t)=U(t)-V(t)$. Next, we define
		\begin{align}
		W_+(t)&=U(t)-\int_a^t\Big(M_1^{(a,b]}(u)+B^{(a,b]}_{1m}(u)X^{(a,b]}_{1m}(u)\Big)du,\label{W+}\\
		W_-(t)&=V(t)-\int_a^tD^{(a,b]}_{1m}(u)X^{(a,b]}_{1m}(u)du, \label{W-}
		\end{align}
	and $W(t)=W_+(t)-W_-(t)$, for all $t\geq a$. Because $M_1^{(a,b]}(u)+B^{(a,b]}_{1m}(u)X^{(a,b]}_{1m}(u)$ and $D^{(a,b]}_{1m}(u)X^{(a,b]}_{1m}(u)$ are exactly the rates that the process $(X_{1m}^{(a,b]}(t),t\geq a)$ increases and decreases by 1 at time $u$, and both $U(a)$ and $V(a)$ are 0, both the process  $(W_+(t),t\geq a)$ and the process $(W_-(t),t\geq a)$ are mean-zero martingales. It follows that the processes $(W(t),t\geq a)$ and $(W_+(t)+W_-(t),t\geq a)$ are also mean-zero martingales. Since $W$ is locally of bounded variation, its   quadratic variation is
		$$
		[W](t)=\sum_{u\in[a,t]}\big(W(u)-W(u-)\big)^2=U(t)+V(t).
		$$
	Now, consider the process $(\langle W \rangle(t), t\geq a)$. The process $([W](t)-\langle W \rangle (t),t\geq a)$ is mean-zero martingale, by the definition of the sharp bracket. From equations (\ref{W+}), (\ref{W-}), and the fact that $\big(W_+(t)+W_-(t),t\geq a\big)$ is a mean-zero martingale, we have that
		$$
		\langle W \rangle (t)=\int_a^t\Big(M_i^{(a,b]}(u)+\Big(B_{im}^{(a,b]}(u)+D_{im}^{(a,b]}(u)\Big)X_{im}^{(a,b]}(u)\Big)du.
		$$

	Now, for $t\geq a$, we define 
  		$$
  		I(t)=e^{-\int_a^tG_{1}(v)dv}.
  		$$
	Because both $(X_{1m}^{(a,b]}(t),t\geq a)$ and $(I(t),t\geq a)$ are semimartingales, such that $(I(t),t\geq 0)$ has continuous paths and the process $(X_{1m}^{(a,b]}(t),t\geq a)$ is locally of bounded variation, 
  		$$
  		[X^{(a,b]}_{1m},I](t)=0 \hspace{.1 in}\mbox{for all $t$ a.s.}
  		$$   
	Also, because
  		$$
  		X^{(a,b]}_{1m}(t)=U(t)-V(t)=W(t)+\int_a^t\Big(M^{(a,b]}_1(u)+G_1(u)X^{(a,b]}_{1m}(u)\Big)du
  		$$
for all $t\geq a$, we have
  		$$
  		\int_a^tI(u)dX^{(a,b]}_{1m}(u)=\int_a^tI(u)dW(u)+\int_a^tI(u)\Big(M^{(a,b]}_1(u)+G_1(u)X^{(a,b]}_{1m}(u)\Big)du.
  		$$
	Using the Integration by Parts formula, we have
		\begin{align}
		I(t)X^{(a,b]}_{1m}(t) 
		&=I(a)X^{(a,b]}_{1m}(a)+\int_a^tX^{(a,b]}_{1m}(u-)dI(u)+\int_a^tI(u-)dX^{(a,b]}_{1m}(u)+[X^{(a,b]}_{1m},I](t)\nonumber\\           
		& =0-\int_a^tX^{(a,b]}_{1m}(u)G_1(u)I(u)du+\int_a^tI(u)dX^{(a,b]}_{1m}(u)+0 \nonumber\\           
		&=\int_a^tM^{(a,b]}_1(u)I(u)du+\int_a^tI(u)dW(u). \label{-4.2}
		\end{align}
	Therefore, from (\ref{Zm}) and (\ref{-4.2}),
		\begin{equation}
		Z^{(a,b]}_{1m}(t)=I(t)X^{(a,b]}_{1m}(t)-\int_a^tM^{(a,b]}_1(u)I(u)du=\int_a^tI(u)dW(u). \label{Z=intIdW}
		\end{equation}
	From (\ref{B1mab}) and (\ref{D1mab}), we have $B_1(t)\in [0,1]$ and $D_1(t)\in [0,1+\mu]$ for all $t\geq a$. So, $G_1(t)\in[-1-\mu,1]$ for all $t\geq a$. Thus,
		\begin{align}
		\int_a^t I^2(u)d\langle W\rangle(u)
		&=\int_a^te^{-2\int_a^uG_1(v)dv}\Big(M^{(a,b]}_1(u)+\Big(B^{(a,b]}_{1m}(u)+D^{(a,b]}_{1m}(u)\Big)X^{(a,b]}_{1m}(u)\Big)du \label{int1}\\
		&\leq \int_a^te^{2(1+\mu)(u-a)}\cdot\big(\mu N+(2+\mu)N\big)du\nonumber\\
		&=\big(e^{2(1+\mu)(t-a)}-1\big)N, \nonumber
		\end{align}
	for all $t\geq a$. Hence, for each $t\geq a$, we have $E[\int_0^tI^2(u)d\langle W \rangle(u)] < \infty$.
Therefore, from (\ref{Z=intIdW}), the process $\big(Z^{(a,b]}_{1m}(t),t\geq 0\big)$ is a square integrable martingale with 
		\begin{equation} \label{<Z>}
		\big\langle Z^{(a,b]}_{1m}\big\rangle(t)= \int_a^t I^2(u)d\langle W\rangle(u).
		\end{equation}
	This process has mean zero, because $Z^{(a,b]}_{1m}(a)=0$. By Corollary 8.25 of \cite{Kle}, $\textup{Var}\big(Z^{(a,b]}_{1m}(t)\big)=E\big[\big(Z^{(a,b]}_{1m}(t)\big)^2\big]=E\big[\big\langle Z^{(a,b]}_{1m}\big\rangle(t)\big]$, and this proves the variance formula by using (\ref{int1}) and (\ref{<Z>}).
	Lastly, because a stopped martingale is a martingale, the process $(Z_{1m}^{(a,b]}(t\wedge T),t\geq a)$ is a mean-zero martingale, and by the same argument above, we can get the variance formula for the process $(Z_{1m}^{(a,b]}(t\wedge T),t\geq a)$.
\end{proof}

	Since the process $((X_0(t),X_1(t)-X^{(a,b]}_{1m}(t),X^{(a,b]}_{1m}(t),X_2(t),X_3(t)),t\geq 0)$ is a continuous-time Markov chain, combining Proposition \ref{ZMar} and Markov property yields the following result.

\begin{corollary} \label{ZMar*}
	If $T$ is a stopping time and $T\geq a$, then for $i=1,2,3$ and $a \leq t$, 
		$$
		\textup{Var}\Big(Z_{im}^{(a,b]}(t\wedge T)\Big|\mathcal{F}_a\Big)=E\bigg[\int_a^{t\wedge T} e^{-2\int_a^u G_i(v)dv}\Big(M_i^{(a,b]}(u)+\Big(B_{im}^{(a,b]}(u)+D_{im}^{(a,b]}(u)\Big)X_{im}^{(a,b]}(u)\Big) du\bigg|\mathcal{F}_a\bigg],
		$$
	and for $i=0,1,2,3$ and for $a \leq t$, 
		$$
		\textup{Var}\Big(Z_{ir}^{(a,b]}(t\wedge T)\Big|\mathcal{F}_a\Big)=E\bigg[\int_a^{t\wedge T} e^{-2\int_a^u G_{ir}^{(a,b]}(v)dv}\Big( R_i^{(a,b]}(u)+\Big(B_{ir}^{(a,b]}(u)+D_{ir}^{(a,b]}(u)\Big)X_{ir}^{(a,b]}(u)\Big) du\bigg|\mathcal{F}_a\bigg].
		$$
\end{corollary}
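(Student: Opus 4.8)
The plan is to deduce Corollary~\ref{ZMar*} from the unconditional assertions of Proposition~\ref{ZMar} by a single use of the tower property of conditional expectation; the point is that the martingales involved take the deterministic value $0$ at time $a$, so conditioning down to the smaller $\sigma$-field $\mathcal F_a$ changes nothing.

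First I would fix $i$ and the window $(a,b]$ and treat $Z_{im}^{(a,b]}$; the argument for $Z_{ir}^{(a,b]}$ is word-for-word the same after the replacements $G_i\mapsto G_{ir}^{(a,b]}$, $M_i^{(a,b]}\mapsto R_i^{(a,b]}$, $X_{im}^{(a,b]}\mapsto X_{ir}^{(a,b]}$, $B_{im}^{(a,b]}\mapsto B_{ir}^{(a,b]}$, $D_{im}^{(a,b]}\mapsto D_{ir}^{(a,b]}$. Let $(\mathcal G_t)_{t\ge 0}$ denote the filtration generated by the individual-level transitions of the process (recording, at each death, which individual dies, whether recombination occurs, and which parents supply the alleles). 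This is the filtration with respect to which the compensated counting processes $W_\pm$ in the proof of Proposition~\ref{ZMar} are martingales, hence --- via (\ref{Z=intIdW}) and the square-integrability bound (\ref{int1}) --- the filtration for which both $(Z_{im}^{(a,b]}(t\wedge T),t\ge a)$ and $\big(Z_{im}^{(a,b]}(t\wedge T)^2-\langle Z_{im}^{(a,b]}\rangle(t\wedge T),\,t\ge a\big)$ are square-integrable martingales. I would use only two facts about $(\mathcal G_t)$: that $\mathcal F_a\subseteq\mathcal G_a$, since $(X_0,X_1,X_2,X_3)$ is a measurable functional of the individual-level transitions, and that $Z_{im}^{(a,b]}(a)=0=\langle Z_{im}^{(a,b]}\rangle(a)$ by construction.

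With this in hand the computation is short. Fix $t\ge a$. Since $T\ge a$, the martingale property together with $\mathcal F_a\subseteq\mathcal G_a$ gives
\[
E\big[Z_{im}^{(a,b]}(t\wedge T)\mid\mathcal F_a\big]=E\big[E[Z_{im}^{(a,b]}(t\wedge T)\mid\mathcal G_a]\mid\mathcal F_a\big]=E\big[Z_{im}^{(a,b]}(a)\mid\mathcal F_a\big]=0,
\]
so $\textup{Var}\big(Z_{im}^{(a,b]}(t\wedge T)\mid\mathcal F_a\big)=E\big[Z_{im}^{(a,b]}(t\wedge T)^2\mid\mathcal F_a\big]$. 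Running the identical tower-property step on the martingale $Z_{im}^{(a,b]}(\cdot\wedge T)^2-\langle Z_{im}^{(a,b]}\rangle(\cdot\wedge T)$, which also equals $0$ at time $a$, yields $E[Z_{im}^{(a,b]}(t\wedge T)^2\mid\mathcal F_a]=E[\langle Z_{im}^{(a,b]}\rangle(t\wedge T)\mid\mathcal F_a]$, and then I would substitute the explicit form of the sharp bracket from (\ref{<Z>}) --- that is, the stopped version of the integral already appearing in Proposition~\ref{ZMar} --- to land exactly on the stated identity. An equivalent route, matching the remark that precedes the corollary, is to note that $((X_0(t),X_1(t)-X_{1m}^{(a,b]}(t),X_{1m}^{(a,b]}(t),X_2(t),X_3(t)),t\ge 0)$ is a continuous-time Markov chain whose natural $\sigma$-field at time $a$ is precisely $\mathcal F_a$ (because $X_{1m}^{(a,b]}\equiv 0$ on $[0,a]$), so conditioning on $\mathcal F_a$ amounts to restarting that chain from its state at time $a$, and Proposition~\ref{ZMar} --- whose proof uses only $Z_{1m}^{(a,b]}(a)=0$, the transition rates, and (\ref{int1}), never the time-$0$ initial condition --- applies verbatim to the restart.

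I do not expect a genuine obstacle: essentially all of the work is already in Proposition~\ref{ZMar}. The single point deserving a line of justification is that these martingales are adapted to the finer filtration $(\mathcal G_t)$ rather than to $(\mathcal F_t)$, which is harmless here precisely because they take the deterministic value $0$ at time $a$ while $\mathcal F_a\subseteq\mathcal G_a$, so the tower property collapses each conditional expectation to its value at time $a$.
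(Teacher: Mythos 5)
Your argument is correct, and it lands on the same two ingredients the paper relies on, but your primary route is genuinely more explicit than the paper's. The paper disposes of Corollary \ref{ZMar*} in one sentence: the augmented process $((X_0(t),X_1(t)-X^{(a,b]}_{1m}(t),X^{(a,b]}_{1m}(t),X_2(t),X_3(t)),t\geq 0)$ is a continuous-time Markov chain whose time-$a$ state is determined by $\mathcal{F}_a$ (since $X_{1m}^{(a,b]}(a)=0$), so conditioning on $\mathcal{F}_a$ restarts that chain and Proposition \ref{ZMar} applies to the restart. That is exactly your ``equivalent route.'' Your main argument instead runs the tower property through the finer individual-level filtration $(\mathcal{G}_t)$, using that both $Z_{im}^{(a,b]}(\cdot\wedge T)$ and $Z_{im}^{(a,b]}(\cdot\wedge T)^2-\langle Z_{im}^{(a,b]}\rangle(\cdot\wedge T)$ are $(\mathcal{G}_t)$-martingales vanishing at time $a$, together with $\mathcal{F}_a\subseteq\mathcal{G}_a$. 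What this buys is an honest treatment of the adaptedness issue the paper glosses over: the type-decomposed counts $X_{im}^{(a,b]}$ are not measurable functionals of the four aggregate counts alone, so the martingales of Proposition \ref{ZMar} live on a filtration strictly finer than $(\mathcal{F}_t)$, and your observation that they take the deterministic value $0$ at time $a$ is precisely what makes conditioning on the coarser $\mathcal{F}_a$ harmless. The one point worth a half-line in a final write-up is that an $(\mathcal{F}_t)$-stopping time $T$ is automatically a $(\mathcal{G}_t)$-stopping time, so the stopped processes remain $(\mathcal{G}_t)$-martingales; with that noted, your proof is complete.
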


	Now, for $i=0,1,2,3$ and $0\leq a \leq t$,  we define
		\begin{equation} \label{Zi}
		Z_i^{[a]}(t)=e^{-\int_a^tG_i(v)dv}X_i^{[a]}(t).
		\end{equation}
	By a similar argument to the one used in proving Proposition \ref{ZMar} and Corollary \ref{ZMar*}, we get the following result.

\begin{proposition} \label{ZiMar}
	If $T$ is a stopping time with $T\geq a$, then for $i=0, 1, 2,3$, the process $(Z_i^{[a]}(t),t\geq a)$ is a  martingale, and for all $a\leq t$,
	$$
	\textup{Var}\Big(Z_i^{[a]}(t\wedge T)\Big|\mathcal{F}_a\Big)=E\bigg[\int_a^{t\wedge T} e^{-2\int_a^u G_i(v)dv}\Big(B_i^{[a]}(u)+D_i^{[a]}(u)\Big)X_i^{[a]}(u) du\bigg|\mathcal{F}_a\bigg].
	$$
\end{proposition}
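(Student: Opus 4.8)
The plan is to mirror the proof of Proposition~\ref{ZMar}, the one simplification being that in the bookkeeping for $X_i^{[a]}$ there is no creation term: no mutation or recombination can produce a type~$i$ individual that is not already counted among the descendants of the time-$a$ population, so the compensator of the upward jumps is simply $B_i^{[a]}(u)X_i^{[a]}(u)$ with no added $M$- or $R$-type contribution. Fix $i\in\{0,1,2,3\}$. For $t\ge a$ let $U(t)$ and $V(t)$ count the upward and downward jumps of $(X_i^{[a]}(t))$ on $[a,t]$, so that $X_i^{[a]}(t)=X_i(a)+U(t)-V(t)$. Since $B_i^{[a]}(u)X_i^{[a]}(u)$ and $D_i^{[a]}(u)X_i^{[a]}(u)$ are exactly the instantaneous rates at which these jumps occur, the compensated processes $W_+(t)=U(t)-\int_a^tB_i^{[a]}(u)X_i^{[a]}(u)\,du$ and $W_-(t)=V(t)-\int_a^tD_i^{[a]}(u)X_i^{[a]}(u)\,du$ are mean-zero martingales, hence so are $W:=W_+-W_-$ and $W_++W_-$. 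Because $W$ is a pure-jump process with unit jumps, $[W](t)=U(t)+V(t)$, and comparing with $W_++W_-$ gives $\langle W\rangle(t)=\int_a^t\big(B_i^{[a]}(u)+D_i^{[a]}(u)\big)X_i^{[a]}(u)\,du$.

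Next, put $I(t)=e^{-\int_a^tG_i(v)\,dv}$, which has absolutely continuous paths with $dI(u)=-G_i(u)I(u)\,du$; since $X_i^{[a]}$ is of locally bounded variation, $[X_i^{[a]},I]\equiv0$. Using the identity $B_i^{[a]}(u)-D_i^{[a]}(u)=G_i(u)$, valid for every $i$ including $i=0$ by the very definition of $G_0$, we have $X_i^{[a]}(t)=X_i(a)+W(t)+\int_a^tG_i(u)X_i^{[a]}(u)\,du$, and an integration by parts in which the two $\int G_i\,I\,X_i^{[a]}$ terms cancel yields
\[
Z_i^{[a]}(t)=I(t)X_i^{[a]}(t)=X_i(a)+\int_a^tI(u)\,dW(u).
\]
Because the $B_i^{[a]}(u)$ and $D_i^{[a]}(u)$ lie in bounded intervals (and $s\ll1$), $G_i$ is bounded, so $I$ is bounded on each $[a,t]$; together with $X_i^{[a]}\le N$ this shows $E\big[\int_a^tI^2(u)\,d\langle W\rangle(u)\big]<\infty$. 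Hence $\int_a^{\,\cdot}I(u)\,dW(u)$ is a square-integrable mean-zero martingale with sharp bracket $\int_a^tI^2(u)\,d\langle W\rangle(u)=\int_a^te^{-2\int_a^uG_i(v)\,dv}\big(B_i^{[a]}(u)+D_i^{[a]}(u)\big)X_i^{[a]}(u)\,du$, and since $X_i(a)$ is $\mathcal{F}_a$-measurable, $(Z_i^{[a]}(t),t\ge a)$ is a martingale.

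Finally, for the stopped conditional variance I would argue as in Corollary~\ref{ZMar*}: the enlarged process $\big((X_0(t),\dots,X_i(t)-X_i^{[a]}(t),X_i^{[a]}(t),\dots),t\ge a\big)$ is a continuous-time Markov chain, so conditioning on $\mathcal{F}_a$ is the same as restarting the unconditional statement at time $a$; a stopped martingale is again a martingale, so $(Z_i^{[a]}(t\wedge T),t\ge a)$ is a martingale, and optional stopping applied to the martingale $\big(\int_a^tI\,dW\big)^2-\int_a^tI^2\,d\langle W\rangle$ (licensed by the uniform bound above) gives $\textup{Var}\big(Z_i^{[a]}(t\wedge T)\mid\mathcal{F}_a\big)=E\big[\int_a^{t\wedge T}I^2\,d\langle W\rangle\mid\mathcal{F}_a\big]$, which is the asserted formula. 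I do not anticipate a real obstacle; the only points needing care are the cancellation in the integration by parts, the use of $G_0=B_0^{[a]}-D_0^{[a]}$ as a definition rather than a previously established identity, and tracking the $\mathcal{F}_a$-measurable initial value $X_i(a)$ (which is why the conclusion asserts a martingale rather than a mean-zero martingale).
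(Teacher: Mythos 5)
Your proposal is correct and follows essentially the same route the paper intends: the paper's own justification for Proposition \ref{ZiMar} is only the remark that it follows ``by a similar argument to the one used in proving Proposition \ref{ZMar} and Corollary \ref{ZMar*},'' and you have carried out exactly that argument, correctly identifying the two points where it differs (no immigration term $M_i$ or $R_i$ in the compensator, and the $\mathcal{F}_a$-measurable initial value $X_i(a)$, which is why the conclusion is ``martingale'' rather than ``mean-zero martingale'' and why $X_i(a)$ drops out of the conditional variance). No gaps.
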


	Lastly, for $i=1,2,3$, for $0\leq a <b$ and $a\leq t$, let us define
		\begin{equation} \label{Wm}
		W^{(a,b]}_{im}(t)=e^{-\int_a^t G_i(v)dv}X_{im}^{(a,b]}(t),
		\end{equation}
and for $i=0,1,2,3$, for $0\leq a <b$ and $a\leq t$, we define
		$$
		W^{(a,b]}_{ir}(t)=e^{-\int_a^t G^{(a,b]}_{ir}(v)dv}X_{ir}^{(a,b]}(t).
		$$

\begin{proposition}\label{W}
	If $T$ is a stopping time and $T\geq a$, for $i=1, 2,3$, the process $(W_{im}^{(a,b]}(t\wedge T),t\geq a)$ is a submartingale, and for $a\leq t$,
		$$
		E\Big[W_{im}^{(a,b]}(t\wedge T)\Big|\mathcal{F}_a\Big]=E\bigg[\int_a^{t\wedge T} M_i^{(a,b]}(u)e^{-\int_a^uG_i(v)dv} du\bigg|\mathcal{F}_a\bigg].
		$$
	For $i=0,1,2,3$ the process $(W_{ir}^{(a,b]}(t\wedge T),t\geq a)$ is a submartingale, and for $a\leq t$,
		$$
		E\Big[W_{ir}^{(a,b]}(t\wedge T)\Big|\mathcal{F}_a\Big]=E\bigg[\int_a^{t\wedge T} R_i^{(a,b]}(u)e^{-\int_a^uG_{ir}^{(a,b]}(v)dv} du\bigg|\mathcal{F}_a\bigg].
		$$
\end{proposition}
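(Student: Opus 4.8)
The plan is to show that the process $(W_{im}^{(a,b]}(t),t\geq a)$ decomposes as a true martingale plus a nondecreasing predictable term, and then apply optional stopping. Recall from the proof of Proposition \ref{ZMar} (equation (\ref{Z=intIdW})) that
$$
Z_{im}^{(a,b]}(t)=e^{-\int_a^tG_i(v)dv}X_{im}^{(a,b]}(t)-\int_a^tM_i^{(a,b]}(u)e^{-\int_a^uG_i(v)dv}du
$$
is a mean-zero martingale. By the definition (\ref{Wm}) of $W_{im}^{(a,b]}$, this says exactly that
$$
W_{im}^{(a,b]}(t)=Z_{im}^{(a,b]}(t)+\int_a^tM_i^{(a,b]}(u)e^{-\int_a^uG_i(v)dv}du .
$$
The integrand $M_i^{(a,b]}(u)e^{-\int_a^uG_i(v)dv}$ is nonnegative (it is $\mu X_0(u)1_{(a,b]}(u)$ or $\mu(X_1(u)+X_2(u))1_{(a,b]}(u)$ times a positive exponential), and the integral is an adapted, continuous, nondecreasing process that is $0$ at time $a$. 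Hence $W_{im}^{(a,b]}$ is the sum of a martingale and an increasing process started from $0$, so it is a submartingale, and its conditional expectation given $\mathcal F_a$ equals $E[Z_{im}^{(a,b]}(t)\mid\mathcal F_a]$ plus the conditional expectation of the integral term; since $Z_{im}^{(a,b]}(a)=0$ and $(Z_{im}^{(a,b]}(t),t\geq a)$ is a martingale, the first term vanishes, giving the claimed formula in the unstopped case.

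For the stopped process, I would invoke optional stopping for the martingale part: by Proposition \ref{ZMar}, $(Z_{im}^{(a,b]}(t\wedge T),t\geq a)$ is again a mean-zero martingale, and the stopped increasing integral $\int_a^{t\wedge T}M_i^{(a,b]}(u)e^{-\int_a^uG_i(v)dv}du$ is still adapted, nondecreasing, and $0$ at $a$. Therefore $W_{im}^{(a,b]}(t\wedge T)=Z_{im}^{(a,b]}(t\wedge T)+\int_a^{t\wedge T}M_i^{(a,b]}(u)e^{-\int_a^uG_i(v)dv}du$ is a submartingale with
$$
E\Big[W_{im}^{(a,b]}(t\wedge T)\Big|\mathcal F_a\Big]=E\Big[Z_{im}^{(a,b]}(t\wedge T)\Big|\mathcal F_a\Big]+E\bigg[\int_a^{t\wedge T}M_i^{(a,b]}(u)e^{-\int_a^uG_i(v)dv}du\bigg|\mathcal F_a\bigg],
$$
and the first term on the right is $E[Z_{im}^{(a,b]}(a\wedge T)\mid\mathcal F_a]=Z_{im}^{(a,b]}(a)=0$ by the martingale property (using $T\geq a$). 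The argument for $(W_{ir}^{(a,b]}(t),t\geq a)$ is identical, using the representation $Z_{ir}^{(a,b]}(t)=e^{-\int_a^tG_{ir}^{(a,b]}(v)dv}X_{ir}^{(a,b]}(t)-\int_a^tR_i^{(a,b]}(u)e^{-\int_a^uG_{ir}^{(a,b]}(v)dv}du$ from Proposition \ref{ZMar} and the nonnegativity of $R_i^{(a,b]}$, which is clear from the inequalities (\ref{R1<})–(\ref{R0<}).

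The only mild subtlety — not really an obstacle — is making sure the conditional-expectation identities are legitimate, i.e. that the integrable-majorant conditions behind optional stopping hold. This is already handled in the proof of Proposition \ref{ZMar}: the bound $\int_a^tI^2(u)d\langle W\rangle(u)\leq(e^{2(1+\mu)(t-a)}-1)N$ shows $Z_{im}^{(a,b]}$ is square-integrable on $[a,t]$, hence uniformly integrable on that compact interval, so optional stopping applies to $Z_{im}^{(a,b]}(t\wedge T)$; and the integral term is bounded on $[a,t]$ by $t\cdot\mu N\cdot e^{(1+\mu)(t-a)}$ (similarly for the $ir$ case via (\ref{R1<})–(\ref{R0<})), so it is integrable and Fubini/conditional Fubini applies to pull the conditional expectation inside the integral. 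With those observations in place the proposition follows immediately from the already-established martingale decomposition.
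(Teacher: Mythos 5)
Your proposal is correct and follows essentially the same route as the paper: both decompose $W_{im}^{(a,b]}(t\wedge T)=Z_{im}^{(a,b]}(t\wedge T)+\int_a^{t\wedge T}M_i^{(a,b]}(u)e^{-\int_a^u G_i(v)dv}\,du$, use the stopped-martingale statement of Proposition \ref{ZMar} for the first term and monotonicity of the nonnegative integral for the submartingale property, and obtain the expectation formula from $E[Z_{im}^{(a,b]}(t\wedge T)\mid\mathcal F_a]=0$. The only cosmetic difference is how integrability is checked (the paper bounds $W_{im}^{(a,b]}(t\wedge T)$ directly by $e^{(1+\mu)(t-a)}N$), and note that nonnegativity of $R_i^{(a,b]}$ follows from its definition rather than from the upper bounds (\ref{R1<})--(\ref{R0<}).
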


\begin{proof} 
	Consider the process $\big(W_{im}^{(a,b]}(t\wedge T),t\geq a\big)$. Because $B_{im}^{(a,b]}(t)\in[0,1]$ and $D_{im}^{(a,b]}(t)\in[0,1+\mu]$, we have that $G_{im}^{(a,b]}(t)\in[-1-\mu,1]$. Thus,
		$$
		W^{(a,b]}_{im}(t\wedge T)=e^{-\int_a^{t\wedge T} G_i(v)dv}X_{im}^{(a,b]}(t\wedge T)\in\big[0,e^{(1+\mu)(t-a)}N\big],
		$$
	for all $t\geq a$. So, $E\big[W^{(a,b]}_{im}(t\wedge T)\big]<\infty$ for all $t\geq a$.

	From (\ref{Zm}) and (\ref{Wm}), for all $t\geq a$,
		$$
		W_{im}^{(a,b]}(t\wedge T)=Z_{im}^{(a,b]}(t\wedge T)+\int_a^{t\wedge T} M_i^{(a,b]}(u)e^{-\int_a^uG_i(v)dv} du.
		$$
	For $a\leq t' <t$, by Proposition \ref{ZMar}, we have
		\begin{align}
		E\Big[W^{(a,b]}_{im}(t\wedge T)\Big| \mathcal{F}_{t'}\Big]
		&=E\Big[Z^{(a,b]}_{im}(t'\wedge T)\Big| \mathcal{F}_{t'}\Big]+E\Big[\int_a^{t\wedge T} M_i^{(a,b]}(u)e^{-\int_a^uG_i(v)dv} du\Big| \mathcal{F}_{t'}\Big]\label{***4.1}\\
		&=Z^{(a,b]}_{im}(t'\wedge T)+\int_a^{t'\wedge T} M_i^{(a,b]}(u)e^{-\int_a^uG_i(v)dv} du\nonumber\\
		&\hspace{1 cm}+E\Big[\int_{t'\wedge T}^{t\wedge T} M_i^{(a,b]}(u)e^{-\int_a^uG_i(v)dv} du\Big| \mathcal{F}_{t'}\Big]\nonumber\\
		&=W^{(a,b]}_{im}(t'\wedge T)+E\Big[\int_{t'\wedge T}^{t\wedge T} M_i^{(a,b]}(u)e^{-\int_a^uG_i(v)dv} du\Big| \mathcal{F}_{t'}\Big]\nonumber\\
		&\geq W^{(a,b]}_{im}(t'\wedge T).\nonumber
		\end{align}
	Thus, the process $(W_{im}^{(a,b]}(t\wedge T),t\geq a)$ is a submartingale. From (\ref{***4.1}) and from the fact that the process $(Z_{im}^{(a,b]}(t\wedge T),t\geq a)$ is a mean-zero martingale by Proposition \ref{ZMar}, 
		\begin{align*}
		E\Big[W^{(a,b]}_{im}(t\wedge T)\Big| \mathcal{F}_a\Big]
		&=E\Big[Z^{(a,b]}_{im}(t\wedge T)\Big| \mathcal{F}_a\Big]+E\Big[\int_a^{t\wedge T} M_i^{(a,b]}(u)e^{-\int_a^uG_i(v)dv} du\Big| \mathcal{F}_a\Big]\\
		&=Z^{(a,b]}_{im}(a)+E\Big[\int_a^{t\wedge T} M_i^{(a,b]}(u)e^{-\int_a^uG_i(v)dv} du\Big| \mathcal{F}_a\Big]\\
		&=E\Big[\int_a^{t\wedge T} M_i^{(a,b]}(u)e^{-\int_a^uG_i(v)dv} du\Big| \mathcal{F}_a\Big].
		\end{align*}
	The proof for the process $W^{(a,b]}_{ir}$ can be done by a similar argument. 
\end{proof}

\section{Phase 1 and the proof of Proposition \ref{@t1}}\label{phase1}
\subsection{Notations}
	First, note that to prove Propositions \ref{@t1}, \ref{@t2}, \ref{@t3} and \ref{@t4}, it is enough to prove that they hold for all small values of $\epsilon$ and $\delta$. We choose $\epsilon$ and $\delta$ as follow:
		\begin{equation}\label{eps}
		\epsilon \in \Big(0,\frac{1}{16}\Big),
		\end{equation}  	
  	and
  		\begin{equation}\label{delta}
  		\delta \in \Big(0,\frac{1}{4}\Big).
  		\end{equation}
  	We will now define several constants, fixed times, and stopping times. In both the recombination dominating case and the mutation dominating case, we pick the following  constants:
		\begin{align}
    		K&> \frac{6}{\epsilon}\label{K}\\
    		C_1&>\ln\Big(\frac{5K}{\epsilon}\Big)\vee \ln		\Big(\frac{8}{\delta^2}\Big), \label{C1}\\
    		C_{0,m}&>2\ln\Big(\frac{2K}{\epsilon}\Big), \label{C0m}\\
    		C_{0,m}^+&>C_{0,m} \vee \bigg(14e^{-C_1}+\ln\bigg(\frac{48K}{\epsilon (1-\delta^2)^2}\bigg)\bigg),\label{C0m+}\\ 
    		C_{0,r}&>\ln\Big(\frac{K^2}{\epsilon}\Big)\vee (C_1+\ln4),\label{C0r}\\
    		\eta&=2Ke^{-C_1}.\label{n}
   		\end{align}
	Next, we define several fixed times as follows:
    		\begin{equation}\label{t0r}
    		t_{0,r}=
    		\begin{cases}
    		\displaystyle{\frac{1}{s}\ln\Big(\frac{s}{\mu\sqrt{Nr}}\Big)-\frac{C_{0,r}}{s}} & \text{in the recombination dominating case}\\
    		\displaystyle{\frac{1}{s}\ln\Big(\frac{s}{\mu\sqrt{Nr}}\Big)-\frac{C_{0,r}}{s}} & \text{in the mutation dominating case and when $Nr\geq e$}\\
    		\displaystyle{\frac{1}{s}\ln\Big(\frac{s}{\mu}\Big)-\frac{C_{0,r}}{s}} &\text{in the mutation dominating case and when $Nr< e$},
    		\end{cases}
    		\end{equation}
    	and in both cases, we define
    	
    		\begin{align}
    		t_{0,m}&=\frac{1}{s}\ln\Big(\frac{s}{N\mu^2}\Big)-\frac{C_{0,m}}{s},\label{t0m}\\
    		t_{0,m}^+&=\frac{1}{s}\ln\Big(\frac{s}{N\mu^2}\Big)+\frac{C_{0,m}^+}{s},\label{t0m+}\\
    		t_1&=\frac{1}{s}\ln\Big(\frac{s}{\mu}\Big)-\frac{C_1}{s}.\label{t1}
   		\end{align}
   		
	It follows from these definitions, the fact that $1\ll \mu$, and the fact that $1 \ll Nr$ in the recombination dominating case that for sufficiently large $N$, we have $0<t_{0,m}<t_{0,m}^+<t_1$ and $0<t_{0,r}<t_1$. Now, in both cases, we define the following stopping times:
    		\begin{align}
     	T_1&=\inf\Big\{t\geq 0:X_1(t)\geq\frac{KN\mu}{s}e^{st}\Big\} ,\label{T1}\\
     	T_2&=\inf\Big\{t\geq 0:X_2(t)\geq\frac{KN\mu}{s}e^{st}\Big\},\label{T2}\\
     	T_3&=\inf\Big\{t \geq 0: X_3(t)\geq \frac{N\mu}{s}e^{st}\Big\} ,\label{T3}\\
     	T_{(1)}&=T_1 \wedge T_2 \wedge T_3. \label{T(1)}
    		\end{align}
  	Lastly, we define the following events:
   		\begin{align}
		A_1&=\{T_{(1)}> t_1\}\label{A1}.\\
		A_2&=\bigg\{\sup_{t\in[0,t_1]}\Big|Z_{1m}^{(0,t_1]}(t\wedge T_{(1)})\Big|\leq\sqrt{\frac{48}{\epsilon}\cdot\frac{N\mu}{s^2}}\bigg\} \label{A2}.\\
		A_3&=\bigg\{\sup_{t\in[0,t_1]}\Big|Z_{2m}^{(0,t_1]}(t\wedge T_{(1)})\Big|\leq\sqrt{\frac{48}{\epsilon}\cdot\frac{N\mu}{s^2}}\bigg\}\nonumber.\\
		A_4&=\bigg\{\sup_{t\in[0,t_1]}\Big|Z_{1r}^{(0,t_1]}(t\wedge T_{(1)})\Big|\leq\sqrt{\frac{48}{\epsilon}\cdot\frac{N\mu r}{s^3}\ln\Big(\frac{s}{\mu}\Big)}\bigg\}\label{A4}.\\
		A_5&=\bigg\{\sup_{t\in[0,t_1]}\Big|Z_{2r}^{(0,t_1]}(t\wedge T_{(1)})\Big|\leq\sqrt{\frac{48}{\epsilon}\cdot\frac{N\mu r}{s^3}\ln\Big(\frac{s}{\mu}\Big)}\bigg\}\nonumber.\\
		A_6&=\bigg \{\sup_{t\in[t_{0,m}^+,t_1]}\Big|Z_{3m}^{(t_{0,m}^+,t_1]}(t\wedge T_{(1)})\Big|\leq\sqrt{\frac{48Ke^{C_{0,m}^+}}{\epsilon}\cdot\frac{1}{s^2}}\bigg\}\label{A6}.\\
		A_7&=\bigg\{\sup_{t\in[t_{0,r},t_1]}\Big|Z_{3r}^{(t_{0,r},t_1]}(t\wedge T_{(1)})\Big|\leq\sqrt{\frac{16K^2e^{-2C_{0,r}}}{\epsilon}\cdot\frac{\ln_+(Nr)}{s^2}}\bigg\}\label{A7}.\\
		A_8&=\Big\{X_{3m}^{(0,t_{0,m}]}(t_1\wedge T_{(1)})=0\Big\}\label{A8}.\\
		A_9&=\Big\{X_{3r}^{(0,t_{0,r}]}(t_1\wedge T_{(1)})=0\Big\}\label{A9}.\\
		A_{10}&=\bigg\{X_{3m}^{(t_{0,m},t_1]}(t_1\wedge T_{(1)})\leq \bigg(\frac{2Ke^{-2C_1+C_{0,m}}}{\epsilon}\bigg)\frac{N^2\mu^2}{s}\bigg\}.\label{A10} \\
		A_{11}&=\bigg\{X_{3r}^{(t_{0,r},t_1]}(t_1\wedge T_{(1)})\leq \bigg(\frac{K^2e^{-2C_1}(2(C_{0,r}-C_1)+1)}{2\epsilon}\bigg)\frac{(1\vee Nr\ln_+(Nr))}{s}\bigg\}.\label{A11} 
		\end{align}
Also, we define
	\begin{equation}\label{A(1)}
	A_{(1)}=
		\begin{cases}
		\displaystyle{\bigcap_{1\leq i \leq 11, i\neq 6}}A_i &\text{in the recombination dominating case}\\
    		\displaystyle{\bigcap_{1\leq i \leq 11, i\neq 7}}A_i &\text{in the mutation dominating case}.
    		\end{cases}
		\end{equation}
	We will show that these events occur with high probability. Here, we will prove some inequalities involving $G_1(t), G_2(t)$ and $G_3(t)$, which will be used quite often in this section.

  	\begin{lemma}\label{G<}
   	For sufficiently large $N$, and $t\in [0,t_1\wedge T_{(1)})$, the following statements hold:
   		\begin{enumerate}
   		\item $X_i(t) \leq \eta N$,  for $i=1,2,3$.
    		\item $G_1(t)\leq s$, $G_2(t)\leq s$, and $G_3(t) \leq 2s$.
    		\item $G_1(t) \geq s-4\eta s-r-\mu$, $G_2(t) \geq s-4\eta s-r-\mu$, and $G_3(t) \geq 2s-4\eta s -r$.
    		\item For $0<a<b$, we have $G_{1r}^{(a,b]}(t)\leq s+r1_{(a,b]}(t)$, $G_{2r}^{(a,b]}(t)\leq s+r1_{(a,b]}(t)$, and $G_{3r}^{(a,b]}(t) \leq 2s+r1_{(a,b]}(t)$. 
    		\item For $0<a<b$, we have $G_{1r}^{(a,b]}(t) \geq s-4\eta s-r-\mu$, $G_{2r}^{(a,b]}(t)\geq s-4\eta s-r-\mu$, and $G_{3r}^{(a,b]}(t)\geq 2s-4\eta s -r$.
   		\end{enumerate}
  	\end{lemma}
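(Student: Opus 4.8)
The plan is to deduce all five parts directly from two a priori inputs — the bound on the $X_i$ forced by not having reached $T_{(1)}$, and the explicit formulas for the growth rates from Section~\ref{trans} — together with the elementary identity $\tilde X_0(t)+\tilde X_1(t)+\tilde X_2(t)+\tilde X_3(t)=1$. I would begin with part~1. On $[0,t_1\wedge T_{(1)})$ none of $T_1,T_2,T_3$ has occurred, so by (\ref{T1})--(\ref{T3}) we have $X_1(t)<\tfrac{KN\mu}{s}e^{st}$, $X_2(t)<\tfrac{KN\mu}{s}e^{st}$, and $X_3(t)<\tfrac{N\mu}{s}e^{st}$; since $t<t_1$, definition (\ref{t1}) gives $e^{st}<e^{st_1}=(s/\mu)e^{-C_1}$, so $X_1(t),X_2(t)<KNe^{-C_1}=\tfrac12\eta N$ and $X_3(t)<Ne^{-C_1}\le \eta N$ by (\ref{n}) and $K\ge 1$. (The interval is nonempty because $t_1>0$ once $\mu/s<e^{-C_1}$, which holds for large $N$ since $\mu\ll s$.) It is also worth recording here that the choices (\ref{K}) and (\ref{C1}) make $\eta=2Ke^{-C_1}<2\epsilon/5<1$, although this is only a convenience, not strictly needed for the stated inequalities.

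For parts 2 and 3 I would feed into (\ref{G1}), the analogous expression for $G_2$, and (\ref{G3}) the facts just established: $0\le \tilde X_i(t)\le \eta$ for $i=1,2,3$, hence $\tilde X_1(t)+\tilde X_2(t)+2\tilde X_3(t)\le 4\eta$; and $0\le 1-s\tilde X_1(t)-s\tilde X_2(t)-2s\tilde X_3(t)\le 1$, which holds because $s\le 1/2$. The upper bounds $G_1(t),G_2(t)\le s$ and $G_3(t)\le 2s$ then follow simply by discarding the nonpositive terms $-r\tilde X_2(t)(\cdots)$, $-r\tilde X_1(t)(\cdots)$, $-r\tilde X_0(t)(\cdots)$ and $-\mu$. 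For the lower bounds, $s(1-\tilde X_1-\tilde X_2-2\tilde X_3)\ge s-4\eta s$ (resp.\ $s(2-\tilde X_1-\tilde X_2-2\tilde X_3)\ge 2s-4\eta s$ in (\ref{G3})), while the recombination term is bounded below by $-r\tilde X_2(t)(\cdots)\ge -r$ (resp.\ $-r\tilde X_0(t)(\cdots)\ge -r$) because each $\tilde X_j(t)\le 1$ and the factor $(1-s\tilde X_1-s\tilde X_2-2s\tilde X_3)$ lies in $[0,1]$; subtracting the remaining $\mu$ gives parts~3.

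Finally, parts 4 and 5 are immediate corollaries. Formulas (\ref{G1r}), (\ref{G2r}), (\ref{G3r}) write $G_{ir}^{(a,b]}(t)$ as $G_i(t)$ plus a nonnegative term which, using $(1-s)<1$, $(1-2s)<1$ and $\tilde X_j(t)\le 1$, is at most $r1_{(a,b]}(t)$. Dropping that term and invoking part~3 yields the lower bounds; keeping it and invoking part~2 yields the upper bounds. I do not anticipate a genuine obstacle here: the lemma is a routine bookkeeping exercise, and the only care needed is tracking which nonpositive terms to discard for the upper bounds versus which to bound crudely by $-r$ or $-\mu$ for the lower bounds, and confirming that ``for sufficiently large $N$'' is needed only to guarantee $t_1>0$ (via $\mu\ll s$) and that the stated $[0,t_1\wedge T_{(1)})$ is a meaningful interval.
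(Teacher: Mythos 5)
Your proof is correct and follows essentially the same route as the paper: part 1 from the definitions of $T_1,T_2,T_3$ and $t_1$, then parts 2--3 by plugging $\tilde X_i(t)\le\eta$ and $0\le 1-s\tilde X_1-s\tilde X_2-2s\tilde X_3\le 1$ into (\ref{G1}) and (\ref{G3}), and parts 4--5 by noting that (\ref{G1r})--(\ref{G3r}) add to $G_i(t)$ a term lying in $[0,r1_{(a,b]}(t)]$. The only (harmless) difference is that you justify $1-s\tilde X_1-s\tilde X_2-2s\tilde X_3\ge 0$ via $s\le 1/2$ rather than via $s\ll1$ as the paper does.
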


  	\begin{proof}
  	By the definition of $\eta$, $t_1$  and $T_{(1)}$ in (\ref{n}), (\ref{t1}) and (\ref{T(1)}), for every $t\in[0,t_1\wedge T_{(1)})$, and for $i=1,2,3$,
  		$$		
  		X_i(t)<\frac{KN\mu}{s}e^{st}\leq \frac{KN\mu}{s}e^{st_1}=Ke^{-C_1}N<\eta N.
  		$$
	For statement 2, since $0\leq\tilde X_1(t)+\tilde X_2(t)+\tilde X_3(t)\leq 1$ for all $t\geq 0$, and $s\ll 1$, it follows that for sufficiently large $N$, we have $0<1-2s \leq 1-s\tilde X_1(t)-s\tilde X_2(t)-2s\tilde X_3(t) \leq 1$ for all $t\geq 0$. Thus, by the definition of $G_1(t)$ in (\ref{G1}), for sufficiently large $N$, we have $G_1(t)\leq s$ for all $t\in [0,t_1\wedge T_{(1)})$. Also, by part 1, if $t\in[0,t_1\wedge T_{(1)})$, then $1-\tilde X_1(t)-\tilde X_2(t)-2\tilde X_3(t)\geq 1-4\eta$. Again, by using the definition of $G_1(t)$ in (\ref{G1}), we get the lower bound of $G_1(t)$ in statement 3. Both the upper and lower bounds for $G_3(t)$ can be shown by similar arguments. Lastly, we can prove statements 4 and 5 by using (\ref{G1r}), (\ref{G2r}) and (\ref{G3r}) along with statements 1, 2 and 3 of this lemma.
  	\end{proof}

\subsection{Upper bounds for expectations}
	In this section, we are going to prove some results on the upper  bounds for the expectations of $X_{im}^{(a,b]}(t\wedge T_{(1)})$ and $X_{ir}^{(a,b]}(t\wedge T_{(1)})$.

  	\begin{lemma} \label{EX1m}
   	For sufficiently large $N$, for $i=1,2$ and $t\in[0,t_1]$, we have
    		\begin{equation} \label{EX1m.1}
    		E\Big[e^{-s(t\wedge T_{(1)})}X_{im}(t\wedge T_{(1)})\Big]\leq \frac{N\mu}{s},
    		\end{equation}
	and
		$$
		E\Big[X_{im}(t\wedge T_{(1)})\Big]\leq \frac{N\mu}{s}e^{st}.
		$$
  	\end{lemma}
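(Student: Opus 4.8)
The plan is to apply the submartingale machinery from Proposition \ref{W} to the process $W_{im}^{(0,t_1]}(t\wedge T_{(1)})$ for $i=1,2$. The key point is that $X_{im}(t) = X_{im}^{(0,t_1]}(t)$ for $t\in[0,t_1]$, so it suffices to control $X_{im}^{(0,t_1]}(t\wedge T_{(1)})$. First I would note that by Lemma \ref{G<}(2), on the interval $[0,t_1\wedge T_{(1)})$ we have $G_i(v)\leq s$, hence $e^{-\int_0^u G_i(v)\,dv}\geq e^{-su}$ for $u\le t_1\wedge T_{(1)}$; and since the stopped integrand $M_i^{(0,t_1]}(u) = \mu X_0(u)1_{(0,t_1]}(u)$ is at most $\mu N$, Proposition \ref{W} gives
\begin{align*}
E\Big[W_{im}^{(0,t_1]}(t\wedge T_{(1)})\Big]
&= E\bigg[\int_0^{t\wedge T_{(1)}} \mu X_0(u)\, e^{-\int_0^u G_i(v)\,dv}\,du\bigg]\\
&\leq E\bigg[\int_0^{t\wedge T_{(1)}} \mu N\, e^{-su}\,du\bigg]
\leq \mu N \int_0^\infty e^{-su}\,du = \frac{N\mu}{s}.
\end{align*}
On the other hand, from the definition \eqref{Wm}, $W_{im}^{(0,t_1]}(t\wedge T_{(1)}) = e^{-\int_0^{t\wedge T_{(1)}} G_i(v)\,dv}X_{im}^{(0,t_1]}(t\wedge T_{(1)}) \geq e^{-s(t\wedge T_{(1)})}X_{im}(t\wedge T_{(1)})$ using $G_i\le s$ again (here I must be slightly careful: after time $T_{(1)}$ the bound $G_i\le s$ need not hold, but $W$ is stopped at $T_{(1)}$ so only values of $v\le t\wedge T_{(1)}$ enter, and on that range Lemma \ref{G<} applies since $t\wedge T_{(1)}\le t_1\wedge T_{(1)}$ — I would phrase this via $t\wedge T_{(1)}\le t_1\wedge T_{(1)}$ and the inequality $G_i(v)\le s$ for $v$ in the closure of $[0,t_1\wedge T_{(1)})$, or just note the integrand $(G_i(v)-s)$ is $\le 0$ a.e. on $[0,t\wedge T_{(1)}]$). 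Combining the two displays yields \eqref{EX1m.1}.

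For the second inequality I would remove the stopping by a direct comparison. Since $X_{im}(t\wedge T_{(1)}) = e^{s(t\wedge T_{(1)})}\cdot e^{-s(t\wedge T_{(1)})}X_{im}(t\wedge T_{(1)})$ and on $\{T_{(1)}\le t\}$ we have $e^{s(t\wedge T_{(1)})}=e^{sT_{(1)}}\le e^{st}$ while on $\{T_{(1)}>t\}$ we have $e^{s(t\wedge T_{(1)})}=e^{st}$, in either case $X_{im}(t\wedge T_{(1)})\le e^{st}\cdot e^{-s(t\wedge T_{(1)})}X_{im}(t\wedge T_{(1)})$, so taking expectations and applying \eqref{EX1m.1} gives $E[X_{im}(t\wedge T_{(1)})]\le e^{st}\cdot\frac{N\mu}{s}$, as claimed. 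Alternatively one can observe that $e^{-s(t\wedge T_{(1)})}X_{im}(t\wedge T_{(1)})\ge e^{-st}X_{im}(t\wedge T_{(1)})$ directly since $t\wedge T_{(1)}\le t$, which is cleaner.

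I do not expect a serious obstacle here; this is a warm-up lemma. The only subtlety worth stating carefully is the bookkeeping around the stopping time $T_{(1)}$ — making sure that the bound $G_i(v)\le s$ from Lemma \ref{G<}, which is only asserted on $[0,t_1\wedge T_{(1)})$, is invoked only for arguments $v$ lying in that range, which holds because $W_{im}^{(0,t_1]}$ is evaluated at $t\wedge T_{(1)}\le t_1\wedge T_{(1)}$ and the exponential weight integrates $G_i$ only up to that time. Also one should confirm that $M_i^{(0,t_1]}(u)=\mu X_0(u)1_{(0,t_1]}(u)\le\mu N$ holds for all $u$ (it does, since $X_0(u)\le N$ always), so the bound on the integrand requires nothing beyond the population-size constraint, and the resulting integral $\int_0^\infty \mu N e^{-su}\,du=N\mu/s$ is finite and uniform in $t$.
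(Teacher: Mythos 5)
Your proof has a genuine gap at the central estimate. You correctly record from Lemma \ref{G<} that $G_i(v)\leq s$ on $[0,t_1\wedge T_{(1)})$, and correctly deduce $e^{-\int_0^u G_i(v)\,dv}\geq e^{-su}$ — but then in the displayed chain you use this inequality in the wrong direction, replacing $e^{-\int_0^u G_i(v)\,dv}$ by $e^{-su}$ inside an \emph{upper} bound. Since in fact $G_i(v)<s$ strictly (because of the $-s(\tilde X_1+\tilde X_2+2\tilde X_3)$, $-r\tilde X_2(\cdots)$ and $-\mu$ terms in (\ref{G1})), the weight $e^{-\int_0^u G_i}$ exceeds $e^{-su}$, and the claimed inequality $\int_0^{t\wedge T_{(1)}}\mu X_0(u)e^{-\int_0^u G_i(v)dv}du\leq\int_0^{t\wedge T_{(1)}}\mu N e^{-su}du$ is false. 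To get an upper bound you must use the \emph{lower} bound $G_i(v)\geq s-4\eta s-r-\mu$ (Lemma \ref{G<}, part 3), which gives $e^{-\int_0^u G_i(v)dv}\leq e^{-(s-4\eta s-r-\mu)u}$ and hence only $E[e^{-s(t\wedge T_{(1)})}X_{im}(t\wedge T_{(1)})]\leq N\mu/\big((1-4\eta)s-r-\mu\big)$. That is larger than the asserted $N\mu/s$ by a factor roughly $(1-4\eta)^{-1}$, where $\eta=2Ke^{-C_1}$ is a fixed constant not tending to $0$ with $N$; so the single-window argument, even once the direction is fixed, does not prove the lemma as stated.

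The missing idea is the paper's subdivision trick. One first proves the window estimate
$E\big[e^{-s(t\wedge T_{(1)})}X_{1m}^{(a,b]}(t\wedge T_{(1)})\big]\leq e^{(4\eta s+r+\mu)(b-a)}\,N\mu\int_a^b e^{-su}du$,
in which the error factor is governed by the \emph{length} $b-a$ of the window in which the ancestors appear (because the martingale $Z_{1m}^{(a,b]}$ normalizes from time $a$, so the discrepancy between the upper and lower bounds on $G_1$ only accumulates over $[a,u]$ with $u\leq b$). Then one writes $X_{1m}^{(0,t]}=\sum_{j}X_{1m}^{(t_j',t_{j+1}']}$ over $n$ equal sub-windows, applies the window estimate to each (error factor $e^{(4\eta s+r+\mu)(b-a)/n}$), sums — the main terms telescope to $N\mu\int_0^t e^{-su}du\leq N\mu/s$ — and lets $n\to\infty$ to kill the error factor entirely. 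Your reduction of the second inequality to the first via $e^{-s(t\wedge T_{(1)})}\geq e^{-st}$ is correct and matches the paper, but the first inequality is where the work is, and as written your argument for it does not go through.
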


	\begin{proof}
  	The proof is similar to Lemma 5.1 in \cite{Sch}. We will show the proof for $i=1$, since the argument is similar for $i=2$. We will first show that for sufficiently large $N$, for $0\leq a <b\leq t_1$, and for $t \in [0,t_1]$, we have
    		\begin{equation}\label{lemEX1m}
    		E\Big[e^{-s(t\wedge T_{(1)})}X_{1m}^{(a,b]}(t\wedge T_{(1)})\Big]\leq e^{(4\eta s+r+\mu)(b-a)}\cdot N\mu \int_a^be^{-su}du.
    		\end{equation}
	If $t\in[0,a)$, this inequality is trivial, since by the definition of $X_{1m}^{(a,b]}(t)$, we have $X_{1m}^{(a,b]}(t)=0$. Assume that $t\in[a,t_1]$. By Proposition  \ref{ZMar} and (\ref{Zm}), we have $E\big[Z_{1m}^{(a,b]}(t\wedge T_{(1)})\big]=0$, and
    		\begin{equation} \label{1m.0}
    		E\Big[e^{-\int_a^{t\wedge T_{(1)}}G_1(v)dv}X_{1m}^{(a,b]}(t\wedge T_{(1)})\Big]=E\bigg[\int_a^{t\wedge T_{(1)}} M_1^{(a,b]}(u)e^{-\int_a^uG_1(v)dv}du\bigg].
    		\end{equation}
	Note that in the event that $T_{(1)}<a$, we interpret the integral from $a$ to $t\wedge T_{(1)}$ as 0. Also, from the definition of $X_{1m}^{(a,b]}(t)$, in the event $T_{(1)}<a$, we have $X_{1m}^{(a,b]}(t\wedge T_{(1)})=0$. Now, using the upper bound for $G_1(t)$ in Lemma \ref{G<}, we know that for sufficiently large $N$, for $0\leq a <b\leq t_1$, and $t\in [a,t_1]$,
    		\begin{align} 
     	E\Big[e^{-\int_a^{t\wedge T_{(1)}}G_1(v)dv}X_{1m}^{(a,b]}(t\wedge T_{(1)})\Big]
   		&=  E\Big[e^{-\int_a^{t\wedge T_{(1)}}G_1(v)dv}X_{1m}^{(a,b]}(t\wedge T_{(1)})1_{\{T_{(1)}\geq a\}}\Big] \nonumber \\	
     	&\geq E\Big[e^{-\int_a^{t\wedge T_{(1)}} sdv}X_{1m}^{(a,b]}(t\wedge T_{(1)})1_{\{T_{(1)}\geq a\}}\Big] \nonumber \\
     	&=e^{sa}E\Big[e^{-s(t\wedge T_{(1)})}X_{1m}^{(a,b]}(t\wedge T_{(1)})1_{\{T_{(1)}\geq a\}}\Big] \nonumber \\
     	&=e^{sa}E\Big[e^{-s(t\wedge T_{(1)})}X_{1m}^{(a,b]}(t\wedge T_{(1)})\Big]. \label{1m.1}
    		\end{align}
	Next, we use the lower bound for $G_1(t)$ in Lemma \ref{G<}. From (\ref{M1ab}), for sufficiently   large $N$, for $0\leq a <b\leq t_1$, and $t\in [a,t_1]$,
    		\begin{align}
     	E\bigg[\int_a^{t\wedge T_{(1)}} M_1^{(a,b]}(u)e^{-\int_a^uG_1(v)dv}du\bigg]
     	&=E\bigg[\int_a^{t\wedge T_{(1)}} \mu X_0(u)1_{(a,b]}(u)e^{-\int_a^uG_1(v)dv}du\bigg] \nonumber\\
     	&\leq \int_a^b \mu N e^{-(s -4\eta s-r-\mu)(u-a)}du \nonumber \\
     	&\leq e^{(4\eta s+r+\mu)(b-a)}\cdot N\mu e^{sa}\int_a^b e^{-su}du. \label{1m.2}
    		\end{align}
	From (\ref{1m.0}), (\ref{1m.1}) and (\ref{1m.2}), we have the inequality (\ref{lemEX1m}).
	
   	In the second part of the proof, for each $n \in \mathbb{N}$, let $t'_j=(b-a)j/n+a$, for $j=0,1,...,n$. It follows from
    		\begin{align*}
     	E\Big[e^{-s(t\wedge T_{(1)})}X_{1m}^{(a,b]}(t\wedge T_{(1)})\Big] 
     	&=E\Bigg[\sum_{j=0}^{n-1}e^{-s(t\wedge T_{(1)})}X_{1m}^{(t'_j,t'_{j+1}]}(t\wedge T_{(1)})\Bigg] \\
     	&\leq \sum _{j=0}^{m-1}e^{(4\eta s+r+\mu)(t'_{j+1}-t'_j)}\cdot N\mu\int_{t'_j}^{t'_{j+1}} e^{-su}du \\
     	&=e^{(4\eta s+r+\mu)(\frac{b-a}{n})}\cdot N\mu\int_a^b e^{-su}du \\
     	&\leq e^{(4\eta s+r+\mu)(\frac{b-a}{n})}\cdot \frac{N\mu}{s} e^{-sa}.
    		\end{align*}
	By letting $n\rightarrow \infty$, we have that for sufficiently large $N$, $0\leq a<b\leq t_1$, and $t\in[a,t_1]$,
 		\begin{equation} \label{1m.3}
    		E\Big[e^{-s(t\wedge T_{(1)})}X_{1m}^{(a,b]}(t\wedge T_{(1)})\Big]\leq \frac{N\mu}{s}e^{-sa}.
    		\end{equation}
The inequality (\ref{EX1m.1}) follows from the fact that $X_{1m}(t\wedge T_{(1)})=X^{(0,t]}_{1m}(t\wedge T_{(1)})$. From (\ref{1m.3}), it follows that
		$$
		E\Big[X_{1m}(t\wedge T_{(1)})\Big]=e^{st}E\Big[e^{-st}X_{1m}(t\wedge T_{(1)})\Big]\leq e^{st}E\Big[e^{-s(t\wedge T_{(1)})}X_{1m}^{(0,t]}(t\wedge T_{(1)})\Big]\leq \frac{N\mu}{s}e^{st},
		$$ 
	and the proof is completed.
  	\end{proof}

  	\begin{lemma} \label{EX1r}
   	For sufficiently large $N$, for $i=1,2$, and $t \in [0, t_1]$, we have
    		\begin{equation} \label{EX1r.1}
    		E\Big[e^{-s(t\wedge T_{(1)})}X_{ir}(t\wedge T_{(1)})\Big]\leq\bigg(\frac{N\mu r}{s}\bigg) t,
    		\end{equation}
   	and
    		\begin{equation} \label{EX1r.2}
    		E\Big[X_{ir}(t\wedge T_{(1)})\Big]\leq \bigg(\frac{N\mu r}{s}\bigg)e^{st}t.
    		\end{equation}
  	\end{lemma}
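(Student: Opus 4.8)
The plan is to follow the proof of Lemma~\ref{EX1m} almost verbatim, with the mutation source $M_i^{(a,b]}$ replaced by the recombination source $R_i^{(a,b]}$, and the martingale $Z_{im}^{(a,b]}$ replaced by $Z_{ir}^{(a,b]}$. As in Lemma~\ref{EX1m} it suffices to treat $i=1$ (the case $i=2$ is identical after interchanging the roles of $X_1$ and $X_2$ in the growth rates). The heart of the argument is the intermediate estimate: for sufficiently large $N$, $0\le a<b\le t_1$, and $t\in[0,t_1]$,
\[
E\Big[e^{-s(t\wedge T_{(1)})}X_{1r}^{(a,b]}(t\wedge T_{(1)})\Big]\le e^{(4\eta s+2r+\mu)(b-a)}\cdot\frac{N\mu r}{s}(b-a).
\]
For $t<a$ this is trivial since $X_{1r}^{(a,b]}(t\wedge T_{(1)})=0$; for $t\ge a$, Proposition~\ref{ZMar} applied with the stopping time $T_{(1)}$ gives $E[Z_{1r}^{(a,b]}(t\wedge T_{(1)})]=0$, hence
\[
E\Big[e^{-\int_a^{t\wedge T_{(1)}}G_{1r}^{(a,b]}(v)dv}X_{1r}^{(a,b]}(t\wedge T_{(1)})\Big]=E\bigg[\int_a^{t\wedge T_{(1)}}R_1^{(a,b]}(u)e^{-\int_a^uG_{1r}^{(a,b]}(v)dv}du\bigg],
\]
with the same conventions as in (\ref{1m.0}) (the integral over an empty range is $0$, and $X_{1r}^{(a,b]}(t\wedge T_{(1)})=0$ on $\{T_{(1)}<a\}$).

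For the left-hand side I would use the upper bound $G_{1r}^{(a,b]}(v)\le s+r1_{(a,b]}(v)$ from part~4 of Lemma~\ref{G<}, so that $\int_a^{t\wedge T_{(1)}}G_{1r}^{(a,b]}(v)dv\le s(t\wedge T_{(1)}-a)+r(b-a)$ and the left-hand side is at least $e^{sa-r(b-a)}E[e^{-s(t\wedge T_{(1)})}X_{1r}^{(a,b]}(t\wedge T_{(1)})]$. For the right-hand side I would bound $R_1^{(a,b]}(u)$ using (\ref{R1<}) together with $\tilde X_0\le 1$ to get $R_1^{(a,b]}(u)\le rX_3(u)1_{(a,b]}(u)$, and then use that $T_{(1)}\le T_3$, so that $X_3(u)<\frac{N\mu}{s}e^{su}$ for $u<T_{(1)}$; this gives $R_1^{(a,b]}(u)\le\frac{N\mu r}{s}e^{su}1_{(a,b]}(u)$ a.e.\ on the range of integration. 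Combining with the lower bound $G_{1r}^{(a,b]}(v)\ge s-4\eta s-r-\mu$ from part~5 of Lemma~\ref{G<}, the right-hand side is at most $\int_a^b\frac{N\mu r}{s}e^{su}e^{-(s-4\eta s-r-\mu)(u-a)}du$; the integrand equals $\frac{N\mu r}{s}e^{sa}e^{(4\eta s+r+\mu)(u-a)}\le\frac{N\mu r}{s}e^{sa}e^{(4\eta s+r+\mu)(b-a)}$, so the integral is at most $\frac{N\mu r}{s}e^{sa}e^{(4\eta s+r+\mu)(b-a)}(b-a)$. Equating the two bounds and dividing by $e^{sa-r(b-a)}$ yields the displayed intermediate estimate.

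To finish, for $t\in[0,t_1]$ I would partition $(0,t]$ into $n$ equal subintervals $(t'_j,t'_{j+1}]$ with $t'_j=tj/n$, use $X_{1r}(t\wedge T_{(1)})=X_{1r}^{(0,t]}(t\wedge T_{(1)})=\sum_{j=0}^{n-1}X_{1r}^{(t'_j,t'_{j+1}]}(t\wedge T_{(1)})$ (each type $1r$ ancestor appears in exactly one subinterval, and its descendants are counted there), apply the intermediate bound to each summand with $b-a=t/n$, and sum to obtain $E[e^{-s(t\wedge T_{(1)})}X_{1r}(t\wedge T_{(1)})]\le e^{(4\eta s+2r+\mu)(t/n)}\cdot\frac{N\mu r}{s}t$; letting $n\to\infty$ gives (\ref{EX1r.1}). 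Then (\ref{EX1r.2}) follows from $e^{-st}\le e^{-s(t\wedge T_{(1)})}$ and $X_{1r}\ge 0$: $E[X_{1r}(t\wedge T_{(1)})]=e^{st}E[e^{-st}X_{1r}(t\wedge T_{(1)})]\le e^{st}E[e^{-s(t\wedge T_{(1)})}X_{1r}(t\wedge T_{(1)})]\le\frac{N\mu r}{s}e^{st}t$. I expect the main obstacle to be the step controlling the recombination source $R_1^{(a,b]}$: unlike the mutation source $M_1^{(a,b]}\le\mu N$, which is bounded uniformly in time, $R_1^{(a,b]}$ is proportional to $\tilde X_3$ and grows like $e^{su}$, which is exactly why the bound $X_3(u)<\frac{N\mu}{s}e^{su}$ before $T_{(1)}$ is needed and why the final estimate carries a factor $t$ instead of $1/s$ — the source and the population it feeds grow at the same rate, so the contribution accumulates linearly in time. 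Everything else is a routine transcription of Lemma~\ref{EX1m}.
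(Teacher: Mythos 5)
Your proposal is correct and follows the paper's own proof essentially verbatim: the same intermediate estimate on subintervals $(a,b]$ via Proposition \ref{ZMar}, the same bound $R_1^{(a,b]}(u)\leq Nr\tilde X_0(u)\tilde X_3(u)1_{(a,b]}(u)\leq \frac{N\mu r}{s}e^{su}1_{(a,b]}(u)$ using (\ref{R1<}) and the definition of $T_3$, and the same partition-and-limit argument borrowed from the second part of the proof of Lemma \ref{EX1m}. Your closing remark about why the source term's $e^{su}$ growth produces a factor of $t$ rather than $1/s$ is exactly the right diagnosis of where this lemma differs from Lemma \ref{EX1m}.
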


  	\begin{proof}
   	The proof is similar to the proof of Lemma \ref{EX1m}. We will show the proof for $i=1$, and the same argument can be used when $i=2$. In the first part of this proof, we will show that for   sufficiently large $N$, for $0\leq a <b\leq t_1$, and for $t \in [0,t_1]$, we have
    		\begin{equation}\label{lemEX1r}
    		E\Big[e^{-s(t\wedge T_{(1)})}X_{1r}^{(a,b]}(t\wedge T_{(1)})\Big]\leq e^{(4\eta s+2r+\mu)(b-a)}\cdot \frac{N\mu r}{s}\cdot(b-a).
    		\end{equation}
	If $t\in[0,a)$, this inequality is trivial, since by the definition of $X_{1r}^{(a,b]}(t)$, we have $X_{1r}^{(a,b]}(t)=0$. Assume that $t\in[a,t_1]$. By Proposition  \ref{ZMar} and (\ref{Zr}), we have $E\big[Z_{1r}^{(a,b]}(t\wedge T_{(1)})\big]=0$, and
    		\begin{equation} \label{1r.0}
    		E\Big[e^{-\int_a^{t\wedge T_{(1)}}G^{(a,b]}_{1r}(v)dv}X_{1r}^{(a,b]}(t\wedge T_{(1)})\Big]=E\bigg[\int_a^{t\wedge T_{(1)}} R_1^{(a,b]}(u)e^{-\int_a^uG^{(a,b]}_{1r}(v)dv}du\bigg].
    		\end{equation}
	Using the upper bound for $G^{(a,b]}_{1r}(t)$ in Lemma \ref{G<}, we know that for sufficiently large $N$, for $0\leq a <b\leq t_1$, and $t\in [a,t_1]$,
    		\begin{align} 
     	E\Big[e^{-\int_a^{t\wedge T_{(1)}}G^{(a,b]}_{1r}(v)dv}X_{1r}^{(a,b]}(t\wedge T_{(1)})\Big]
     	&\geq E\Big[e^{-\int_a^{t\wedge T_{(1)}} (s+r1_{(a,b]}(v))dv}X_{1r}^{(a,b]}(t\wedge T_{(1)})1_{\{T_{(1)}\geq a\}}\Big] \nonumber \\
     	&\geq e^{sa-r(b-a)}E\Big[e^{-s(t\wedge T_{(1)})}X_{1r}^{(a,b]}(t\wedge T_{(1)})1_{\{T_{(1)}\geq a\}}\Big] \nonumber \\
     	&=e^{sa-r(b-a)}E\Big[e^{-s(t\wedge T_{(1)})}X_{1r}^{(a,b]}(t\wedge T_{(1)})\Big]. \label{1r.1}
    		\end{align}
	Then, using the lower bound for $G_{1r}^{(a,b]}(t)$ in Lemma \ref{G<}, along with the upper bound for $R_1^{(a,b]}(t)$ in (\ref{R1<}) and the definition of $T_3$ in (\ref{T3}), we have that for sufficiently   large $N$, for $0\leq a <b\leq t_1$, and $t\in [a,t_1]$,
    		\begin{align}
     	E\bigg[\int_a^{t\wedge T_{(1)}} R_1^{(a,b]}(u)e^{-\int_a^uG_{1r}^{(a,b]}(v)dv}du\bigg]
     	&\leq E\bigg[\int_a^{t\wedge T_{(1)}} Nr\tilde X_0(u)\tilde X_3(u)1_{(a,b]}(u)e^{-\int_a^uG_{1r}^{(a,b]}(v)dv}du\bigg] \nonumber\\
     	&\leq \int_a^b Nr\cdot \frac{\mu}{s}e^{su}\cdot e^{-(s -4\eta s-r-\mu)(u-a)}du \nonumber \\
     	&\leq e^{(4\eta s+r+\mu)(b-a)}\cdot \frac{N\mu r}{s} e^{sa}(b-a). \label{1r.2}
    		\end{align}
	From (\ref{1r.0}), (\ref{1r.1}) and (\ref{1r.2}), we have the inequality (\ref{lemEX1r}). Lastly, by using (\ref{lemEX1r}) and following the argument in the second part of the proof of Lemma \ref{EX1m}, we can prove (\ref{EX1r.1}) and (\ref{EX1r.2}). 
  	\end{proof}     

 	\begin{lemma} \label{EX3m}
  	For sufficiently large $N$ and for $t\in[0,t_1]$, we have
   		\begin{equation} \label{EX3m.1}
    		E\Big[e^{-s(t\wedge T_{(1)})}X_{3m}(t\wedge T_{(1)})\Big]\leq \frac{2KN\mu^2}{s^2} e^{st},
    		\end{equation}
   		\begin{equation} \label{EX3m.2}
   		E\Big[X_{3m}^{(t_{0,m},t_1]}(t\wedge T_{(1)})\Big]\leq\frac{2Ke^{C_{0,m}}N^2\mu^4}{s^3} e^{2st},
   		\end{equation}
   	and
   		\begin{equation} \label{EX3m.3}
   		E\Big[X_{3m}^{(t_{0,m}^+,t_1]}(t\wedge T_{(1)})\Big]\leq\frac{2Ke^{-C_{0,m}^+}N^2\mu^4}{s^3} e^{2st},
   		\end{equation}
 	\end{lemma}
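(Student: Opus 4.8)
The plan is to mirror the two-stage argument used in Lemmas \ref{EX1m} and \ref{EX1r}: first obtain, for an arbitrary subinterval $(a,b]\subseteq[0,t_1]$, a clean upper bound for $E[e^{-s(t\wedge T_{(1)})}X_{3m}^{(a,b]}(t\wedge T_{(1)})]$ by applying the martingale identity from Proposition \ref{ZMar} to $Z_{3m}^{(a,b]}$, and then sum over a fine partition of $(a,b]$ and let the mesh go to zero to remove the $e^{(\cdots)(b-a)/n}$ factor. The key input that distinguishes the type-3 computation from the type-1 one is the source term: from (\ref{M3ab}), $M_3^{(a,b]}(t)=\mu(X_1(t)+X_2(t))1_{(a,b]}(t)$, so instead of bounding the source by $\mu N$ we must bound $X_1(t\wedge T_{(1)})+X_2(t\wedge T_{(1)})$. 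Here I would invoke the already-proved expectation bound from Lemma \ref{EX1m}, namely $E[X_{im}(t\wedge T_{(1)})]\le (N\mu/s)e^{st}$ for $i=1,2$, together with the fact that on $[0,t_1\wedge T_{(1)})$ we have (from the definitions (\ref{T1}), (\ref{T2})) the pathwise bound $X_i(t)<KN\mu e^{st}/s$; the latter is what produces the constant $K$ in (\ref{EX3m.1}).

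For (\ref{EX3m.1}): using the bound $G_3(t)\le 2s$ from Lemma \ref{G<} on the left side of the $Z_{3m}^{(0,t]}$ identity and $G_3(t)\ge 2s-4\eta s-r$ on the right side, together with the pathwise bound $X_1(u\wedge T_{(1)})+X_2(u\wedge T_{(1)})\le 2KN\mu e^{su}/s$ inside $M_3^{(a,b]}$, one gets
\begin{align*}
E\Big[e^{-2s(t\wedge T_{(1)})}X_{3m}(t\wedge T_{(1)})\Big]
&\le E\bigg[\int_0^{t\wedge T_{(1)}}\mu\cdot\frac{2KN\mu}{s}e^{su}\cdot e^{-(2s-4\eta s-r)u}\,du\bigg]\\
&\le e^{(4\eta s+r)t}\cdot\frac{2KN\mu^2}{s}\int_0^{t}e^{-su}\,du
\le \frac{2KN\mu^2}{s^2},
\end{align*}
where the final inequality uses that $e^{(4\eta s+r)t}\le e^{(4\eta s+r)t_1}$ is $(1+o(1))$ for large $N$ (by $\eta=2Ke^{-C_1}$ fixed, $s\ll1$, $r\ll s$ and $st_1=\ln(s/\mu)-C_1$, so $(4\eta s+r)t_1\to 0$ after absorbing the constant $2K$ into the stated $2K$ — one keeps enough slack in the constant, exactly as in Lemma \ref{EX1m}); multiplying through by $e^{2st}$ gives (\ref{EX3m.1}). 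As in the previous lemmas, the partition-and-limit step is needed to justify dropping the $e^{(4\eta s+r)(b-a)/n}$ prefactor cleanly, so I would state the per-interval estimate $E[e^{-s(t\wedge T_{(1)})}X_{3m}^{(a,b]}(t\wedge T_{(1)})]\le e^{(4\eta s+r)(b-a)}\frac{2KN\mu^2}{s^2}e^{-sa}$ first (note: one uses the crude $e^{-s(t\wedge T_{(1)})}\le e^{-2s(t\wedge T_{(1)})}e^{s(t\wedge T_{(1)})}$ manipulation or just carries $e^{2s}$-weights throughout — I would carry $e^{2s(\cdot)}$-weights to match the growth rate $2s$ of type 3, then convert) and then sum.

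For (\ref{EX3m.2}) and (\ref{EX3m.3}): these are the same computation but with the interval $(0,t]$ replaced by $(t_{0,m},t_1]$ and $(t_{0,m}^+,t_1]$ respectively, so the integral $\int_{t_{0,m}}^{t\wedge T_{(1)}}$ (resp.\ $\int_{t_{0,m}^+}^{t\wedge T_{(1)}}$) only accumulates from the later starting time; the extra factor comes from $e^{2su}$ being evaluated against the lower limit $t_{0,m}$ instead of $0$. Concretely, after the same manipulation one gets a bound proportional to $\mu\cdot\frac{N\mu}{s}\cdot e^{st}\cdot \frac{e^{st}-e^{st_{0,m}}}{s}$-type expression; plugging in $e^{st_{0,m}}=e^{-C_{0,m}}s/(N\mu^2)$ from (\ref{t0m}) turns $\frac{N\mu}{s}\cdot e^{st_{0,m}}$ into $\frac{e^{-C_{0,m}}}{\mu}$, and chasing constants yields the $e^{C_{0,m}}N^2\mu^4/s^3\cdot e^{2st}$ shape of (\ref{EX3m.2}); identically with $C_{0,m}^+$ and $e^{-C_{0,m}^+}$ for (\ref{EX3m.3}) using (\ref{t0m+}). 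The one point requiring care — and the step I expect to be the main obstacle — is handling the source term correctly when the ancestor interval starts at $t_{0,m}$: one must bound $X_1(u)+X_2(u)$ for $u$ in $(t_{0,m},t_1]$, and I would split into the martingale/integral contribution using $E[X_{im}(u\wedge T_{(1)})]\le (N\mu/s)e^{su}$ rather than the cruder $KN\mu e^{su}/s$ whenever the sharper constant is needed, being careful that the stopping time $T_{(1)}$ and the indicator $1_{(a,b]}$ interact correctly (as in the remark after (\ref{1m.0}) that on $\{T_{(1)}<a\}$ the relevant quantity vanishes). Everything else is routine once the per-interval estimate is in place.
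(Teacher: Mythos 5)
Your proposal matches the paper's proof essentially step for step: the same per-interval estimate for $E\big[e^{-s(t\wedge T_{(1)})}X_{3m}^{(a,b]}(t\wedge T_{(1)})\big]$ obtained from Proposition \ref{ZMar} with $G_3(t)\leq 2s$ on one side and $G_3(t)\geq 2s-4\eta s-r$ on the other, the pathwise source bound $\mu(X_1(u)+X_2(u))\leq 2KN\mu^2e^{su}/s$ coming from the definitions of $T_1$ and $T_2$, the partition-and-limit step, and finally the substitutions $a=0$, $a=t_{0,m}$, $a=t_{0,m}^+$. The one slip to fix is in your sketch of (\ref{EX3m.2}): the integral contributes $\int_{t_{0,m}}^{t}e^{-su}\,du\leq e^{-st_{0,m}}/s$, so the factor $e^{C_{0,m}}$ comes from $e^{-st_{0,m}}=e^{C_{0,m}}N\mu^2/s$ (the reciprocal of the quantity you substituted), and analogously $e^{-st_{0,m}^+}=e^{-C_{0,m}^+}N\mu^2/s$ yields (\ref{EX3m.3}).
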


 	\begin{proof}
  	The argument in this proof is similar to that of Lemma \ref{EX1m}. We will first show that for sufficiently large $N$, for $0\leq a <b\leq t_1$, and for $t \in [0,t_1]$, we have
    		\begin{equation}\label{lemEX3m}
    		E\Big[e^{-s(t\wedge T_{(1)})}X_{3m}^{(a,b]}(t\wedge T_{(1)})\Big]\leq e^{(4\eta s+r)(b-a)}\cdot \frac{2KN\mu^2}{s}e^{st} \int_a^be^{-su}du.
    		\end{equation}
	If $t\in[0,a)$, this inequality is trivial, since by the definition of $X_{3m}^{(a,b]}(t)$, we have $X_{3m}^{(a,b]}(t)=0$. Let assume that $t\in[a,t_1]$. By Proposition  \ref{ZMar} and (\ref{Zm}), we have $E\big[Z_{3m}^{(a,b]}(t\wedge T_{(1)})\big]=0$, and
    		\begin{equation} \label{3m.0}
    		E\Big[e^{-\int_a^{t\wedge T_{(1)}}G_{3}(v)dv}X_{3m}^{(a,b]}(t\wedge T_{(1)})\Big]=E\bigg[\int_a^{t\wedge T_{(1)}} M_3^{(a,b]}(u)e^{-\int_a^uG_3(v)dv}du\bigg].
    		\end{equation}
	 Using the upper bound for $G_3(t)$ in Lemma \ref{G<}, we know that for sufficiently large $N$, for $0\leq a <b\leq t_1$, and $t\in [a,t_1]$,
    		\begin{align} 
     	E\Big[e^{-\int_a^{t\wedge T_{(1)}}G_3(v)dv}X_{3m}^{(a,b]}(t\wedge T_{(1)})\Big]
     	&\geq E\Big[e^{-\int_a^{t\wedge T_{(1)}} 2sdv}X_{3m}^{(a,b]}(t\wedge T_{(1)})1_{\{T_{(1)}\geq a\}}\Big] \nonumber \\
     	&\geq e^{-st+2sa}E\Big[e^{-s(t\wedge T_{(1)})}X_{3m}^{(a,b]}(t\wedge T_{(1)})1_{\{T_{(1)}\geq a\}}\Big] \nonumber \\
     	&=e^{-st+2sa}E\Big[e^{-s(t\wedge T_{(1)})}X_{3m}^{(a,b]}(t\wedge T_{(1)})\Big]. \label{3m.1}
    		\end{align}
	Now, we use the formula for $M_3^{(a,b]}(t)$ in (\ref{M3ab}), the lower bound for $G_3(t)$ in Lemma \ref{G<}, and the definition of $T_1$ and $T_2$ in (\ref{T1}) and (\ref{T2}). It follows that for sufficiently   large $N$, $0\leq a <b\leq t_1$, and $t\in [a,t_1]$,
    		\begin{align}
     	E\bigg[\int_a^{t\wedge T_{(1)}} M_3^{(a,b]}(u)e^{-\int_a^uG_3(v)dv}du\bigg]
     	&=E\bigg[\int_a^{t\wedge T_{(1)}} \mu (X_1(u)+X_2(u))1_{(a,b]}(u)e^{-\int_a^uG_3(v)dv}du\bigg] \nonumber\\
     	&\leq E\bigg[\int_a^{b\wedge T_{(1)}} \frac{2KN\mu^2}{s}e^{su}\cdot e^{-(2s-4\eta s-r)(u-a)}du\bigg] \nonumber\\
     	&\leq e^{(4\eta s+r)(b-a)}\cdot \frac{2KN\mu^2}{s} e^{2sa}\int_a^b e^{-su}du. \label{3m.2}
    		\end{align}
	From (\ref{3m.0}), (\ref{3m.1}) and (\ref{3m.2}), we have the inequality (\ref{lemEX3m}). By following the argument in the second part of the proof of Lemma \ref{EX1m}, it follows that for sufficiently large $N$, when $0\leq a< t_1$ and $t\in[a,t_1]$,
   		\begin{equation} \label{3m.3}
    		E\Big[e^{-s(t\wedge T_{(1)})}X_{3m}^{(a,b]}(t\wedge T_{(1)})\Big]\leq \frac{2KN\mu^2}{s^2}e^{s(t-a)},
    		\end{equation}
    	and
    		\begin{equation} \label{3m.4}
    		E\Big[X_{3m}^{(a,b]}(t\wedge T_{(1)})\Big]\leq \frac{2KN\mu^2}{s^2}e^{s(2t-a)}.
    		\end{equation}
	The inequality (\ref{EX3m.1}) follows from (\ref{3m.3}) and the fact that $X_{3m}(t\wedge T_{(1)})=X^{(0,t]}_{3m}(t\wedge T_{(1)})$, and the inequalities (\ref{EX3m.2}) and (\ref{EX3m.3}) follow from (\ref{3m.4}) and the definitions of $t_{0,m}$ and $t_{0,m}^+$ in (\ref{t0m}) and (\ref{t0m+}).
  	\end{proof}

  \begin{lemma} \label{EX3r}
   For sufficiently large $N$ and $0\leq a < t_1$, if $t \in [0, t_1]$, we have
    		\begin{equation} \label{EX3r.1}
    		E\Big[e^{-s(t\wedge T_{(1)})}X_{3r}(t\wedge T_{(1)})\Big]\leq\bigg(\frac{K^2N\mu^2 r}{s^2}\bigg)e^{st} t,
    		\end{equation}
   	and if $t\in[a,t_1]$, 
    		\begin{equation} \label{EX3r.2}
    		E\Big[X_{3r}^{(a,t_1]}(t\wedge T_{(1)})\Big]\leq \bigg(\frac{K^2N\mu^2 r}{s^2}\bigg)e^{2st}(t-a).
    		\end{equation}
  	\end{lemma}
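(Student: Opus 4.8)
The plan is to follow the same two-stage template used for Lemmas \ref{EX1m}, \ref{EX1r} and \ref{EX3m}: a bound on a single ``generation window'' $(a,b]$, followed by a subdivision argument. For the first stage I would show that, for sufficiently large $N$, all $0\le a<b\le t_1$, and all $t\in[0,t_1]$,
$$E\Big[e^{-s(t\wedge T_{(1)})}X_{3r}^{(a,b]}(t\wedge T_{(1)})\Big]\le e^{(4\eta s+2r)(b-a)}\cdot\frac{K^2N\mu^2 r}{s^2}\,e^{st}\,(b-a).$$
When $t<a$ this is trivial since $X_{3r}^{(a,b]}\equiv 0$ there, so assume $t\in[a,t_1]$. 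By Proposition \ref{ZMar} applied to the mean-zero martingale $Z_{3r}^{(a,b]}$ stopped at $T_{(1)}$, together with $E[Z_{3r}^{(a,b]}(t\wedge T_{(1)})]=0$ and (\ref{Zr}),
$$E\Big[e^{-\int_a^{t\wedge T_{(1)}}G_{3r}^{(a,b]}(v)\,dv}X_{3r}^{(a,b]}(t\wedge T_{(1)})\Big]=E\bigg[\int_a^{t\wedge T_{(1)}}R_3^{(a,b]}(u)\,e^{-\int_a^uG_{3r}^{(a,b]}(v)\,dv}\,du\bigg].$$

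For the left-hand side I would use the upper bound $G_{3r}^{(a,b]}(v)\le 2s+r1_{(a,b]}(v)$ from Lemma \ref{G<} to get $e^{-\int_a^{t\wedge T_{(1)}}G_{3r}^{(a,b]}(v)dv}\ge e^{2sa-r(b-a)}e^{-2s(t\wedge T_{(1)})}$, then write $e^{-2s(t\wedge T_{(1)})}=e^{-s(t\wedge T_{(1)})}e^{-s(t\wedge T_{(1)})}\ge e^{-st}e^{-s(t\wedge T_{(1)})}$ (as $t\wedge T_{(1)}\le t$), and drop the event $\{T_{(1)}\ge a\}$ since $X_{3r}^{(a,b]}$ vanishes on $\{T_{(1)}<a\}$; this reproduces exactly the step (\ref{1r.1})/(\ref{3m.1}) and gives a lower bound of the form $e^{2sa-r(b-a)-st}E[e^{-s(t\wedge T_{(1)})}X_{3r}^{(a,b]}(t\wedge T_{(1)})]$. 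For the right-hand side I would use (\ref{R3<}), i.e.\ $R_3^{(a,b]}(u)\le Nr\,\tilde X_1(u)\tilde X_2(u)1_{(a,b]}(u)$, together with the fact that on $[0,t_1\wedge T_{(1)})$ the definitions (\ref{T1}), (\ref{T2}) of $T_1,T_2$ (and $T_{(1)}\le T_1\wedge T_2$) give $X_1(u),X_2(u)<\tfrac{KN\mu}{s}e^{su}$, so that $R_3^{(a,b]}(u)\le\tfrac{K^2N\mu^2 r}{s^2}e^{2su}1_{(a,b]}(u)$ for a.e.\ $u$ in the integration region; combined with the lower bound $G_{3r}^{(a,b]}(v)\ge 2s-4\eta s-r$ from Lemma \ref{G<} and the elementary inequality $e^{2su}e^{-(2s-4\eta s-r)(u-a)}\le e^{2sa}e^{(4\eta s+r)(b-a)}$, the right-hand side is at most $\tfrac{K^2N\mu^2 r}{s^2}e^{2sa}e^{(4\eta s+r)(b-a)}(b-a)$. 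Dividing through by $e^{2sa-r(b-a)-st}$ gives the displayed per-window bound.

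For the second stage I would copy verbatim the subdivision argument from the second part of the proof of Lemma \ref{EX1m}: partition $(a,b]$ into $n$ equal subintervals $(t'_j,t'_{j+1}]$, use additivity $X_{3r}^{(a,b]}(t\wedge T_{(1)})=\sum_{j=0}^{n-1}X_{3r}^{(t'_j,t'_{j+1}]}(t\wedge T_{(1)})$ (each type $3r$ individual descends from a unique founding ancestor with a unique birth time), apply the per-window bound to each summand, sum, and let $n\to\infty$ so the prefactor $e^{(4\eta s+2r)(b-a)/n}\to 1$; this yields $E[e^{-s(t\wedge T_{(1)})}X_{3r}^{(a,b]}(t\wedge T_{(1)})]\le\tfrac{K^2N\mu^2 r}{s^2}e^{st}(b-a)$ for $0\le a<b\le t_1$ and $t\in[a,t_1]$. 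Then (\ref{EX3r.1}) is the case $a=0,\ b=t$, using $X_{3r}(t\wedge T_{(1)})=X_{3r}^{(0,t]}(t\wedge T_{(1)})$; and (\ref{EX3r.2}) follows because, for $t\le t_1$, an ancestor appearing after $t\wedge T_{(1)}$ contributes nothing, so $X_{3r}^{(a,t_1]}(t\wedge T_{(1)})=X_{3r}^{(a,t]}(t\wedge T_{(1)})$, whence applying the bound with $b=t$ and then using $e^{st}\ge e^{s(t\wedge T_{(1)})}$ to convert $E[e^{-s(t\wedge T_{(1)})}\,\cdot\,]$ into $E[\,\cdot\,]$ at the cost of one extra factor $e^{st}$ gives $E[X_{3r}^{(a,t_1]}(t\wedge T_{(1)})]\le\tfrac{K^2N\mu^2 r}{s^2}e^{2st}(t-a)$.

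I do not expect a genuine obstacle: the argument is a near-mechanical adaptation of Lemma \ref{EX1r}, with the growth rate $s$ of type-1 lineages replaced by the growth rate $2s$ of type-3 lineages — which is exactly what produces the extra $e^{st}$ in (\ref{EX3r.1}) and the $e^{2st}$ in (\ref{EX3r.2}) — and with the mutation source replaced by the recombination source $R_3^{(a,b]}$. The only points that need a little care are (i) carrying the indicator $1_{(a,b]}$ inside $R_3^{(a,b]}$ and the truncation at $t\wedge T_{(1)}$ through the estimates, so that the bounds $X_1,X_2<\tfrac{KN\mu}{s}e^{su}$ are legitimately applied only on $[0,t_1\wedge T_{(1)})$, and (ii) the bookkeeping of powers of $e^{s}$, since type 3 grows roughly twice as fast as types 1 and 2.
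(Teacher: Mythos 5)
Your proposal is correct and follows essentially the same route as the paper's proof: the martingale identity from Proposition \ref{ZMar}, the bounds on $G_{3r}^{(a,b]}$ from Lemma \ref{G<}, the bound $R_3^{(a,b]}(u)\le Nr\tilde X_1(u)\tilde X_2(u)1_{(a,b]}(u)$ combined with the definitions of $T_1,T_2$, and the subdivision argument inherited from Lemma \ref{EX1m}. The only cosmetic difference is that the paper keeps a general window endpoint $b$ and tracks the factor $\int_{t\wedge a}^{t\wedge b}1\,du$, whereas you truncate the window at $b=t$ from the outset; both yield the same factor $t-a$ in (\ref{EX3r.2}).
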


  	\begin{proof}
   	The proof is similar to the proof of Lemma \ref{EX1m}. We will first show that for   sufficiently large $N$, for $0\leq a < b \leq t_1$ and $t\in[0,t_1]$, we have
    		\begin{equation}\label{lemEX3r}
    		E\Big[e^{-s(t\wedge T_{(1)})}X_{3r}^{(a,b]}(t\wedge T_{(1)})\Big]\leq e^{(4\eta s+2r)(b-a)}\cdot \frac{K^2N\mu^2 r}{s^2}e^{st}\cdot\int_{t\wedge a}^{t\wedge b}1du.
    		\end{equation}
	If $t\in[0,a)$, this inequality is trivial, since by the definition of $X_{3r}^{(a,b]}(t)$, we have $X_{3r}^{(a,b]}(t)=0$. Assume that $t\in[a,t_1]$. By Proposition  \ref{ZMar} and (\ref{Zr}), we have $E\big[Z_{3r}^{(a,b]}(t\wedge T_{(1)})\big]=0$, and
    		\begin{equation} \label{3r.0}
    		E\Big[e^{-\int_a^{t\wedge T_{(1)}}G^{(a,b]}_{3r}(v)dv}X_{3r}^{(a,b]}(t\wedge T_{(1)})\Big]=E\bigg[\int_a^{t\wedge T_{(1)}} R_3^{(a,b]}(u)e^{-\int_a^uG^{(a,b]}_{3r}(v)dv}du\bigg].
    		\end{equation}
	Using the upper bound for $G^{(a,b]}_{3r}(t)$ in Lemma \ref{G<}, we know that for sufficiently large $N$, for $0\leq a <b\leq t_1$, and $t\in [a,t_1]$,
    		\begin{align} 
     	E\Big[e^{-\int_a^{t\wedge T_{(1)}}G^{(a,b]}_{3r}(v)dv}X_{3r}^{(a,b]}(t\wedge T_{(1)})\Big]
     	&\geq E\Big[e^{-\int_a^{t\wedge T_{(1)}} (2s+r1_{(a,b]}(v))dv}X_{3r}^{(a,b]}(t\wedge T_{(1)})1_{\{T_{(1)}\geq a\}}\Big] \nonumber \\
     	&\geq e^{2sa-st-r(b-a)}E\Big[e^{-s(t\wedge T_{(1)})}X_{3r}^{(a,b]}(t\wedge T_{(1)})1_{\{T_{(1)}\geq a\}}\Big] \nonumber \\
     	&=e^{2sa-st-r(b-a)}E\Big[e^{-s(t\wedge T_{(1)})}X_{3r}^{(a,b]}(t\wedge T_{(1)})\Big]. \label{3r.1}
    		\end{align}
	Then, we use the lower bound for $G_{3r}^{(a,b]}(t)$ in Lemma \ref{G<}, along with the upper bound for $R_3^{(a,b]}(t)$ in (\ref{R1<}) and the definitions of $T_1$ and $T_2$ in (\ref{T1}) and (\ref{T2}), we have that for sufficiently large $N$, for $0\leq a <b\leq t_1$, and $t\in [a,t_1]$,
    		\begin{align}
     	E\bigg[\int_a^{t\wedge T_{(1)}} R_3^{(a,b]}(u)e^{-\int_a^uG_{3r}^{(a,b]}(v)dv}du\bigg]
     	&\leq E\bigg[\int_a^{t\wedge T_{(1)}} Nr\tilde X_1(u)\tilde X_2(u)1_{(a,b]}(u)e^{-\int_a^uG_{3r}^{(a,b]}(v)dv}du\bigg] \nonumber\\
     	&\leq \int_a^{t\wedge b} \frac{K^2N\mu^2r}{s^2}e^{2su}\cdot e^{-(2s -4\eta s-r)(u-a)}du \nonumber \\
     	&\leq e^{(4\eta s+r)(b-a)}\cdot \frac{K^2N\mu^2 r}{s^2} \cdot e^{2sa}\cdot \int_a^{t\wedge b}1du. \label{3r.2}
    		\end{align}
	From (\ref{3r.0}), (\ref{3r.1}) and (\ref{3r.2}), we have the inequality (\ref{lemEX3r}). By similar argument to the second part of the proof of Lemma \ref{EX1m}, we can show that for sufficiently large $N$, for $0\leq a< b\leq t_1$, and $t\in[a,t_1]$,
		\begin{equation}	\label{3r.3}	
		E\Big[e^{-s(t\wedge T_{(1)})}X_{3r}^{(a,b]}(t\wedge T_{(1)})\Big]\leq \frac{K^2N\mu^2 r}{s^2}e^{st}\cdot(t\wedge b-a),
		\end{equation}
	and
		\begin{equation}	\label{3r.4}	
		E\Big[X_{3r}^{(a,b]}(t\wedge T_{(1)})\Big]\leq \frac{K^2N\mu^2 r}{s^2}e^{2st}\cdot(t\wedge b-a).
		\end{equation}
	The inequality (\ref{EX3r.1}) follows from the inequality (\ref{3r.3}) and the fact that $X_{3r}^{(0,t]}(t\wedge T_{(1)})=X_{3r}(t\wedge T_{(1)})$. The inequality (\ref{EX3r.2}) is a special case of the inequality (\ref{3r.4}) when $b=t_1$.
  	\end{proof}
  	   
  	Using these upper bounds on expectations, we can prove that when $N$ is sufficiently large, the event $T_{(1)}>t_1$ occurs with probability close to 1, and the proof is shown below.
  	
	\begin{lemma}\label{T<t1}
   	For sufficiently large $N$, we have $P(A_1^c)\leq 2\epsilon$.
	\end{lemma}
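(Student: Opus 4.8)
The plan is to bound $P(A_1^c) = P(T_{(1)} \le t_1)$ by a union bound over the three threshold-crossing times $T_1, T_2, T_3$, reducing each piece to Markov's inequality applied to the exponentially rescaled, stopped quantities $e^{-s(t_1 \wedge T_{(1)})}X_i(t_1 \wedge T_{(1)})$, whose expectations are already controlled by Lemmas \ref{EX1m}--\ref{EX3r}.

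The first step is an event inclusion. Each $X_i$ has c\`adl\`ag paths with unit upward jumps, and each threshold $t \mapsto (KN\mu/s)e^{st}$ (for $i=1,2$) or $t\mapsto (N\mu/s)e^{st}$ (for $i=3$) is continuous and increasing, so at time $T_i$ the value $X_i(T_i)$ is at least the threshold evaluated at $T_i$. Hence, on $\{T_{(1)}\le t_1\}$ we have $t_1\wedge T_{(1)} = T_{(1)}$, and whichever of $T_1,T_2,T_3$ attains the minimum forces the corresponding coordinate to meet its threshold at time $T_{(1)}$. This gives
\[
\{T_{(1)}\le t_1\} \subseteq \bigcup_{i=1,2}\Big\{ X_i(t_1\wedge T_{(1)}) \ge \tfrac{KN\mu}{s}e^{s(t_1\wedge T_{(1)})}\Big\} \cup \Big\{ X_3(t_1\wedge T_{(1)}) \ge \tfrac{N\mu}{s}e^{s(t_1\wedge T_{(1)})}\Big\}.
\]

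I would then estimate each term by Markov's inequality after multiplying by $e^{-s(t_1\wedge T_{(1)})}$. For $i=1,2$, writing $X_i = X_{im}+X_{ir}$ and invoking (\ref{EX1m.1}) and (\ref{EX1r.1}),
\[
P\Big(e^{-s(t_1\wedge T_{(1)})}X_i(t_1\wedge T_{(1)}) \ge \tfrac{KN\mu}{s}\Big) \le \frac{s}{KN\mu}\Big(\frac{N\mu}{s}+\frac{N\mu r}{s}t_1\Big) = \frac{1+rt_1}{K},
\]
and since $rt_1 \le \frac{r}{s}\ln(s/\mu)\to 0$ by (\ref{rslogsu}), this is at most $2/K$ for large $N$. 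For $i=3$, using $e^{st_1} = (s/\mu)e^{-C_1}$ together with (\ref{EX3m.1}) and (\ref{EX3r.1}), the expectation $E\big[e^{-s(t_1\wedge T_{(1)})}X_3(t_1\wedge T_{(1)})\big]$ is bounded by $\frac{2KN\mu}{s}e^{-C_1} + \frac{K^2N\mu r}{s}e^{-C_1}t_1$, which is at most $\frac{3KN\mu}{s}e^{-C_1}$ for large $N$ (again because $rt_1\to 0$), so that term contributes at most $3Ke^{-C_1}$.

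Finally I would plug in the constants: $K>6/\epsilon$ gives $2/K<\epsilon/3$ for each of the two first terms, and $C_1>\ln(5K/\epsilon)$ gives $3Ke^{-C_1}<\tfrac{3}{5}\epsilon$; summing, $P(A_1^c)\le \tfrac{\epsilon}{3}+\tfrac{\epsilon}{3}+\tfrac{3\epsilon}{5} = \tfrac{19}{15}\epsilon \le 2\epsilon$. There is no serious obstacle here: the real work is already packaged in the moment bounds of Lemmas \ref{EX1m}--\ref{EX3r}, and the only point needing a little care is the first step, namely verifying that at a crossing time the process genuinely lies at or above the continuous threshold, so that the event inclusion above is valid.
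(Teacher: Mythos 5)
Your proposal is correct and follows essentially the same route as the paper: the union bound over $T_1,T_2,T_3$, rescaling by $e^{-s(t_1\wedge T_{(1)})}$, and Markov's inequality fed by the expectation bounds of Lemmas \ref{EX1m}--\ref{EX3r}, with the same constants $K>6/\epsilon$ and $C_1>\ln(5K/\epsilon)$ closing the estimate. The only (immaterial) difference is that the paper splits each threshold in half and applies Markov separately to the $im$ and $ir$ parts, whereas you apply Markov once to the sum, which is in fact marginally tighter.
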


  	\begin{proof}
   	Recall the definition of $A_1$ in (\ref{A1}). First, note that 
    		\begin{equation} \label{T<t1.1}
    		P(A_1^c)=P(T_{(1)}\leq t_1)= P(t_1\wedge T_{(1)} = T_{(1)}) \leq \sum_{i=1}^3 P(t_1\wedge T_{(1)} =T_i).
    		\end{equation}
	Now, consider the term $P(t_1\wedge T_{(1)}=T_i)$, for $i=1,2$. Using Markov's inequality, Lemmas \ref{EX1m} and \ref{EX1r}, the definition of $t_1$ in (\ref{t1}), and (\ref{rslogsu}), for sufficiently large $N$, 
    \begin{align}
  		P(t_1\wedge T_{(1)}=T_i)
     	&\leq P\Big(X_i(t_1\wedge T_{(1)})\geq \frac{KN\mu}{s}e^{s(t_1\wedge T_{(1)})}\Big)\nonumber\\
     	&\leq P\Big(X_{im}(t_1\wedge T_{(1)})\geq \frac{KN\mu}{2s}e^{s(t_1\wedge T_{(1)})}\Big)+P\Big(X_{ir}(t_1\wedge T_{(1)})	\geq \frac{KN\mu}{2s}e^{s(t_1\wedge T_{(1)})}\Big)\nonumber\\
     	&\leq P\Big(e^{-s(t_1\wedge T_{(1)})}X_{im}(t_1\wedge T_{(1)})\geq \frac{KN\mu}{2s}\Big)+P\Big(e^{-s(t_1\wedge T_{(1)})}X_{ir}(t_1\wedge T_{(1)})\geq \frac{KN\mu}{2s}\Big)\nonumber\\
     	&\leq \frac{E[e^{-s(t_1\wedge T_{(1)})}X_{im}(t_1\wedge T_{(1)})]}{KN\mu/2s}+\frac{E[e^{-s(t_1\wedge T_{(1)})}X_{ir}(t_1\wedge T_{(1)})]}{KN\mu/2s}\nonumber\\
     	&\leq \frac{2}{K}+\frac{2rt_1}{K}\nonumber\\
     	&\leq \frac{2}{K}+\frac{2}{K}\cdot \frac{r}{s}\ln\Big(\frac{s}{\mu}\Big)\nonumber\\
		&\leq 3K^{-1}.\label{T<t1.2}
    	\end{align}
   	Next, consider the term $P(t_1\wedge T_{(1)}=T_3)$. By Markov's inequality, Lemma \ref{EX3m}, Lemma \ref{EX3r}, and using  (\ref{rslogsu}), for sufficiently large $N$, we have
    		\begin{align}
     	P(t_1\wedge T_{(1)}=T_3)
     	&\leq P\Big(X_3(t_1\wedge T_{(1)})\geq \frac{N\mu}{s} e^{s(t_1\wedge T_{(1)})}\Big)\nonumber\\
     	&\leq P\Big(X_{3m}(t_1\wedge T_{(1)})\geq \frac{N\mu}{2s}e^{s(t_1\wedge T_{(1)})}\Big)+P\Big(X_{3r}(t_1\wedge T_{(1)})\geq \frac{N\mu}{2s}e^{s(t_1\wedge T_{(1)})}\Big)\nonumber\\
     	&\leq P\Big(e^{-s(t_1\wedge T_{(1)})}X_{3m}(t_1\wedge T_{(1)})\geq \frac{N\mu}{2s}\Big)+P\Big(e^{-s(t_1\wedge T_{(1)})}X_{3r}(t_1\wedge T_{(1)})\geq \frac{N\mu}{2s}\Big)\nonumber\\
     	&\leq \frac{E[e^{-s(t_1\wedge T_{(1)})}X_{3m}(t_1\wedge T_{(1)})]}{N\mu/2s}+\frac{E[e^{-s(t_1\wedge T_{(1)})}X_{3r}(t_1\wedge T_{(1)})]}{N\mu/2s}\nonumber\\
     	&\leq \frac{4K\mu e^{st_1}}{s}+\frac{2K^2\mu r e^{st_1}t_1}{s}\nonumber\\
     	&\leq 4Ke^{-C_1}+2K^2e^{-C_1}\cdot \frac{r}{s}\ln\Big(\frac{s}{\mu}\Big)\nonumber\\
     	& \leq 5Ke^{-C_1}. \label{T<t1.3}
    		\end{align}
   	Thus, from (\ref{T<t1.1}), (\ref{T<t1.2}), (\ref{T<t1.3}) and the way we choose $K$ and $C_1$ in (\ref{K}) and (\ref{C1}), for sufficiently large $N$, we have $P(T\leq t_1)\leq 6K^{-1}+5Ke^{-C_1}\leq 2\epsilon$. 
  	\end{proof}

\subsection{The variance bounds}
	By using the upper bounds for expectations, the variance formulas in Proposition \ref{ZMar}, and the $L^2$-maximal inequality, we can show that the probability that each of the events $A_2, A_3, A_4, A_5, A_6, A_7$ occurs is at least $1-\epsilon$.
  	\begin{lemma} \label{|Z|}
   	The following statements hold:
    		\begin{enumerate}
     	\item For sufficiently large $N$, and for $i=2,3,4,5,6$, we have $P(A_i^c)\leq\epsilon$.
     	\item In the recombination dominating case, for sufficiently large $N$, we have $P(A_7^c)\leq\epsilon$.
    		\end{enumerate}
   	\end{lemma}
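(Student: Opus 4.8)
The plan is to treat $A_2,\dots,A_7$ in one stroke, since each has the form $\{\sup_{t\in[a,t_1]}|Z(t\wedge T_{(1)})|\le\lambda\}$ for one of the mean-zero martingales of Proposition~\ref{ZMar}: $Z=Z_{1m}^{(0,t_1]},Z_{2m}^{(0,t_1]},Z_{1r}^{(0,t_1]},Z_{2r}^{(0,t_1]},Z_{3m}^{(t_{0,m}^+,t_1]},Z_{3r}^{(t_{0,r},t_1]}$, with left endpoint $a=0,0,0,0,t_{0,m}^+,t_{0,r}$ respectively, and with $\lambda$ the explicit quantity in the corresponding definition (\ref{A2})--(\ref{A7}). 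Since a stopped martingale is a martingale, $(Z(t\wedge T_{(1)}))_{t\in[a,t_1]}$ is a mean-zero square-integrable martingale, so the $L^2$-maximal inequality gives $E[\sup_{t\in[a,t_1]}|Z(t\wedge T_{(1)})|^2]\le 4\,\textup{Var}(Z(t_1\wedge T_{(1)}))$, and Markov's inequality then yields $P(\sup_{t\in[a,t_1]}|Z(t\wedge T_{(1)})|>\lambda)\le 4\,\textup{Var}(Z(t_1\wedge T_{(1)}))/\lambda^2$. Hence it suffices to show $\textup{Var}(Z(t_1\wedge T_{(1)}))\le(\epsilon/4)\lambda^2$ for all sufficiently large $N$; the numerical constants $48$ and $16$ in (\ref{A2})--(\ref{A7}) are precisely the cushion that makes this go through, and for $A_7$ one also invokes $Nr\to\infty$ in the recombination dominating case (by (\ref{1<Nr})).

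To bound a variance I would start from the formula in Proposition~\ref{ZMar}, for instance
$$
\textup{Var}\big(Z_{1m}^{(0,t_1]}(t_1\wedge T_{(1)})\big)=E\bigg[\int_0^{t_1\wedge T_{(1)}}e^{-2\int_0^uG_1(v)dv}\Big(M_1^{(0,t_1]}(u)+\big(B_{1m}^{(0,t_1]}(u)+D_{1m}^{(0,t_1]}(u)\big)X_{1m}^{(0,t_1]}(u)\Big)du\bigg],
$$
push the expectation inside the time integral by Tonelli, bound the prefactor $B^{(\cdot)}+D^{(\cdot)}$ by the universal constant $3$, and bound the path-dependent factor $e^{-2\int_a^uG(v)dv}$ on $\{u\le T_{(1)}\}$ deterministically using the lower bounds $G_1,G_2\ge s-4\eta s-r-\mu$, $G_3\ge 2s-4\eta s-r$, and their analogues for $G_{ir}^{(a,b]}$, from Lemma~\ref{G<}. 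The choices $C_1>\ln(5K/\epsilon)$ and $\epsilon<1/16$ force $\eta=2Ke^{-C_1}<1/40$, so together with $r,\mu\ll s$ every exponent occurring in the resulting elementary exponential integrals stays bounded below by a fixed positive multiple of $s$ for large $N$; the integrals then converge and contribute only powers of $1/s$, together with the prefactors $e^{sa}$, $e^{2sa}$ at the left endpoint. For the inhomogeneous terms I use the pathwise bounds valid on $[0,t_1\wedge T_{(1)})$ from (\ref{T1})--(\ref{T3}) and (\ref{R1<})--(\ref{R0<}): $M_1,M_2\le\mu N$, $M_3^{(a,b]}(u)=\mu(X_1(u)+X_2(u))\le 2KN\mu^2 e^{su}/s$, $R_1^{(a,b]}(u),R_2^{(a,b]}(u)\le N\mu r e^{su}/s$, and $R_3^{(a,b]}(u)\le K^2N\mu^2 r e^{2su}/s^2$.

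The only delicate point is the $X_{im}^{(a,b]}(u)$ and $X_{ir}^{(a,b]}(u)$ terms: the crude bound $X_i(u)\le(KN\mu/s)e^{su}$ available before $T_{(1)}$ would leave a surviving factor of $K$ (or of $Ks/r$) after dividing by $\lambda^2$, and $K>6/\epsilon$ is large, so this is not enough. Instead, on $\{u\le T_{(1)}\}$ one has $X_{im}^{(a,b]}(u)=X_{im}(u\wedge T_{(1)})$ and $X_{ir}^{(a,b]}(u)=X_{ir}(u\wedge T_{(1)})$, so I substitute the \emph{expectation} bounds of Lemmas~\ref{EX1m}, \ref{EX1r}, \ref{EX3m}, \ref{EX3r}, which carry no factor $K$. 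This gives $\textup{Var}(Z_{im}^{(0,t_1]}(t_1\wedge T_{(1)}))=O(N\mu/s^2)$ for $A_2,A_3$ and $\textup{Var}(Z_{ir}^{(0,t_1]}(t_1\wedge T_{(1)}))=O(N\mu r/s^3)$ for $A_4,A_5$ (where the factor $\ln(s/\mu)$ in $\lambda^2$ then gives extra slack). For $A_6$, and for $A_7$ in the recombination dominating case, even the full $X_3$-bound is too weak, and one must use the ``delayed-start'' estimates (\ref{EX3m.3}) and (\ref{EX3r.2}), which reflect that the type-$3$ population created after $t_{0,m}^+$ (resp.\ $t_{0,r}$) has had only a short time to grow; these yield $\textup{Var}(Z_{3m}^{(t_{0,m}^+,t_1]}(t_1\wedge T_{(1)}))=O(Ke^{C_{0,m}^+}/s^2)$ and $\textup{Var}(Z_{3r}^{(t_{0,r},t_1]}(t_1\wedge T_{(1)}))=O(K^2e^{-2C_{0,r}}/s^2)$, matching the respective $\lambda^2$. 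Substituting $e^{st_1}=e^{-C_1}s/\mu$, $e^{st_{0,m}^+}=e^{C_{0,m}^+}s/(N\mu^2)$, and $e^{st_{0,r}}=e^{-C_{0,r}}s/(\mu\sqrt{Nr})$ makes all powers of $N,\mu,s,r$ cancel against $\lambda^2$, and the leftover numerical constant is $<\epsilon$ for large $N$ in each case. I expect no conceptual obstacle; the work is entirely bookkeeping — pairing each growth rate ($s$ for types $1,2$, $2s$ for type $3$) and each endpoint prefactor with the matching term so the cancellations are exact.
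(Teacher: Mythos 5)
Your proposal follows the paper's proof essentially verbatim: the variance formula of Proposition \ref{ZMar}, Tonelli, the deterministic exponential bounds from Lemma \ref{G<} valid before $T_{(1)}$, the expectation bounds of Lemmas \ref{EX1m}, \ref{EX1r}, \ref{EX3m}, \ref{EX3r} (including the delayed-start estimates (\ref{EX3m.3}) and (\ref{EX3r.2}) for $A_6$ and $A_7$) substituted in place of the $K$-carrying pathwise bounds, and the $L^2$-maximal inequality, with $1\ll Nr$ invoked for $A_7$ exactly as in the paper. The only (cosmetic) discrepancy is your bound $B+D\le 3$ where the paper uses $B\le 1$ and $D\le 1+\mu$; since the constants $48$ and $16$ in the definitions of $A_2$--$A_7$ are essentially tight against the resulting variance bounds (e.g.\ $\textup{Var}\le 12N\mu/s^2$ versus the threshold $(\epsilon/4)\lambda^2=12N\mu/s^2$ for $A_2$), you should keep $B+D\le 2+\mu$ for the arithmetic to close.
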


   	\begin{proof} Recall the definitions of the events $A_2, A_3, A_4, A_5, A_6, A_7$ in (\ref{A2}) - (\ref{A7}). We will first prove that $P(A_2^c)\leq\epsilon$, when $N$ is sufficiently large. From (\ref{B1mab}), (\ref{D1mab}) and the facts that $\mu \ll s, r\ll s$, and $s\ll 1$,  for sufficiently large $N$ and for $t\geq 0$,
    		$$
    		B_{1m}^{(0,t_1]}(t)
    		\leq \tilde X_0(t)+\tilde X_1(t)+\tilde X_2(t)+\tilde X_3(t)
    		=1,
    		$$
   	and
    		$$
    		D_{1m}^{(0,t_1]}(t)
    		\leq (1-s)+\mu
    		\leq 1.
    		$$
   	From Proposition \ref{ZMar}, Lemma \ref{G<}, and Lemma \ref{EX1m}, for sufficiently large $N$,
    		\begin{align}
     	\textup{Var}\Big(Z_{1m}^{(0,t_1]}(t_1\wedge T_{(1)})\Big) 
     	&=E\bigg[\int_0^{t_1\wedge T_{(1)}} e^{-2\int_0^u G_1(v)dv}\Big(\mu X_0(u)+\big(B_{1m}^{(0,t_1]}(u)+D_{1m}^{(0,t_1]}(u)\big)X_{1m}^{(0,t_1]}(u)\Big)du\bigg] \nonumber\\
     	&\leq E\bigg[\int_0^{t_1}e^{-2(s-4\eta s-r-\mu )u}(N\mu +2X_{1m}(u\wedge T_{(1)}))du\bigg] \nonumber\\
     	&\leq e^{2(r+\mu) t_1}E\bigg[\int_0^{t_1}e^{-2s(1-4\eta)u}(N\mu+2X_{1m}(u\wedge T_{(1)}))du\bigg] \nonumber\\
     	&=e^{2(r+\mu) t_1}\int_0^{t_{1}}e^{-2s(1-4\eta)u}(N\mu+2 E[X_{1m}(u\wedge T_{(1)})])du \nonumber\\ 
     	& \leq e^{2(r+\mu) t_1}\int_0^{t_1}e^{-2s(1-4\eta)u}\bigg(N\mu +\frac{2 N\mu}{s}e^{su}\bigg)du \nonumber\\
     	& = e^{2(r+\mu) t_1}\cdot\frac{N\mu}{s}\int_0^{t_1}\big(e^{-2s(1-4\eta)u}s+2 e^{-s(1-8\eta )u}\big)du \nonumber\\
     	& \leq e^{2(r+\mu) t_1}\cdot\frac{N\mu}{s}\cdot \bigg(\frac{1}{2(1-4\eta)}+\frac{2}{s(1-8\eta)}\bigg). \label{Z1m.1}
    		\end{align}
   	From the definition of $t_1$ in (\ref{t1}) along with (\ref{rslogsu}),  and the facts that $\mu \ll s$ and $r\ll s$, we have that 
   		\begin{equation}\label{(r+u)t1}
   		(r+\mu)t_1
   		=\frac{r}{s}\ln\Big(\frac{s}{\mu}\Big)+\frac{\mu}{s}\ln\Big(\frac{s}{\mu}\Big)-\frac{C_1(r+\mu)}{s} 
   		\ll 1.
   		\end{equation}
   	By the way we choose $\epsilon, K$ and $\eta$ in (\ref{eps}), (\ref{K}) and (\ref{n}), 
   		\begin{equation} \label{n<1/16}		
   		\eta=2Ke^{-C_1}<2\epsilon /5 <\epsilon \leq 1/16.
   		\end{equation} 
   	By (\ref{Z1m.1}), (\ref{(r+u)t1}) and the fact that $s\ll 1$, for sufficiently large $N$, 
    		$$
    		\textup{Var}\Big(Z_{1m}^{(0,t_1]}(t_1\wedge T_{(1)})\Big) 
    		\leq 2\Big(\frac{N\mu}{s}\Big)\Big(\frac{3}{s(1-8\eta)}\Big)
    		=\Big(\frac{6}{1-8\eta}\Big)\Big(\frac{N\mu}{s^2}\Big)
    		\leq\frac{12N\mu}{s^2}.
    		$$
   	By the $L^2$-maximal inequality, for sufficiently large $N$,
    		$$
    		P\bigg(\sup_{t\in[0,t_1]}\Big|Z_{1m}^{(0,t_1]}(t\wedge T_{(1)})\Big|\geq\sqrt{\frac{48}{\epsilon}\cdot\frac{N\mu}{s^2}}\bigg)
    		\leq \frac{4\textup{Var}(Z_{1m}^{(0,t_1]}(t_1\wedge T_0))}{\frac{48}{\epsilon}\cdot\frac{N\mu}{s^2}}
    		\leq\epsilon.
    		$$
  	Hence, we have shown that $P(A_2^c)\leq \epsilon$. The proof for $P(A_3^c)\leq \epsilon$ is in fact the same as that for $P(A_2^c)\leq \epsilon$.
   	
	Now, we will prove that $P(A_4^c)\leq \epsilon$. From (\ref{B1rab}), (\ref{D1rab}) and the facts that $\mu \ll s, r \ll s$, and $s\ll 1$, for sufficiently large $N$, for all $t\geq 0$, we have $B_{1r}^{(0,t_1]}(t)\leq 1$ and $D_{1r}^{(0,t_1]}(t)\leq  1$. From Proposition \ref{ZMar}, Lemma \ref{G<}, and inequality (\ref{R1<}), for sufficiently large $N$,
      	\begin{align*}
       	\textup{Var}\Big(Z_{1r}^{(0,t_1]}(t_1\wedge T_{(1)})\Big) 
       	&=E\bigg[\int_0^{t_1\wedge T_{(1)}} e^{-2\int_0^u G_{1r}^{(0,t_1]}(v)dv}\Big(R_1^{(0,t_1]}(u)+\big(B_{1r}^{(0,t_1]}(u)+D_{1r}^{(0,t_1]}(u)\big)X_{1r}^{(0,t_1]}(u)\Big)du\bigg] \\
       	&\leq E\bigg[\int_0^{t_1}e^{-2(s-4\eta s-r-\mu )u}(Nr \tilde X_3(u\wedge T_{(1)}) +2X_{1r}(u\wedge T_{(1)}))du\bigg] \\
       	&\leq e^{2(r+\mu) t_1}\int_0^{t_1}e^{-2s(1-4\eta)u}\Big(Nr\cdot \frac{\mu}{s}e^{su}+2E[X_{1r}(u\wedge T_{(1)})]\Big)du.
       	\end{align*}
   	From Lemma \ref{EX1r} and the definition of $t_1$ in (\ref{t1}), for sufficiently large $N$,
       	\begin{align*}
       	\textup{Var}\Big(Z_{1r}^{(0,t_1]}(t_1\wedge T_{(1)})\Big)
       	& \leq e^{2(r+\mu) t_1}\int_0^{t_1}e^{-2s(1-4\eta)u}\bigg(\frac{N\mu r}{s}e^{su}+\frac{2 N\mu r}{s^2}\ln\Big(\frac{s}{\mu}\Big)e^{su}\bigg)du\\
       	& \leq e^{2(r+\mu) t_1}\frac{N\mu r}{s}\Big(1+\frac{2}{s}\ln\Big(\frac{s}{\mu}\Big)\Big)\int_0^{t_1}e^{-s(1-8\eta)u}du\\
       	& \leq e^{2(r+\mu) t_1}\frac{N\mu r}{s^2(1-8\eta)}\Big(1+\frac{2}{s}\ln\Big(\frac{s}{\mu}\Big)\Big).
     	\end{align*}
    	Therefore, from (\ref{(r+u)t1}), (\ref{n<1/16}), and the fact that $\mu \ll s \ll 1$, for sufficiently large $N$, 
     	$$
     	\textup{Var}\Big(Z_{1r}^{(0,t_1]}(t_1\wedge T_{(1)})\Big)    
     	\leq 2\cdot\frac{N\mu r}{s^2(1-8\eta)}\cdot\frac{3}{s}\ln\Big(\frac{s}{\mu}\Big)
     	\leq \frac{12N\mu r}{s^3}\ln\Big(\frac{s}{\mu}\Big).
     	$$
    	By the $L^2$-maximal inequality, for sufficiently large $N$,
     	$$
     	P\bigg(\sup_{t\in[0,t_1]}\big|Z_{1r}^{(0,t_1]}(t\wedge T_{(1)})\big|\geq\sqrt{\frac{48}{\epsilon}\cdot\frac{N\mu r}{s^3}\ln\Big(\frac{s}{\mu}\Big)}\bigg)
     	\leq\frac{4\textup{Var}(Z_{1r}^{(0,t_1]}(t_1\wedge T_{(1)}))}{\frac{48}{\epsilon}\cdot\frac{N\mu r}{s^3}\ln\Big(\frac{s}{\mu}\Big)}
     	\leq\epsilon.
     	$$ 
    We have proved that $P(A_4^c)\leq \epsilon$. The proof for $P(A_5^c)\leq \epsilon$ is the same as the proof for $P(A_4^c)\leq \epsilon$. 	
   	
   	Next, We will give a proof that $P(A_6^c)\leq \epsilon$. From (\ref{B3mab}), (\ref{D3mab}) and the facts that $\mu \ll s, r \ll  s$, and $s\ll 1$. for sufficiently large $N$, for all $t\geq t_{0,m}^+$, we have $B_{3m}^{(t_{0,m}^+,t_1]}(t) \leq 1$ and $ D_{3m}^{(t_{0,m}^+,t_1]}(t)\leq 1$. From Proposition \ref{ZMar}, Lemma \ref{G<}, and the definitions of $T_1$ and $T_2$ in (\ref{T1}) and (\ref{T2}), for sufficiently large $N$,
    		\begin{align*}
     	&\textup{Var}\Big(Z_{3m}^{(t_{0,m}^+,t_1]}(t_1\wedge T_{(1)})\Big)\nonumber\\ 
     	&\hspace{0.5cm}=E\bigg[\int_{t_{0,m}^+}^{t_1\wedge T_{(1)}}e^{-2\int_{t_{0,m}^+}^u G_3(v)dv}\Big(\mu (X_1(u)+X_2(u))+\big(B_{3m}^{(t_{0,m}^+,t_1]}(u)+D_{3m}^{(t_{0,m}^+,t_1]}(u)\big)X_{3m}^{(t_{0,m}^+,t_1]}(u)\Big)du\bigg] \nonumber\\
     	&\hspace{0.5cm}\leq E\bigg[\int_{t_{0,m}^+}^{t_1\wedge T_{(1)}}e^{-2\int_{t_{0,m}^+}^u (2s-4\eta s)dv}\Big(\mu\cdot \frac{2KN\mu}{s}e^{su} +2X_{3m}^{(t_{0,m}^+,t_1]}(u\wedge T_{(1)})\Big)du\bigg] \nonumber\\
     	&\hspace{0.5cm}\leq \int_{t_{0,m}^+}^{t_1}e^{-2(2s-4\eta s-r)(u-t_{0,m}^+)}\Big(\mu \cdot \frac{2KN\mu}{s}e^{su} +2E\big[X_{3m}^{(t_{0,m}^+,t_1]}(u\wedge T_{(1)})\big]\Big)du.
     	\end{align*}
     By Lemma \ref{EX3m}, and the definition of $t_{0,m}^+$ in (\ref{t0m+}), for sufficiently large $N$,
     	\begin{align*}
     	&\textup{Var}\Big(Z_{3m}^{(t_{0,m}^+,t_1]}(t_1\wedge T_{(1)})\Big)\\
     	&\hspace{0.5cm}\leq e^{2rt_1}\cdot e^{2(2s-4\eta s)t_{0,m}^+}\int_{t_{0,m}^+}^{t_1}e^{-2(2s-4\eta s)u}\Big(\frac{2KN\mu^2}{s}e^{su} +\frac{4Ke^{-C_{0,m}^+}N^2\mu^4}{s^3}e^{2su}\Big)du \nonumber\\
     	&\hspace{0.5cm}\leq e^{2rt_1}\cdot e^{2(2s-4\eta s)t_{0,m}^+}\cdot\frac{2KN\mu^2}{s}\int_{t_{0,m}^+}^{t_1}\Big(e^{-(3s-8\eta s)u}+\frac{2e^{-C_{0,m}^+}N\mu^2}{s^2}e^{-(2s-8\eta s)u}\Big)du \nonumber\\
    	 	&\hspace{0.5cm}\leq e^{2rt_1}\cdot e^{2(2s-4\eta s)t_{0,m}^+}\cdot \frac{2KN\mu^2}{s}\bigg(\frac{e^{-(3s-8\eta s)t_{0,m}^+}}{s(3-8\eta)}+\frac{2e^{-C_{0,m}^+}N\mu^2}{s^2}\cdot \frac{e^{-(2s-8\eta s)t_{0,m}^+}}{s(2-8\eta)}\bigg)\nonumber \\
     	&\hspace{0.5cm}=e^{2rt_1}\cdot \frac{2K}{s^2}\bigg(\frac{e^{C_{0,m}^+}s}{3-8\eta}+\frac{2e^{C_{0,m}^+}}{2-8\eta}\bigg).
    		\end{align*}
     From (\ref{(r+u)t1}) and (\ref{n<1/16}) along with the fact that $s\ll 1$, for sufficiently large $N$,  
      	$$
      	\textup{Var}\Big(Z_{3m}^{(t_{0,m}^+,t_1]}(t_1\wedge T_{(1)})\Big) 
      	\leq 2 \cdot\frac{2K}{s^2}\cdot \frac{3e^{C_{0,m}^+}}{2-8\eta}
      	< \frac{12Ke^{C_{0,m}^+}}{s^2}.
      	$$
     By the $L^2$-maximal inequality, we have that for sufficiently large $N$,
      	$$
      	P\bigg(\sup_{t\in[t_{0,m}^+,t_1]}\big|Z_{3m}^{(t_{0,m}^+,t_1]}(t\wedge T_{(1)})\big|\geq\sqrt{\frac{48Ke^{C_{0,m}^+}}{\epsilon}\cdot\frac{1}{s^2}}\bigg)
      	\leq\frac{4\textup{Var}(Z_{3m}^{(t_{0,m}^+,t_1]}(t_1\wedge T_{(1)}))}{\frac{48Ke^{C_{0,m}^+}}{\epsilon}\cdot\frac{1}{s^2}}
      	\leq\epsilon.
      	$$
    	
    	Lastly, we will prove part 2. From (\ref{B3rab}), (\ref{D3rab}) and the fact that $\mu \ll s, r\ll s $, and $s\ll 1$, for sufficiently large $N$, for all $t\geq 0$, we have $B_{1r}^{(0,t_1]}(t)\leq 1$ and $D_{1r}^{(0,t_1]}(t)\leq 1$. From Proposition \ref{ZMar}, Lemma \ref{G<}, inequality (\ref{R3<}), and the definition of $T_1$ and $T_2$ in (\ref{T1}) and (\ref{T2}), for sufficiently large $N$,
     	\begin{align*}
      	&\textup{Var}\Big(Z_{3r}^{(t_{0,r},t_1]}(t_1\wedge T_{(1)})\Big)\\ 
      	&\hspace{0.5cm}=E\bigg[\int_{t_{0,r}}^{t_1\wedge T_{(1)}} e^{-2\int_{t_{0,r}}^u G_{3r}^{(t_{0,r},t_1]}(v)dv}  \Big(R_3^{(t_{0,r},t_1]}(u)+\big(B_{3r}^{(t_{0,r},t_1]}(u)+D_{3r}^{(t_{0,r},t_1]}(u)\big)X_{3r}^{(t_{0,r},t_1]}(u)\Big)du\bigg] \\
      	&\hspace{0.5cm}\leq E\bigg[ \int_{t_{0,r}}^{t_1\wedge T_{(1)}}e^{-2(2s-4\eta s-r)(u-t_{0,r})}\Big(Nr\tilde X_1(u) \tilde X_2(u) +2X_{3r}^{(t_{0,r},t_1]}(u\wedge T_{(1)})\Big)du\bigg] \\
      	&\hspace{0.5cm}\leq \int_{t_{0,r}}^{t_1}e^{-2(2s-4\eta s-r)(u-t_{0,r})}	\Big(\frac{K^2N\mu^2 r}{s^2}e^{2su}+2E[X_{3r}^{(t_{0,r},t_1]}(u\wedge T_{(1)})]\Big)du.
      	\end{align*}
   	By Lemma \ref{EX3r} and the definitions of $t_{0,r}$ and $t_1$ in (\ref{t0r}) and (\ref{t1}), for sufficiently large $N$, 
      	\begin{align*}
      	&\textup{Var}\Big(Z_{3r}^{(t_{0,r},t_1]}(t_1\wedge T_{(1)})\Big)\\ 
      	&\hspace{0.5cm}\leq e^{2rt_1}\cdot e^{2(2s-4\eta s)t_{0,r}}\int_{t_{0,r}}^{t_1}e^{-2(2s-4\eta s)u}\Big(\frac{K^2N\mu^2r}{s^2}e^{2su} +\frac{2K^2N\mu^2r}{s^2}e^{2su}(u-t_{0,r})\Big)du \\
      	&\hspace{0.5cm}\leq e^{2rt_1}\cdot e^{2(2s-4\eta s)t_{0,r}}\cdot\frac{K^2N\mu^2r}{s^2}\Big(1+2(t_1-t_{0,r})\Big)\int_{t_{0,r}}^{t_1}e^{-(2s-8\eta s)u}du \\
      	&\hspace{0.5cm}\leq e^{2rt_1}\cdot e^{2(2s-4\eta s)t_{0,r}}\cdot\frac{K^2N\mu^2r}{s^2}\Big(1+2(t_1-t_{0,r})\Big)\cdot \frac{e^{-(2s-8\eta s)t_{0,r}}}{s(2-8\eta)} \\
      	&\hspace{0.5cm}= e^{2rt_1}\cdot \frac{K^2e^{-2C_{0,r}}}{s(2-8\eta)}\Big(1+\frac{2}{s}\ln(Nr)+\frac{2(C_{0,r}-C_1)}{s}\Big)
     	\end{align*}
    	Because in the recombination dominating case, $1\ll Nr$, by using the fact that $s\ll 1$ along with (\ref{(r+u)t1}) and (\ref{n<1/16}), we have that for sufficiently large $N$, 
     	$$
     	\textup{Var}\Big(Z_{3r}^{(t_{0,r},t_1]}(t_1\wedge T_{(1)})\Big) 
     	\leq 2 \cdot \frac{K^2e^{-2C_{0,r}}}{s(2-8\eta)}\cdot \frac{3}{s}\ln(Nr)
     	\leq \frac{4K^2e^{-2C_{0,r}}\ln(Nr)}{s^2}.
     	$$
    	By the $L^2$-maximal inequality, for sufficiently large $N$, 
     	$$
     	P\bigg(\sup_{t\in[t_{0,r},t_1]}|Z_{3r}^{(t_{0,r},t_1]}(t\wedge T_{(1)})|\geq\sqrt{\frac{16K^2e^{-2C_{0,r}}}{\epsilon}\cdot\frac{\ln(Nr)}{s^2}}\bigg)
     	\leq\frac{4\textup{Var}(Z_{3r}^{(t_{0,r},t_1]}(t_1\wedge T_{(1)}))}{\frac{16K^2e^{-2C_{0,r}}}{\epsilon}\cdot\frac{\ln(Nr)}{s^2}}
     	\leq\epsilon,
     	$$
     which proves part 2.
   	\end{proof}

\subsection{Results on type 3 individuals}
  	In this section, we will show that the events $A_8$ and $A_9$ as defined in (\ref{A8}) and (\ref{A9}) occur with high probability. That is with probability close to 1, there are no type 3m (or 3r) individuals at time $t_1$ that are descended from type 3m (or 3r) ancestors that appear before time $t_{0,m}$ (or $t_{0,r}$). The proof consists of two main ideas.
  	\begin{enumerate}
  	\item With probability close to 1, the number of type 3m (or 3r) ancestors that appear before time $t_{0,m}$ (or $t_{0,r}$) is small.
  	\item With probability close to 1, each of these early ancestors will not have alive descendant by time $t_1$. 
  	\end{enumerate}
  	At the end of this subsection, we will show that the events $A_{10}$ and $A_{11}$ as defined in (\ref{A10}) and (\ref{A11}) also occur with high probability.
	
   	\begin{lemma} \label{m&rho}
    	Define $m(t)$ and $\rho(t)$ to be the number of type 3m ancestors and 3r ancestors respectively that appear in the time interval $(0,t]$. For sufficiently large $N$, the following statements hold:  
     	\begin{enumerate}
      	\item $P\bigg(m(t_{0,m}\wedge T_{(1)})\geq \frac{e^{-C_{0,m}/2}}{s}\bigg)\leq \epsilon.$
      	\item $P\bigg(\rho(t_{0,r}\wedge T_{(1)})\geq \frac{e^{-C_{0,r}+1}}{s}\bigg)\leq \epsilon.$
     	\end{enumerate}
    	\end{lemma}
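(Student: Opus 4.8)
The plan is to bound the expectations $E[m(t_{0,m}\wedge T_{(1)})]$ and $E[\rho(t_{0,r}\wedge T_{(1)})]$ and then invoke Markov's inequality, in the same spirit as the proof of Lemma \ref{T<t1}, the difference being that we now track the counting processes of ancestors rather than the populations themselves.

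First, for statement 1, I would note that $m$ is a counting process whose intensity at time $t$ is $\mu(X_1(t)+X_2(t))$, since a type 3m ancestor is created precisely when a type 1 or type 2 individual mutates at the relevant locus (this matches $M_3^{(a,b]}(t)$ in (\ref{M3ab}) for $t$ in the interval). Hence $m(t)-\int_0^t\mu(X_1(u)+X_2(u))\,du$ is a mean-zero martingale; because this intensity is bounded by $2\mu N$, the counting process is dominated by a Poisson process and hence integrable, so the martingale is genuine and optional stopping at the bounded time $t_{0,m}\wedge T_{(1)}$ gives
$$E\big[m(t_{0,m}\wedge T_{(1)})\big]=E\bigg[\int_0^{t_{0,m}\wedge T_{(1)}}\mu\big(X_1(u)+X_2(u)\big)\,du\bigg].$$
On $\{u<T_{(1)}\}$ we have $X_i(u)<\frac{KN\mu}{s}e^{su}$ for $i=1,2$ by the definitions (\ref{T1}) and (\ref{T2}), so the right-hand side is at most $\int_0^{t_{0,m}}\frac{2KN\mu^2}{s}e^{su}\,du\le\frac{2KN\mu^2}{s^2}e^{st_{0,m}}$, and substituting $e^{st_{0,m}}=\frac{s}{N\mu^2}e^{-C_{0,m}}$ from (\ref{t0m}) collapses this to $\frac{2K}{s}e^{-C_{0,m}}$. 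Markov's inequality then yields $P\big(m(t_{0,m}\wedge T_{(1)})\ge\frac{e^{-C_{0,m}/2}}{s}\big)\le 2Ke^{-C_{0,m}/2}$, which is at most $\epsilon$ because $C_{0,m}>2\ln(2K/\epsilon)$ by (\ref{C0m}).

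For statement 2, I would observe that $\rho$ increases by one exactly when a death occurs and the replacement is formed by recombining an $A$ allele from a type 1 individual with a $B$ allele from a type 2 individual, so its intensity at time $t$ is bounded above by $Nr\tilde X_1(t)\tilde X_2(t)$, the same bound that (\ref{R3<}) provides for $R_3^{(a,b]}$ with the interval indicator removed. Running the same compensator-and-optional-stopping argument (the intensity is now bounded by $Nr$) gives $E[\rho(t_{0,r}\wedge T_{(1)})]\le E\big[\int_0^{t_{0,r}\wedge T_{(1)}}Nr\tilde X_1(u)\tilde X_2(u)\,du\big]$; on $\{u<T_{(1)}\}$ one has $\tilde X_1(u)\tilde X_2(u)<\frac{K^2\mu^2}{s^2}e^{2su}$, so the right-hand side is at most $\int_0^{t_{0,r}}\frac{K^2N\mu^2 r}{s^2}e^{2su}\,du\le\frac{K^2N\mu^2 r}{2s^3}e^{2st_{0,r}}$. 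In the recombination dominating case, and in the mutation dominating case when $Nr\ge e$, (\ref{t0r}) gives $e^{2st_{0,r}}=\frac{s^2}{\mu^2 Nr}e^{-2C_{0,r}}$, collapsing the bound to $\frac{K^2}{2s}e^{-2C_{0,r}}$; in the remaining subcase $Nr<e$ (and if $r=0$ the statement is trivial since then $\rho\equiv 0$) one has $e^{2st_{0,r}}=\frac{s^2}{\mu^2}e^{-2C_{0,r}}$, and the surviving factor $Nr<e$ leaves the bound $\frac{K^2 e}{2s}e^{-2C_{0,r}}$. In either case Markov's inequality gives $P\big(\rho(t_{0,r}\wedge T_{(1)})\ge\frac{e^{-C_{0,r}+1}}{s}\big)\le\frac{K^2}{2}e^{-C_{0,r}}$, which is at most $\epsilon$ since $C_{0,r}>\ln(K^2/\epsilon)$ by (\ref{C0r}).

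The argument is essentially routine, so I do not expect a real obstacle; the only points I expect to require a word of care are the justification that the compensated counting processes are true martingales so that optional stopping applies at the bounded stopping times $t_{0,m}\wedge T_{(1)}$ and $t_{0,r}\wedge T_{(1)}$ — handled above via domination by a Poisson process — and the correct identification of the intensity of $\rho$ together with the bookkeeping for the three subcases in the definition (\ref{t0r}) of $t_{0,r}$, each of which is dispatched by the same computation up to the harmless constant coming from the size of $Nr$.
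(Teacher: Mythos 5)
Your proposal is correct and follows essentially the same route as the paper: both treat $m$ and $\rho$ as pure birth (counting) processes, compensate by the integrated intensities $M_3^{(0,t]}$ and $R_3^{(0,t]}$ to get mean-zero martingales, apply optional stopping at $t_{0,m}\wedge T_{(1)}$ and $t_{0,r}\wedge T_{(1)}$, bound the expectations via the definition of $T_{(1)}$ and the three subcases in the definition of $t_{0,r}$, and conclude with Markov's inequality using the choices of $C_{0,m}$ and $C_{0,r}$. The only differences are cosmetic (you keep the harmless factor of $\tfrac12$ from $\int e^{2su}\,du$ that the paper discards).
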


    	\begin{proof}
    	The process $(m(t), t\geq 0)$ is a pure birth process with total birth rate $M_3^{(0,t]}(t)$ as defined in (\ref{M3ab}). Then, there is a mean-zero martingale $(W'(t),t\geq 0)$ such that for all $t\geq 0$,
     	$$
     	m(t)=W'(t)+\int_0^tM_3^{(0,u]}(u)du.
     	$$ 
    	By Doob's stopping theorem, $(W'(t\wedge T_{(1)}),t\geq 0)$ is a mean-zero martingale. Thus,
     	\begin{align*}
	  	E[m(t_{0,m} \wedge T_{(1)})]
	  	&=E\bigg[\int_0^{t_{0,m}\wedge T_{(1)}}\mu(X_1(u)+X_2(u))du\bigg]\\
	  	&\leq \int_0^{t_{0,m}}\mu\cdot\frac{2KN\mu}{s}e^{su}du\\
	  	&=\frac{2KN\mu^2}{s^2}(e^{st_{0,m}}-1)\\
	  	&\leq\frac{2Ke^{-C_{0,m}}}{s}.
     	\end{align*}
    	So, by Markov's inequality and by the way we choose $C_{0,m}$ in (\ref{C0m}),
	 	$$
	 	P\bigg(m(t_{0,m}\wedge T_{(1)})\geq \frac{e^{C_{0,m}/2}}{s}\bigg)
	 	\leq\frac{E[m(t_{0,m} \wedge T_{(1)})]}{e^{-C_{0,m}/2}/s}
	 	\leq 2Ke^{-C_{0,m}/2}
	 	\leq \epsilon.
	 	$$

	Now, consider the process $(\rho(t),t\geq 0)$. By (\ref{R3ab}), the process is a pure birth process, and the birth rate at time $t$ is given by  $R_3^{(0,t]}(t)$ as defined in (\ref{R3ab}). Then, there is a mean-zero martingale $(W''(t),t\geq 0)$ such that for all $t\geq 0$,
     	$$
     	\rho(t)=W''(t)+\int_0^tR_3^{(0,u]}(u)du,
     	$$ 
    	By Doob's stopping theorem, $(W''(t\wedge T_{(1)}),t\geq 0)$ is a mean-zero martingale. Thus,
     	\begin{align}
	  	E[\rho(t_{0,r} \wedge T_{(1)})]
	  	&=E\bigg[\int_0^{t_{0,r}\wedge T_{(1)}}R_3^{(0,u]}(u)du\bigg]\nonumber\\
	  	&\leq E\bigg[\int_0^{t_{0,r}\wedge T_{(1)}}Nr \tilde X_1(u)\tilde X_2(u)du\bigg]\nonumber\\
      	&\leq \int_0^{t_{0,r}}Nr\cdot\frac{K^2\mu^2}{s^2}e^{2su}du\nonumber\\
	  	&\leq\frac{K^2N\mu^2r}{s^3}\cdot e^{2st_{0,r}}\label{m&rho.1}
     	\end{align}
   	From the definition of $t_{0,r}$ in (\ref{t0r}), if we are in the recombination dominating case or in the mutation dominating case with $Nr\geq e$,
   		$$
   		\frac{K^2N\mu^2r}{s^3}\cdot e^{2st_{0,r}}
   		=\frac{K^2e^{-2C_{0,r}}}{s},
   		$$
   	and in the mutation dominating case when $Nr< e$, we have
   		$$
   		\frac{K^2N\mu^2r}{s^3}\cdot e^{2st_{0,r}}
   		=\frac{K^2e^{-2C_{0,r}}Nr}{s}
   		\leq\frac{K^2e^{-2C_{0,r}+1}}{s}.
   		$$
   	Hence, from (\ref{m&rho.1}), 
   		$$
   		E[\rho(t_{0,r} \wedge T_{(1)})]\leq \frac{K^2e^{-2C_{0,r}+1}}{s}.
   		$$
    	Lastly, by Markov's inequality and the definition of $C_{0,r}$ in (\ref{C0r}),
    		$$
     	P\bigg(\rho(t_{0,r}\wedge T_{(1)})\geq \frac{e^{-C_{0,r}+1}}{s}\bigg)
     	\leq\frac{E[\rho(t_{0,r} \wedge T_{(1)})]}{e^{-C_{0,r}+1}/s}
     	\leq K^2e^{-C_{0,r}}
     	\leq \epsilon.	  
     	$$
   	\end{proof}

	\begin{lemma} \label{3mlemma}
	For $i\in \mathbb{N}$, define $\tau_{i,m}$ to be the time that the ith type 3m ancestor appears, where we set $\tau_{i,m}=\infty$ if the ith type 3m ancestor never appears. Let $Y_{i,m}(t)$ be the number of descendants of the ith type 3m ancestor alive at time $t$. Then, for sufficiently large N, for all $i\in\mathbb{N}$,
	$$
	P\Big(\{Y_{i,m}(t_1)>0\}\cap\{t_1< T_{(1)}\}\Big|\tau_{i,m}\leq t_{0,m}\wedge T_{(1)}\Big)\leq 3s.
	$$
	\end{lemma}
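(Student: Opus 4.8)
The plan is to condition on the random time $\tau_{i,m}$ and to show that, on the event $\{\tau_{i,m}\le t_{0,m}\wedge T_{(1)}\}$, the conditional probability of $\{Y_{i,m}(t_1)>0\}\cap\{t_1<T_{(1)}\}$ given $\mathcal F_{\tau_{i,m}}$ is already at most $3s$; integrating this bound against the indicator of $\{\tau_{i,m}\le t_{0,m}\wedge T_{(1)}\}$ then yields the stated conditional probability. So write $\tau=\tau_{i,m}$ and $\tilde Y_{i,m}(t)=Y_{i,m}(t)/N$, note that $Y_{i,m}(\tau)=1$, and recall that $0<t_{0,m}<t_1$ for $N$ large.

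The first step is to dominate the family of the $i$th type $3m$ ancestor by a supercritical linear birth-death process. From the transition rules in Section~\ref{rate}: every type $3$ individual dies at rate $1-2s$, so the family loses a member at rate exactly $(1-2s)Y_{i,m}(t)$; and whenever any individual dies, the total death rate being at most $X_0(t)+X_1(t)+X_2(t)+X_3(t)=N$ (every per-individual death rate lies in $[0,1]$), the newly created individual belongs to this family with conditional probability
$$
(1-r)\tilde Y_{i,m}(t)+r\,\tilde Y_{i,m}(t)\big(\tilde X_1(t)+\tilde X_2(t)+\tilde X_3(t)\big)=\tilde Y_{i,m}(t)\big(1-r\tilde X_0(t)\big)\le \tilde Y_{i,m}(t),
$$
as one reads off from the offspring rules and the recombination mechanism. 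Hence the family gains a member at rate at most $Y_{i,m}(t)$, and so there is a coupling under which $Y_{i,m}(t)\le\hat Y(t)$ for all $t\ge\tau$, where $\hat Y$ is the continuous-time branching process with $\hat Y(\tau)=1$ in which every particle independently gives birth at rate $1$ and dies at rate $1-2s$; this is the standard monotone coupling for birth-death chains, and it is valid because the two per-state down-rates agree while the up-rate of $\hat Y$ dominates. In particular
$$
P\big(\{Y_{i,m}(t_1)>0\}\cap\{t_1<T_{(1)}\}\,\big|\,\mathcal F_\tau\big)\le P\big(\hat Y(t_1)>0\,\big|\,\mathcal F_\tau\big).
$$

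To finish, invoke the classical extinction formula: since $\hat Y$ has birth rate $1$ and death rate $1-2s$, starting from one particle at time $\tau$,
$$
P\big(\hat Y(t_1)>0\big)=\frac{2s\,e^{2s(t_1-\tau)}}{e^{2s(t_1-\tau)}-(1-2s)}=\frac{2s}{1-(1-2s)e^{-2s(t_1-\tau)}}.
$$
On $\{\tau\le t_{0,m}\}$ one has $t_1-\tau\ge t_1-t_{0,m}=\tfrac{1}{s}\ln(N\mu)+\tfrac{C_{0,m}-C_1}{s}$, so $2s(t_1-\tau)\ge 2\ln(N\mu)+2(C_{0,m}-C_1)$, which tends to $\infty$ by (\ref{Con1.1}). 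Hence $(1-2s)e^{-2s(t_1-\tau)}\le e^{-2s(t_1-t_{0,m})}\to 0$ as $N\to\infty$, so for all sufficiently large $N$ the right-hand side above is at most $3s$. This gives $P\big(\{Y_{i,m}(t_1)>0\}\cap\{t_1<T_{(1)}\}\mid\mathcal F_\tau\big)\le 3s$ on $\{\tau\le t_{0,m}\wedge T_{(1)}\}$, and taking expectations over that event completes the proof.

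The main obstacle is the first step: one must check carefully, from the detailed per-type transition rates and the recombination offspring conventions of Section~\ref{rate}, that the family's per-individual birth rate really is bounded by $1$ (up to an $O(1/N)$ error coming from whether the just-deceased individual is excluded when a parent is sampled, which is harmless since $1/N\ll s$) while its death rate is exactly $1-2s$, and then to justify the stochastic comparison $Y_{i,m}\preceq\hat Y$. Once that is in place, the explicit birth-death survival formula and the asymptotics forced by $1\ll N\mu$ are routine.
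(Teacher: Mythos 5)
Your overall strategy (condition on $\mathcal{F}_{\tau_{i,m}}$, dominate the family of the $i$th ancestor by a linear birth--death process, apply the explicit survival formula, and use that $t_1-t_{0,m}=\tfrac{1}{s}\ln(N\mu)+\tfrac{C_{0,m}-C_1}{s}\to\infty$) is the same as the paper's, but the domination step contains a genuine gap. In this model a death and the compensating birth form a single transition, so the rate at which the family \emph{count} decreases is not the rate $(1-2s)Y_{i,m}(t)$ at which family members die: a dying family member may be replaced by an offspring of that same family, in which case the count is unchanged. The correct per-individual down-rate is $(1-2s)\big(1-\tilde Y_{i,m}(t)+r\tilde X_0(t)\tilde Y_{i,m}(t)\big)$, which is \emph{smaller} than $1-2s$ by a term of order $\tilde Y_{i,m}(t)$. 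The monotone coupling $Y_{i,m}\preceq\hat Y$ requires the dominated process to have down-rate at least that of the dominating one at equal states, so the coupling with the $(1,\,1-2s)$ process fails. Nor can you absorb the error crudely: on $[0,t_1\wedge T_{(1)})$ one only has $\tilde Y_{i,m}\le\tilde X_3\le\eta$, and $\eta$ is a fixed constant with $s\ll\eta$, so lower-bounding the death rate by $(1-2s)(1-\eta)$ produces a comparison process whose survival probability is of order $\eta$, not $O(s)$ --- the conclusion $\le 3s$ is lost.

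The paper's proof closes exactly this hole. It records that the per-individual birth rate is at most $1-\tilde Y_{i,m}$ while the per-individual death rate is at least $(1-2s)(1-\tilde Y_{i,m})$ --- the \emph{same} factor $1-\tilde Y_{i,m}$ in both --- and then performs the random time change $\lambda(t)=\int_{\tau_{i,m}}^{t+\tau_{i,m}}\big(1-\tilde Y_{i,m}(v)\big)\,dv$. The time-changed process has birth rate $\le 1$ and death rate $\ge 1-2s$, so it genuinely is dominated by the process $Y^{\#}$ with rates $(1,\,1-2s)$; the only price is that one can assert survival of $Y^{\#}$ only up to time $\lambda(t_1-t_{0,m})\ge (t_1-t_{0,m})/2$ (using $\tilde Y_{i,m}\le\eta\le 1/2$ on $\{t_1<T_{(1)}\}$), which still yields the bound $2s/\big(1-e^{-s(t_1-t_{0,m})}\big)\le 3s$. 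If you insert this time-change step, the remainder of your argument goes through.
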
 

	\begin{proof} 
	First, define $\tilde Y_{i,m}(t)=Y_{i,m}(t)/N$ for all $t\geq 0$ and $i\in\mathbb{N}$. By following the same reasoning that led us to get the rates in (\ref{B3mab}) and (\ref{D3mab}), we have that on the event $\tau_{i,m}\leq t_{0,m}\wedge T_{(1)}$, the process $(Y_{i,m}(t+\tau_{i,m}),t\geq 0)$ is a birth-death process with $Y_{i,m}(\tau_{i,m})=1$, where each individual gives birth at rate
		$$
		b(t)=\big(\tilde X_0(t+\tau_{i,m})+(1-s)(\tilde X_1(t+\tau_{i,m})+\tilde X_2(t+\tau_{i,m}))+(1-2s)(\tilde X_3(t+\tau_{i,m})-\tilde Y_{i,m}(t+\tau_{i,m}))\big)(1-r\tilde X_0(t+\tau_{i,m})),
		$$
	and dies at rate 
		$$
		d(t)=(1-2s)\big(1-\tilde Y_{i,m}(t+\tau_{i,m})+r\tilde X_0(t+\tau_{i,m})\tilde Y_{i,m}(t+\tau_{i,m})\big).
		$$
	Note that for $t\geq 0$,
		\begin{align*}
		b(t)&\leq\tilde X_0(t+\tau_{i,m})+\tilde X_1(t+\tau_{i,m})+\tilde X_2(t+\tau_{i,m})+(\tilde X_3(t+\tau_{i,m})-\tilde Y_{i,m}(t+\tau_{i,m}))\\
		&= 1-\tilde Y_{i,m}(t+\tau_{i,m}),
		\end{align*}
	and
		$$
		d(t)\geq(1-2s)\big(1-\tilde Y_{i,m}(t+\tau_{i,m})).
		$$

	For $t\geq 0$, define $\lambda(t)=\int_{\tau_{i,m}}^{t+\tau_{i,m}}1-\tilde Y_{i,m}(v)dv$. Define $Y_{i,m}^*(t)=Y_{i,m}(\lambda^{-1}(t)+\tau_{i,m}) $ for $t\in[0,\lambda((t_1\wedge T_{(1)}) -\tau_{i,m})]$. The process $(Y^*_{i,m}(t),0\leq t<\lambda((t_1\wedge T_{(1)}) -\tau_{i,m}))$ is a birth-death process with $Y^*_{i,m}(0)=1$, where each individual gives birth at rate 
		$$
		b^*(t)=b(\lambda^{-1}(t))\cdot(\lambda^{-1})'(t)=\frac{b(\lambda^{-1}(t))}{1-\tilde Y_{i,m}(\lambda^{-1}(t)+\tau_{i,m})}\leq 1,
		$$
	and dies at rate
		$$
		d^*(t)=d(\lambda^{-1}(t))\cdot(\lambda^{-1})'(t)=\frac{d(\lambda^{-1}(t))}{1-\tilde Y_{i,m}(\lambda^{-1}(t)+\tau_{i,m})}\geq 1-2s.
		$$

	Let $(Y^\#(t),t\geq 0)$ be a birth-death process where $Y^\#(0)=1$, where each individual gives birth at rate 1 and dies at rate $1-2s$. From the generating function of birth and death process (in the section 5 of Chapter III of \cite{A-N}), for $t\geq 0$,
		\begin{equation}\label{-5.2.1}
		P(Y^\#(t)>0)
		= \frac{1-(1-2s)}{1-(1-2s)e^{-(1-(1-2s))t}}
		\leq\frac{2s}{1-e^{-2st}}.
		\end{equation}
	Since $1\ll N\mu$, we have that for sufficiently large $N$, 
		\begin{equation}\label{-5.3}
		P\bigg(Y^\#\Big(\frac{t_1-t_{0,m}}{2}\Big)>0\bigg)
		\leq \frac{2s}{1-e^{-s(t_1-t_{0,m})}}
		=\frac{2s}{1-\frac{1}{N\mu}e^{C_1-C_{0,m}}}\leq 3s.
		\end{equation}
	By Lemma \ref{G<} and (\ref{n<1/16}), on the event $t_1<T_{(1)}$ , we have $Y_{i,m}(t)\leq X_3(t)\leq \eta N\leq \frac{N}{2}$ for all $t\in[0,t_1]$, which implies that 
		\begin{equation}\label{lem3m.1}
		\lambda(t_1-t_{0,m})
		=\int_{\tau_{i,m}}^{t_1-t_{0,m}+\tau_{i,m}}1-\tilde Y_{i,m}(v)dv
		\geq\int_{\tau_{i,m}}^{t_1-t_{0,m}+\tau_{i,m}} \frac{1}{2} dv
		\geq \frac{t_1-t_{0,m}}{2}.
		\end{equation}
	It is possible to couple the process $(Y^\#(t),t\geq 0)$ with the population process, such that 1) on the event $t_1<T_{(1)} $, for any time $t$, if $Y^*_{i,m}(t)>0$, then $Y^\#(t)>0$, and 2) the process $(Y^\#(t),t\geq 0)$ is independent of $\mathcal{F}_{\tau_{i,m}}$. It follows that
		\begin{align*}
		&P\Big(\{Y_{i,m}(t_1)>0\}\cap\{t_1\leq T_{(1)}\}\Big|\tau_{i,m}\leq t_0\wedge T_{(1)}\Big)\nonumber\\
		&\hspace{0.5 cm}=P\Big(\{Y_{i,m}(t_1)>0\}\cap\{t_1<T_{(1)}\}\Big|\tau_{i,m}\leq t_{0,m}\wedge T_{(1)}\Big)\nonumber\\
		&\hspace{0.5 cm}\leq P\Big(\{Y_{i,m}(t_1-t_{0,m}+\tau_{i,m})>0\}\cap\{t_1<T_{(1)}\}\Big|\tau_{i,m}\leq t_{0,m}\wedge T_{(1)}\Big)\nonumber\\
		&\hspace{0.5 cm}=P\Big(\{Y^*_{i,m}(\lambda(t_1-t_{0,m}))>0\}\cap\{t_1< T_{(1)}\}\Big|\tau_{i,m}\leq t_{0,m}\wedge T_{(1)}\Big)\nonumber\\
		&\hspace{0.5 cm}\leq P\Big(\{Y^\#(\lambda(t_1-t_{0,m}))>0\}\cap\{t_1< T_{(1)}\}\Big|\tau_{i,m}\leq t_{0,m}\wedge T_{(1)}\Big).
		\end{align*}
	Lastly, using (\ref{lem3m.1}) and (\ref{-5.3}), we have
		\begin{align}
		&P\Big(\{Y_{i,m}(t_1)>0\}\cap\{t_1\leq T_{(1)}\}\Big|\tau_{i,m}\leq t_0\wedge T_{(1)}\Big)\nonumber\\
		&\hspace{0.5 cm}\leq P\Big(\Big\{Y^\#\Big(\frac{t_1-t_{0,m}}{2}\Big)>0\Big\}\cap\{t_1< T_{(1)}\}\Big|\tau_{i,m}\leq t_{0,m}\wedge T_{(1)}\Big)\nonumber\\
		&\hspace{0.5 cm}\leq P\Big(Y^\#\Big(\frac{t_1-t_{0,m}}{2}\Big)>0\Big|\tau_{i,m}\leq t_{0,m}\wedge T_{(1)}\Big)\nonumber\\
		&\hspace{0.5 cm}= P\Big(Y^\#\Big(\frac{t_1-t_0}{2}\Big)>0\Big)\nonumber\\
		&\hspace{0.5 cm}\leq 3s. \label{lem3m.2}
		\end{align}
	\end{proof} 

	\begin{lemma} \label{3rlemma}
	For $i\in \mathbb{N}$, define $\tau_{i,r}$ to be the time that the ith type 3r ancestor appears, where we set $\tau_{i,r}=\infty$, if the ith type 3r ancestor never appears. Let $Y_{i,r}(t)$ be the number of  descendants of the ith type 3r ancestor alive at time $t$. Then, for sufficiently large N,  for all $i\in\mathbb{N}$,
		$$
		P\Big(\{Y_{i,r}(t_1)>0\}\cap\{t_1<T_{(1)}\}\Big|\tau_{i,r}\leq t_{0,r}\wedge T_{(1)}\Big)\leq 4s.
		$$
	\end{lemma}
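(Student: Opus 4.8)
The plan is to mirror the proof of Lemma \ref{3mlemma} almost line for line, replacing mutation by recombination throughout and keeping careful track of the fact that $t_{0,r}$ is defined by the three-branch formula (\ref{t0r}) rather than by a single expression. First I would observe that, on the event $\tau_{i,r}\le t_{0,r}\wedge T_{(1)}$, the process $(Y_{i,r}(t+\tau_{i,r}),t\ge 0)$ is a birth-death process with $Y_{i,r}(\tau_{i,r})=1$: following the reasoning that produced (\ref{B3rab}) and (\ref{D3rab}), with $\tilde Y_{i,r}=Y_{i,r}/N$, each of its members gives birth at rate
$$
b(t)=\big(\tilde X_0(t+\tau_{i,r})+(1-s)(\tilde X_1(t+\tau_{i,r})+\tilde X_2(t+\tau_{i,r}))+(1-2s)(\tilde X_3(t+\tau_{i,r})-\tilde Y_{i,r}(t+\tau_{i,r}))\big)(1-r\tilde X_0(t+\tau_{i,r}))
$$
and dies at rate
$$
d(t)=(1-2s)\big(1-\tilde Y_{i,r}(t+\tau_{i,r})+r\tilde X_0(t+\tau_{i,r})\tilde Y_{i,r}(t+\tau_{i,r})\big),
$$
the term $-r\tilde X_1\tilde X_2 1_{(a,b]}$ of (\ref{D3rab}) being absent here since it corresponds to the birth of a fresh type 3r ancestor rather than of a descendant of the $i$th one. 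Exactly as in Lemma \ref{3mlemma}, $b(t)\le 1-\tilde Y_{i,r}(t+\tau_{i,r})$ and $d(t)\ge(1-2s)(1-\tilde Y_{i,r}(t+\tau_{i,r}))$.

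Next I would apply the same deterministic time change: set $\lambda(t)=\int_{\tau_{i,r}}^{t+\tau_{i,r}}\big(1-\tilde Y_{i,r}(v)\big)\,dv$ and $Y^*_{i,r}(t)=Y_{i,r}(\lambda^{-1}(t)+\tau_{i,r})$, so that $Y^*_{i,r}$ is a birth-death process started from $1$ with birth rate at most $1$ and death rate at least $1-2s$; then couple it with the reference process $(Y^\#(t),t\ge 0)$ of Lemma \ref{3mlemma} (birth rate $1$, death rate $1-2s$, $Y^\#(0)=1$, independent of $\mathcal{F}_{\tau_{i,r}}$) so that on $\{t_1<T_{(1)}\}$ one has $Y^*_{i,r}(t)>0\Rightarrow Y^\#(t)>0$. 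Since $0$ is absorbing, $\{Y_{i,r}(t_1)>0\}\subseteq\{Y_{i,r}(t_1-t_{0,r}+\tau_{i,r})>0\}$ on $\{\tau_{i,r}\le t_{0,r}\}$, and on $\{t_1<T_{(1)}\}$ Lemma \ref{G<}(1) together with (\ref{n<1/16}) gives $Y_{i,r}(v)\le X_3(v)\le\eta N\le N/2$ for $v\in[0,t_1]$, whence $\lambda(t_1-t_{0,r})\ge(t_1-t_{0,r})/2$. Combining this with the generating-function bound $P(Y^\#(t)>0)\le 2s/(1-e^{-2st})$ of (\ref{-5.2.1}), and using that $2s/(1-e^{-2st})$ is decreasing in $t$, reduces the claim to showing $2s/\big(1-e^{-s(t_1-t_{0,r})}\big)\le 4s$ for sufficiently large $N$, i.e. $e^{-s(t_1-t_{0,r})}\le 1/2$.

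The only place where the argument genuinely diverges from Lemma \ref{3mlemma} — and the step I expect to be the main obstacle — is verifying this last inequality across all three branches of (\ref{t0r}). In the recombination-dominating case and in the mutation-dominating case with $Nr\ge e$, one computes $t_1-t_{0,r}=s^{-1}\ln\sqrt{Nr}+s^{-1}(C_{0,r}-C_1)$, so $e^{-s(t_1-t_{0,r})}=(Nr)^{-1/2}e^{C_1-C_{0,r}}$; this is at most $e^{C_1-C_{0,r}}$ since $Nr\ge e$ (which holds in the recombination-dominating case by (\ref{1<Nr})), and in the mutation-dominating case with $Nr<e$ one has $t_1-t_{0,r}=s^{-1}(C_{0,r}-C_1)$, so $e^{-s(t_1-t_{0,r})}=e^{C_1-C_{0,r}}$. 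In every case the choice $C_{0,r}>C_1+\ln 4$ in (\ref{C0r}) gives $e^{-s(t_1-t_{0,r})}<1/4<1/2$, hence $2s/\big(1-e^{-s(t_1-t_{0,r})}\big)<3s\le 4s$, which completes the proof. The routine parts — justifying the birth-death description, checking that the time-changed rates are bounded as stated, and constructing the coupling — are handled exactly as in Lemma \ref{3mlemma}, so the only real content is the three-case bookkeeping above.
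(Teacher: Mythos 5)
Your proposal is correct and follows essentially the same route as the paper: the same birth--death description of $(Y_{i,r}(t+\tau_{i,r}))$, the same time change $\lambda$, the same coupling with a supercritical reference process $Y^\#$, and the same three-branch verification that $e^{-s(t_1-t_{0,r})}\leq e^{C_1-C_{0,r}}<1/4$ via the choice of $C_{0,r}$ in (\ref{C0r}). The only (immaterial) divergence is in the death-rate bookkeeping: you drop the $-r\tilde X_1\tilde X_2$ term on the grounds that a freshly created 3r ancestor is not a descendant of the $i$th one, so your time-changed death rate is bounded below by $1-2s$ and you compare with a $(1,1-2s)$ process, whereas the paper retains that term, bounds $d^*(t)>1-3s$ using $r\ll s$ and $\tilde X_1,\tilde X_2\leq\eta$, and compares with a $(1,1-3s)$ process --- both versions land within the stated bound of $4s$.
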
 

	\begin{proof}
	The proof is similar to that of Lemma \ref{3mlemma}. First, define $\tilde Y_{i,r}(t)=Y_{i,r}(t)/N$ for all $t\geq 0$ and $i\in\mathbb{N}$. By following the same reasoning that led us to get the rates in (\ref{B3rab}) and (\ref{D3rab}), we have that on the event $\tau_{i,r}\leq t_{0,r}\wedge T_{(1)}$, the process $(Y_{i,r}(t+\tau_{i,r}),t\geq 0)$ is a birth-death process with $Y_{i,r}(\tau_{i,r})=1$, where each individual gives birth at rate
		$$
		b(t)=\big(\tilde X_0(t+\tau_{i,r})+(1-s)(\tilde X_1(t+\tau_{i,r})+\tilde X_2(t+\tau_{i,r}))+(1-2s)(\tilde X_3(t+\tau_{i,r})-\tilde Y_{i,r}(t+\tau_{i,r}))\big)(1-r\tilde X_0(t+\tau_{i,r})),
		$$
and dies at rate 
		$$
		d(t)=(1-2s)\big(1-\tilde Y_{i,r}(t+\tau_{i,r})+r\tilde X_0(t+\tau_{i,r})\tilde Y_{i,r}(t+\tau_{i,r})-r\tilde X_1(t+\tau_{i,r})\tilde X_2(t+\tau_{i,r})\big).
		$$
Note that when $t\geq 0$, we have $b(t)\leq 1-\tilde Y_{i,r}(t+\tau_{i,r})$. 

	For $t\geq 0$, let $\lambda(t)=\int_{\tau_{i,r}}^{t+\tau_{i,r}}1-\tilde Y_{i,r}(v)dv$. Define $Y_{i,r}^*(t)=Y_{i,r}(\lambda^{-1}(t)+\tau_{i,r}) $ for $t\in[0,\lambda((t_1\wedge T_{(1)}) -\tau_{i,r})]$. The process $(Y^*_{i,m}(t),0\leq t<\lambda((t_1\wedge T_{(1)}) -\tau_{i,r}))$ is a birth-death process with $Y^*_{i,r}(0)=1$, where each individual gives birth at rate 
		$$
		b^*(t)=b(\lambda^{-1}(t))\cdot(\lambda^{-1})'(t)
		=\frac{b(\lambda^{-1}(t))}{1-\tilde Y_{i,r}(\lambda^{-1}(t)+\tau_{i,r})}
		\leq 1,
		$$
	and dies at rate
		\begin{align}
		d^*(t)
		&=d(\lambda^{-1}(t))\cdot(\lambda^{-1})'(t)\nonumber\\
		&=\frac{d(\lambda^{-1}(t))}{1-\tilde Y_{i,r}(\lambda^{-1}(t)+\tau_{i,r})}\nonumber\\
		&\geq (1-2s)\bigg(1-\frac{r\tilde X_1(\lambda^{-1}(t)+\tau_{i,r})\tilde X_2(\lambda^{-1}(t)+\tau_{i,r})}{1-\tilde Y_{i,r}(\lambda^{-1}(t)+\tau_{i,r})}\bigg). \label{d*}
		\end{align}
	Since the function $\lambda$ is strictly increasing on the interval $[0,(t_1\wedge T_{(1)}) -\tau_{i,r})$, we have that if $t\in [0,\lambda((t_1\wedge T_{(1)}) -\tau_{i,r}))$, then $\lambda^{-1}(t)+\tau_{i,r}(t)\leq t_1\wedge T_{(1)}.$ Hence, from Lemma \ref{G<}, for every $t\in [0,\lambda((t_1\wedge T_{(1)}) -\tau_{i,r}))$ and $j=1,2$ and 3, we have $\tilde X_j(\lambda^{-1}(t)+\tau_{i,r})\leq \eta$, and $\tilde Y_{i,r}(\lambda^{-1}(t)+\tau_{i,r})\leq \tilde X_3(\lambda^{-1}(t)+\tau_{i,r})\leq \eta$. Now, because $r \ll s$, by (\ref{d*}), for sufficiently large $N$, for $t\in[0,\lambda((t_1\wedge T_{(1)}) -\tau_{i,r}))$,
		$$
		d^*(t)\geq(1-2s)\bigg(1-\Big(\frac{\eta^2}{1-\eta}\Big)r\bigg)\geq (1-2s)(1-s) >1-3s.
		$$

	Let $(Y^\#(t),t\geq 0)$ be a birth-death process where $Y^\#(0)=1$, where each individual gives birth at rate 1 and dies at rate $1-3s$. By the same argument we used to get (\ref{-5.2.1}), for $t\geq 0$,
		\begin{equation}\label{-5.4}
		P(Y^\#(t)>0)= \frac{1-(1-3s)}{1-(1-3s)e^{-(1-(1-3s))t}}\leq\frac{3s}{1-e^{-3st}}.
		\end{equation}
	We claim that for sufficiently large $N$,
		$$
		P\bigg(Y^\#\Big(\frac{t_1-t_{0,r}}{3}\Big)>0\bigg) \leq 4s.
		$$
	From (\ref{-5.4}) and the definition of $C_{0,r}$ in (\ref{C0r}), in the recombination dominating case and the  mutation dominating case with $Nr\geq e$, we have that for sufficiently large $N$, 
		$$
		P\bigg(Y^\#\Big(\frac{t_1-t_{0,r}}{3}\Big)>0\bigg)
		\leq \frac{3s}{1-e^{-s(t_1-t_{0,r})}}
		=\frac{3s}{1-\frac{1}{\sqrt{Nr}}e^{-(C_{0,r}-C_1)}}
		< \frac{3s}{1-e^{-(C_{0,r}-C_1)}}
		\leq 4s,
		$$
	and in the mutation dominating case with $Nr\leq e$, we also have
		$$
		P\bigg(Y^\#\Big(\frac{t_1-t_{0,r}}{3}\Big)>0\bigg)
		\leq \frac{3s}{1-e^{-s(t_1-t_{0,r})}}
		=\frac{3s}{1-e^{-(C_{0,r}-C_1)}}
		\leq 4s.
		$$
	On the event $t_1<T_{(1)}$ , using (\ref{n<1/16}), we have $Y_{i,r}(t)\leq X_3(t)\leq \eta N\leq \frac{N}{3}$ for all $t\in[0,t_1]$. By following the same reasoning in (\ref{lem3m.1}), 
		$$
		\lambda(t_1-t_{0,r})\geq \frac{t_1-t_{0,r}}{3}.
		$$
	It is possible to couple the process $(Y^\#(t),t\geq 0)$ with the population process, such that 1) on the event $t_1<T_{(1)}$, for any time $t$, if $Y^*_{i,m}(t)>0$, then $Y^\#(t)>0$, and 2) the process $(Y^\#(t),t\geq 0)$ is independent of $\mathcal{F}_{\tau_{i,r}}$.
By the same reasoning we used to get (\ref{lem3m.2}), it follows that for sufficiently large $N$,
		$$
		P\Big(\{Y_{i,r}(t_1)>0\}\cap\{t_1< T_{(1)}\}\Big|\tau_{i,r}\leq t_{0,r}\wedge T_{(1)}\Big)
		\leq P\Big(Y^\#\Big(\frac{t_1-t_{0,r}}{3}\Big)>0\Big)			
		\leq 4s.
		$$
	\end{proof}

Now, we are ready to show that the events $A_8$ and $A_9$ occur with probability close to 1.

	\begin{lemma} \label{PX3t1}
	For sufficiently large $N$, we have $P(A_8^c)\leq 4\epsilon$, and $P(A_9^c)\leq 4\epsilon$.
	\end{lemma}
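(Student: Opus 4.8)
The plan is to reduce both estimates to the two ingredients announced just before Lemma~\ref{m&rho}: with high probability only few type 3m (resp.\ 3r) ancestors are born in the time interval $(0,t_{0,m}]$ (resp.\ $(0,t_{0,r}]$), and with high probability none of these early ancestors leaves a descendant alive at time $t_1$. I will carry out the argument for $A_8$; the bound $P(A_9^c)\le 4\epsilon$ follows by exactly the same steps with $m$, $t_{0,m}$, $C_{0,m}$ and Lemma~\ref{3mlemma} replaced by $\rho$, $t_{0,r}$, $C_{0,r}$ and Lemma~\ref{3rlemma}.

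First I would peel off the event $A_1^c$: by Lemma~\ref{T<t1}, $P(A_1^c)\le 2\epsilon$, so $P(A_8^c)\le 2\epsilon+P(A_8^c\cap A_1)$. On $A_1=\{T_{(1)}>t_1\}$ we have $t_1\wedge T_{(1)}=t_1$ and, since $t_{0,m}<t_1<T_{(1)}$ there, also $t_{0,m}\wedge T_{(1)}=t_{0,m}$. With $\tau_{i,m}$ and $Y_{i,m}$ as in Lemma~\ref{3mlemma}, the definitions in Section~\ref{rate} give (pathwise, on $A_1$) the identity $X_{3m}^{(0,t_{0,m}]}(t_1)=\sum_{i:\,\tau_{i,m}\le t_{0,m}}Y_{i,m}(t_1)$, because every type 3m individual descends from a unique type 3m ancestor and the ones it counts are exactly those whose ancestor appeared in $(0,t_{0,m}]$. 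Since on $A_1$ the event $\{\tau_{i,m}\le t_{0,m}\}$ coincides with $\{\tau_{i,m}\le t_{0,m}\wedge T_{(1)}\}$, it follows that
\[
A_8^c\cap A_1\subseteq\bigcup_{i\ge 1}\Big(\{Y_{i,m}(t_1)>0\}\cap\{t_1<T_{(1)}\}\cap\{\tau_{i,m}\le t_{0,m}\wedge T_{(1)}\}\Big).
\]

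Then I would apply Lemma~\ref{3mlemma} term by term, getting $P(\{Y_{i,m}(t_1)>0\}\cap\{t_1<T_{(1)}\}\cap\{\tau_{i,m}\le t_{0,m}\wedge T_{(1)}\})\le 3s\,P(\tau_{i,m}\le t_{0,m}\wedge T_{(1)})$ for each $i$, and then summing over $i$, using $\sum_{i\ge1}P(\tau_{i,m}\le t_{0,m}\wedge T_{(1)})=E[m(t_{0,m}\wedge T_{(1)})]\le 2Ke^{-C_{0,m}}/s$ (the expectation bound established inside the proof of Lemma~\ref{m&rho}), to obtain $P(A_8^c\cap A_1)\le 6Ke^{-C_{0,m}}$. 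By the choice of $C_{0,m}$ in (\ref{C0m}) this is at most $3\epsilon^2/(2K)$, hence at most $\epsilon$ since $K>6/\epsilon$ by (\ref{K}); so $P(A_8^c)\le 3\epsilon\le 4\epsilon$. (Equivalently, one may first intersect with $\{m(t_{0,m}\wedge T_{(1)})<e^{-C_{0,m}/2}/s\}$, which has probability at least $1-\epsilon$ by Lemma~\ref{m&rho}, and then union-bound the at most $e^{-C_{0,m}/2}/s$ surviving terms by $3s$ each.) For $A_9$ the only changes are a factor $4s$ supplied by Lemma~\ref{3rlemma} in place of $3s$, and the expectation estimate $E[\rho(t_{0,r}\wedge T_{(1)})]\le K^2e^{-2C_{0,r}+1}/s$ from the proof of Lemma~\ref{m&rho}; these give $P(A_9^c\cap A_1)\le 4K^2e^{-2C_{0,r}+1}$, which is at most $\epsilon$ by the choice of $C_{0,r}$ in (\ref{C0r}) and $K>6/\epsilon$, so $P(A_9^c)\le 4\epsilon$.

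The only point needing care is the bookkeeping in the union bound over the random number $m(t_{0,m}\wedge T_{(1)})$ of early ancestors: one must invoke Lemma~\ref{3mlemma} with precisely its conditioning event $\{\tau_{i,m}\le t_{0,m}\wedge T_{(1)}\}$ and use $\{i\le m(t_{0,m}\wedge T_{(1)})\}=\{\tau_{i,m}\le t_{0,m}\wedge T_{(1)}\}$, so that multiplying the conditional bound $3s$ by $P(\tau_{i,m}\le t_{0,m}\wedge T_{(1)})$ and summing reconstitutes $E[m(t_{0,m}\wedge T_{(1)})]$. Everything else is a direct substitution of the constants fixed in (\ref{C0m}), (\ref{C0r}) and (\ref{K}), so I do not anticipate any further obstacle.
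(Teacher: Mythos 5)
Your proof is correct and follows essentially the same route as the paper: the same two ingredients (the expectation bound on the number of early type 3m/3r ancestors from the proof of Lemma~\ref{m&rho}, and the per-ancestor extinction bounds $3s$ and $4s$ from Lemmas~\ref{3mlemma} and~\ref{3rlemma}), combined by a union bound after discarding $A_1^c$. The only cosmetic difference is that you sum $3s\,P(\tau_{i,m}\le t_{0,m}\wedge T_{(1)})$ over all $i$ and reconstitute $E[m(t_{0,m}\wedge T_{(1)})]$ directly, whereas the paper first truncates at $\lfloor e^{-C_{0,m}/2}/s\rfloor$ ancestors via part 1 of Lemma~\ref{m&rho} (paying an extra $\epsilon$ there); both yield the claimed bound.
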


	\begin{proof}
	Recall the definitions of $A_8$ and $A_9$ in (\ref{A8}) and (\ref{A9}). We will only show that $P(A_8^c)\leq 4\epsilon$. The same reasoning can be used to prove that $P(A_9^c)\leq 4\epsilon$. 
	
	Let $J=\lfloor e^{-C_{0,m}/2}/s\rfloor$. By Lemma \ref{3mlemma}, we have that for sufficiently large $N$,
		\begin{align*}
		&P\big(\{X_{3m}^{(0,t_{0,m}]}(t_1 )>0\}\cap\{m(t_{0,m}\wedge T_{(1)})<e^{-C_{0,m}/2}/s\}\cap\{t_1< T_{(1)}\}\big)\\
		&\leq \sum_{i=1}^{J}P\Big(\{Y_{i,m}(t_1)>0\}\cap\{\tau_{i,m}\leq t_{0,m}\wedge T_{(1)}\}\cap\{t_1<T_{(1)}\}\Big)\\
		&\leq \sum_{i=1}^J P\Big(\{Y_{i,m}(t_1)>0\}\cap\{t_1<T_{(1)}\}\Big|\tau_{i,m}\leq t_{0,m}\wedge T_{(1)}\Big)\\
		&\leq 3sJ\\
		&\leq 3e^{-C_{0,m}/2}.
		\end{align*}
	Hence, by  Lemma \ref{T<t1} and Lemma \ref{m&rho} along with the way we choose $\epsilon, K$ and $C_{0,m}$ in (\ref{eps}), (\ref{K}) and (\ref{C0m}), for sufficiently large $N$,
		\begin{align*}
		&P(\{X_{3m}^{(0,t_{0,m}]}(t_1\wedge T_{(1)})>0\})\\
		&\leq P\big(\{X_{3m}^{(0,t_{0,m}]}(t_1\wedge T_{(1)})>0\}\cap\{m(t_{0,m}\wedge T_{(1)})<e^{-C_{0,m}/2}/s\}\cap\{t_1<T_{(1)}\}\big)\\
		&\hspace{0.5 cm}+P(m(t_{0,m}\wedge T_{(1)})\geq e^{-C_{0,m}/2}/s)+ P(T_{(1)}\leq t_1)\\
		&\leq 3e^{-C_{0,m}/2}+3\epsilon\\
		&\leq 4\epsilon.
		\end{align*}
	So, this prove that $P(A_9^c)\leq 4\epsilon$.
	\end{proof}
	
	\begin{lemma} \label{X3t0-t1}
	For sufficiently large $N$, we have $P(A_{10}^c)\leq \epsilon$, and $P(A_{11}^c)\leq \epsilon$.
	\end{lemma}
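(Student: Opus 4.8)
The plan is to prove both bounds by Markov's inequality applied to the nonnegative random variables $X_{3m}^{(t_{0,m},t_1]}(t_1\wedge T_{(1)})$ and $X_{3r}^{(t_{0,r},t_1]}(t_1\wedge T_{(1)})$, using the expectation estimates already established in Lemmas \ref{EX3m} and \ref{EX3r} evaluated at $t=t_1$. The one identity that drives everything is $st_1=\ln(s/\mu)-C_1$ from (\ref{t1}), hence $e^{2st_1}=(s/\mu)^2e^{-2C_1}$; this is precisely what turns the crude-looking bounds of those lemmas into the exact constants appearing in the definitions (\ref{A10}) and (\ref{A11}).

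For $P(A_{10}^c)\le\epsilon$: apply (\ref{EX3m.2}) with $t=t_1$ to get $E[X_{3m}^{(t_{0,m},t_1]}(t_1\wedge T_{(1)})]\le\frac{2Ke^{C_{0,m}}N^2\mu^4}{s^3}\cdot\frac{s^2}{\mu^2}e^{-2C_1}=\frac{2Ke^{C_{0,m}-2C_1}N^2\mu^2}{s}$, and then Markov's inequality with the threshold $\frac{2Ke^{-2C_1+C_{0,m}}}{\epsilon}\cdot\frac{N^2\mu^2}{s}$ from (\ref{A10}) gives $P(A_{10}^c)\le\epsilon$ for $N$ large. This is essentially a one-line computation once (\ref{EX3m.2}) is available.

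For $P(A_{11}^c)\le\epsilon$: apply (\ref{EX3r.2}) with $a=t_{0,r}$ and $t=t_1$ to get $E[X_{3r}^{(t_{0,r},t_1]}(t_1\wedge T_{(1)})]\le K^2Nre^{-2C_1}(t_1-t_{0,r})=\frac{K^2Nre^{-2C_1}}{s}\cdot s(t_1-t_{0,r})$. One then reads off $s(t_1-t_{0,r})$ from (\ref{t1}) and (\ref{t0r}): in the recombination dominating case and in the mutation dominating case with $Nr\ge e$ it equals $\tfrac12\ln(Nr)+(C_{0,r}-C_1)$, which since $\ln(Nr)\ge1$ is at most $\tfrac12(2(C_{0,r}-C_1)+1)\ln(Nr)$, while there $1\vee Nr\ln_+(Nr)=Nr\ln(Nr)$; in the mutation dominating case with $Nr<e$ it equals $C_{0,r}-C_1$ and $Nr$ is bounded, so one compares $Nr$ against $1\vee Nr\ln_+(Nr)\ge1$. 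In each regime the outcome is $E[X_{3r}^{(t_{0,r},t_1]}(t_1\wedge T_{(1)})]\le\frac{K^2e^{-2C_1}(2(C_{0,r}-C_1)+1)}{2}\cdot\frac{1\vee Nr\ln_+(Nr)}{s}$, and Markov's inequality against the threshold in the definition of $A_{11}$ then closes the estimate.

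The main obstacle is this last constant-matching step for $A_{11}$. Because $t_{0,r}$ is defined by three separate formulas and the threshold in (\ref{A11}) is built out of the $\ln_+$ function and a $\vee\,1$, one must check that the constant $\frac{K^2e^{-2C_1}(2(C_{0,r}-C_1)+1)}{2}$ coming out of (\ref{EX3r.2}) is controlled by the one in the definition of $A_{11}$ in every regime — most delicately in the transitional range where $Nr$ is of order one (neither $\ll 1$ nor $\gg 1$), where one has to exploit $1\vee Nr\ln_+(Nr)\ge1$ together with the lower bound $C_{0,r}>C_1+\ln 4$ from (\ref{C0r}). No new probabilistic ingredient (martingale or coupling) enters here; beyond Lemmas \ref{EX3m} and \ref{EX3r} the whole statement is Markov's inequality plus this bookkeeping.
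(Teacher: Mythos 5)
Your proposal is correct and follows essentially the same route as the paper: Markov's inequality applied to the expectation bounds of Lemmas \ref{EX3m} and \ref{EX3r} at $t=t_1$, with the same case split on the three definitions of $t_{0,r}$. The one caveat is the sub-case $1<Nr<e$, where $Nr(C_{0,r}-C_1)$ can exceed $\tfrac12\bigl(2(C_{0,r}-C_1)+1\bigr)\bigl(1\vee Nr\ln_+(Nr)\bigr)$ by a factor of at most $e$ (the condition $C_{0,r}>C_1+\ln 4$ does not rescue the exact constant); the paper absorbs this by writing $e^{-2C_1+1}$ in its final expectation bound, and either way the discrepancy is a harmless bounded factor that can be folded into the constant defining $A_{11}$.
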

	
	\begin{proof}
	Recall the definition of $A_{10}$ in (\ref{A10}). From Lemma \ref{EX3m} and the definition of $t_1$ in (\ref{t1}), for sufficiently large $N$,
		$$
		E\bigg[X_{3m}^{(t_{0,m},t_1]}(t_1\wedge T_{(1)})\bigg]\leq \frac{2Ke^{C_{0,m}}N^2\mu^4}{s^3}e^{2st_1}=\frac{2Ke^{-2C_1+C_{0,m}}N^2\mu^2}{s},
		$$
	and from the Markov's inequality, we get that $P(A_{10}^c)\leq \epsilon$. 
	
	Now, recall the definition of $A_{11}$ in (\ref{A11}). We will first consider the recombination dominating case and the mutation dominating case with $Nr\geq e$. Recall that $Nr\gg 1$ in the recombination dominating case. From Lemma \ref{EX3r} and the definition of $t_{0,r}$ in (\ref{t0r}), for sufficiently large $N$,
		\begin{align*}
		E\bigg[X_{3r}^{(t_{0,r},t_1]}(t_1\wedge T_{(1)})\bigg]
		&\leq \frac{K^2N\mu^2r}{s^2}e^{2st_1}(t_1-t_{0,r})\\
		&=K^2e^{-2C_1}Nr\bigg(\frac{\ln(Nr)}{2s}+\frac{C_{0,r}-C_1}{s}\bigg)\\
		&\leq \frac{K^2e^{-2C_1}(2(C_{0,r}-C_1)+1)Nr\ln(Nr)}{2s}.
		\end{align*}
	In the mutation dominating case with $Nr<e$, from Lemma \ref{EX3r} and the definition of $t_{0,r}$ in (\ref{t0r}), for sufficiently large $N$,
		$$
		E\bigg[X_{3r}^{(t_{0,r},t_1]}(t_1\wedge T_{(1)})\bigg]\leq \frac{K^2N\mu^2r}{s^2}e^{2st_1}(t_1-t_{0,r})=K^2e^{-2C_1}Nr\bigg(\frac{C_{0,r}-C_1}{s}\bigg)\leq \frac{K^2e^{-2C_1}(2(C_{0,r}-C_1)+1)}{2s}.
		$$ 
	Thus, in both cases, for sufficiently large $N$,
		$$
		E\bigg[X_{3r}^{(t_{0,r},t_1]}(t_1\wedge T_{(1)})\bigg]\leq\frac{K^2e^{-2C_1+1}(2(C_{0,r}-C_1)+1)\big(1\vee Nr\ln_+(Nr)\big)}{2s},
		$$ 
	and $P(A_{11}^c)\leq \epsilon$ is followed from the Markov's inequality.
	\end{proof}
	
	Before we prove Proposition \ref{@t1}, we will give both upper and lower bounds of the numbers of type 1 and 2 individuals on the event $A_{(1)}$.

	\begin{lemma} \label{lem@t1}
	The following statements hold:
		\begin{enumerate}
		\item  On the event $A_{(1)}$, for $i=1,2$, for sufficiently large $N$ and for $t\in[0,t_1]$, 
		$$\displaystyle X_i(t) \leq (1+\delta^2)\frac{N\mu}{s}e^{st}.$$
		\item  In the recombination dominating case, on the event $A_{(1)}$, for $i=1,2$, for sufficiently large $N$, and for every $t\in[t_{0,r},t_1]$, we have 
		$$\displaystyle X_i(t) \geq (1-\delta^2)\frac{N\mu}{s}e^{st}.$$
		\item In the mutation dominating case, on the event $A_{(1)}$, for $i=1,2$, for sufficiently large $N$, and for $t\in[t_{0,m},t_1]$, we have 
		$$\displaystyle X_i(t) \geq (1-\delta^2)\frac{N\mu}{s}e^{st}.$$
		\end{enumerate}
	\end{lemma}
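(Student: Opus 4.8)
The plan is to work from the decomposition $X_i(t)=X^{(0,t_1]}_{im}(t)+X^{(0,t_1]}_{ir}(t)$, valid for $i=1,2$ and $t\le t_1$, together with the representation formulas (\ref{Xm}) and (\ref{Xr}). I will estimate the deterministic integrals using the bounds on $G_i$ and $G^{(0,t_1]}_{ir}$ from Lemma \ref{G<} and the definition of $T_3$, and control the martingale terms via the events $A_2,A_4$ (for $i=1$) and $A_3,A_5$ (for $i=2$). Note that $A_1,\dots,A_5\subseteq A_{(1)}$ in both cases, and on $A_1$ we have $t\wedge T_{(1)}=t$ for all $t\in[0,t_1]$, so the bounds of (\ref{A2})--(\ref{A5}) hold at $t$ itself; likewise $X_3(u)<\tfrac{N\mu}{s}e^{su}$ for $u\le t_1$ on $A_{(1)}$, since $T_3\ge T_{(1)}>t_1$.

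For statement 1, I bound $M^{(0,t_1]}_i(u)=\mu X_0(u)\le N\mu$ and $G_i(v)\le s$ in (\ref{Xm}), so the deterministic term there is at most $\int_0^t N\mu\,e^{s(t-u)}du\le\tfrac{N\mu}{s}e^{st}$. In (\ref{Xr}) I use $R^{(0,t_1]}_i(u)\le Nr\tilde X_0(u)\tilde X_3(u)\le Nr\tfrac{\mu}{s}e^{su}$ and $G^{(0,t_1]}_{ir}(v)\le s+r1_{(0,t_1]}(v)$, so the deterministic term is at most $e^{rt_1}\tfrac{N\mu r}{s}e^{st}t_1$, which is $o(1)\tfrac{N\mu}{s}e^{st}$ since $rt_1=\tfrac rs\ln(s/\mu)-\tfrac{C_1r}{s}\to0$ by (\ref{rslogsu}). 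The two martingale terms, bounded on $A_2$ and $A_4$, are at most $e^{st}\sqrt{\tfrac{48}{\epsilon}\tfrac{N\mu}{s^2}}$ and $2e^{st}\sqrt{\tfrac{48}{\epsilon}\tfrac{N\mu r}{s^3}\ln(s/\mu)}$, and dividing by $\tfrac{N\mu}{s}e^{st}$ the ratios are $O((N\mu)^{-1/2})$ and $O\big((\tfrac rs\ln(s/\mu)/(N\mu))^{1/2}\big)$, hence $o(1)$ by (\ref{Con1.1}) and (\ref{rslogsu}). Summing, $X_i(t)\le(1+\delta^2)\tfrac{N\mu}{s}e^{st}$ for $N$ large.

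For statements 2 and 3 the naive bound $G_i(v)\ge s-4\eta s-r-\mu$ is too weak, since $4\eta s\,t_1\sim 4\eta\ln(s/\mu)\to\infty$ would destroy the comparison $e^{\int_0^tG_i}\approx e^{st}$. Instead I bootstrap off statement 1, which is legitimate since statement 1 used only Lemma \ref{G<}. On $A_{(1)}$, for $v\in[0,t_1]$, (\ref{G1}) together with $\tilde X_1,\tilde X_2\le(1+\delta^2)\tfrac\mu s e^{sv}$, $\tilde X_3\le\tfrac\mu s e^{sv}$ and $r\le s$ gives $s-G_i(v)\le c\mu e^{sv}+\mu$ for a constant $c$ not depending on $N$ (one may take $c=6$), whence $\int_u^t(s-G_i(v))dv\le\tfrac cs\mu(e^{st}-1)+\mu t_1\le ce^{-C_1}+o(1)$ for $0\le u\le t\le t_1$ (using $\tfrac\mu s e^{st_1}=e^{-C_1}$ and $\mu t_1\to0$), so $e^{\int_u^tG_i(v)dv}\ge e^{s(t-u)}(1-ce^{-C_1}-o(1))$. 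Combining this with $X_0(u)\ge N-(3+2\delta^2)\tfrac{N\mu}{s}e^{su}$ (again statement 1 plus $X_3(u)\le\tfrac{N\mu}{s}e^{su}$), dropping the nonnegative $r$-part of (\ref{Xm}), and bounding $Z^{(0,t_1]}_{im}(t)=Z^{(0,t_1]}_{im}(t\wedge T_{(1)})$ on $A_2$ (resp. $A_3$) gives
$$
X_i(t)\ \ge\ \frac{N\mu}{s}e^{st}\big(1-ce^{-C_1}-o(1)\big)\big(1-e^{-st}-(3+2\delta^2)\mu t\big)\ -\ \sqrt{\tfrac{48}{\epsilon}\tfrac{N\mu}{s^2}}\,e^{st}.
$$
Now for $t\ge t_{0,r}$ in the recombination case $e^{-st}\le e^{-st_{0,r}}\to0$, and for $t\ge t_{0,m}$ in the mutation case $e^{-st}\le e^{-st_{0,m}}=\tfrac{N\mu^2}{s}e^{C_{0,m}}\to0$ (both under our assumptions); also $\mu t\le\mu t_1\to0$ and the last term is $o(1)\tfrac{N\mu}{s}e^{st}$. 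Since $C_1>\ln(8/\delta^2)$ in (\ref{C1}) makes $ce^{-C_1}<\delta^2$ with room to absorb the finitely many $o(1)$ quantities, this yields $X_i(t)\ge(1-\delta^2)\tfrac{N\mu}{s}e^{st}$ for $N$ large, as claimed.

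The one genuinely delicate step is the bootstrap: upgrading the exponential weight $e^{\int_u^tG_i}$ from the useless $e^{(1-O(\eta))s(t-u)}$ to $(1-O(e^{-C_1}))e^{s(t-u)}$, which is what forces statement 1 to be proved first and then fed back in. Everything else is routine estimation, the constants of (\ref{C1})--(\ref{n}) having been chosen precisely so that the residual $O(e^{-C_1})$ terms stay dominated by $\delta^2$.
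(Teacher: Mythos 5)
Your proposal is correct and follows essentially the same route as the paper: the upper bound via the decomposition $X_i=X^{(0,t_1]}_{im}+X^{(0,t_1]}_{ir}$, the representations (\ref{Xm})--(\ref{Xr}) with the crude bounds of Lemma \ref{G<} and the martingale control from $A_2$--$A_5$, and then the lower bound by bootstrapping statement 1 back into $\int_u^t G_i(v)\,dv\geq s(t-u)-O(e^{-C_1})$ and into the bound on $X_0(u)$, exactly as in the paper's proof (which obtains $\int_u^tG_1\geq s(t-u)-7e^{-C_1}$ and $X_0(u)\geq N-\tfrac{5N\mu}{s}e^{su}$). The constants and the use of $st_{0,r},st_{0,m}\gg1$ and $C_1>\ln(8/\delta^2)$ also match.
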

	
	\begin{proof}
	In this proof, we assume that we are on the event $A_{(1)}$. From (\ref{Xm}), we have that for all $t\in(0,t_1]$,
		\begin{equation}\label{X1m}
		X_{1m}(t)=X_{1m}^{(0,t_1]}(t)=\int_0^t M_1^{(0,t_1]}(u)e^{\int_u^t G_1(v)dv}du+Z_{1m}^{(0,t_1]}(t)e^{\int_0^t G_1(v)dv}.
		\end{equation}
 	From Lemma \ref{G<}, definitions of $A_1$ and $A_2$ in (\ref{A1}) and (\ref{A2}), and the fact that $1\ll N\mu$, for sufficiently large $N$ and for $t\in(0,t_1]$,
		\begin{align*}
		X_{1m}(t)&\leq \int_0^t N\mu e^{\int_u^t s dv}du + \sqrt{\frac{48}{\epsilon}\cdot\frac{N\mu}{s^2}}\cdot e^{\int_0^t s dv}\\
		&=\frac{N\mu}{s}(e^{st}-1)+ \sqrt{\frac{48}{\epsilon}\cdot\frac{N\mu}{s^2}}\cdot e^{st}\\
		&\leq \bigg(1+ \sqrt{\frac{48}{\epsilon}\cdot\frac{1}{N\mu}}\bigg)\frac{N\mu}{s}e^{st},\\
		&\leq \Big(1+\frac{\delta^2}{2}\Big)\frac{N\mu}{s}e^{st}.
		\end{align*}
	Next, from (\ref{Xr}), we have that for all $t\in(0,t_1]$,
		$$
		X_{1r}(t)=X_{1r}^{(0,t_1]}(t)=\int_0^t R_1^{(0,t_1]}(u)e^{\int_u^t G^{(0,t_1]}_{1r}(v)dv}du+Z_{1r}^{(0,t_1]}(t)e^{\int_0^t G^{(0,t_1]}_{1r}(v)dv}.
		$$
	From (\ref{R1<}), Lemma \ref{G<}, and definitions of $A_1$ and $A_4$ in (\ref{A1}) and (\ref{A4}), for sufficiently large $N$ and for $t\in(0,t_1]$,
		$$
		X_{1r}(t)
		\leq \int_0^t Nr\tilde X_0(u)\tilde X_3(u) e^{\int_u^t (s+r)dv}du + \sqrt{\frac{48}{\epsilon}\cdot\frac{N\mu r}{s^3}\ln\Big(\frac{s}{\mu}\Big)}e^{\int_0^t (s+r) dv}.
		$$
	By the definition of $T_3$ in (\ref{T3}), inequalities (\ref{rslogsu}), (\ref{(r+u)t1}) and the fact that $1\ll N\mu$, it follows that  for sufficiently large $N$ and for $t\in(0,t_1]$,
		\begin{align*}
		X_{1r}(t)
		&\leq Nr\int_0^t \frac{\mu}{s}e^{su}\cdot e^{s(t-u)+rt}du+ \sqrt{\frac{48}{\epsilon}\cdot\frac{N\mu r}{s^3}\ln\Big(\frac{s}{\mu}\Big)}e^{st+rt}\\
		&=e^{rt}\bigg(\frac{N\mu r}{s}e^{st}t+\sqrt{\frac{48}{\epsilon}\cdot\frac{N\mu r}{s^3}\ln\Big(\frac{s}{\mu}\Big)}e^{st}\bigg)\\
		&\leq \frac{N\mu}{s}e^{st}\cdot e^{rt_1}\bigg(rt_1+\sqrt{\frac{48}{\epsilon}\cdot\frac{1}{N\mu}\cdot\frac{r}{s}\ln\Big(\frac{s}{\mu}\Big)}\bigg)\\
		&\leq \frac{\delta^2}{2}\cdot\frac{N\mu}{s}e^{st}.
		\end{align*}
 	Therefore, for sufficiently large $N$, for all $t\in[0,t_1]$, we have
		$$
		X_1(t)=X_{1m}(t)+X_{1r}(t)\leq (1+\delta^2)\frac{N\mu}{s}e^{st},
		$$
Note that by similar argument, we can also prove the upper bound for $X_2(t)$.

	To prove the lower bound for $X_1(t)$ in the recombination dominating case, we first need to consider the term $\int_u^t G_1(v)dv$. By using (\ref{G1}), part 1 of this lemma and the definition of $T_3$ in (\ref{T3}), we have that when $N$ is sufficiently large, for $0\leq u<t\leq t_1$,
		\begin{align} 
		\int_u^tG_1(v)dv
		&\geq\int_u^t(s-s\tilde{X}_1(v)- s\tilde{X}_2(v)-2s\tilde{X}_3(v)-r-\mu) dv \nonumber\\
		&\geq\int_u^t\bigg(s-s\cdot (1+\delta^2)\frac{\mu}{s}e^{su}- s\cdot (1+\delta^2)\frac{\mu}{s}e^{su}-2s\cdot\frac{\mu}{s}e^{su}-r-\mu\bigg) dv \nonumber\\
		&= s(t-u)-\frac{(4+2\delta^2)\mu}{s}(e^{st}-e^{su})-(r+\mu)(t-u)\nonumber\\
		&= s(t-u)-\frac{(4+2\delta^2)\mu}{s}e^{st_1}-(r+\mu)t_1.\nonumber
		\end{align}
	Now, using the fact that $\delta<1$, the definition of $t_1$ in (\ref{t1}) along with (\ref{(r+u)t1}), we have that when $N$ is sufficiently large, for $0\leq u<t\leq t_1$,
		\begin{align} 
		\int_u^tG_1(v)dv 
		&\geq s(t-u)-\frac{6\mu}{s}e^{st_1}-(r+\mu)t_1\nonumber\\
		&=s(t-u)-6e^{-C_1}-(r+\mu)t_1\nonumber\\
		&\geq s(t-u)-7e^{-C_1}.\label{intG1}
		\end{align}
	Also, using part 1 of this lemma, the definition of $T_3$ in (\ref{T3}) and the fact that $\delta <1$, for sufficiently large $N$, and for $u\in[0,t_1]$, 
		\begin{equation} \label{X0}
		X_0(u)=N-X_1(u)-X_2(u)-X_3(u)\geq N-2(1+\delta^2)\frac{N\mu}{s}e^{su}-\frac{N\mu}{s}e^{su}\geq N-\frac{5N\mu}{s}e^{su}.
		\end{equation} 
Thus, from (\ref{X1m}), (\ref{intG1}), (\ref{X0}), along with the definition of $A_4$ in (\ref{A4})  for sufficiently large $N$, for all $t\in[t_{0,r},t_1]$,
		\begin{align}
		X_1(t)
		&\geq X_{1m}^{(0,t_1]}(t)\nonumber\\
		&\geq\int_0^t\mu\bigg(N-\frac{5N\mu}{s}e^{su}\bigg)e^{s(t-u)-7e^{-C_1}}du-\sqrt{\frac{48}{\epsilon}\cdot\frac{N\mu}{s^2}}e^{st}\nonumber\\
		&=\bigg(e^{-7e^{-C_1}}\int_0^t(se^{-su}-5\mu)du-\sqrt{\frac{48}{\epsilon}\cdot\frac{1}{N\mu}}\bigg)\bigg(\frac{N\mu}{s}e^{st}\bigg)\nonumber\\
		&\geq\bigg((1-7e^{-C_1})(1-e^{-st}-5\mu t)-\sqrt{\frac{48}{\epsilon}\cdot\frac{1}{N\mu}}\bigg)\bigg(\frac{N\mu}{s}e^{st}\bigg)\nonumber\\
		&\geq\bigg((1-7e^{-C_1})(1-e^{-st_{0,r}}-5\mu t_1)-\sqrt{\frac{48}{\epsilon}\cdot\frac{1}{N\mu}}\bigg)\bigg(\frac{N\mu}{s}e^{st}\bigg). \label{X1>}
		\end{align}
	In the recombination dominating case, we have $N\mu^2 \ll s$ and $r\ll s$. So, by using the definition of $t_{0,r}$ in (\ref{t0r}), we have that
		$$
		st_{0,r}=\frac{1}{2}\ln\Big(\frac{s^2}{N\mu^2r}\Big)-C_{0,r}\gg 1. 
		$$
	Thus, from (\ref{X1>}), (\ref{(r+u)t1}), and the way we choose $C_1$ as in (\ref{C1}), for sufficiently large $N$, and for all $t\in[t_{0,r},t_1]$,
		$$
		X_1(t)\geq (1-8e^{-C_1})\frac{N\mu}{s}e^{st}\geq (1-\delta^2)\frac{N\mu}{s}e^{st}.
		$$

	The proof for the mutation dominating case is almost exactly the same as that of the recombination dominating case by replacing $t_{0,r}$ by $t_{0,m}$, and using that because $N\mu^2 \ll s$, we have
		$$
		st_{0,m}=\ln\Big(\frac{s}{N\mu^2}\Big)-C_{0,m}\gg 1,
		$$
	which completes the proof.
	\end{proof}	
	
\subsection{The proof of Proposition \ref{@t1}}

	\begin{proof}
	By the definition of $A_{(1)}$ in (\ref{A(1)}) and Lemmas \ref{T<t1}, \ref{|Z|}, \ref{PX3t1}, and \ref{X3t0-t1},  for sufficiently large $N$, we have that $P(A_{(1)})\geq 1-17\epsilon$. From now on, we will assume that we are working on the event $A_{(1)}$.  The statement 1 follows from Lemma \ref{lem@t1} by inserting $t=t_1$. 
	
	Now consider $X_3(t_1)$. From the definitions of $A_8, A_9, A_{10}$ and $A_{11}$, in (\ref{A8}), (\ref{A9}), (\ref{A10}) and (\ref{A11}), it follows that 
		\begin{equation}\label{X3mt1<}
		X_{3m}(t_1)=X^{(0,t_{0,m}]}_{3m}(t_1)+X^{(t_{0,m},t_1]}_{3m}(t_1)
		\leq \bigg(\frac{2Ke^{-2C_1+C_{0,m}}}{\epsilon}\bigg)\frac{N^2\mu^2}{s},
		\end{equation}
	and
		\begin{equation}\label{X3rt1<}
		X_{3r}(t_1)=X^{(0,t_{0,r}]}_{3r}(t_1)+X^{(t_{0,r},t_1]}_{3r}(t_1)
		\leq \bigg(\frac{K^2e^{-2C_1}(2(C_{0,r}-C_1)+1)}{2\epsilon}\bigg)\frac{(1\vee Nr\ln_+(Nr))}{s}.
		\end{equation}
	In the recombination dominating case, $Nr\gg 1$ and $r$ satisfy (\ref{Con2.1}). It follows from (\ref{X3mt1<}) and (\ref{X3rt1<}) that if $N$ is sufficiently large, then
		$$
		X_3(t_1)=X_{3m}(t_1)+X_{3r}(t_1)
		\leq \bigg(\frac{K^2e^{-2C_1}(2(C_{0,r}-C_1)+1)}{\epsilon}\bigg)\frac{Nr\ln(Nr)}{s}.
		$$
	So, we choose the positive constant 
		$$
		K_{1r}^+=\frac{K^2e^{-2C_1}(2(C_{0,r}-C_1)+1)}{\epsilon}.
		$$ 
	Next, consider the mutation dominating case. In this case, $r$ satisfies (\ref{Con2.2}), and we also have that $1\ll N\mu$. It follows from (\ref{X3mt1<}) and (\ref{X3rt1<}) that if $N$ is sufficiently large, then
		$$
		X_{3r}(t_1)
		\leq \bigg(\frac{K^2e^{-2C_1}(2(C_{0,r}-C_1)+1)}{2\epsilon}\bigg)\frac{CN^2\mu^2}{s},
		$$
	and
		$$
		X_3(t_1)=X_{3m}(t_1)+X_{3r}(t_1)
		\leq \bigg(\frac{4Ke^{-2C_1+C_{0,m}}+K^2e^{-2C_1}(2(C_{0,r}-C_1)+1)C}{2\epsilon}\bigg)\frac{N^2\mu^2}{s}.
		$$
	Thus, we choose the positive constant 
		$$
		K_{1m}^+=\frac{4Ke^{-2C_1+C_{0,m}}+K^2e^{-2C_1}(2(C_{0,r}-C_1)+1)C}{2\epsilon}.
		$$
		
	Now, we will show the lower bound of $X_3(t_1)$. First, consider the recombination dominating case. To prove the lower bound, we will need to consider the term $\int_u^{t_1}G^{(t_{0,r},t_1]}_{3r}(v)dv$. Similar to the way we get (\ref{intG1}) by using (\ref{G3r}) instead of (\ref{G1r}), for $t_{0,r}\leq u\leq t_1$, 
		\begin{align*} 
		\int_u^{t_1}G^{(t_{0,r},t_1]}_{3r}(v)dv
		&\geq\int_u^{t_1}G_3(v)dv\\
		&\geq\int_u^{t_1}\big(2s-s\tilde{X}_1(v)- s\tilde{X}_2(v)-2s\tilde{X}_3(v)-r\big)dv \\
		&\geq\int_u^{t_1}\Big(2s-s\cdot (1+\delta^2)\frac{\mu}{s}e^{su}- s\cdot (1+\delta^2)\frac{\mu}{s}e^{su}-2s\cdot\frac{\mu}{s}e^{su}-r\Big)dv \\
		&\geq 2s(t_1-u)-\frac{6\mu}{s}e^{st_1}-rt_1\\
		&= 2s(t_1-u)-6e^{-C_1}-rt_1.
		\end{align*}
	By (\ref{(r+u)t1}), when $N$ is sufficiently large, for $t_{0,r}\leq u\leq t_1$
		\begin{equation}\label{intG3r}
		\int_u^{t_1}G^{(t_{0,r},t_1]}_{3r}(v)dv \geq 2s(t_1-u)-7e^{-C_1}.
		\end{equation}
	By (\ref{R3ab}) and Lemma \ref{lem@t1}, for sufficiently large $N$, and for $t\in[t_{0,r},t_1]$,
		\begin{align}
		R_3^{(t_{0,r},t_1]}(t)
		&\geq X_0(t)\cdot r\tilde X_1(t)\tilde X_2(t)\nonumber\\
		&=(N-X_1(t)-X_2(t)-X_3(t))\cdot r\tilde X_1(t)\tilde X_2(t)\nonumber\\
		&\geq\Big(N-2(1+\delta^2)\frac{N\mu}{s}e^{st}-\frac{N\mu}{s}e^{st}\Big)\Big((1-\delta^2)^2\frac{\mu^2r}{s^2}e^{2st}\Big)\nonumber\\
		&\geq\Big(N-\frac{5N\mu}{s}e^{st_1}\Big)\Big((1-\delta^2)^2\frac{\mu^2r}{s^2}e^{2st}\Big)\nonumber\\
		&=(1-5e^{-C_1})(1-\delta^2)^2\cdot\frac{N\mu^2r}{s^2}e^{2st}.\label{R3>}
		\end{align}
	 Using (\ref{Xr}), (\ref{intG3r}), (\ref{R3>}), Lemma \ref{G<} and the definitions of $A_7$ in (\ref{A7}), for sufficiently large $N$,
		\begin{align*}
		X_3(t_1)
		&\geq X_{3r}^{(t_{0,r},t_1]}(t_1)\\
		&\geq \int_{t_{0,r}}^{t_1}(1-5e^{-C_1})(1-\delta^2)^2\cdot\frac{N\mu^2r}{s^2}e^{2su}\cdot e^{2s(t_1-u)-7e^{-C_1}}du-\sqrt{\frac{16K^2e^{-2C_{0,r}}}{\epsilon}\cdot \frac{\ln(Nr)}{s^2}}\cdot e^{\int_{t_{0,r}}^{t_1} (2s+r) dv}\\
		&= e^{-7e^{-C_1}}(1-5e^{-C_1})(1-\delta^2)^2\cdot\frac{N\mu^2r}{s^2}e^{2st_1}(t_1-t_{0,r})-\sqrt{\frac{16K^2e^{-2C_{0,r}}}{\epsilon}\cdot \frac{\ln(Nr)}{s^2}}\cdot e^{(2s+r)(t_1-t_{0,r})}.
		\end{align*}
	It follows from the definitions of $t_1$ and $t_{0,r}$ in (\ref{t1}) and (\ref{t0r}) that for sufficiently large $N$,
		\begin{align*}
		X_3(t_1)&\geq e^{-7e^{-C_1}}(1-5e^{-C_1})(1-\delta^2)^2\cdot\frac{e^{-2C_1}Nr}{2s}\big(\ln(Nr)+2(C_{0,r}-C_1)\big)\\
		&\hspace{1 cm}-\frac{4Ke^{-2C_1+C_{0,r}+rt_1}}{\sqrt{\epsilon}}\cdot \frac{Nr\sqrt{\ln(Nr)}}{s}\\
		&=\frac{Nr\ln(Nr)}{s}\cdot\bigg(e^{-7e^{-C_1}}(1-5e^{-C_1})(1-\delta^2)^2e^{-2C_1}\Big(\frac{1}{2}+\frac{(C_{0,r}-C_1)}{\ln(Nr)}\Big)\\
		&\hspace{1 cm}-\frac{4Ke^{-2C_1+C_{0,r}+rt_1}}{\sqrt{\epsilon}}\cdot \frac{1}{\sqrt{\ln(Nr)}}\bigg).
		\end{align*}
	By (\ref{(r+u)t1}) and the fact that $1\ll Nr$, we have that for sufficiently large $N$
		$$
		X_3(t_1)\geq \bigg(\frac{e^{-7e^{-C_1}}(1-5e^{-C_1})(1-\delta^2)^2e^{-2C_1}}{3}\bigg)\frac{Nr\ln(Nr)}{s},
		$$
	and we choose the positive constant
		$$
		K_{1r}^-=\frac{e^{-7e^{-C_1}}(1-5e^{-C_1})(1-\delta^2)^2e^{-2C_1}}{3}.
		$$

	Lastly, consider the mutation dominating case. By the same argument we used to obtain (\ref{intG3r}), we have that for sufficiently large $N$ and for $t_{0,m}\leq u\leq t_1$,  
		$$
		\int_u^{t_1}G_3(v)dv \geq 2s(t_1-u)-7e^{-C_1}.
		$$
	From (\ref{Xm}),  Lemma \ref{G<}, Lemma \ref{lem@t1}, and the definition of $A_6$ in (\ref{A6}), for sufficiently large $N$,
		\begin{align*}
		X_3(t_1)
		&\geq X_{3m}^{(t_{0,m}^+,t_1]}(t_1)\\
		&=\int_{t_{0,m}^+}^{t_1} \mu(X_1(u)+X_2(u))e^{\int_u^{t_1}G_3(v)dv}du+Z^{(t_{0,m}^+,t_1]}_{3m}(t_1)e^{\int_{t_{0,m}^+}^{t_1}G_3(v)dv}\\
		&\geq \int_{t_{0,m}^+}^{t_1}2(1-\delta^2)\cdot\frac{N\mu^2}{s}e^{su}\cdot e^{2s(t_1-u)-7e^{-C_1}}du-\sqrt{\frac{48Ke^{C_{0,m}^+}}{\epsilon}\cdot \frac{1}{s^2}}\cdot e^{\int_{t_{0,m}^+}^{t_1} 2s dv}\\
		&= 2(1-\delta^2)e^{-7e^{-C_1}}\cdot\frac{N\mu^2}{s^2}e^{2st_1}(e^{-st_{0,m}^+}-e^{-st_1})-\sqrt{\frac{48Ke^{C_{0,m}^+}}{\epsilon}\cdot \frac{1}{s^2}}\cdot e^{2s(t_1-t_{0,m}^+)}.
		\end{align*}
	Using the definitions of $t_1$ and $t_{0,m}$ in (\ref{t1}) and (\ref{t0m}), and the fact that $1\ll N\mu$, for sufficiently large $N$,
		\begin{align*}
		X_3(t_1)
		&\geq 2(1-\delta^2)e^{-7e^{-C_1}}\cdot Ne^{-2C_1}\Big(e^{-C_{0,m}^+}\cdot \frac{N\mu^2}{s}-e^{C_1}\cdot \frac{\mu}{s}\Big)-\sqrt{\frac{48Ke^{C_{0,m}^+}}{\epsilon}\cdot \frac{1}{s^2}}\cdot e^{-2C_1-2C_{0,m}^+}N^2\mu^2\\
		&=\frac{N^2\mu^2}{s}\Bigg(2(1-\delta^2)e^{-7e^{-C_1}-2C_1-C_{0,m}^+}\Big(1-e^{C_1+C_{0,m}^+}\cdot \frac{1}{N\mu}\Big)-\sqrt{\frac{48Ke^{C_{0,m}^+}}{\epsilon}}\cdot e^{-2C_1-2C_{0,m}^+}\Bigg)\\
		&\geq \frac{N^2\mu^2}{s}\Bigg((1-\delta^2)e^{-7e^{-C_1}-2C_1-C_{0,m}^+}-\sqrt{\frac{48Ke^{C_{0,m}^+}}{\epsilon}}\cdot e^{-2C_1-2C_{0,m}^+}\Bigg)\\
		&= \frac{N^2\mu^2}{s}\cdot e^{-2C_1-2C_{0,m}^+}\Bigg((1-\delta^2)e^{-7e^{-C_1}}-\sqrt{\frac{48Ke^{-C_{0,m}^+}}{\epsilon}}\Bigg).
		\end{align*}
	Note that the way we define $C_{0,m}^+$ in (\ref{C0m+}) is precisely to make
		$$
		(1-\delta^2)e^{-7e^{-C_1}}-\sqrt{\frac{48Ke^{-C_{0,m}^+}}{\epsilon}}>0.
		$$
	Hence, we choose the positive constant
		$$
		K_{1m}^-=(1-\delta^2)e^{-7e^{-C_1}-2C_1-C_{0,m}^+}-\sqrt{\frac{48Ke^{C_{0,m}^+}}{\epsilon}}\cdot e^{-2C_1-2C_{0,m}^+}.		
		$$
	This completes the proof.
	\end{proof}

\section{Phase 2 and the proof of Proposition \ref{@t2}}\label{phase2}

\subsection{Comparing the Markov chain with a differential equation} 
	Theorem \ref{DNthm} below is a special case of Theorem 4.1 of \cite{Dar}. Let $(\mathbf{X}(t),t\geq 0)$ be a continuous time Markov chain with finite state space $S\subset\mathbb{R}^3$. Let $q(\xi,\xi')$ be the jump rate from the state $\xi$ to the state $\xi'$. For each state $\xi \in S$, define the function $\alpha:S\rightarrow \mathbb{R}$ by
		\begin{equation}\label{alpha}
		\alpha(\xi)=\sum_{\xi'\neq\xi}|\xi'-\xi|^2q(\xi,\xi'),
		\end{equation}
	where $|\cdot|$ is the Euclidean norm, and define the function $\beta:S\rightarrow \mathbb{R}^3$ by
		\begin{equation}\label{beta}
		\beta(\xi)=\sum_{\xi'\neq\xi}(\xi'-\xi)q(\xi,\xi'). 
		\end{equation}
	It follows that 
		$$
		\mathbf{X}(t)=\mathbf{X}(0)+M(t)+\int_0^t\beta(\mathbf{X}(s))ds, \hspace{0.5cm}\mbox{for $t\geq 0$},
		$$
	for some martingale $(M(t),t\geq 0)$.

	Let $b:[0,1]^3\rightarrow\mathbb{R}^3$ be a Lipschitz function with Lipschitz constant $K$. Let $x:[0,\infty)\rightarrow \mathbb{R}^3$ be the function that satisfies
		$$
		x(t)=x(0)+\int_0^tb(x(s))ds, \hspace{0.5 cm} \mbox{for $t\geq 0$}.
		$$  
	The goal is to compare $\mathbf{X}(t)$ with $x(t)$.

	Fix $T>0$, $\epsilon_0>0$, $L>0$, and let $\Delta=\epsilon_0 e^{-KT}/3$. Define the events
		\begin{align}
		\Omega_0&=\{|\mathbf{X}(0)-x(0)|\leq\Delta\},\nonumber\\
		\Omega_1&=\bigg\{\int_0^{T}|\beta(\mathbf{X}(t))-b(\mathbf{X}(t))|dt\leq \Delta\bigg\},\nonumber\\
		\Omega_2&=\bigg\{\int_0^{T}\alpha(\mathbf{X}(t))dt\leq LT\bigg\}.\nonumber
		\end{align}

	\begin{theorem} \label{DNthm}
	Under all the assumptions above,
		$$ 
		P\bigg(\sup_{0\leq t \leq T}|\mathbf{X}(t)-x(t)|>\epsilon\bigg)\leq\frac{4LT}{\Delta^2}+P\big(\Omega_0^c\cup\Omega_1^c\cup\Omega_2^c\big).
		$$
	\end{theorem}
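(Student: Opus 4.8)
The result is the classical fluid‑limit comparison, and the plan is to run a Gronwall estimate for the drift together with an $L^2$ maximal inequality for the martingale part. Writing the semimartingale decompositions $\mathbf{X}(t)=\mathbf{X}(0)+M(t)+\int_0^t\beta(\mathbf{X}(s))\,ds$ and $x(t)=x(0)+\int_0^tb(x(s))\,ds$, I would subtract them and split the drift difference as $\beta(\mathbf{X}(s))-b(x(s))=\bigl(\beta(\mathbf{X}(s))-b(\mathbf{X}(s))\bigr)+\bigl(b(\mathbf{X}(s))-b(x(s))\bigr)$, so that on $\Omega_1$ the first piece integrates to at most $\Delta$ while the Lipschitz property gives $|b(\mathbf{X}(s))-b(x(s))|\le K|\mathbf{X}(s)-x(s)|$ for the second. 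Setting $D(t)=\sup_{0\le s\le t}|\mathbf{X}(s)-x(s)|$ and $M^{*}(t)=\sup_{0\le s\le t}|M(s)|$, this yields, on $\Omega_0\cap\Omega_1$,
$$D(t)\le 2\Delta+M^{*}(t)+K\int_0^tD(s)\,ds,$$
and since $2\Delta+M^{*}(t)$ is non‑decreasing in $t$, Gronwall's inequality gives $D(T)\le\bigl(2\Delta+M^{*}(T)\bigr)e^{KT}$.

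The next step is to bound $M^{*}$ in $L^2$, and here the one real obstacle is that $\int_0^T\alpha(\mathbf{X}(t))\,dt\le LT$ holds only on $\Omega_2$, not surely, so a direct bound on $E[|M(T)|^2]$ is unavailable. I would therefore truncate at the stopping time $\tau=\inf\{t\ge 0:\int_0^t\alpha(\mathbf{X}(s))\,ds>LT\}$, noting that $\Omega_2\subseteq\{\tau\ge T\}$. Since $\alpha(\xi)=\sum_{\xi'\ne\xi}|\xi'-\xi|^2q(\xi,\xi')$ is exactly the total rate of squared jump sizes, the coordinates $M_i$ of $M$ satisfy $\sum_i\langle M_i\rangle(t)=\int_0^t\alpha(\mathbf{X}(s))\,ds$; finiteness of the state space makes each $M_i$ a square‑integrable martingale, so $E[|M(T\wedge\tau)|^2]=\sum_iE[\langle M_i\rangle(T\wedge\tau)]=E\bigl[\int_0^{T\wedge\tau}\alpha(\mathbf{X}(s))\,ds\bigr]\le LT$. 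Using $M^{*}(t)^2\le\sum_i(\sup_{s\le t}|M_i(s)|)^2$ and Doob's $L^2$ maximal inequality coordinatewise, $E[(M^{*}(T\wedge\tau))^2]\le 4E[|M(T\wedge\tau)|^2]\le 4LT$, whence Chebyshev's inequality gives $P(M^{*}(T\wedge\tau)>\Delta)\le 4LT/\Delta^2$.

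Finally I would glue the pieces on the good event. On $\Omega_0\cap\Omega_1\cap\Omega_2$ we have $\tau\ge T$, so $M^{*}(T\wedge\tau)=M^{*}(T)$, and the choice $\Delta=\epsilon_0e^{-KT}/3$ is exactly what makes the implication $M^{*}(T)\le\Delta\Rightarrow D(T)\le(2\Delta+\Delta)e^{KT}=\epsilon_0$ valid. Consequently
$$\Bigl\{\sup_{0\le t\le T}|\mathbf{X}(t)-x(t)|>\epsilon_0\Bigr\}\subseteq\Omega_0^c\cup\Omega_1^c\cup\Omega_2^c\cup\{M^{*}(T\wedge\tau)>\Delta\},$$
and taking probabilities and inserting the Chebyshev bound gives precisely $4LT/\Delta^2+P(\Omega_0^c\cup\Omega_1^c\cup\Omega_2^c)$. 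Apart from the stopping‑time truncation and the minor bookkeeping relating the Euclidean norm to the coordinatewise Doob inequality, the argument is routine; alternatively, since the statement is quoted as a special case of Theorem 4.1 of \cite{Dar}, one may simply check that its hypotheses are met and invoke that theorem directly.
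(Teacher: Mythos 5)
Your proof is correct. Note first that the paper itself offers no proof of this statement: it is quoted verbatim as a special case of Theorem 4.1 of Darling and Norris, so there is nothing internal to compare against. What you have written is, in essence, the standard proof of that theorem, and every step checks out: the drift splitting combined with Gronwall (using that $2\Delta+M^{*}(t)$ is non-decreasing so the integral form of Gronwall applies), the identification $\sum_i\langle M_i\rangle(t)=\int_0^t\alpha(\mathbf{X}(s))\,ds$ for the compensated jump martingale, and the final union bound with $\Delta=\epsilon_0e^{-KT}/3$ chosen so that $3\Delta e^{KT}=\epsilon_0$ (the $\epsilon$ in the displayed statement is a typo for the $\epsilon_0$ fixed in the setup, which you handle correctly). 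The stopping-time truncation at $\tau=\inf\{t:\int_0^t\alpha(\mathbf{X}(s))\,ds>LT\}$ is exactly the right device, and is genuinely needed since $\Omega_2$ is not a sure event; the only fine point worth making explicit is that $t\mapsto\int_0^t\alpha(\mathbf{X}(s))\,ds$ is continuous and non-decreasing (as $\alpha$ is bounded on the finite state space), so $\int_0^{T\wedge\tau}\alpha(\mathbf{X}(s))\,ds\le LT$ holds almost surely and the bound $E[|M(T\wedge\tau)|^2]\le LT$ is legitimate. Your closing remark that one could instead just invoke the cited theorem is precisely what the paper does; your version has the advantage of being self-contained.
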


	Now, we will apply this theorem to our process $((X_0(t),X_1(t),X_2(t),X_3(t)),t\geq 0)$. First, for $t\geq 0$, we define
		\begin{equation} \label{X(t)}
		\mathbf{X}(t)=(\tilde{X_1}(t),\tilde{X_2}(t),\tilde{X_3}(t)),
		\end{equation}
	and $S=\{(\xi_1,\xi_2,\xi_3)\in\{0,\frac{1}{N},...,\frac{N-1}{N},1\}^3:\xi_1+\xi_2+\xi_3\leq 1\}$. We are thinking of $\xi_1,\xi_2$ and $\xi_3$ as the fractions of type 1, 2 and 3 individuals in the population. For better understanding in the following formulas, we will define $\xi_0=1-\xi_1-\xi_2-\xi_3$, which represents the fraction of type 0 individuals in the population. Now, for each $\xi =(\xi_1, \xi_2, \xi_3)\in S$, we define 
		\begin{align*}
		f_0(\xi)&=(1-r)\xi_0+r(\xi_0+\xi_1)(\xi_0+\xi_2),\\
		f_1(\xi)&=(1-r)\xi_1+r(\xi_1+\xi_3)(\xi_0+\xi_1),\\
		f_2(\xi)&=(1-r)\xi_2+r(\xi_0+\xi_2)(\xi_2+\xi_3),\\
		f_3(\xi)&=(1-r)\xi_3+r(\xi_1+\xi_3)(\xi_2+\xi_3).
		\end{align*}
	Note that for each $i=0, 1, 2, 3$, the quantity $f_i(\xi)$ represents the probability that a new individual born is of type $i$. Next, for $\xi=(\xi_1,\xi_2,\xi_3)\in S$ and $\xi'=(\xi'_1,\xi'_2,\xi'_3)\in S$, the transition rate $q(\xi,\xi')$ is given by
		\begin{equation}\label{q}
  		q(\xi,\xi')=
			\begin{cases}
  			N\xi_0f_1(\xi)+\mu N\xi_0, &\mbox{ if $(\xi'_1,\xi'_2,\xi'_3)=(\xi_1+\frac{1}{N},\xi_2,\xi_3)$}\\
  			N\xi_0f_2(\xi)+\mu N\xi_0, &\mbox{ if $(\xi'_1,\xi'_2,\xi'_3)=(\xi_1,\xi_2+\frac{1}{N},\xi_3)$}\\
  			N\xi_0f_3(\xi), &\mbox{ if $(\xi'_1,\xi'_2,\xi'_3)=(\xi_1,\xi_2,\xi_3+\frac{1}{N})$}\\
  			N(1-s)\xi_1f_0(\xi), &\mbox{ if $(\xi'_1,\xi'_2,\xi'_3)=(\xi_1-\frac{1}{N},\xi_2,\xi_3)$}\\
  			N(1-s)\xi_1f_2(\xi), &\mbox{ if $(\xi'_1,\xi'_2,\xi'_3)=(\xi_1-\frac{1}{N},\xi_2+\frac{1}{N},\xi_3)$}\\
  			N(1-s)\xi_1f_3(\xi)+\mu N\xi_1, &\mbox{ if $(\xi'_1,\xi'_2,\xi'_3)=(\xi_1-\frac{1}{N},\xi_2,\xi_3+\frac{1}{N})$}\\
 			N(1-s)\xi_2f_0(\xi), &\mbox{ if $(\xi'_1,\xi'_2,\xi'_3)=(\xi_1,\xi_2-\frac{1}{N},\xi_3)$}\\
  			N(1-s)\xi_2f_1(\xi), &\mbox{ if $(\xi'_1,\xi'_2,\xi'_3)=(\xi_1+\frac{1}{N},\xi_2-\frac{1}{N},\xi_3)$}\\
  			N(1-s)\xi_2f_3(\xi)+\mu N\xi_2, &\mbox{ if $(\xi'_1,\xi'_2,\xi'_3)=(\xi_1,\xi_2-\frac{1}{N},\xi_3+\frac{1}{N})$}\\
  			N(1-2s)\xi_3f_0(\xi), &\mbox{ if $(\xi'_1,\xi'_2,\xi'_3)=(\xi_1,\xi_2,\xi_3-\frac{1}{N})$}\\
  			N(1-2s)\xi_3f_1(\xi), &\mbox{ if $(\xi'_1,\xi'_2,\xi'_3)=(\xi_1+\frac{1}{N},\xi_2,\xi_3-\frac{1}{N})$}\\
  			N(1-2s)\xi_3f_2(\xi), &\mbox{ if $(\xi'_1,\xi'_2,\xi'_3)=(\xi_1,\xi_2+\frac{1}{N},\xi_3-\frac{1}{N})$}\\
  			0, &\mbox{ otherwise}.
			\end{cases}
		\end{equation}
	The reasons behind the formulas for these rates are similar to the ones we used to obtain the birth and death rates in section \ref{rate}. Let us consider the first rate. It is the rate that the number of type 0 individuals decreases by 1 and the number of type 1 individuals increases by 1. There are two ways for this to occur: 1) a type 0 individual mutates to type 1, which occurs at total rate of $\mu N\xi_0$, and 2) a type 0 individual dies and is replaced by a type 1 individual. The total rate that a type 0 individual dies is $N\xi_0$, and the probability that the replacement is of type 1 is $f_1(\xi)$.

	We define the functions $\alpha$ and $\beta$ as in (\ref{alpha}) and (\ref{beta}). For $\xi,\xi'\in S$ such that $q(\xi,\xi')\neq 0$, we have $|\xi-\xi'|^2\leq2/N^2$, since it is equal to $1/N^2$ or  $2/N^2$. Because for each $i= 0,1, 2,3$ and $\xi \in S$,  we have $0\leq f_i(\xi)\leq 1$, and because $\mu \ll s \ll 1$, it follows that for sufficiently large $N$, for all $\xi, \xi' \in S$, we have $q(\xi, \xi')\leq 2N$. By the definition of $\alpha$ in (\ref{alpha}), for sufficiently large $N$, 
		\begin{equation}\label{alpha<}
		\alpha(\xi)\leq\frac{48}{N}.
		\end{equation}
	
	For each $\xi \in S$, we define
		$$
		\gamma_s(\xi)=\big(\xi_0\xi_3-\xi_1\xi_2\big) (1-s\xi_1-s\xi_2-2s\xi_3).
		$$  
	A tedious calculation gives
  		\begin{align} \label{beta=}
		\beta(\xi)&
			=s\begin{pmatrix}
        			(1-\xi_1-\xi_2-2\xi_3)\xi_1\\
           		(1-\xi_1-\xi_2-2\xi_3)\xi_2\\
           		(2-\xi_1-\xi_2-2\xi_3)\xi_3
             	\end{pmatrix}
           	+r\gamma_s(\xi)
            		\begin{pmatrix}
            		1\\
            		1\\
            		-1
           		 \end{pmatrix}  
            	+\mu\begin{pmatrix}
             	\xi_0-\xi_1\\
             	\xi_0-\xi_2\\
             	\xi_1+\xi_2
             	\end{pmatrix}.
		\end{align}
	Note that for $i=1,2,3$, the $i^{th}$ row  of $N\beta(\xi)$ is exactly the rate at which the number of type $i$ individuals increases by 1 minus the rate at which the number of type $i$ individuals decreases by 1.  

	Here, we define the functions $b:[0,1]^3\rightarrow \mathbb{R}^3$ and $\tilde b:[0,1]^3\rightarrow \mathbb{R}^3$ by
		\begin{equation}\label{b}
		b(x_1,x_2,x_3)=s\big((1-x_1-x_2-2x_3)x_1,(1-x_1-x_2-2x_3)x_2,(2-x_1-x_2-2x_3)x_3\big),
		\end{equation}
	and 
		\begin{equation} \label{b-}
		\tilde b(x_1,x_2,x_3)=b(x_1,x_2,x_3)/s.
		\end{equation}
		 Since all first partial derivatives of $\tilde{b}$ are bounded, the function $\tilde{b}$ is Lipschitz. Hence, $b$ is also Lipschitz with Lipschitz constant $ks$, where $k>0$ and $k$ does not depend on $N$.

	Now, we define a random variable $B$ such that on the event that $\tilde X_1(t_1)+\tilde X_2(t_1)>0$, we have
		\begin{equation}\label{B}
		B =\big(\tilde X_1(t_1)+\tilde X_2(t_1)\big)^{-1}-1. 
		\end{equation}
	The value of $B$ on the event that $\tilde X_1(t_1)+\tilde X_2(t_1)=0$ is not of interest, as we will work only on the event $A_{(1)}$ when $N$ is sufficiently large. By Proposition \ref{@t1}, we know that $\tilde X_1(t_1)+\tilde X_2(t_1)>0$ on the event $A_{(1)}$. Next, for $t\geq t_1$, we define	
		\begin{equation} \label{f}
			f(t)=\frac{1}{1+Be^{-s(t-t_1)}},
		\end{equation}
	and for $t\geq t_1$, we let
		\begin{equation} \label{x}
		x(t)=\big(x_1(t),x_2(t),x_3(t)\big)=\bigg(\bigg(\frac{\tilde X_1(t_1)}{\tilde X_1(t_1)+\tilde X_2(t_1)}\bigg)f(t),\bigg(\frac{\tilde X_2(t_1)}{\tilde X_1(t_1)+\tilde X_2(t_1)}\bigg)f(t),0\bigg).
		\end{equation}
	Note that for $i=1,2$, we have $x_i(t_1)=\tilde X_i(t_1)$, and for all $t\geq t_1$, we have $x_1(t)+x_2(t)=f(t)$. From (\ref{f}), for $t\geq t_1$,
		$$
		\frac{d}{dt}f(t)=\frac{sBe^{-s(t-t_1)}}{(1+Be^{-s(t-t_1)})^2}=sBe^{-s(t-t_1)}(f(t))^2,
		$$
	and it follows that
		$$
		\frac{d}{dt}x(t)=sBe^{-s(t-t_1)}(f(t))^2\bigg(\frac{\tilde X_1(t_1)}{\tilde X_1(t_1)+\tilde X_2(t_1)},\frac{\tilde X_2(t_1)}{\tilde X_1(t_1)+\tilde X_2(t_1)},0\bigg).
		$$
	From (\ref{b}), (\ref{x}), (\ref{f}), and the fact that $x_1(t)+x_2(t)=f(t)$ for all $t\geq t_1$, we have that for $t\geq t_1$,
		\begin{align*}
		b(x(t))&=s\big((1-f(t))x_1(t),(1-f(t))x_2(t),0\big)\\
		&=sBe^{-s(t-t_1)}(f(t))^2\bigg(\frac{\tilde X_1(t_1)}{\tilde X_1(t_1)+\tilde X_2(t_1)},\frac{\tilde X_2(t_1)}{\tilde X_1(t_1)+\tilde X_2(t_1)},0\bigg).
		\end{align*}
	Therefore, for $t\geq t_1$, we have $\frac{d}{dt}x(t)=b(x(t))$, and
		$$
		x(t)=x(t_1)+\int_{t_1}^tb(x(s))ds.
		$$

	We pick the constant 
		\begin{equation}\label{C2}
		C_2=-C_1+\ln\bigg(\frac{e^{C_1}}{2(1+\delta^2)}-1\bigg)+\ln\bigg(\frac{1}{\delta^2}-1\bigg),
		\end{equation}
	and we define
		\begin{equation}
		t_2=\frac{1}{s}\ln\Big(\frac{s}{\mu}\Big)+\frac{C_2}{s}.\label{t2}
		\end{equation} 

	We will use Theorem \ref{DNthm} to show that with probability almost 1, both $X_1(t)$ and $X_2(t)$ are close to $x_1(t)N$ and $x_2(t)N$ for $t\in[t_1,t_2]$. We define the event
	\begin{equation}\label{A12}
	A_{12}=\bigg\{\sup_{t \in [t_1,t_2]}|X_i(t)-x_i(t)N|\leq \Big(\frac{\delta^4}{4}\Big)N \hspace{0.1 cm}\textup{ for $i=1,2$}\bigg\}.
	\end{equation}
	
	\begin{lemma} \label{DNt12}
	For sufficiently large $N$, we have $P(A_{12}^c|\hspace{0.1 cm}\mathcal{F}_{t_1})\leq \epsilon$ on the event $A_{(1)}$.
	\end{lemma}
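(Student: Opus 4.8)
The plan is to apply Theorem \ref{DNthm} to the process restarted at time $t_1$. By the Markov property, conditionally on $\mathcal{F}_{t_1}$ the process $(\mathbf{X}(t_1+t),\,t\ge 0)$, with $\mathbf{X}$ as in (\ref{X(t)}), is a continuous-time Markov chain on $S$ with the transition rates $q$ of (\ref{q}) started from $\mathbf{X}(t_1)$; and since the random variable $B$ of (\ref{B}) is $\mathcal{F}_{t_1}$-measurable, the curve $x$ of (\ref{x}) is, conditionally on $\mathcal{F}_{t_1}$, a deterministic solution of $\frac{d}{dt}x(t)=b(x(t))$ with $x(t_1)=(\tilde X_1(t_1),\tilde X_2(t_1),0)$, as computed after (\ref{x}). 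I would take $T=t_2-t_1=(C_1+C_2)/s$, let $K=ks$ be the Lipschitz constant of $b$ established after (\ref{b-}) (so $KT=k(C_1+C_2)$ is independent of $N$), set $\epsilon_0=\delta^4/4$, and put $\Delta=\epsilon_0 e^{-KT}/3$, also independent of $N$. Since each coordinate satisfies $|\tilde X_i(t)-x_i(t)|\le|\mathbf{X}(t)-x(t)|$, we have $A_{12}^c\subseteq\{\sup_{t_1\le t\le t_2}|\mathbf{X}(t)-x(t)|>\epsilon_0\}$, so Theorem \ref{DNthm} reduces the lemma to checking that, on $A_{(1)}$ and for all large $N$, the events $\Omega_0,\Omega_1,\Omega_2$ hold (conditionally on $\mathcal{F}_{t_1}$) with probability one and that $4LT/\Delta^2\le\epsilon$.

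For $\Omega_0$: from (\ref{B}) and (\ref{f}) one gets $f(t_1)=\tilde X_1(t_1)+\tilde X_2(t_1)$, hence $x(t_1)=(\tilde X_1(t_1),\tilde X_2(t_1),0)$ and $|\mathbf{X}(t_1)-x(t_1)|=\tilde X_3(t_1)$. On $A_{(1)}$, Proposition \ref{@t1} gives $\tilde X_3(t_1)\le K^+_{1r}\,r\ln(Nr)/s$ in the recombination dominating case and $\tilde X_3(t_1)\le K^+_{1m}\,N\mu^2/s$ in the mutation dominating case; by (\ref{Con1.2}) and (\ref{Con1.3}) both bounds tend to $0$, so for $N$ large they are $\le\Delta$ and $\Omega_0$ holds on $A_{(1)}$.

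For $\Omega_1$ and $\Omega_2$: subtracting (\ref{b}) from (\ref{beta=}) gives $\beta(\xi)-b(\xi)=r\gamma_s(\xi)(1,1,-1)+\mu(\xi_0-\xi_1,\xi_0-\xi_2,\xi_1+\xi_2)$; since $|\gamma_s(\xi)|\le 1$ and each entry of the last vector lies in $[-1,1]$ for every $\xi\in S$, this gives $|\beta(\xi)-b(\xi)|\le\sqrt3(r+\mu)$, so $\int_{t_1}^{t_2}|\beta(\mathbf{X}(t))-b(\mathbf{X}(t))|\,dt\le\sqrt3(r+\mu)(C_1+C_2)/s\to0$ by (\ref{r<s}) and $\mu\ll s$; hence $\Omega_1$ holds surely for $N$ large. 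By (\ref{alpha<}), $\alpha(\xi)\le 48/N$ for $N$ large, so choosing $L=48/N$ makes $\Omega_2$ hold surely; then $4LT/\Delta^2=192(C_1+C_2)/(Ns\Delta^2)$, and since $Ns\ge N\cdot N\mu^2=(N\mu)^2\to\infty$ by (\ref{Con1.1}) and (\ref{Con1.2}), this is $\le\epsilon$ for $N$ large. Theorem \ref{DNthm} then yields $P(A_{12}^c\mid\mathcal{F}_{t_1})\le 4LT/\Delta^2\le\epsilon$ on $A_{(1)}$ for $N$ large, as required.

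The argument is mostly bookkeeping once Theorem \ref{DNthm} is available; the two model-specific ingredients are Proposition \ref{@t1}, used to make the initial discrepancy $\tilde X_3(t_1)=|\mathbf{X}(t_1)-x(t_1)|$ negligible (this is where conditions (\ref{Con1.2}) and (\ref{Con1.3}) enter), and the growth estimate $Ns\to\infty$ coming from (\ref{Con1.1}) and (\ref{Con1.2}), which forces the Darling--Norris fluctuation term $4LT/\Delta^2$ to $0$. The point requiring the most care will be the conditioning: one must observe that $B$, and hence the entire target curve $x$, is $\mathcal{F}_{t_1}$-measurable, so that on $A_{(1)}$ Theorem \ref{DNthm} can be invoked with a fully deterministic comparison trajectory and initial point.
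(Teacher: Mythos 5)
Your proposal is correct and follows essentially the same route as the paper's proof: the same choice of $T$, $L$, and $\Delta$, the same use of Proposition \ref{@t1} together with (\ref{Con1.2})--(\ref{Con1.3}) to make the initial discrepancy $\tilde X_3(t_1)\le\Delta$ on $A_{(1)}$, the same bound $|\beta-b|=O(r+\mu)$ forcing $\Omega_1^c=\emptyset$, and the same handling of the conditioning (the paper fixes $\mathbf{X}(t_1)=(\xi_1,\xi_2,\xi_3)$ and notes the bound is uniform in the starting point, which is the same observation as your remark that $B$ and hence $x$ are $\mathcal{F}_{t_1}$-measurable). The only cosmetic difference is that you compute the explicit constant $\sqrt3(r+\mu)$ where the paper uses generic constants $D,D'$.
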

	
	\begin{proof}
	Let $\Delta=\delta^4e^{-k(C_2+C_1)}/12$. We will first prove that for sufficiently large $N$, on the event $A_{(1)}$, 
		$$
		P\bigg(\sup_{t \in [t_1,t_2]}|\mathbf{X}(t)-x(t)|>\frac{\delta^4}{4}\hspace{0.1 cm} \bigg|\hspace{0.1 cm} \mathcal{F}_{t_1}\bigg)
		\leq\epsilon.
		$$
	By (\ref{beta=}) and (\ref{b}), we have
		\begin{align*}
		\beta(\mathbf{X}(t))-b(\mathbf{X}(t))
			&= r\gamma_s\big((\tilde X_1(t),\tilde X_2(t),\tilde X_3(t))\big)
            		\begin{pmatrix}
            		1\\
            		1\\
            		-1
            		\end{pmatrix}
		+\mu\begin{pmatrix}
  				1-2\tilde{X_1}(t)-\tilde{X_2}(t)-\tilde{X_3}(t)\\
  				1-\tilde{X_1}(t)-2\tilde{X_2}(t)-\tilde{X_3}(t)\\
  				\tilde{X_1}(t)+\tilde{X_2}(t)\\
  			\end{pmatrix}.
		\end{align*}
	Because $\tilde{X_i}(t)\in[0,1]$ for all $i=1,2,3$,  and $t\geq 0$, we have
		$$
		|\beta(\mathbf{X}(t))-b(\mathbf{X}(t))|\leq Dr+D'\mu,
		$$
	for some positive constants $D$ and $D'$. Thus, 
		\begin{equation} \label{I1}
		\int_{t_1}^{t_2}|\beta(\mathbf{X}(t)-b(\mathbf{X}(t))|dt \leq(Dr+D'\mu)(t_2-t_1)=(C_2+C_1)\Big(D\Big(\frac{r}{s}\Big)+D'\Big(\frac{\mu}{s}\Big)\Big).
		\end{equation}
	
	In the recombination dominating case, since $r\ll s$, $\mu \ll s$, $1\ll N\mu$, and $r\ln_+(Nr)\ll s$, if $N$ is sufficiently large, then 
		\begin{equation} \label{-6.0}
		\bigg(\frac{192(C_2+C_1)}{\Delta^2}\bigg)\bigg(\frac{1}{Ns}\bigg)\leq \epsilon,
		\end{equation}
		\begin{equation}\label{*3}
		(C_2+C_1)\Big(D\Big(\frac{r}{s}\Big)+D'\Big(\frac{\mu}{s}\Big)\Big)\leq \Delta,
		\end{equation}
	and 
		\begin{equation}\label{-6.1}
		\frac{K_{1r}^+r\ln(Nr)}{s}\leq \Delta.
		\end{equation}
	In the mutation dominating case, since $r\ll s$, $\mu \ll s$, an $N\mu^2\ll s$ , if $N$ is sufficiently large, then (\ref{*3}) holds and
		\begin{equation}\label{-6.2}
		\frac{K_{1m}^+N\mu^2}{s}\leq \Delta.
		\end{equation}
	In this proof, we assume that $N$ is large enough so that in the recombination dominating case, (\ref{-6.0}), (\ref{*3}) and (\ref{-6.1}) hold, and  in the mutation dominating case, (\ref{-6.0}), (\ref{*3}) and (\ref{-6.2}) hold.
	
	Now, let us consider the process $(\mathbf{X}(t),t\geq 0)$. By Markov property of the process, if we condition on $\mathcal{F}_{t_1}$, the process after time $t_1$ behaves as if we start the whole process again with $\mathbf{X}(t_1)$ as the initial condition. Now, let us fix the value of $\mathbf{X}(t_1)=(\xi_1,\xi_2,\xi_3)$, and consider the process starting at time $t_1$ with this initial condition. Note that by starting the process from this fixed start point, the function $f$ and $x$ defined in (\ref{f}) and (\ref{x}) are no longer random, which allows us to use Theorem \ref{DNthm}.
	
	We define $T=t_2-t_1$, and note that $\Delta=\delta^4 e^{-k(C_2+C_1)}/12=(\delta^4/4)\cdot e^{-(ks)T}/3$, which is in the form required in order to use Theorem \ref{DNthm}. We let $L=48/N$ and define the events
		\begin{align*}
		\Omega_0&=\{|\mathbf{X}(t_1)-x(t_1)|\leq\Delta\}\\
		\Omega_1&=\bigg\{\int_{t_1}^{t_2}|\beta(\mathbf{X}(t))-b(\mathbf{X}(t))|dt\leq \Delta\bigg\}\\
		\Omega_2&=\bigg\{\int_{t_1}^{t_2}\alpha(\mathbf{X}(t))dt\leq LT\bigg\}.
		\end{align*}
 	
 	First, we consider $\Omega_0$. In the recombination dominating case, if $\mathbf{X}(t_1)=(\xi_1,\xi_2,\xi_3)$ satisfies (\ref{Xit1}) and (\ref{X3t1rec}), then by (\ref{-6.1}), we have 
		$$
		|\mathbf{X}(t_1)-x(t_1)|
		\leq |\tilde{X_1}(t_1)-x_1(t_1)|+|\tilde{X_2}(t_1)-x_2(t_1)|+|\tilde{X_3}(t_1)-x_3(t_1)| 
		\leq 0+0+\frac{K_{1r}^+r\ln(Nr)}{s}
		\leq \Delta.
		$$
	Similarly, in the mutation dominating case, if $\mathbf{X}(t_1)=(\xi_1,\xi_2,\xi_3)$ satisfies (\ref{Xit1}) and (\ref{X3t1mut}), then by (\ref{-6.2}), we have
		$$
		|\mathbf{X}(t_1)-x(t_1)|
		\leq |\tilde{X_1}(t_1)-x_1(t_1)|+|\tilde{X_2}(t_1)-x_2(t_1)|+|\tilde{X_3}(t_1)-x_3(t_1)| 
		\leq 0+0+\frac{K_{1m}^+N^2\mu^2}{s}
		\leq \Delta.
		$$ 
	
	Next, because of (\ref{I1}) and (\ref{*3}), we have that $\Omega_1^c=\emptyset$. Lastly, by (\ref{alpha<}), it follows that
		$$
		\int_{t_1}^{t_2}\alpha(\mathbf{X}(t))dt\leq\Big(\frac{48}{N}\Big)(t_2-t_1)=LT.
		$$
	So, $\Omega_2^c=\emptyset$.

	Therefore, if $\mathbf{X}(t_1)=(\xi_1,\xi_2,\xi_3)$ satisfies (\ref{Xit1}) and (\ref{X3t1rec}) in the recombination dominating case, or satisfies (\ref{Xit1}) and (\ref{X3t1mut}) in the mutation dominating case, by Theorem \ref{DNthm} and (\ref{-6.0}), we have that
		$$
		P\bigg(\sup_{t \in [t_1,t_2]}|\mathbf{X}(t)-x(t)|>\frac{\delta^4}{4}\hspace{0.1 cm} \bigg|\hspace{0.1 cm} \mathbf{X}(t_1)=(\xi_1,\xi_2,\xi_3)\bigg)
		\leq\frac{4AT}{\Delta^2}+0=\bigg(\frac{192(C_2+C_1)}{\Delta^2}\bigg)\bigg(\frac{1}{Ns}\bigg)\leq \epsilon.
		$$
	Note that the upper bound does not depend on the value of $(\xi_1,\xi_2,\xi_3)$. By Proposition \ref{@t1}, on the event $A_{(1)}$, we know that $\mathbf{X}(t_1)=(\xi_1,\xi_2,\xi_3)$ satisfies (\ref{Xit1}) and (\ref{X3t1rec}) in the recombination dominating case, and satisfies (\ref{Xit1}) and (\ref{X3t1mut}) in the mutation dominating case for sufficiently large $N$. Using the Markov property of the process, we have that on the event $A_{(1)}$,
		$$
		P\bigg(\sup_{t \in [t_1,t_2]}|\mathbf{X}(t)-x(t)|>\frac{\delta^4}{4}\hspace{0.1 cm}\bigg |\hspace{0.1 cm}\mathcal{F}_{t_1}\bigg)
		\leq\epsilon.
		$$
	Thus, from the definition of the event $A_{12}$ in (\ref{A12}), on the event $A_{(1)}$,
		$$
		P(A_{12}^c|\hspace{0.1 cm}\mathcal{F}_{t_1})
		\leq P\bigg(\sup_{t \in [t_1,t_2]}|\mathbf{X}(t)-x(t)|>\frac{\delta^4}{4}\hspace{0.1 cm}\bigg |\hspace{0.1 cm}\mathcal{F}_{t_1}\bigg)
		\leq \epsilon,
		$$
	which completes the proof.
	\end{proof}

\subsection{Results on type 3 individuals}

	We will now show that for sufficiently large $N$, with probability close to 1, $X_3(t_2)$ has the same order as $(Nr\ln(Nr))/s$ in the recombination dominating case, and has the same order as $(N^2\mu^2)/s$ in the mutation dominating case. The proof mainly has two parts. In the first part, we will show that $X_3^{[t_1]}(t_2)$, which was defined to be the number of type 3 individuals at time $t$ that descend from the type 3 individuals at time $t_1$, has order $(Nr\ln(Nr))/s$ in the recombination dominating case, and $(N^2\mu^2)/s$ in the mutation dominating case. In the second part, we show that $X_{3m}^{(t_1,t_2]}(t_2)$ and $X_{3r}^{(t_1,t_2]}(t_2)$ are much smaller than $X_3^{[t_1]}(t_2)$.

	\begin{lemma} \label{EX3[t1]}
	For sufficiently large $N$, for all $t\geq t_1$
		$$
		E\Big[X_3^{[t_1]}(t)\hspace{0.1 cm}\Big |\hspace{0.1 cm}\mathcal{F}_{t_1}\Big]\leq e^{2s(t-t_1)}X_3(t_1).
		$$
	\end{lemma}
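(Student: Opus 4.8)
The plan is to bound the conditional expectation of $X_3^{[t_1]}(t)$ by comparing it with a pure birth process whose per-individual birth rate never exceeds $2s$ above the death rate. Concretely, I would work with the martingale $Z_3^{[t_1]}(t)$ from equation~(\ref{Zi}), namely $Z_3^{[t_1]}(t)=e^{-\int_{t_1}^t G_3(v)dv}X_3^{[t_1]}(t)$, which by Proposition~\ref{ZiMar} is a martingale with respect to $(\mathcal{F}_t)_{t\geq t_1}$. Taking conditional expectations given $\mathcal{F}_{t_1}$ and using that $X_3^{[t_1]}(t_1)=X_3(t_1)$, one gets
\[
E\Big[e^{-\int_{t_1}^t G_3(v)dv}X_3^{[t_1]}(t)\hspace{0.1cm}\Big|\hspace{0.1cm}\mathcal{F}_{t_1}\Big]=X_3(t_1).
\]
So it remains to control the exponential factor from below in the required regime, i.e. to show $\int_{t_1}^t G_3(v)dv \leq 2s(t-t_1)$ always holds, which would then yield
\[
E\Big[X_3^{[t_1]}(t)\hspace{0.1cm}\Big|\hspace{0.1cm}\mathcal{F}_{t_1}\Big]=E\Big[e^{\int_{t_1}^t G_3(v)dv}\cdot e^{-\int_{t_1}^t G_3(v)dv}X_3^{[t_1]}(t)\hspace{0.1cm}\Big|\hspace{0.1cm}\mathcal{F}_{t_1}\Big]\leq e^{2s(t-t_1)}X_3(t_1),
\]
using that $e^{\int_{t_1}^t G_3(v)dv}$ is $\mathcal{F}_t$-measurable but bounded by $e^{2s(t-t_1)}$ pathwise.

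Next I would verify the pathwise bound $G_3(v)\leq 2s$ for all $v\geq 0$. From the explicit formula~(\ref{G3}),
\[
G_3(v)=s\big(2-\tilde X_1(v)-\tilde X_2(v)-2\tilde X_3(v)\big)-r\tilde X_0(v)\big(1-s\tilde X_1(v)-s\tilde X_2(v)-2s\tilde X_3(v)\big),
\]
and since $\tilde X_i(v)\geq 0$ for all $i$, the first term is at most $2s$; and for sufficiently large $N$ we have $s\ll 1$ so that $1-s\tilde X_1(v)-s\tilde X_2(v)-2s\tilde X_3(v)\geq 0$, which together with $r,\tilde X_0(v)\geq 0$ makes the second term non-positive. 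Hence $G_3(v)\leq 2s$ unconditionally, giving $e^{\int_{t_1}^t G_3(v)dv}\leq e^{2s(t-t_1)}$ as needed. This upper bound on $G_3$ is essentially statement~2 of Lemma~\ref{G<}, but I would note that Lemma~\ref{G<} only asserts it on $[0,t_1\wedge T_{(1)})$; here I need it for all times $v\geq t_1$, so I would re-derive it directly from~(\ref{G3}) as above — the argument is identical and uses only $\tilde X_i\geq 0$ and $s\ll 1$.

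The only mild subtlety, and the one step I would be careful about, is the conditioning: I need $E[Z_3^{[t_1]}(t)\mid\mathcal{F}_{t_1}]=Z_3^{[t_1]}(t_1)=e^{0}X_3(t_1)=X_3(t_1)$. Proposition~\ref{ZiMar} as stated concerns $(Z_i^{[a]}(t\wedge T),t\geq a)$ for a stopping time $T\geq a$ and gives the variance formula conditional on $\mathcal{F}_a$; the martingale property conditional on $\mathcal{F}_a$ with $a=t_1$ (no stopping, or $T=\infty$) is exactly what is invoked. I would just cite Proposition~\ref{ZiMar} for the martingale property and conclude. There is no real obstacle here — the lemma is a one-sided Grönwall-type estimate and the martingale machinery of Section~\ref{M} does all the work; the proof is three lines once the bound $G_3\leq 2s$ is in hand.
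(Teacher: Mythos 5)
Your proof is correct and takes essentially the same route as the paper: the martingale identity $E[Z_3^{[t_1]}(t)\mid\mathcal{F}_{t_1}]=Z_3^{[t_1]}(t_1)=X_3(t_1)$ from Proposition \ref{ZiMar}, combined with the pathwise bound $G_3(v)\leq 2s$, after which the two estimates are rearranged in trivially equivalent ways. Your remark that Lemma \ref{G<} states this bound only on $[0,t_1\wedge T_{(1)})$ while the lemma needs it for all $v\geq t_1$ is a legitimate observation (the paper cites Lemma \ref{G<} without comment), and your direct re-derivation from (\ref{G3}) using only $\tilde X_i\geq 0$ and $s\ll 1$ closes that small gap cleanly.
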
 
	
	\begin{proof}
	From (\ref{Zi}) and Proposition \ref{ZiMar}, we have that for $t\geq t_1$,
		$$
		X_3(t_1)=Z_3^{[t_1]}(t_1)=E\Big[Z_3^{[t_1]}(t)\hspace{0.1 cm}\Big |\hspace{0.1 cm}\mathcal{F}_{t_1}\Big]=E\Big[e^{-\int_{t_1}^tG_3(v)dv}X_3^{[t_1]}(t)\hspace{0.1 cm}\Big |\hspace{0.1 cm}\mathcal{F}_{t_1}\Big].
		$$
	Because of Lemma \ref{G<}, for sufficiently large $N$,
		$$
		E\Big[e^{-\int_{t_1}^tG_3(v)dv}X_3^{[t_1]}(t)\hspace{0.1 cm}\Big |\hspace{0.1 cm}\mathcal{F}_{t_1}\Big]
		\geq E\Big[e^{-\int_{t_1}^t2sdv}X_3^{[t_1]}(t)\hspace{0.1 cm}\Big |\hspace{0.1 cm}\mathcal{F}_{t_1}\Big]
		=e^{-2s(t-t_1)}E\Big[X_3^{[t_1]}(t)\hspace{0.1 cm}\Big |\hspace{0.1 cm}\mathcal{F}_{t_1}\Big].
		$$
	Thus, for sufficiently large $N$,
		$$
		E\Big[X_3^{[t_1]}(t)\hspace{0.1 cm}\Big |\hspace{0.1 cm}\mathcal{F}_{t_1}\Big]\leq e^{2s(t-t_1)}X_3(t_1),
		$$
	which proves this lemma.
	\end{proof}

	\begin{lemma}\label{Z3[t1]}
	The following statements hold:
		\begin{enumerate}
		\item In the recombination dominating case, there is a positive constant $K_{0r}$,  such that for sufficiently large $N$, on the event $A_{(1)}$, we have
			$$
			P\bigg(\Big| Z_3^{[t_1]}(t_2)-X_3(t_1) \Big| \geq \sqrt{\frac{K_{0r}}{\epsilon}\cdot\frac{Nr\ln(Nr)}{s^2}}\hspace{0.1 cm}\bigg |\hspace{0.1 cm}\mathcal{F}_{t_1}\bigg)\leq \epsilon.
			$$
		\item In the mutation dominating case, there is a positive constant $K_{0m}$,  such that for sufficiently large $N$, on the event $A_{(1)}$, we have
			$$
			P\bigg(\Big| Z_3^{[t_1]}(t_2)-X_3(t_1) \Big| \geq \sqrt{\frac{K_{0m}}{\epsilon}}\cdot\frac{N\mu}{s}\hspace{0.1 cm}\bigg |\hspace{0.1 cm}\mathcal{F}_{t_1}\bigg)\leq \epsilon.
			$$
		\end{enumerate}
	\end{lemma}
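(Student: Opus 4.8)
The plan is to exploit the martingale structure of $Z_3^{[t_1]}$ supplied by Proposition \ref{ZiMar}, control its conditional variance, and then apply a conditional Chebyshev inequality. From (\ref{Zi}) we have $Z_3^{[t_1]}(t_1)=X_3(t_1)$, and by Proposition \ref{ZiMar} (applied with any deterministic time $T\ge t_2$ playing the role of the stopping time, a constant being trivially a stopping time) the process $(Z_3^{[t_1]}(t),t\ge t_1)$ is a martingale, so $E[Z_3^{[t_1]}(t_2)\,|\,\mathcal{F}_{t_1}]=X_3(t_1)$. Hence $|Z_3^{[t_1]}(t_2)-X_3(t_1)|$ is exactly the deviation of the martingale from its conditional mean, and
\[
\textup{Var}\big(Z_3^{[t_1]}(t_2)\,\big|\,\mathcal{F}_{t_1}\big)=E\bigg[\int_{t_1}^{t_2}e^{-2\int_{t_1}^{u}G_3(v)\,dv}\big(B_3^{[t_1]}(u)+D_3^{[t_1]}(u)\big)X_3^{[t_1]}(u)\,du\;\bigg|\;\mathcal{F}_{t_1}\bigg].
\]
So it is enough to bound this conditional variance on the event $A_{(1)}$ and invoke Chebyshev.

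Next I would bound the integrand by a crude multiple of $X_3^{[t_1]}(u)$. From (\ref{B3a}), (\ref{D3a}), the identity $\tilde X_0+\tilde X_1+\tilde X_2+\tilde X_3=1$, and $r\ll s$, one checks that $B_3^{[t_1]}(u)\le 1$ and $D_3^{[t_1]}(u)\le 1$ for $N$ large, so their sum is at most $2$. From (\ref{G3}) and $\tilde X_1+\tilde X_2+2\tilde X_3\le 2$ one gets $G_3(v)\ge -r$, so $e^{-2\int_{t_1}^{u}G_3(v)\,dv}\le e^{2r(t_2-t_1)}=e^{2(C_1+C_2)r/s}$, which tends to $1$ as $N\to\infty$ by (\ref{r<s}) and hence is at most $2$ for $N$ large. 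Inserting these bounds, using Tonelli to pull the conditional expectation inside the $du$-integral, and then Lemma \ref{EX3[t1]}, I obtain
\[
\textup{Var}\big(Z_3^{[t_1]}(t_2)\,\big|\,\mathcal{F}_{t_1}\big)\le 4\int_{t_1}^{t_2}E\big[X_3^{[t_1]}(u)\,\big|\,\mathcal{F}_{t_1}\big]\,du\le 4\int_{t_1}^{t_2}e^{2s(u-t_1)}X_3(t_1)\,du=\frac{2\big(e^{2(C_1+C_2)}-1\big)}{s}\,X_3(t_1).
\]

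Finally I would restrict to the event $A_{(1)}$, which is $\mathcal{F}_{t_1}$-measurable, so that the deterministic bound on $X_3(t_1)$ coming from Proposition \ref{@t1} may be inserted directly. In the recombination dominating case Proposition \ref{@t1} gives $X_3(t_1)\le K_{1r}^{+}Nr\ln(Nr)/s$ on $A_{(1)}$ for $N$ large, whence on $A_{(1)}$
\[
\textup{Var}\big(Z_3^{[t_1]}(t_2)\,\big|\,\mathcal{F}_{t_1}\big)\le \frac{2(e^{2(C_1+C_2)}-1)K_{1r}^{+}}{s^{2}}\,Nr\ln(Nr),
\]
and, setting $K_{0r}=2(e^{2(C_1+C_2)}-1)K_{1r}^{+}$, the conditional Chebyshev inequality yields the first bound. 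The mutation dominating case is identical, using $X_3(t_1)\le K_{1m}^{+}N^{2}\mu^{2}/s$ on $A_{(1)}$ and $K_{0m}=2(e^{2(C_1+C_2)}-1)K_{1m}^{+}$. I do not expect a genuine obstacle here; the only points that need a little care are (i) verifying that $A_{(1)}\in\mathcal{F}_{t_1}$, so that the bound on $X_3(t_1)$ from Proposition \ref{@t1} is legitimate to use inside the conditional variance, and (ii) justifying the interchange of conditional expectation and the $du$-integral, which is immediate by Tonelli since the integrand is nonnegative. Everything else is routine estimation with inequalities already in hand.
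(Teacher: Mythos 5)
Your proposal is correct and follows essentially the same route as the paper: the martingale property and conditional variance formula from Proposition \ref{ZiMar}, the bounds $B_3^{[t_1]}+D_3^{[t_1]}\le 2$ and $G_3\ge -r$, Lemma \ref{EX3[t1]} to control $E[X_3^{[t_1]}(u)\,|\,\mathcal{F}_{t_1}]$, the bound on $X_3(t_1)$ from Proposition \ref{@t1} on $A_{(1)}$, and conditional Chebyshev. The only differences are cosmetic (you absorb the factor $e^{2r(t_2-t_1)}\le 2$ slightly differently, ending with the constant $2(e^{2(C_1+C_2)}-1)K_{1r}^{+}$ in place of the paper's $2e^{2(C_2+C_1)}K_{1r}^{+}$), so no further changes are needed.
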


	\begin{proof}
	First, consider the recombination dominating case. From (\ref{B3a}) and (\ref{D3a}), for all $t\geq 0$, we have that $B_3^{[t_1]}(t)\leq 1$ and $D_3^{[t_1]}(t)\leq 1$. Also, from (\ref{G3}) and the fact that $s\ll 1$, for sufficiently large $N$,  for all $t\geq 0$,
		\begin{equation}\label{G3>-r}
		G_3(t)\geq -r.
		\end{equation}
	
	By Proposition \ref{ZiMar}, (\ref{G3>-r}), and Lemma \ref{EX3[t1]},  for sufficiently large $N$, 
		\begin{align}
		\textup{Var}\Big(Z_3^{[t_1]}(t_2)\hspace{0.1 cm}\Big |\hspace{0.1 cm}\mathcal{F}_{t_1}\Big)
		&\leq E\bigg[\int_{t_1}^{t_2}e^{2r(u-t_1)}\cdot 2X_3^{[t_1]}(u)du\hspace{0.1 cm}\bigg |\hspace{0.1 cm}\mathcal{F}_{t_1}\bigg]\nonumber\\
		&\leq 2e^{2r(t_2-t_1)}\int_{t_1}^{t_2}E\Big[X_3^{[t_1]}(u)\hspace{0.1 cm}\Big |\hspace{0.1 cm}\mathcal{F}_{t_1}\Big]du\nonumber\\
		&\leq 2e^{2r(t_2-t_1)}\int_{t_1}^{t_2}e^{2s(u-t_1)}X_3(t_1)du\nonumber\\
		&=e^{2r(t_2-t_1)}\bigg(\frac{e^{2s(t_2-t_1)}-1}{s}\bigg)X_3(t_1).\nonumber
		\end{align}
	By Proposition \ref{@t1} and the definitions of $t_1$ and $t_2$ in (\ref{t1}) and (\ref{t2}), for sufficiently large $N$, on the event $A_{(1)}$,
		\begin{align}
		\textup{Var}\Big(Z_3^{[t_1]}(t_2)\hspace{0.1 cm}\Big |\hspace{0.1 cm}\mathcal{F}_{t_1}\Big)
		&\leq e^{2(C_2+C_1)\cdot\frac{r}{s}}\bigg(\frac{e^{2(C_2+C_1)}K_{1r}^+Nr\ln(Nr)}{s^2}\bigg)\label{*4}\\
		&\leq\frac{2e^{2(C_2+C_1)}K_{1r}^+Nr\ln(Nr)}{s^2}.\nonumber
		\end{align}
	We define 
	 	\begin{equation}\label{K0r}
	 	K_{0r}=2e^{2(C_2+C_1)}K_{1r}^+.
	 	\end{equation}
	 Since the process $\big(Z_3^{[t_1]}(t),t\geq 0)$ is a martingale, we have that $E[Z_3^{[t_1]}(t_2)|\mathcal{F}_{t_1}]=Z_3^{[t_1]}(t_1)=X_3(t_1)$. Hence, by Chebyshev's inequality, we have that for sufficiently large $N$, on the event $A_{(1)}$,  
		$$
		P\bigg(\Big| Z_3^{[t_1]}(t_2)-X_3(t_1) \Big| \geq \sqrt{\frac{K_{0r}}{\epsilon}\cdot\frac{Nr\ln(Nr)}{s^2}}\hspace{0.1 cm}\bigg |\hspace{0.1 cm}\mathcal{F}_{t_1}\bigg)\leq \epsilon.
		$$
	
	For the mutation dominating case, the proof is almost exactly the same. The only difference is the inequality (\ref{*4}), for which Proposition \ref{@t1} gives that
		$$
		\textup{Var}\Big(Z_3^{[t_1]}(t_2)\hspace{0.1 cm}\Big |\hspace{0.1 cm}\mathcal{F}_{t_1}\Big)
		\leq e^{2(C_2+C_1)\cdot\frac{r}{s}}\bigg(\frac{e^{2(C_2+C_1)}K_{1m}^+N^2\mu^2}{s^2}\bigg)\leq\frac{2e^{2(C_2+C_1)}K_{1m}^+N^2\mu^2}{s^2}.
		$$
	In this case, we pick
		\begin{align}\label{K0m}
		K_{0m}=2e^{2(C_2+C_1)}K_{1m}^+.
		\end{align}
	This completes the proof. 
	\end{proof}

	\begin{lemma}\label{X(t1t2]}
	There exist positive constants $K'_1$ and $K'_2$ such that for sufficiently large $N$, we have
		\begin{enumerate}
		\item $\displaystyle P\bigg(X_{3m}^{(t_1,t_2]}(t_2)\geq\frac{K'_1}{\epsilon}\cdot\frac{N\mu}{s}\hspace{0.1 cm}\Big |\hspace{0.1 cm}\mathcal{F}_{t_1}\bigg)\leq \epsilon.$
		\item $\displaystyle P\bigg(X_{3r}^{(t_1,t_2]}(t_2)\geq\frac{K'_2}{\epsilon}\cdot\frac{Nr}{s}\hspace{0.1 cm}\Big |\hspace{0.1 cm}\mathcal{F}_{t_1}\bigg)\leq \epsilon.$
		\end{enumerate}
	\end{lemma}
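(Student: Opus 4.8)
The plan is to bound the two counts $X_{3m}^{(t_1,t_2]}(t_2)$ and $X_{3r}^{(t_1,t_2]}(t_2)$ by computing upper bounds for their conditional expectations given $\mathcal{F}_{t_1}$ and then invoking Markov's inequality, exactly in the spirit of Lemmas \ref{EX3m} and \ref{EX3r}. For this I first want to control the growth rates $G_3(v)$ and $G_{3r}^{(t_1,t_2]}(v)$ on the interval $[t_1,t_2]$; unlike in phase 1 we no longer have the crude bound $X_i(t)\le\eta N$, but on the event $A_{(1)}$, using Proposition \ref{@t2} (or more precisely the phase-2 comparison result Lemma \ref{DNt12}, which says $X_i(t)$ stays close to $x_i(t)N$ for $i=1,2$) together with the structural identities (\ref{G3}) and (\ref{G3r}) and the trivial bounds $\tilde X_i\le 1$, we get $G_3(v)\le 2s$ and $G_{3r}^{(t_1,t_2]}(v)\le 2s+r$ on $[t_1,t_2]$, and also a lower bound of the form $G_3(v)\ge -r$ (as already recorded in (\ref{G3>-r})) to keep the exponential integrating factors harmless.

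For part 1, start from the martingale representation (\ref{Xm}) for $X_{3m}^{(t_1,t_2]}$, which gives
$$
E\Big[X_{3m}^{(t_1,t_2]}(t_2)\,\Big|\,\mathcal{F}_{t_1}\Big]=E\bigg[\int_{t_1}^{t_2}M_3^{(t_1,t_2]}(u)e^{\int_u^{t_2}G_3(v)dv}du\,\Big|\,\mathcal{F}_{t_1}\bigg].
$$
Here $M_3^{(t_1,t_2]}(u)=\mu(X_1(u)+X_2(u))\le \mu N$ by (\ref{M3ab}), and $e^{\int_u^{t_2}G_3(v)dv}\le e^{2s(t_2-u)}$ by the upper bound on $G_3$, so the integral is at most $\mu N\int_{t_1}^{t_2}e^{2s(t_2-u)}du\le \tfrac{\mu N}{2s}e^{2s(t_2-t_1)}$. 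Since $t_2-t_1=(C_2+C_1)/s$ is $O(1/s)$, we get $E[X_{3m}^{(t_1,t_2]}(t_2)\mid\mathcal{F}_{t_1}]\le K''_1\,N\mu/s$ for a constant $K''_1$ depending only on $C_1,C_2$; Markov's inequality with threshold $(K'_1/\epsilon)(N\mu/s)$ and $K'_1=K''_1$ then yields the claim.

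For part 2, proceed identically using the representation (\ref{Xr}) for $X_{3r}^{(t_1,t_2]}$:
$$
E\Big[X_{3r}^{(t_1,t_2]}(t_2)\,\Big|\,\mathcal{F}_{t_1}\Big]=E\bigg[\int_{t_1}^{t_2}R_3^{(t_1,t_2]}(u)e^{\int_u^{t_2}G_{3r}^{(t_1,t_2]}(v)dv}du\,\Big|\,\mathcal{F}_{t_1}\bigg],
$$
where $R_3^{(t_1,t_2]}(u)\le Nr\tilde X_1(u)\tilde X_2(u)\le Nr$ by (\ref{R3<}) and the integrating factor is $\le e^{(2s+r)(t_2-u)}$. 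Using $r\ll s$ and $t_2-t_1=O(1/s)$, the integral is $O(Nr/s)$, giving $E[X_{3r}^{(t_1,t_2]}(t_2)\mid\mathcal{F}_{t_1}]\le K''_2\,Nr/s$, and Markov's inequality finishes it with $K'_2=K''_2$. The main obstacle is the first step: making precise the upper bound $M_3^{(t_1,t_2]}(u)\le\mu N$ and $R_3^{(t_1,t_2]}(u)\le Nr$ is trivial, but one must be careful that these crude bounds (rather than sharper ones involving $X_1,X_2\sim N/2$) are actually enough — they are, because the target order $N\mu/s$ (resp.\ $Nr/s$) already absorbs the factor $N$ and only needs the length $t_2-t_1=O(1/s)$ of the interval, so no event conditioning beyond the $\mathcal{F}_{t_1}$-measurable initial data and the uniform bound $G_3\le 2s$ is required. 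In particular, unlike the phase-1 lemmas, here we do not even need to intersect with $A_{(1)}$ or stop at $T_{(1)}$; the bounds $\tilde X_i\le 1$ suffice throughout.
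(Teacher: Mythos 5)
Your overall strategy—bounding $E[X_{3m}^{(t_1,t_2]}(t_2)\mid\mathcal{F}_{t_1}]$ and $E[X_{3r}^{(t_1,t_2]}(t_2)\mid\mathcal{F}_{t_1}]$ by $O(N\mu/s)$ and $O(Nr/s)$ using the crude bounds $M_3\le N\mu$, $R_3\le Nr$, $G_3\le 2s$, $G_{3r}^{(t_1,t_2]}\le 2s+r$ over an interval of length $(C_1+C_2)/s$, and then applying Markov's inequality—is exactly what the paper does, including the observation that no stopping at $T_{(1)}$ or restriction to $A_{(1)}$ is needed. The one step you should not write as you did is the identity $E[X_{3m}^{(t_1,t_2]}(t_2)\mid\mathcal{F}_{t_1}]=E[\int_{t_1}^{t_2}M_3^{(t_1,t_2]}(u)e^{\int_u^{t_2}G_3(v)dv}du\mid\mathcal{F}_{t_1}]$: from (\ref{Xm}) the leftover term is $E[Z_{3m}^{(t_1,t_2]}(t_2)e^{\int_{t_1}^{t_2}G_3(v)dv}\mid\mathcal{F}_{t_1}]$, and since the exponential weight is random and not $\mathcal{F}_{t_1}$-measurable, the fact that $Z_{3m}^{(t_1,t_2]}$ is a mean-zero martingale does not make this term vanish. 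Two standard repairs are available, both within your reach: either use the correctly weighted identity $E[e^{-\int_{t_1}^{t_2}G_3}X_{3m}^{(t_1,t_2]}(t_2)\mid\mathcal{F}_{t_1}]=E[\int_{t_1}^{t_2}M_3^{(t_1,t_2]}(u)e^{-\int_{t_1}^{u}G_3}du\mid\mathcal{F}_{t_1}]$ as in Lemma \ref{EX3m}, which is where the lower bound $G_3\ge -r$ from (\ref{G3>-r}) that you mention genuinely gets used; or do what the paper does, namely write $X_{3m}^{(t_1,t_2]}(t)=W_m(t)+\int_{t_1}^{t}(M_3^{(t_1,t_2]}(u)+G_3(u)X_{3m}^{(t_1,t_2]}(u))du$ with $W_m$ a mean-zero martingale, take conditional expectations, and close the bound with Gronwall's inequality, which avoids integrating factors altogether. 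With either repair your constants come out as in the paper ($K_1'=e^{2(C_2+C_1)}(C_2+C_1)$ and $K_2'=e^{3(C_2+C_1)}(C_2+C_1)$ up to inessential changes), so the proposal is essentially the paper's proof modulo this fixable technical point.
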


	\begin{proof} 
	We will first prove part 1. Let $U(t)$ and $V(t)$ be the numbers of times that the number of type $3m(t_1,t_2]$  individuals increases and decreases respectively during the time interval $[t_1,t]$. Then, for $t\geq t_1$, we define 
		\begin{align*}
		W_+(t)&=U(t)-\int_{t_1}^{t}\Big(M_3^{(t_1,t_2]}(u)+B^{(t_1,t_2]}_{3m}(u)X^{(t_1,t_2]}_{3m}(u)\Big)du,\\
		W_-(t)&=V(t)-\int_{t_1}^{t}D^{(t_1,t_2]}_{3m}(u)X^{(t_1,t_2]}_{3m}(u)du,\\
		W_m(t)&=W_+(t)-W_-(t).
		\end{align*}
	Then, both processes $(W_+(t), t\geq t_1)$ and $(W_-(t), t\geq t_1)$ are mean-zero martingales, and so is the process $(W_m(t), t\geq t_1)$. We also have that
		$$
		X^{(t_1,t_2]}_{3m}(t)=U(t)-V(t)=W_m(t)+\int_{t_1}^{t}\Big(M_3^{(t_1,t_2]}(u)+G_3(u)X^{(t_1,t_2]}_{3m}(u)\Big)du.
		$$
	Thus, from Lemma \ref{G<}, for sufficiently large $N$, if $t\in[t_1,t_2]$, then
		\begin{align*}
		E\Big[X^{(t_1,t_2]}_{3m}(t)\hspace{0.1 cm}\Big |\hspace{0.1 cm}\mathcal{F}_{t_1}\Big]
		&=0+E\bigg[\int_{t_1}^{t}\Big(M_3^{(t_1,t_2]}(u)+G_3(u)X^{(t_1,t_2]}_{3m}(u)\Big)du\hspace{0.1 cm}\bigg |\hspace{0.1 cm}\mathcal{F}_{t_1}\bigg]\\
		&\leq E\bigg[\int_{t_1}^{t}\Big(\mu(X_1(u)+X_2(u))+2sX^{(t_1,t_2]}_{3m}(u)\Big)du\hspace{0.1 cm}\bigg |\hspace{0.1 cm}\mathcal{F}_{t_1}\bigg]\\
		&\leq E\bigg [\int_{t_1}^{t}\Big(N\mu+2sX^{(t_1,t_2]}_{3m}(u)\Big)du \hspace{0.1 cm}\bigg |\hspace{0.1 cm}\mathcal{F}_{t_1}\bigg]\\
		&\leq N\mu(t_2-t_1)+\int_{t_1}^{t}2sE\Big[X^{(t_1,t_2]}_{3m}(u)\hspace{0.1 cm}\Big |\hspace{0.1 cm}\mathcal{F}_{t_1}\Big]du.
		\end{align*}
	Here, we define 
		\begin{equation}\label{K1'}
		K'_1=e^{2(C_2+C_1)}(C_2+C_1).
		\end{equation}
	From Gronwall's inequality, we have
		$$
		E\Big[X^{(t_1,t_2]}_{3m}(t_2)\hspace{0.1 cm}\Big |\hspace{0.1 cm}\mathcal{F}_{t_1}\Big]\leq N\mu(t_2-t_1)e^{2s(t_2-t_1)}=\frac{K'_1N\mu}{s},
		$$
	and by Markov's inequality, we have that
  		$$
  		P\bigg(X_{3m}^{(t_1,t_2]}(t_2)\geq\frac{K'_1}{\epsilon}\cdot\frac{N\mu}{s}\hspace{0.1 cm}\bigg |\hspace{0.1 cm}\mathcal{F}_{t_1}\bigg)\leq \epsilon.
  		$$

	Now, we will prove part 2. The proof is similar to the the proof for part 1. First, we have that there is a mean-zero martingale $(W_r(t),t\geq t_1)$ such that
		$$
		X^{(t_1,t_2]}_{3r}(t)=W_r(t)+\int_{t_1}^{t}\Big(R_3^{(t_1,t_2]}(u)+G^{(t_1,t_2]}_{3r}(u)X^{(t_1,t_2]}_{3r}(u)\Big)du,
		$$
	for all $t\geq t_1$. From (\ref{R3ab}), Lemma \ref{G<} and $r\ll s$, for sufficiently large $N$, and for $t\in[t_1,t_2]$,
		\begin{align*}
		E\Big[X^{(t_1,t_2]}_{3r}(t)\hspace{0.1 cm}\Big |\hspace{0.1 cm}\mathcal{F}_{t_1}\Big]
		&=0+E\bigg[\int_{t_1}^{t}\Big(R_3^{(t_1,t_2]}(u)+G^{(t_1,t_2]}_{3r}X^{(t_1,t_2]}_{3r}(u)\Big)du\hspace{0.1 cm}\bigg |\hspace{0.1 cm}\mathcal{F}_{t_1}\bigg]\\
		&\leq E\Big[ \int_{t_1}^{t}\Big(Nr\tilde X_1(u)\tilde X_2(u)+(2s+r)X^{(t_1,t_2]}_{3r}(u)\Big)du\hspace{0.1 cm}\Big |\hspace{0.1 cm}\mathcal{F}_{t_1}\Big]\\
		&\leq E\bigg[\int_{t_1}^{t}\Big(Nr+(2s+r)X^{(t_1,t_2]}_{3r}(u)\Big)du\hspace{0.1 cm}\bigg |\hspace{0.1 cm}\mathcal{F}_{t_1}\bigg ]\\
		&= Nr(t-t_1)+\int_{t_1}^{t}(2s+r)E\Big[X^{(t_1,t_2]}_{3m}(u)\hspace{0.1 cm}\Big |\hspace{0.1 cm}\mathcal{F}_{t_1}\Big]du\\
		&\leq Nr(t_2-t_1)+\int_{t_1}^{t}3sE\Big[X^{(t_1,t_2]}_{3m}(u)\hspace{0.1 cm}\Big |\hspace{0.1 cm}\mathcal{F}_{t_1}\Big]du.
		\end{align*}
	We define 
		\begin{equation}\label{K2'}
		K'_2=e^{3(C_2+C_1)}(C_2+C_1).
		\end{equation}
	From Gronwall's inequality, we have
		$$
		E\Big[X^{(t_1,t_2]}_{3r}(t_2)\hspace{0.1 cm}\Big |\hspace{0.1 cm}\mathcal{F}_{t_1}\Big]\leq Nr(t_2-t_1)e^{3s(t_2-t_1)}=\frac{K'_2Nr}{s},
		$$
	and the result follows from Markov's inequality.
\end{proof}

	Recall the constants $K_{0r}, K_{0m}, K_1'$ and $K_2'$ defined in (\ref{K0r}), (\ref{K0m}), (\ref{K1'}) and (\ref{K2'}). Now, we define the following events in both cases:
		\begin{align}
		A_{13}&=\bigg\{X_{3m}^{(t_1,t_2]}(t_2)<\frac{K'_1}{\epsilon_1}\cdot\frac{N\mu}{s}\bigg\}\label{A13}\\
		A_{14}&=\bigg\{X_{3r}^{(t_1,t_2]}(t_2)<\frac{K'_2}{\epsilon_1}\cdot\frac{Nr}{s}\bigg\}\label{A14}
		\end{align}
	In the recombination dominating case, we define
		\begin{equation}
		A_{15}=\Bigg\{\Big| Z_3^{[t_1]}(t_2)-X_3(t_1) \Big| < \sqrt{\frac{K_{0r}}{\epsilon}\cdot\frac{Nr\ln(Nr)}{s^2}}\Bigg\},\label{A15r}
		\end{equation}
	while in the mutation dominating case, we define
		\begin{equation}
		A_{15}=\Bigg\{\Big| Z_3^{[t_1]}(t_2)-X_3(t_1) \Big| < \sqrt{\frac{K_{0m}}{\epsilon}}\cdot\frac{N\mu}{s}\Bigg\}.\label{A15m}
		\end{equation}
	Lastly, in both cases, we define
		\begin{equation}
		A_{(2)}=A_{(1)}\cap\bigg(\bigcap_{i=12}^{15}A_i\bigg).\label{A(2)}
		\end{equation}
		
	\begin{lemma}\label{ft2}
	On the event $A_{(1)}$, for sufficiently large $N$, and for $i=1,2$, we have
		\begin{equation}\label{-6.5}
		\frac{1-\delta^2}{2}\leq\frac{\tilde X_i(t_1)}{\tilde X_1(t_1)+\tilde X_2(t_1)}\leq\frac{1+\delta^2}{2}
		\end{equation}
	and
		$$
		\frac{1-\delta^2}{1+\delta^2} \leq f(t_2)\leq 1-\delta^2.
		$$ 
	\end{lemma}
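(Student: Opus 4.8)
The plan is to read both conclusions directly off Proposition~\ref{@t1}(1) and the explicit choice of $C_2$ in (\ref{C2}); no probabilistic input beyond Proposition~\ref{@t1} is needed. Throughout we work on $A_{(1)}$ with $N$ large. By Proposition~\ref{@t1}(1), $\tilde X_i(t_1)\in[(1-\delta^2)e^{-C_1},(1+\delta^2)e^{-C_1}]$ for $i=1,2$, so in particular $\tilde X_1(t_1)+\tilde X_2(t_1)>0$ and $B$ in (\ref{B}) is well defined; moreover (\ref{C1}) forces $e^{-C_1}<\delta^2/8$, so the sum lies in $(0,1)$. For the ratio bound (\ref{-6.5}), write $\tilde X_i(t_1)/(\tilde X_1(t_1)+\tilde X_2(t_1))=1/\big(1+\tilde X_j(t_1)/\tilde X_i(t_1)\big)$ with $j\neq i$; the quotient $\tilde X_j(t_1)/\tilde X_i(t_1)$ lies in $[(1-\delta^2)/(1+\delta^2),(1+\delta^2)/(1-\delta^2)]$, and substituting the two endpoints gives exactly $\tfrac{1-\delta^2}{2}\le \tilde X_i(t_1)/(\tilde X_1(t_1)+\tilde X_2(t_1))\le\tfrac{1+\delta^2}{2}$.

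For $f(t_2)$, note from (\ref{t1}) and (\ref{t2}) that $t_2-t_1=(C_1+C_2)/s$, so by (\ref{f}), $f(t_2)=1/\big(1+Be^{-(C_1+C_2)}\big)$. From (\ref{C2}), $C_1+C_2=\ln\big[uv\big]$ where $u=\tfrac{e^{C_1}}{2(1+\delta^2)}-1$ and $v=\tfrac1{\delta^2}-1$; both are positive since (\ref{C1}) makes $e^{C_1}$ large, so $e^{-(C_1+C_2)}=(uv)^{-1}$. Also $B=\big(\tilde X_1(t_1)+\tilde X_2(t_1)\big)^{-1}-1$ is a decreasing function of $p:=\tilde X_1(t_1)+\tilde X_2(t_1)$, and the first paragraph gives $p\in[2(1-\delta^2)e^{-C_1},2(1+\delta^2)e^{-C_1}]$, hence $B\in[u,\,\tfrac{e^{C_1}}{2(1-\delta^2)}-1]$, the left endpoint $B=u$ corresponding to $p=2(1+\delta^2)e^{-C_1}$. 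Since $r\mapsto 1/(1+r(uv)^{-1})$ is decreasing, $f(t_2)$ is maximized at $B=u$, giving $f(t_2)\le \tfrac{1}{1+u/(uv)}=\tfrac{v}{v+1}=1-\delta^2$ --- which is precisely what (\ref{C2}) is designed to achieve.

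For the lower bound, $f(t_2)$ is minimized at $B_{\max}=\tfrac{e^{C_1}}{2(1-\delta^2)}-1$, so it suffices to show $B_{\max}(uv)^{-1}\le \tfrac{2\delta^2}{1-\delta^2}$, which is equivalent to $f(t_2)\ge\tfrac{1-\delta^2}{1+\delta^2}$. Factoring, $B_{\max}(uv)^{-1}=\tfrac{\delta^2(1+\delta^2)}{(1-\delta^2)^2}\cdot\tfrac{\frac12 e^{C_1}-(1-\delta^2)}{\frac12 e^{C_1}-(1+\delta^2)}$, and since (\ref{C1}) gives $e^{C_1}>8/\delta^2$ the last factor is at most $1+\delta^4$; it then remains to verify $(1+\delta^4)(1+\delta^2)\le 2(1-\delta^2)$, which holds because $\delta<1/4$ (indeed $(1+\delta^4)(1+\delta^2)\le 1+3\delta^2<2-2\delta^2$ as $5\delta^2<1$). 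Combining the two endpoint computations yields $\tfrac{1-\delta^2}{1+\delta^2}\le f(t_2)\le 1-\delta^2$. The only step that is not pure bookkeeping is this last elementary inequality chain --- pinning down the $1+\delta^4$ control on the ratio of the two $e^{C_1}$-linear factors and then checking $(1+\delta^4)(1+\delta^2)\le 2(1-\delta^2)$ on $\delta\in(0,1/4)$ --- and that is where the hypotheses (\ref{C1}) and $\delta<1/4$ get used; everything else follows mechanically from Proposition~\ref{@t1} and the definitions of $C_2,t_1,t_2,B$.
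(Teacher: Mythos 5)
Your proof is correct and follows essentially the same route as the paper: read the ratio bounds off Proposition \ref{@t1}, then plug the extreme values of $\tilde X_1(t_1)+\tilde X_2(t_1)$ into $f(t_2)=\big(1+Be^{-(C_1+C_2)}\big)^{-1}$ and use the design of $C_2$ in (\ref{C2}). The only (cosmetic) difference is in the final elementary estimate for the lower bound, where the paper bounds $\big(\tfrac{e^{C_1}}{2(1-\delta^2)}-1\big)\big(\tfrac{e^{C_1}}{2(1+\delta^2)}-1\big)^{-1}<2$ directly via $e^{C_1}>15/2$, while you factor out $\tfrac{\delta^2(1+\delta^2)}{(1-\delta^2)^2}$ and check $(1+\delta^4)(1+\delta^2)\le 2(1-\delta^2)$; both reduce to the same inequality $B_{\max}(uv)^{-1}\le\tfrac{2\delta^2}{1-\delta^2}$.
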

	
	\begin{proof}
	First note that if $c>0$, then the function $g(x)=x/(x+c)$ is increasing on the interval $(0,\infty)$. Then, from Proposition \ref{@t1}, on the event $A_{(1)}$, for sufficiently large $N$, 
		$$
		\frac{\tilde X_1(t_1)}{\tilde X_1(t_1)+\tilde X_2(t_1)}
		\geq\frac{(1-\delta^2)e^{-C_1}}{(1-\delta^2)e^{-C_1}+\tilde X_2(t_1)}\geq\frac{(1-\delta^2)e^{-C_1}}{(1-\delta^2)e^{-C_1}+(1+\delta^2)e^{-C_1}}=\frac{1-\delta^2}{2},
		$$
and
		$$
		\frac{\tilde X_1(t_1)}{\tilde X_1(t_1)+\tilde X_2(t_1)}
		\leq\frac{(1+\delta^2)e^{-C_1}}{(1+\delta^2)e^{-C_1}+\tilde X_2(t_1)}\leq\frac{(1+\delta^2)e^{-C_1}}{(1+\delta^2)e^{-C_1}+(1-\delta^2)e^{-C_1}}=\frac{1+\delta^2}{2}.
		$$
	By the same argument, we get the same bounds for $\tilde X_2(t_1)/(\tilde X_1(t_1)+\tilde X_2(t_1))$.
	
	Now, recall the definitions of $B$ and $f$ in (\ref{B}) and (\ref{f}). By Proposition \ref{@t1}, (\ref{-6.5}) and the definitions of $t_1, t_2$ and $C_2$ in (\ref{t1}), (\ref{C2}) and (\ref{t2}), for sufficiently large $N$, on the event $A_{(1)}$, 
		\begin{align*}
		f(t_2)
		&=\frac{1}{1+\Big(\frac{1}{\tilde X_1(t_1)+\tilde X_2(t_1)}-1\Big)e^{-s(t_2-t_1)}}\\
		&\leq \frac{1}{1+\Big(\frac{e^{C_1}}{2(1+\delta^2)}-1\Big)e^{-(C_2+C_1)}}\\
		&=1-\delta^2,
		\end{align*}
	and
		\begin{align*}
		f(t_2)
		&=\frac{1}{1+\Big(\frac{1}{\tilde X_1(t_1)+\tilde X_2(t_1)}-1\Big)e^{-s(t_2-t_1)}}\\
		&\geq \frac{1}{1+\Big(\frac{e^{C_1}}{2(1-\delta^2)}-1\Big)e^{-(C_2+C_1)}}\\
		&=\frac{1}{1+\Big(\frac{e^{C_1}}{2(1-\delta^2)}-1\Big)\Big(\frac{e^{C_1}}{2(1+\delta^2)}-1\Big)^{-1}\Big(\frac{\delta^2}{1-\delta^2}\Big)}.
		\end{align*}
	From the way we define $\delta$ and $C_1$ in (\ref{delta}) and  (\ref{C1}), we have $e^{C_1}>8/\delta^2>32>15/2$, and
		$$
		\bigg(\frac{e^{C_1}}{2(1-\delta^2)}-1\bigg)\bigg(\frac{e^{C_1}}{2(1+\delta^2)}-1\bigg)^{-1}
		\leq\bigg(\frac{2e^{C_1}}{3}-1\bigg)\bigg(\frac{2e^{C_1}}{5}-1\bigg)^{-1}
		<2.
		$$
	Thus, we have
		$$
		f(t_2)\geq\frac{1}{1+\frac{2\delta^2}{1-\delta^2}}=\frac{1-\delta^2}{1+\delta^2}.
		$$
	This completes the proof of this lemma.
	\end{proof}

\subsection{The proof of Proposition \ref{@t2}}
	\begin{proof}
	Recall the definition of $A_{(2)}$ in (\ref{A(2)}). From Proposition \ref{DNt12}, Lemma \ref{Z3[t1]} and Lemma \ref{X(t1t2]}, for sufficiently large $N$, on the event $A_{(1)}$
		$$
		P\bigg(\bigcap_{i=12}^{15}A_i\hspace{0.1 cm}\bigg|\hspace{0.1 cm}\mathcal{F}_{t_1}\bigg)\geq 1-4\epsilon.		
		$$
	Thus, from Proposition \ref{@t1}, we have
		$$
		P(A_{(2)})=P\bigg(A_{(1)}\cap \bigg(\bigcap_{i=12}^{15}A_i\bigg)\bigg)\geq(1-4\epsilon)-P(A_{(1)}^c)\geq 1-21\epsilon.
		$$
	
	From now on,  we will work on the event $A_{(2)}$. By the definition of the event $A_{12}$ in (\ref{A12}), the definition of the function $x$ in (\ref{x}), and Lemma \ref{ft2}, for sufficiently large $N$, on the event $A_{(2)}$,
		\begin{align*}
		X_1(t_2)
		&\leq x_1(t_2)N+\Big(\frac{\delta^4}{4}\Big)N\\
		&=\Big(\frac{\tilde X_1(t_1)}{\tilde X_1(t_1)+\tilde X_2(t_1)}\Big)f(t_2)N+\Big(\frac{\delta^4}{4}\Big)N\\
		&\leq \Big(\frac{1+\delta^2}{2}\Big)\cdot (1-\delta^2)N+\Big(\frac{\delta^4}{4}\Big)N\\
		&=\Big(\frac{1}{2}-\frac{\delta^4}{4}\Big)N,
		\end{align*}
	and
		\begin{align*}
		X_1(t_2)
		&\geq x_1(t_2)N-\Big(\frac{\delta^4}{4}\Big)N\\
		&=\Big(\frac{\tilde X_1(t_1)}{\tilde X_1(t_1)+\tilde X_2(t_1)}\Big)f(t_2)N-\Big(\frac{\delta^4}{4}\Big)N\\
		&\geq \Big(\frac{1-\delta^2}{2}\Big)\cdot \Big(\frac{1-\delta^2}{1+\delta^2}\Big)N-\Big(\frac{\delta^4}{4}\Big)N\\
		&=\Big(\frac{1}{2}-\frac{3\delta^2}{2}+\frac{2\delta^4}{1+\delta^2}-\frac{\delta^4}{4}\Big)N\\
		&>\Big(\frac{1}{2}-\frac{3\delta^2}{2}\Big)N.
		\end{align*}
	Both the upper and lower bounds for $X_2(t_2)$ follow from the same argument.

	Now, we prove statement 2. Assume that we are in the recombination dominating case. By the definition of $Z_3^{[t_1]}(t)$ in (\ref{Zi}), the definition of $A_{15}$ in (\ref{A15r}), the inequality (\ref{G3>-r}) and Proposition \ref{@t1}, on the event $A_{(2)}$,
		\begin{align*}
		X_3(t_2) &\geq X_3^{[t_1]}(t_2)\\
		&=Z_3^{[t_1]}(t_2)e^{\int_{t_1}^{t_2}G_3(v)dv}\\
		&\geq \bigg(X_3(t_1)- \sqrt{\frac{K_{0r}}{\epsilon}\cdot\frac{Nr\ln(Nr)}{s^2}}\bigg)e^{-r(t_2-t_1)}\\
		&\geq\bigg(\frac{K_{1r}^-Nr\ln(Nr)}{s}- \sqrt{\frac{K_{0r}}{\epsilon}\cdot\frac{Nr\ln(Nr)}{s^2}}\bigg)e^{-(C_2+C_1)\cdot\frac{r}{s}}\\
		&=\frac{Nr\ln(Nr)}{s}\cdot e^{-(C_2+C_1)\cdot\frac{r}{s}}\bigg(K_{1r}^- - \sqrt{\frac{K_{0r}}{\epsilon}\cdot\frac{1}{Nr\ln(Nr)}}\bigg).
		\end{align*}
		We define 
		$$
		K_{2r}^-=K_{1r}^-/2.
		$$
		Because $1\ll Nr$ and $r\ll s$, for sufficiently large $N$,
		$$
		X_3(t_2)\geq\frac{K_{2r}^-Nr\ln(Nr)}{s}.
		$$

	By the definitions of $A_{13}, A_{14}$ and $A_{15}$ in (\ref{A13}), (\ref{A14}) and (\ref{A15r}), and by Proposition \ref{@t1}, we have that for sufficiently large $N$, on the event $A_{(2)}$,
		\begin{align*}
		X_3(t_2)&=X_3^{[t_1]}(t_2)+X_{3m}^{(t_1,t_2]}(t_2)+X_{3r}^{(t_1,t_2]}(t_2)\\
		&=Z_3^{[t_1]}(t_2)e^{\int_{t_1}^{t_2}G_3(v)dv}+X_{3m}^{(t_1,t_2]}(t_2)+X_{3r}^{(t_1,t_2]}(t_2)\\
		&\leq \bigg(X_3(t_1)+ \sqrt{\frac{K_{0r}}{\epsilon}\cdot\frac{Nr\ln(Nr)}{s^2}}\bigg)e^{2s(t_2-t_1)}+\frac{K'_1}{\epsilon}\cdot\frac{N\mu}{s}+\frac{K'_2}{\epsilon}\cdot\frac{Nr}{s}\\
		&\leq \bigg(\frac{K_{1r}^+Nr\ln(Nr)}{s}+ \sqrt{\frac{K_{0r}}{\epsilon}\cdot\frac{Nr\ln(Nr)}{s^2}}\bigg)e^{2(C_2+C_1)}+\frac{K'_1}{\epsilon}\cdot\frac{N\mu}{s}+\frac{K'_2}{\epsilon}\cdot\frac{Nr}{s}\\
		&=\frac{Nr\ln(Nr)}{s}\cdot\Bigg(\bigg(K_{1r}^+ + \sqrt{\frac{K_{0r}}{\epsilon}\cdot\frac{1}{Nr\ln(Nr)}}\bigg)e^{2(C_2+C_1)}+\frac{K'_1}{\epsilon}\cdot\frac{\mu}{r\ln(Nr)}+\frac{K'_2}{\epsilon}\cdot\frac{1}{\ln(Nr)}\Bigg).
		\end{align*}

	We define the constant
		\begin{equation}\label{K2r+}
		K_{2r}^+=2K_{1r}^+e^{2(C_2+C_1)}.
		\end{equation}
	Because $1\ll N\mu$ and $\mu \ll r$, for sufficiently large $N$,
		$$
		X_3(t_2)\leq\frac{K_{2r}^+Nr\ln(Nr)}{s}.
		$$
	
	Lastly, consider the mutation dominating case, where we will prove statement 3. The proof is similar to the proof of part 3. First, by using (\ref{A15m}) instead of (\ref{A15r}), for sufficiently large $N$, on the event $A_{(2)}$,
		\begin{align*}
		X_3(t_2) &\geq X_3^{[t_1]}(t_2)\\
		&=Z_3^{[t_1]}(t_2)e^{\int_{t_1}^{t_2}G_3(v)dv}\\
		&\geq \bigg(X_3(t_1)- \sqrt{\frac{K_{0m}}{\epsilon}}\cdot\frac{N\mu}{s}\bigg)e^{-r(t_2-t_1)}\\
		&\geq\bigg(\frac{K_{1m}^-N^2\mu^2}{s}- \sqrt{\frac{K_{0m}}{\epsilon}}\cdot\frac{N\mu}{s}\bigg)e^{-(C_2+C_1)\cdot\frac{r}{s}}\\
		&=\frac{N^2\mu^2}{s}\cdot e^{-(C_2+C_1)\cdot\frac{r}{s}}\bigg(K_{1m}^- - \sqrt{\frac{K_{0m}}{\epsilon}}\cdot\frac{1}{N\mu}\bigg).
		\end{align*}
	We define 
		$$
		K_{2m}^-=K_{1m}^-/2.
		$$
	Since $1\ll N\mu$, for sufficiently large $N$, on the event $A_{(2)}$,
			$$
			X_3(t)\geq\frac{K_{2m}^-N^2\mu^2}{s}.
			$$  
	By the definitions of $A_{13}, A_{14}$ and $A_{15}$ in (\ref{A13}), (\ref{A14}) and (\ref{A15m}), and by Proposition \ref{@t1}, we have that for sufficiently large $N$, on the event $A_{(2)}$,
		\begin{align*}
		X_3(t_2)
		&\leq \bigg(\frac{K_{1m}^+N^2\mu^2}{s}+ \sqrt{\frac{K_{0m}}{\epsilon}}\cdot\frac{N\mu}{s}\bigg)e^{2s(t_2-t_1)}+\frac{K'_1}{\epsilon}\cdot\frac{N\mu}{s}+\frac{K'_2}{\epsilon}\cdot\frac{Nr}{s}\\
		&=\frac{N^2\mu^2}{s}\cdot\Bigg(\bigg(K_{1m}^+ + \sqrt{\frac{K_{0m}}{\epsilon}}\cdot\frac{1}{N\mu}\bigg)e^{2(C_2+C_1)}+\frac{K'_1}{\epsilon}\cdot\frac{1}{N\mu}+\frac{K'_2}{\epsilon}\cdot\frac{r}{N\mu^2}\Bigg).
		\end{align*}

	We define the constant
		\begin{equation}\label{K2m+}
		K_{2m}^+=2K_{1m}^+e^{2(C_2+C_1)}.
		\end{equation}
	Because $1\ll N\mu$ and $r\ll N\mu^2$, for sufficiently large $N$,
		$$
		X_3(t)\leq\frac{K_{2m}^+N^2\mu^2}{s}.
		$$
	This completes the proof.
	\end{proof}

\section{Phase 3 and the proof of Proposition \ref{@t3}}\label{phase3}
	In this phase, we will use martingales and submartingales to approximate the number of type 0 and type 3 individuals. The ideas of the proof are similar to those used in phase 1. At the end of this section, we will give a proof for Proposition \ref{@t3}.

	We define the constant
	\begin{equation}\label{C3}
	C_3=
	\begin{cases}
		\displaystyle{C_2-3-\ln\bigg(\frac{K_{2r}^+}{\delta^2}\bigg)} & \mbox{in the recombination dominating case} \\
		\displaystyle{C_2-3-\ln\bigg(\frac{K_{2m}^+}{\delta^2}\bigg)} & \mbox{in the mutation dominating case},
	\end{cases}
	\end{equation}
	where the constants $K_{2r}^+$ and $K_{2m}^+$ are defined in the equations (\ref{K2r+}) and (\ref{K2m+}). We define the time
	\begin{equation}\label{t3}
	t_3=
	\begin{cases}
		\displaystyle{\frac{1}{s}\ln\bigg(\frac{s^2}{\mu r\ln(Nr)}\bigg)+\frac{C_3}{s}} & \mbox{in the recombination dominating case} \\
		\displaystyle{\frac{1}{s}\ln\bigg(\frac{s^2}{N\mu^3}\bigg)+\frac{C_3}{s}} & \mbox{in the mutation dominating case}.
	\end{cases}
	\end{equation}
	Next, we define the following stopping times:
		\begin{align}
		T_4&=\inf\{t\geq t_2: X_1(t)+X_2(t)\leq (1-3\delta)N\},\label{T4}\\
		T_5&=\inf\Big\{t\geq t_2: s\int_{t_2}^t\tilde X_3(v)dv \geq 1\Big\},\label{T5}\\
		T_6&=\inf\big\{t\geq t_2: X_0(t)\geq \delta Ne^{-s(1-3\delta)(t-t_2)}\big\},\label{T6}\\
		T_{(3)}&=T_4\wedge T_5\wedge T_6.\label{T(3)}
	\end{align}
	In both cases we define
	\begin{align}
		A_{16}&=\{T_4>t_3\wedge T_{(3)}\},\label{A16}\\
		A_{17}&=\bigg\{X_0^{[t_2]}(t\wedge T_{(3)})<\frac{\delta}{2}\cdot Ne^{-s(1-3\delta)(t\wedge T_{(3)}-t_2)}, \hspace{0.2 cm}\textup{for all $t\geq t_2$}\bigg\},\label{A17}\\
		A_{18}&=\bigg\{s\int_{t_2}^{t_3\wedge T_{(3)}}\tilde X_3(v)dv<1\bigg\}.\label{A18}
	\end{align}
	In the recombination dominating case, we define the following events:
	\begin{align}
		A_{19}&=\bigg\{X_{0r}^{(t_2,t_3]}(t\wedge T_{(3)})\leq\frac{e^{3+(C_3-C_2)}}{\epsilon}\cdot\frac{N}{\ln(Nr)}\cdot e^{-s(1-3\delta)(t\wedge T_{(3)}-t_2)}, \hspace{0.2 cm}\textup{for all $t\in [t_2,t_3]$}\bigg\}\label{A19r}\\
		A_{20}&=\bigg\{X_{3m}^{(t_2,t_3]}(t_3\wedge T_{(3)}) < \bigg(\frac{\delta^2}{\epsilon K_{2r}^+}\bigg)\cdot\frac{N\mu}{r\ln(Nr)}\bigg\}\label{A20r}\\
		A_{21}&=\bigg\{X_{3r}^{(t_2,t_3]}(t_3\wedge T_{(3)}) \leq \bigg(\frac{\delta^2}{\epsilon K_{2r}^+}\bigg)\cdot\frac{N}{\ln(Nr)}\bigg\}\label{A21r}\\
		A_{22}&=\bigg\{\sup_{t\in[t_2,t_3]}\Big|Z_3^{[t_2]}(t\wedge T_{(3)})-X_3(t_2)\Big|<\sqrt{\frac{2e^4K_{2r}^+}{\epsilon}\cdot\frac{Nr\ln(Nr)}{s^2}}\bigg\}\label{A22r}
	\end{align}
	In contrast, in the mutation dominating case, we define
	\begin{align}
		A_{19}&=\bigg\{X_{0r}^{(t_2,t_3]}(t\wedge T_{(3)})\leq\frac{e^{3+(C_3-C_2)}}{\epsilon}\cdot\frac{r}{\mu^2}\cdot e^{-s(1-3\delta)(t\wedge T_{(3)}-t_2)}, \hspace{0.2 cm}\textup{for all $t\in [t_2,t_3]$}\bigg\},\nonumber\\
		A_{20}&=\bigg\{X_{3m}^{(t_2,t_3]}(t_3\wedge T_{(3)}) < \bigg(\frac{\delta^2}{\epsilon K_{2m}^+}\bigg)\cdot\frac{1}{\mu}\bigg\},\label{A20m}\\
		A_{21}&=\bigg\{X_{3r}^{(t_2,t_3]}(t_3\wedge T_{(3)}) \leq \bigg(\frac{\delta^2}{\epsilon K_{2m}^+}\bigg)\cdot\frac{r}{\mu^2}\bigg\}\label{A21m},\\
		A_{22}&=\bigg\{\sup_{t\in[t_2,t_3]}\Big|Z_3^{[t_2]}(t\wedge T_{(3)})-X_3(t_2)\Big|<\sqrt{\frac{2e^4K_{2m}^+}{\epsilon}}\cdot\frac{N\mu}{s}\bigg\}.\nonumber
	\end{align}
	Lastly, in both cases, we define
	\begin{equation}\label{A(3)}
	A_{(3)}=A_{(2)}\cap\bigg(\bigcap_{i=16}^{22}A_i\bigg).
	\end{equation}
	
	We will first give bounds on the growth rates of type 0 and type 3 populations.

	\begin{lemma}\label{Gt2-3}
	The following statements are true.
		\begin{enumerate}
		\item If $t\in[t_2,T_4)$, then $G_0(t)\leq -s(1-3\delta)$.
		\item If $t\in[t_2,T_4)$, then $-s(1+\tilde X_3(t))-r-2\mu \leq G_{0r}^{(t_2,t_3]}(t)\leq -s(1-3\delta)+r$.
		\item If $t\in[t_2,T_6)$, then $s(1-\tilde{X}_3(t))-r\leq G_3(t)\leq s\big(1+\delta e^{-s(1-3\delta)(t-t_2)}\big)$.
		\item If $t\in[t_2,T_6)$, then $s(1-\tilde{X}_3(t))-r\leq G^{(t_2,t_3]}_{3r}(t)\leq s\big(1+\delta e^{-s(1-3\delta)(t-t_2)}\big)+r$.
		\end{enumerate}
	\end{lemma}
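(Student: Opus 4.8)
The plan is to read off all four pairs of bounds directly from the explicit formulas for the growth rates in (\ref{G0}), (\ref{G0,0r}), (\ref{G3}) and (\ref{G3r}), using only elementary facts: the $\tilde X_i(t)$ are nonnegative and satisfy $\tilde X_0(t)+\tilde X_1(t)+\tilde X_2(t)+\tilde X_3(t)=1$; since $s\in(0,1/2]$ we have $0\leq 1-s\tilde X_1(t)-s\tilde X_2(t)-2s\tilde X_3(t)\leq 1$; all of $r$, $\mu$ are nonnegative; and the defining inequalities of the stopping times $T_4$ and $T_6$ in (\ref{T4}) and (\ref{T6}). No approximation or limit argument is needed.

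First I would prove statements 1 and 2. For $t\in[t_2,T_4)$ the definition of $T_4$ gives $\tilde X_1(t)+\tilde X_2(t)>1-3\delta$. In (\ref{G0}) the three summands $2s\tilde X_3(t)$, $r\tilde X_3(t)(1-s\tilde X_1(t)-s\tilde X_2(t)-2s\tilde X_3(t))$ and $2\mu$ are all nonnegative, so $G_0(t)\leq -s(\tilde X_1(t)+\tilde X_2(t))<-s(1-3\delta)$, which is statement 1. For statement 2, combine (\ref{G0,0r}) with statement 1 and the bound $\tilde X_1(t)\tilde X_2(t)\leq 1$ to get the upper bound $G_{0r}^{(t_2,t_3]}(t)\leq -s(1-3\delta)+r$; for the lower bound, drop the nonnegative indicator term, use the identity $\tilde X_1(t)+\tilde X_2(t)+2\tilde X_3(t)=1-\tilde X_0(t)+\tilde X_3(t)\leq 1+\tilde X_3(t)$, and bound the factor $1-s\tilde X_1(t)-s\tilde X_2(t)-2s\tilde X_3(t)$ by $1$ in (\ref{G0}), giving $G_{0r}^{(t_2,t_3]}(t)\geq G_0(t)\geq -s(1+\tilde X_3(t))-r-2\mu$.

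Next I would handle statements 3 and 4. For $t\in[t_2,T_6)$ the definition of $T_6$ gives $\tilde X_0(t)<\delta e^{-s(1-3\delta)(t-t_2)}$. Rewriting the factor in (\ref{G3}) as $2-\tilde X_1(t)-\tilde X_2(t)-2\tilde X_3(t)=1+\tilde X_0(t)-\tilde X_3(t)$, the upper bound of $G_3(t)$ follows by discarding the nonnegative subtracted term $r\tilde X_0(t)(1-s\tilde X_1(t)-s\tilde X_2(t)-2s\tilde X_3(t))$ and using $\tilde X_0(t)-\tilde X_3(t)\leq\tilde X_0(t)<\delta e^{-s(1-3\delta)(t-t_2)}$, while the lower bound follows from $\tilde X_0(t)-\tilde X_3(t)\geq-\tilde X_3(t)$ together with bounding that subtracted term above by $r$. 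Statement 4 is then immediate from (\ref{G3r}): the extra term $(1-2s)r\tilde X_1(t)\tilde X_2(t)\mathbf{1}_{(t_2,t_3]}(t)$ lies in $[0,r]$, so adding it to $G_3(t)$ preserves the lower bound from statement 3 and worsens the upper bound by at most $r$.

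There is no genuine obstacle here; the only point requiring a bit of care is to keep careful track of which nonnegative pieces are discarded for the upper versus the lower bound, and to use the exact identity $\tilde X_1(t)+\tilde X_2(t)+2\tilde X_3(t)=1-\tilde X_0(t)+\tilde X_3(t)$ (equivalently $2-\tilde X_1(t)-\tilde X_2(t)-2\tilde X_3(t)=1+\tilde X_0(t)-\tilde X_3(t)$) rather than a cruder bound, since it is precisely this identity that produces the $1+\delta e^{-s(1-3\delta)(t-t_2)}$ and $1-\tilde X_3(t)$ shapes appearing in the statement.
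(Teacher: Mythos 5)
Your proof is correct and follows essentially the same route as the paper: both arguments read the bounds directly off the explicit formulas (\ref{G0}), (\ref{G0,0r}), (\ref{G3}), (\ref{G3r}), using the defining inequalities of $T_4$ and $T_6$ together with the identity $2-\tilde X_1(t)-\tilde X_2(t)-2\tilde X_3(t)=1+\tilde X_0(t)-\tilde X_3(t)$ and the fact that the discarded terms are nonnegative and bounded by $r$ or $2\mu$. Your write-up simply makes explicit the elementary estimates that the paper's terser proof leaves implicit.
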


	\begin{proof} 
	By the definition of $T_4$ in (\ref{T4}), if $t\in[t_2,T_4)$, then $\tilde X_1(t) +\tilde X_2(t)>1-3\delta$, and from (\ref{G0}), we have that $G_0(t)\leq -s(\tilde X_1(t) +\tilde X_2(t))<-s(1-3\delta)$. From (\ref{G0,0r}), if $t\in[t_2,T_4)$, then $G_{0r}^{(t_2,t_3]}(t)\leq -s(1-3\delta)+r$, and by using the fact that $\tilde X_1(u)+\tilde X_2(u) +\tilde X_3(u) \leq 1$ for all $u\geq 0$, we also have that $G_{0r}^{(t_2,t_3]}(t)\geq -s(1+\tilde X_3(t))-r-2\mu$.

	Now, from the definition of $T_6$ in (\ref{T6}), if $t\in[t_2,T_6)$, then the equation (\ref{G3}) implies that $G_3(t)\leq s(1+\tilde X_0(t)) < s\big(1+\delta e^{-s(1-3\delta)(t-t_2)}\big)$, and $G_3(t)\geq s(1-\tilde X_3(t))-r$. Part 4 follows directly from part 3 and (\ref{G3r}).
\end{proof}

\subsection{Results on type 0 individuals}
	\begin{lemma}\label{PA17}
	For sufficiently large $N$, on the event $A_{(2)}$, we have $P(A_{17}^c|\mathcal{F}_{t_2})\leq 6\delta$.
	\end{lemma}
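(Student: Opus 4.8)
The plan is to control the process $X_0^{[t_2]}(t)$ via the martingale $Z_0^{[t_2]}(t) = e^{-\int_{t_2}^t G_0(v)dv} X_0^{[t_2]}(t)$ from Proposition \ref{ZiMar}, stopped at $T_{(3)}$. The key is that on $[t_2, T_4)$, part 1 of Lemma \ref{Gt2-3} gives $G_0(t) \le -s(1-3\delta)$, so $e^{\int_{t_2}^t G_0(v)dv} \le e^{-s(1-3\delta)(t-t_2)}$; hence if I can show $Z_0^{[t_2]}(t \wedge T_{(3)})$ stays below $\tfrac{\delta}{2}N$ for all $t \ge t_2$ with high probability, then $X_0^{[t_2]}(t\wedge T_{(3)}) = Z_0^{[t_2]}(t\wedge T_{(3)})\,e^{\int_{t_2}^{t\wedge T_{(3)}} G_0(v)dv} \le \tfrac{\delta}{2}N e^{-s(1-3\delta)(t\wedge T_{(3)}-t_2)}$ on $[t_2, T_4)$, which is exactly the event $A_{17}$ (noting $T_{(3)} \le T_4$, so on $[t_2, T_{(3)}]$ we are always in the regime where the bound on $G_0$ holds). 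Since $Z_0^{[t_2]}$ is a martingale with $Z_0^{[t_2]}(t_2) = X_0^{[t_2]}(t_2) = X_0(t_2)$, by Doob's $L^2$-maximal inequality it suffices to bound $\mathrm{Var}(Z_0^{[t_2]}(t\wedge T_{(3)}) \mid \mathcal F_{t_2})$ and to know $X_0(t_2)$ is small on $A_{(2)}$.

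First I would estimate $X_0(t_2)$ on the event $A_{(2)}$. By Proposition \ref{@t2}, $X_1(t_2)+X_2(t_2) \ge (1-3\delta^2)N$, so $X_0(t_2) = N - X_1(t_2) - X_2(t_2) - X_3(t_2) \le 3\delta^2 N$, which for $\delta < 1/4$ is comfortably less than $\tfrac{\delta}{4}N$. Second, I would bound the variance using Proposition \ref{ZiMar}:
\begin{align*}
\mathrm{Var}\big(Z_0^{[t_2]}(t\wedge T_{(3)}) \,\big|\, \mathcal F_{t_2}\big) = E\bigg[\int_{t_2}^{t\wedge T_{(3)}} e^{-2\int_{t_2}^u G_0(v)dv}\big(B_0^{[t_2]}(u)+D_0^{[t_2]}(u)\big)X_0^{[t_2]}(u)\,du \,\bigg|\, \mathcal F_{t_2}\bigg].
\end{align*}
On $[t_2, T_{(3)})$ one has $B_0^{[t_2]}, D_0^{[t_2]} \le 1 + O(\mu)$, and $-\int_{t_2}^u G_0(v)dv \le s(1-3\delta)(u-t_2)$ while simultaneously $X_0^{[t_2]}(u) \le X_0(u) \le \delta N e^{-s(1-3\delta)(u-t_2)}$ (this last bound holds because we stop at $T_6 \le T_{(3)}$). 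Combining, the integrand is $\le e^{2s(1-3\delta)(u-t_2)} \cdot 3 \cdot \delta N e^{-s(1-3\delta)(u-t_2)} = 3\delta N e^{s(1-3\delta)(u-t_2)}$, and there is a subtlety: this integrates to something of order $\delta N / s$ times $e^{s(1-3\delta)(t_3 - t_2)}$, which could be large. To avoid that I would instead bound the exponential factor using the $T_5$ stopping: $e^{-2\int_{t_2}^u G_0(v)dv}$ — actually the cleaner route is to pair $e^{-2\int G_0}$ with one factor $X_0^{[t_2]}(u) e^{\int G_0} = Z_0^{[t_2]}(u)\cdot$(something bounded), leaving only a single $e^{-\int G_0} \le e^{s(1-3\delta)(u-t_2)}$; but then one needs a bound on $Z_0^{[t_2]}$ itself, which is circular. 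The correct fix is to use that on $[t_2,T_{(3)})$ we have $X_0^{[t_2]}(u) \le \tfrac{\delta}{2}Ne^{-s(1-3\delta)(u-t_2)}$ is \emph{what we are trying to prove}, so instead bound $X_0^{[t_2]}(u) \le X_0(u)$ and use the $T_6$ definition directly as above; the resulting bound $\mathrm{Var} \le C\delta N e^{s(1-3\delta)(t_3-t_2)}/s$ must then be compared against the threshold $(\tfrac{\delta}{4}N)^2$, and here $\delta N e^{s(1-3\delta)(t_3-t_2)} \ll \delta^2 N^2$ needs to be checked from the definition of $t_3$ — i.e., $e^{s(t_3-t_2)} \ll \delta N$, which follows since $e^{s(t_3-t_2)}$ is of order $s^3/(\mu^2 r \ln(Nr) \cdot \mu) $-ish in the recombination case and one uses conditions (\ref{Con1.2}), (\ref{Con1.3}) to see this is $o(N)$.

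Then the conclusion: by Doob's $L^2$-maximal inequality applied to the martingale $(Z_0^{[t_2]}(t\wedge T_{(3)}) - X_0(t_2), t \ge t_2)$,
\begin{align*}
P\Big(\sup_{t\ge t_2}\big|Z_0^{[t_2]}(t\wedge T_{(3)}) - X_0(t_2)\big| \ge \tfrac{\delta}{4}N \,\Big|\, \mathcal F_{t_2}\Big) \le \frac{4\,\mathrm{Var}(Z_0^{[t_2]}(t_3\wedge T_{(3)})\mid\mathcal F_{t_2})}{(\delta N/4)^2} \le 6\delta
\end{align*}
for sufficiently large $N$, where the constant $6$ is chosen generously to absorb the numerical constants after the asymptotic estimates kick in. On the complement of this event, together with $X_0(t_2) \le \tfrac{\delta}{4}N$ on $A_{(2)}$, we get $Z_0^{[t_2]}(t\wedge T_{(3)}) < \tfrac{\delta}{2}N$ for all $t \ge t_2$, and multiplying by $e^{\int_{t_2}^{t\wedge T_{(3)}}G_0(v)dv} \le e^{-s(1-3\delta)(t\wedge T_{(3)}-t_2)}$ yields $A_{17}$. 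The main obstacle is the variance estimate: getting the exponential growth $e^{s(1-3\delta)(u-t_2)}$ inside the integral to be dominated requires carefully invoking the $T_6$ bound on $X_0$ (so the $e^{-2\int G_0}$ and the $X_0$ decay partially cancel) and then checking $e^{s(t_3-t_2)} = o(\delta N)$ from the parameter conditions and the definition of $t_3$; everything else is routine bookkeeping mirroring the phase 1 arguments.
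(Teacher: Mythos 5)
Your overall setup --- control $X_0^{[t_2]}$ through the nonnegative martingale $Z_0^{[t_2]}(t\wedge T_{(3)})$, use $G_0\le -s(1-3\delta)$ on $[t_2,T_4)$ from Lemma \ref{Gt2-3}, and use $X_0(t_2)\le 3\delta^2 N$ from Proposition \ref{@t2} --- is exactly the paper's, but your choice of the $L^2$-maximal inequality creates a gap that the paper avoids entirely. The paper applies the $L^1$ maximal inequality for nonnegative martingales: since $Z_0^{[t_2]}(t\wedge T_{(3)})\ge 0$ is a martingale with initial value $X_0(t_2)$, one has directly $P\big(\sup_{t\ge t_2}Z_0^{[t_2]}(t\wedge T_{(3)})\ge \delta N/2\,\big|\,\mathcal F_{t_2}\big)\le X_0(t_2)/(\delta N/2)\le 3\delta^2 N/(\delta N/2)=6\delta$, which is precisely where the constant $6\delta$ comes from; no variance computation is needed, and the supremum over \emph{all} $t\ge t_2$ is controlled in one stroke.

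Your variance route has three concrete problems. First, a sign error: $G_0\le -s(1-3\delta)$ gives $e^{-2\int_{t_2}^u G_0}\ge e^{2s(1-3\delta)(u-t_2)}$, a \emph{lower} bound, not the upper bound you use; an upper bound requires the lower bound $G_0\ge -s(\tilde X_1+\tilde X_2+2\tilde X_3)-r-2\mu\ge -2s-r-2\mu$ from (\ref{G0}), so $e^{-2\int_{t_2}^uG_0}\le e^{(4s+2r+4\mu)(u-t_2)}$. Second, after pairing this with $X_0(u)\le \delta Ne^{-s(1-3\delta)(u-t_2)}$ from the $T_6$ stopping, the integrand still grows like $\delta Ne^{s(3+3\delta)(u-t_2)}$, so the variance is of order $\delta Ne^{3s(t_3-t_2)}/s$; in the regime $\mu=N^{-a}$, $r=N^{-b}$, $s=N^{-c}$ this is roughly $\delta N^{1+c+3(b-c)}(\ln N)^{-3}$, which exceeds the required threshold $\delta^3N^2$ whenever $3(b-c)>1-c$, a case permitted by the hypotheses ($0<c<b$ with no upper bound on $b$). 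Third, even if the variance were small, your estimate only covers $t\in[t_2,t_3]$, whereas $A_{17}$ demands the bound for all $t\ge t_2$ and $T_{(3)}>t_3$ on the events of interest (Lemma \ref{T(3)>t3}), so the process is still alive after $t_3$. All three issues disappear once you replace the $L^2$ inequality by the $L^1$ maximal inequality for nonnegative martingales.
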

	
	\begin{proof}
	First, from part 2 of Proposition \ref{@t2}, on the event $A_{(2)}$, we have that $ X_0(t_2)\leq N- X_1(t_2) - X_2(t_2) \leq 3\delta^2N$. From Proposition \ref{ZiMar}, the process $(Z_0^{[t_2]}(t\wedge T_{(3)}),t\geq t_2)$ is a martingale. Hence, by Lemma \ref{Gt2-3} and Doob's maximal inequality, for sufficiently large $N$, on the event $A_{(2)}$,
		\begin{align*}
		P(A_{17}^c|\mathcal{F}_{t_2})
		&=P\bigg(\sup_{t\geq t_2} X_0^{[t_2]}(t\wedge T_{(3)})e^{s(1-3\delta)(t\wedge T_{(3)}-t_2)}\geq\frac{\delta N}{2}\hspace{0.1 cm}\bigg|\hspace{0.1 cm}\mathcal{F}_{t_2}\bigg)\\
		&\leq P\bigg(\sup_{t\geq t_2} X_0^{[t_2]}(t\wedge T_{(3)})e^{-\int_{t_2}^{t\wedge T_{(3)}}G_0(v)dv}\geq\frac{\delta N}{2}\hspace{0.1 cm}\bigg|\hspace{0.1 cm}\mathcal{F}_{t_2}\bigg)\\
		&=P\Big(\sup_{t\geq t_2} Z_0^{[t_2]}(t\wedge T_{(3)})\geq \frac{\delta N}{2}\hspace{0.1 cm}\Big|\hspace{0.1 cm}\mathcal{F}_{t_2}\Big)\\
		&\leq\frac{E\Big[Z_0^{[t_2]}(t_2)\Big|\mathcal{F}_{t_2}\Big]}{\delta N/2}\\
		&=\frac{X_0(t_2)}{\delta N/2}\\
		&\leq 6\delta,
	\end{align*}
	which proves the lemma.
	\end{proof}

	\begin{lemma}\label{PA19}
	For sufficiently large $N$, we have $P(A_{19}^c|\mathcal{F}_{t_2})\leq \epsilon$.
	\end{lemma}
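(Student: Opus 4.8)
The plan is to control $X_{0r}^{(t_2,t_3]}$ through the nonnegative submartingale $\big(W_{0r}^{(t_2,t_3]}(t\wedge T_{(3)}),\,t\geq t_2\big)$ of Proposition \ref{W} (its hypotheses hold since $T_{(3)}\geq t_2$), exactly in the spirit of the phase-1 variance/expectation estimates. I would start with the deterministic conversion between $W_{0r}^{(t_2,t_3]}$ and $X_{0r}^{(t_2,t_3]}$: since $W_{0r}^{(t_2,t_3]}(t)=e^{-\int_{t_2}^tG_{0r}^{(t_2,t_3]}(v)dv}X_{0r}^{(t_2,t_3]}(t)$ and, by Lemma \ref{Gt2-3}(2), $G_{0r}^{(t_2,t_3]}(v)\leq -s(1-3\delta)+r$ for $v\in[t_2,T_4)$ (hence on $[t_2,T_{(3)})$, as $T_{(3)}\leq T_4$), one gets
$$X_{0r}^{(t_2,t_3]}(t\wedge T_{(3)})\,e^{s(1-3\delta)(t\wedge T_{(3)}-t_2)}\leq e^{r(t\wedge T_{(3)}-t_2)}\,W_{0r}^{(t_2,t_3]}(t\wedge T_{(3)})\leq 2\,W_{0r}^{(t_2,t_3]}(t\wedge T_{(3)})$$
for sufficiently large $N$, where $e^{r(t_3-t_2)}\leq 2$ because $r\ll s$ together with (\ref{rslogNs}) forces $r(t_3-t_2)\ll1$. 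Consequently $A_{19}^c$ is contained in the event $\big\{\sup_{t\in[t_2,t_3]}W_{0r}^{(t_2,t_3]}(t\wedge T_{(3)})\geq \tfrac12\cdot\tfrac{e^{3+(C_3-C_2)}}{\epsilon}\cdot\tfrac{N}{\ln(Nr)}\big\}$ in the recombination dominating case, and in the analogous event with $\tfrac{r}{\mu^2}$ replacing $\tfrac{N}{\ln(Nr)}$ in the mutation dominating case, so it suffices to bound the probability of that event.

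Next I would estimate the terminal expectation $E\big[W_{0r}^{(t_2,t_3]}(t_3\wedge T_{(3)})\mid\mathcal{F}_{t_2}\big]$ via the identity in Proposition \ref{W}. Bounding the rate through (\ref{R0<}) by $R_0^{(t_2,t_3]}(u)\leq Nr\,1_{(t_2,t_3]}(u)$, and, for $u<T_{(3)}$, the exponent using the lower bound on $G_{0r}^{(t_2,t_3]}$ from Lemma \ref{Gt2-3}(2) and the definition of $T_5$,
$$-\int_{t_2}^uG_{0r}^{(t_2,t_3]}(v)\,dv\leq (s+r+2\mu)(u-t_2)+s\!\int_{t_2}^u\tilde X_3(v)\,dv\leq (s+r+2\mu)(u-t_2)+1,$$
I obtain $E\big[W_{0r}^{(t_2,t_3]}(t_3\wedge T_{(3)})\mid\mathcal{F}_{t_2}\big]\leq eNr\cdot\frac{e^{(s+r+2\mu)(t_3-t_2)}-1}{s+r+2\mu}$. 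Inserting the definitions of $t_2$ and $t_3$ gives $s(t_3-t_2)=\ln\!\big(s/(r\ln(Nr))\big)+(C_3-C_2)$ in the recombination dominating case and $s(t_3-t_2)=\ln\!\big(s/(N\mu^2)\big)+(C_3-C_2)$ in the mutation dominating case, while $(r+2\mu)(t_3-t_2)\to0$; so the right-hand side is $\leq \frac{e^{1+(C_3-C_2)}N}{\ln(Nr)}(1+o(1))$ in the first case and $\leq \frac{e^{1+(C_3-C_2)}r}{\mu^2}(1+o(1))$ in the second.

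Finally, applying the conditional Doob maximal inequality to the nonnegative submartingale over $[t_2,t_3]$ yields, for sufficiently large $N$,
$$P\Big(\sup_{t\in[t_2,t_3]}W_{0r}^{(t_2,t_3]}(t\wedge T_{(3)})\geq \tfrac12\cdot\tfrac{e^{3+(C_3-C_2)}}{\epsilon}\cdot\tfrac{N}{\ln(Nr)}\;\Big|\;\mathcal{F}_{t_2}\Big)\leq \frac{2\epsilon\,e^{1+(C_3-C_2)}(1+o(1))}{e^{3+(C_3-C_2)}}=\frac{2\epsilon(1+o(1))}{e^2}\leq\epsilon,$$
and identically in the mutation dominating case; together with the inclusion of the first paragraph this gives $P(A_{19}^c\mid\mathcal{F}_{t_2})\leq\epsilon$. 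The main obstacle here is purely bookkeeping: matching the exponent $s(t_3-t_2)$, read off from the definitions of $t_2,t_3,C_3$, against the normalization built into $A_{19}$, and verifying that the stray exponential factors $e^{r(t-t_2)}$ and $e^{(r+2\mu)(t_3-t_2)}$ are negligible — for which one invokes $r\ll s$, $\mu\ll s$, the inequality (\ref{rslogNs}), and $1\ll Nr$ (recombination dominating case) or $N\mu^2\ll s$ (mutation dominating case). No genuinely new probabilistic input is needed beyond the stopped-submartingale machinery already in place.
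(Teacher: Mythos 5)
Your proposal is correct and follows essentially the same route as the paper: the stopped submartingale $W_{0r}^{(t_2,t_3]}(\cdot\wedge T_{(3)})$ from Proposition \ref{W}, the two-sided bounds on $G_{0r}^{(t_2,t_3]}$ from Lemma \ref{Gt2-3} combined with the $T_5$ truncation to control the terminal expectation by a constant times $N/\ln(Nr)$ (resp.\ $r/\mu^2$), and Doob's maximal inequality to convert that into the tail bound defining $A_{19}$. The only differences are cosmetic bookkeeping (your factor $2$ versus the paper's $e^{r(t_3-t_2)}\leq e$ in absorbing the stray exponential, and a slightly different placement of the $(r+2\mu)(t_3-t_2)=o(1)$ correction), so no further comment is needed.
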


	\begin{proof}
	We will first prove this result in the recombination dominating case. By Proposition \ref{W}, the process $\big(W_{0r}^{(t_2,t_3]}(t\wedge T_{(3)}),t\geq t_2\big)$ is a submartingale. Also, note that from the definitions of $t_2$ and $t_3$ in (\ref{t2}) and (\ref{t3}), we have that 
		\begin{equation}\label{*3.1}
		t_3-t_2=\frac{1}{s}\ln\bigg(\frac{s}{r\ln(Nr)}\bigg)+\frac{C_3-C_2}{s}.
		\end{equation} 
	From Proposition \ref{W}, Lemma \ref{Gt2-3} part 2, (\ref{R0<}), and the definition of $T_5$ in (\ref{T5}), we have
		\begin{align}
		E\Big[W_{0r}^{(t_2,t_3]}(t_3\wedge T_{(3)})\Big|\mathcal{F}_{t_2}\Big]\nonumber
		&=E\bigg[\int_{t_2}^{t_3\wedge T_{(3)}}R_0^{(t_2,t_3]}(u)e^{-\int_{t_2}^uG_{0r}^{(t_2,t_3]}(v)dv}du\Big|\mathcal{F}_{t_2}\bigg]\nonumber\\
		&\leq E\bigg[\int_{t_2}^{t_3\wedge T_{(3)}}Nre^{\int_{t_2}^u s(1+\tilde X_3(v))+r+2\mu dv}du\Big|\mathcal{F}_{t_2}\bigg]\nonumber\\
		&\leq E\bigg[\int_{t_2}^{t_3\wedge T_{(3)}}Nre^{s(u-t_2) + \int_{t_2}^{T_5} s\tilde X_3(v)dv+(r+2\mu)(t_3-t_2)}du\Big|\mathcal{F}_{t_2}\bigg]\nonumber\\
		&\leq e^{1+(r+2\mu)(t_3-t_2)}Nr\int_{t_2}^{t_3}e^{s(u-t_2)}du\nonumber\\
		&\leq e^{1+(r+2\mu)(t_3-t_2)}\cdot e^{s(t_3-t_2)}\cdot \frac{Nr}{s}\nonumber\\
		&\leq e^{1+(r+2\mu)(t_3-t_2)}\cdot e^{C_3-C_2}\cdot \frac{N}{\ln(Nr)}.\label{-7.1}
		\end{align}
	Because $1\ll Nr$ and $r\ll s$, for sufficiently large $N$, 
		$$
		\frac{r}{s}\ln\bigg(\frac{s}{r\ln(Nr)}\bigg)\leq \frac{r}{s}\ln\bigg(\frac{s}{r}\bigg)\ll 1,
		$$
	and it follows that 
		\begin{equation}\label{r(t3-t2)}
		r(t_3-t_2)\ll 1.
		\end{equation}
	Also, since $\mu \ll r$, we have 
		\begin{equation}\label{u(t3-t2)}
		\mu(t_3-t_2)\ll 1.
		\end{equation} 
	Hence, from (\ref{-7.1}), for sufficiently large $N$, we have
		$$
		E\Big[W_{0r}^{(t_2,t_3]}(t_3\wedge T_{(3)})\Big|\mathcal{F}_{t_2}\Big]\leq e^{2+(C_3-C_2)}\cdot\frac{N}{\ln(Nr)}.
		$$
	Thus, from (\ref{r(t3-t2)}), Lemma \ref{Gt2-3} part 2 and Doob's maximal inequality, for sufficiently large $N$, 
		\begin{align*}
		P(A_{19}^c|\mathcal{F}_{t_2})
		&= P\bigg(\sup_{t\in[t_2,t_3]}X_{0r}^{(t_2,t_3]}(t\wedge T_{(3)})e^{s(1-3\delta)(t\wedge T_{(3)}-t_2)}>\frac{e^{3+(C_3-C_2)}}{\epsilon}\cdot\frac{N}{\ln(Nr)}\hspace{0.1 cm}\bigg|\hspace{0.1 cm}\mathcal{F}_{t_2}\bigg)\\
		&\leq P\bigg(\sup_{t\in[t_2,t_3]}X_{0r}^{(t_2,t_3]}(t\wedge T_{(3)})e^{s(1-3\delta)(t\wedge T_{(3)}-t_2)-r(t_3-t_2)}\geq\frac{e^{3-r(t_3-t_2)+(C_3-C_2)}}{\epsilon}\cdot\frac{N}{\ln(Nr)}\hspace{0.1 cm}\bigg|\hspace{0.1 cm}\mathcal{F}_{t_2}\bigg)\\
		&\leq P\bigg(\sup_{t\in[t_2,t_3]}X_{0r}^{(t_2,t_3]}(t\wedge T_{(3)})e^{s(1-3\delta)(t\wedge T_{(3)}-t_2)-r(t_3-t_2)}\geq\frac{e^{2+(C_3-C_2)}}{\epsilon}\cdot\frac{N}{\ln(Nr)}\hspace{0.1 cm}\bigg|\hspace{0.1 cm}\mathcal{F}_{t_2}\bigg)\\
		&\leq  P\bigg(\sup_{t\in[t_2,t_3]}X_{0r}^{(t_2,t_3]}(t\wedge T_{(3)})e^{-\int_{t_2}^{t\wedge T_{(3)}}G_{0r}^{(t_2,t_3]}(v)dv}\geq\frac{e^{2+(C_3-C_2)}}{\epsilon}\cdot\frac{N}{\ln(Nr)}\hspace{0.1 cm}\bigg|\hspace{0.1 cm}\mathcal{F}_{t_2}\bigg)\\
		&=P\bigg(\sup_{t\in[t_2,t_3]}W_{0r}^{(t_2,t_3]}(t\wedge T_{(3)})\geq\frac{e^{2+(C_3-C_2)}}{\epsilon}\cdot\frac{N}{\ln(Nr)}\hspace{0.1 cm}\bigg|\hspace{0.1 cm}\mathcal{F}_{t_2}\bigg)\\
		&\leq \epsilon.
		\end{align*}
		
	Now, for the mutation dominating case, we observe that from the definitions of $t_2$ and $t_3$ in (\ref{t2}) and (\ref{t3}), we have
		\begin{equation}\label{*3.2}
		t_3-t_2=\frac{1}{s}\ln\bigg(\frac{s}{N\mu^2}\bigg)+\frac{C_3-C_2}{s}.
		\end{equation}
	From the fact that $1\ll N\mu$ and $\mu\ll s$, we have	
		$$
		\mu(t_3-t_2)\leq \frac{\mu}{s}\ln\bigg(\frac{s}{\mu}\bigg)+\frac{(C_3-C_2)\mu}{s}\ll 1.
		$$
	Also, from $r\ll s$ and (\ref{rslogsu}), we get
		$$
		r(t_3-t_2)\leq \frac{r}{s}\ln\bigg(\frac{s}{\mu}\bigg)+\frac{(C_3-C_2)r}{s}\ll 1,
		$$
	which show that (\ref{r(t3-t2)}) and (\ref{u(t3-t2)}) also hold in this case. By following the same argument as in the recombination dominating case, we obtain that for sufficiently large $N$,
		$$
		E\Big[W_{0r}^{(t_2,t_3]}(t_3\wedge T_{(3)})\Big|\mathcal{F}_{t_2}\Big]\leq e^{2+(C_3-C_2)}\cdot\frac{r}{\mu^2},
		$$
	and $P(A_{19}^c|\mathcal{F}_{t_2}) \leq \epsilon$.
	\end{proof}

\subsection{Results on type 3 individuals}
	\begin{lemma}\label{PA20}
	For sufficiently large $N$, we have that for $t\in[t_2,t_3]$,
		$$
		E\Big[X_{3m}^{(t_2,t_3]}(t\wedge T_{(3)})\Big|\mathcal{F}_{t_2}\Big]\leq \frac{e^3N\mu}{s}\cdot e^{s(t-t_2)},
		$$
	and $P(A_{20}|\mathcal{F}_{t_2})\geq 1-\epsilon$.
	\end{lemma}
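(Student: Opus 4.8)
The plan is to estimate $E[X_{3m}^{(t_2,t_3]}(t\wedge T_{(3)})\mid\mathcal{F}_{t_2}]$ by routing through the submartingale $W_{3m}^{(t_2,t_3]}$ of Proposition \ref{W}, and then to read off the probability bound from Markov's inequality, observing that the constant $C_3$ in (\ref{C3}) is chosen precisely so that the resulting estimate equals $\epsilon$. Since $T_{(3)}=T_4\wedge T_5\wedge T_6\geq t_2$, Proposition \ref{W} applies with $a=t_2$, $b=t_3$, and stopping time $T_{(3)}$, giving $E[W_{3m}^{(t_2,t_3]}(t\wedge T_{(3)})\mid\mathcal{F}_{t_2}]=E[\int_{t_2}^{t\wedge T_{(3)}}M_3^{(t_2,t_3]}(u)e^{-\int_{t_2}^uG_3(v)dv}\,du\mid\mathcal{F}_{t_2}]$.

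First I would bound the integrand. By (\ref{M3ab}) and $X_1(u)+X_2(u)\leq N$, we have $M_3^{(t_2,t_3]}(u)\leq\mu N$. Because $t\wedge T_{(3)}\leq T_5\wedge T_6$, Lemma \ref{Gt2-3} part 3 gives $G_3(v)\geq s(1-\tilde X_3(v))-r$ for $v<t\wedge T_{(3)}$, so for $u\leq t\wedge T_{(3)}$ one has $\int_{t_2}^uG_3(v)\,dv\geq s(u-t_2)-s\int_{t_2}^{T_5}\tilde X_3(v)\,dv-r(t_3-t_2)\geq s(u-t_2)-2$ for large $N$, using the definition (\ref{T5}) of $T_5$ and the fact, established in the proof of Lemma \ref{PA19}, that $r(t_3-t_2)\ll1$ in both cases. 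Hence $e^{-\int_{t_2}^uG_3(v)dv}\leq e^2e^{-s(u-t_2)}$, and integrating gives $E[W_{3m}^{(t_2,t_3]}(t\wedge T_{(3)})\mid\mathcal{F}_{t_2}]\leq e^2\mu N/s$. To return to $X_{3m}^{(t_2,t_3]}$, I would note that $t\wedge T_{(3)}\leq T_6$, so the upper bound in Lemma \ref{Gt2-3} part 3 together with $\delta<1/4$ from (\ref{delta}) gives $\int_{t_2}^{t\wedge T_{(3)}}G_3(v)\,dv\leq s(t-t_2)+s\delta\int_{t_2}^\infty e^{-s(1-3\delta)(v-t_2)}\,dv=s(t-t_2)+\frac{\delta}{1-3\delta}\leq s(t-t_2)+1$; this yields the pathwise inequality $X_{3m}^{(t_2,t_3]}(t\wedge T_{(3)})=e^{\int_{t_2}^{t\wedge T_{(3)}}G_3(v)dv}W_{3m}^{(t_2,t_3]}(t\wedge T_{(3)})\leq e^{s(t-t_2)+1}W_{3m}^{(t_2,t_3]}(t\wedge T_{(3)})$, and taking conditional expectations proves the first displayed bound.

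Next I would get the probability estimate by applying Markov's inequality at $t=t_3$. In the recombination dominating case, (\ref{*3.1}) gives $e^{s(t_3-t_2)}=\frac{s}{r\ln(Nr)}e^{C_3-C_2}$, so $E[X_{3m}^{(t_2,t_3]}(t_3\wedge T_{(3)})\mid\mathcal{F}_{t_2}]\leq e^{3+(C_3-C_2)}\frac{N\mu}{r\ln(Nr)}$; dividing by the threshold $\frac{\delta^2}{\epsilon K_{2r}^+}\cdot\frac{N\mu}{r\ln(Nr)}$ from (\ref{A20r}) gives $P(A_{20}^c\mid\mathcal{F}_{t_2})\leq\frac{\epsilon K_{2r}^+}{\delta^2}e^{3+(C_3-C_2)}$, which equals $\epsilon$ since $C_3=C_2-3-\ln(K_{2r}^+/\delta^2)$ by (\ref{C3}). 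The mutation dominating case is identical after replacing (\ref{*3.1}) by (\ref{*3.2}), $K_{2r}^+$ by $K_{2m}^+$, and the threshold by the one in (\ref{A20m}).

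The only genuinely delicate point is keeping track of which bounds of Lemma \ref{Gt2-3} are available on which events: both the upper and lower bounds on $G_3$ hold only before $T_6$, which is why stopping at $T_{(3)}\leq T_6$ matters, and the control $s\int_{t_2}^{t\wedge T_{(3)}}\tilde X_3(v)\,dv\leq1$ relies on stopping at $T_{(3)}\leq T_5$. Beyond this bookkeeping, the argument is a routine repetition of the scheme already used for Lemmas \ref{EX3m} and \ref{X(t1t2]}, and I do not anticipate any substantial obstacle.
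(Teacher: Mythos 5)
Your proposal is correct and follows essentially the same route as the paper: the identity you extract from Proposition \ref{W} for $W_{3m}^{(t_2,t_3]}$ is the same one the paper reads off from the mean-zero martingale $Z_{3m}^{(t_2,t_3]}$ of Proposition \ref{ZMar}, and the subsequent steps (bounding $M_3^{(t_2,t_3]}\leq N\mu$, using the two-sided bounds on $G_3$ from Lemma \ref{Gt2-3} together with the stopping at $T_5$ and $T_6$, and then Markov's inequality at $t=t_3$ with the definition of $C_3$) match the paper's argument exactly.
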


	\begin{proof}
	The proof is similar to that of Lemma \ref{EX3m}. First, recall that the process $\big(Z_{3m}^{(t_2,t_3]}(t\wedge T_{(3)}),t\geq t_2\big)$ is a mean-zero martingale by Proposition \ref{ZMar}.  By (\ref{Zm}), for all $t\geq t_2$, we have
		$$
		E\Big[e^{-\int_{t_2}^{t\wedge T_{(3)}}G_3(v)dv}X_{3m}^{(t_2,t_3]}(t\wedge T_{(3)})\Big|\mathcal{F}_{t_2}\Big]
		=E\bigg[\int_{t_2}^{t\wedge T_{(3)}}M_3^{(t_2,t_3]}(u)e^{-\int_{t_2}^uG_3(v)dv}du\bigg|\mathcal{F}_{t_2}\bigg],
		$$
	From (\ref{M3ab}), Lemma \ref{Gt2-3} part 3, and the definition of $T_5$ in (\ref{T5}), we have that for every $t\in[t_2,t_3]$,
		\begin{align}
		E\bigg[\int_{t_2}^{t\wedge T_{(3)}}M_3^{(t_2,t_3]}(u)e^{-\int_{t_2}^uG_3(v)dv}du\bigg|\mathcal{F}_{t_2}\bigg]
		&\leq E\bigg[\int_{t_2}^{t\wedge T_{(3)}} N\mu\cdot e^{-\int_{t_2}^u s(1-\tilde X_3(v))-rdv}du\bigg|\mathcal{F}_{t_2}\bigg]\nonumber\\
		&\leq N\mu \cdot E\bigg[\int_{t_2}^{t\wedge T_{(3)}} e^{-s(u-t_2)+s\int_{t_2}^{T_5}\tilde X_3(v)dv +r(t_3-t_2)}du\bigg|\mathcal{F}_{t_2}\bigg]\nonumber\\
		&\leq e^{1+r(t_3-t_2)}\cdot N\mu \cdot \int_{t_2}^t e^{-s(u-t_2)}du\nonumber\\
		&\leq e^{1+r(t_3-t_2)}\cdot \frac{N\mu}{s}. \label{*3.-1}
		\end{align}
	From (\ref{r(t3-t2)}), for sufficiently large $N$ and for all $t\in[t_2,t_3]$,
		$$
		E\bigg[\int_{t_2}^{t\wedge T_{(3)}}M_3^{(t_2,t_3]}(u)e^{-\int_{t_2}^uG_3(v)dv}du\bigg|\mathcal{F}_{t_2}\bigg]\leq \frac{e^2N\mu}{s}.
		$$
	Also, by Lemma \ref{Gt2-3} part 3, we have that for all $t\geq t_2$,
		\begin{align*}
		E\Big[e^{-\int_{t_2}^{t\wedge T_{(3)}}G_3(v)dv}X_{3m}^{(t_2,t_3]}(t\wedge T_{(3)})\Big|\mathcal{F}_{t_2}\Big]
		&\geq e^{-\int_{t_2}^t s\big(1+\delta e^{-s(1-3\delta)(v-t_2)}\big)dv}\cdot E\Big[X_{3m}^{(t_2,t_3]}(t\wedge T_{(3)})\Big|\mathcal{F}_{t_2}\Big]\\
		&\geq e^{-s(t-t_2)-\frac{\delta}{1-3\delta}}\cdot E\Big[X_{3m}^{(t_2,t_{3,r}]}(t\wedge T_{(3)})\Big|\mathcal{F}_{t_2}\Big].
		\end{align*}
	Therefore, using that $\delta<\frac{1}{4}$, for sufficiently large $N$, we have that if $t\in[t_2,t_3]$, then 
		$$
		E\Big[X_{3m}^{(t_2,t_3]}(t\wedge T_{(3)})\Big|\mathcal{F}_{t_2}\Big]\leq e^{2+\frac{\delta}{1-3\delta}}\cdot \frac{N\mu}{s}\cdot e^{s(t-t_2)}\leq\frac{e^3 N\mu}{s}\cdot e^{s(t-t_2)}.
		$$
	It follows from this inequality, along with (\ref{*3.1}), (\ref{*3.2}) and the definition of $C_3$ in (\ref{C3}) that in the recombination dominating case, for sufficiently large $N$, 
		$$
		E\Big[X_{3m}^{(t_2,t_3]}(t_3\wedge T_{(3)})\Big|\mathcal{F}_{t_2}\Big]\leq \frac{e^3N\mu}{s}\cdot e^{s(t_3-t_2)}
		=e^{3+(C_3-C_2)}\cdot\frac{N\mu}{r\ln(Nr)}
		=\bigg(\frac{\delta^2}{K_{2r}^+}\bigg)\cdot \frac{N\mu}{r\ln(Nr)},
		$$
	while in the mutation dominating, for sufficiently large $N$, 
		$$
		E\Big[X_{3m}^{(t_2,t_3]}(t_3\wedge T_{(3)})\Big|\mathcal{F}_{t_2}\Big]\leq \frac{e^3N\mu}{s}\cdot e^{s(t_3-t_2)}
		=\bigg(\frac{\delta^2}{K_{2m}^+}\bigg)\cdot \frac{1}{\mu}.
		$$
	Thus, by Markov's inequality, in both cases, we have that $P(A_{20}^c|\mathcal{F}_{t_2})\leq \epsilon$. 
	\end{proof}

	\begin{lemma}\label{PA21}
	For sufficiently large $N$, we have that for $t\in[t_2,t_3]$,
		$$
		E\Big[X_{3r}^{(t_2,t_3]}(t\wedge T_{(3)})\Big|\mathcal{F}_{t_2}\Big]\leq \frac{e^3Nr}{s}\cdot e^{s(t-t_2)},
		$$
	and $P(A_{21}|\mathcal{F}_{t_2})\geq 1-\epsilon$.
	\end{lemma}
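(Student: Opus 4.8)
The plan is to mimic the proof of Lemma~\ref{PA20}, replacing the martingale $\big(Z_{3m}^{(t_2,t_3]}(t\wedge T_{(3)}),t\geq t_2\big)$ by $\big(Z_{3r}^{(t_2,t_3]}(t\wedge T_{(3)}),t\geq t_2\big)$ from Proposition~\ref{ZMar}, the driving term $M_3^{(t_2,t_3]}$ by $R_3^{(t_2,t_3]}$, and part~3 of Lemma~\ref{Gt2-3} by part~4. First I would use Proposition~\ref{ZMar} and (\ref{Zr}) to write, for $t\geq t_2$,
$$E\Big[e^{-\int_{t_2}^{t\wedge T_{(3)}}G_{3r}^{(t_2,t_3]}(v)dv}X_{3r}^{(t_2,t_3]}(t\wedge T_{(3)})\Big|\mathcal{F}_{t_2}\Big]=E\bigg[\int_{t_2}^{t\wedge T_{(3)}}R_3^{(t_2,t_3]}(u)e^{-\int_{t_2}^uG_{3r}^{(t_2,t_3]}(v)dv}du\bigg|\mathcal{F}_{t_2}\bigg].$$
The right-hand side I would bound above using (\ref{R3<}), which gives $R_3^{(t_2,t_3]}(u)\leq Nr$ on $(t_2,t_3]$, together with the lower bound $G_{3r}^{(t_2,t_3]}(v)\geq s(1-\tilde X_3(v))-r$ from part~4 of Lemma~\ref{Gt2-3} and the definition of $T_5$ in (\ref{T5}), which yields $s\int_{t_2}^{u}\tilde X_3(v)\,dv\leq 1$ for $u\leq T_{(3)}\leq T_5$; this produces the bound $e^{1+r(t_3-t_2)}Nr/s$, which is at most $e^2 Nr/s$ for large $N$ by (\ref{r(t3-t2)}). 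The left-hand side I would bound below using the upper bound $G_{3r}^{(t_2,t_3]}(v)\leq s\big(1+\delta e^{-s(1-3\delta)(v-t_2)}\big)+r$ from part~4 of Lemma~\ref{Gt2-3} (valid since $t\wedge T_{(3)}\leq T_6$): the integral of $s\delta e^{-s(1-3\delta)(v-t_2)}$ over $[t_2,t]$ is at most $\delta/(1-3\delta)$, so after rearranging and using $\delta<1/4$ (hence $\delta/(1-3\delta)<1$) together with $r(t_3-t_2)\ll 1$, I obtain $E\big[X_{3r}^{(t_2,t_3]}(t\wedge T_{(3)})\mid\mathcal{F}_{t_2}\big]\leq (e^3 Nr/s)e^{s(t-t_2)}$ for sufficiently large $N$, which is the first claim.

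For the probability bound I would set $t=t_3$ and substitute the value of $s(t_3-t_2)$ from (\ref{*3.1}) in the recombination dominating case and from (\ref{*3.2}) in the mutation dominating case, giving $E\big[X_{3r}^{(t_2,t_3]}(t_3\wedge T_{(3)})\mid\mathcal{F}_{t_2}\big]\leq e^{3+(C_3-C_2)}N/\ln(Nr)$ and $\leq e^{3+(C_3-C_2)}r/\mu^2$ respectively. The definition of $C_3$ in (\ref{C3}) is chosen precisely so that $e^{3+(C_3-C_2)}=\delta^2/K_{2r}^+$ in the recombination dominating case and $e^{3+(C_3-C_2)}=\delta^2/K_{2m}^+$ in the mutation dominating case, so Markov's inequality then gives $P(A_{21}^c\mid\mathcal{F}_{t_2})\leq\epsilon$ in each case, matching the definitions of $A_{21}$ in (\ref{A21r}) and (\ref{A21m}).

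This argument is essentially parallel to Lemma~\ref{PA20}, so there is no serious obstacle. The only points requiring care are the bookkeeping of the extra $r\mathbf 1_{(t_2,t_3]}$ contribution appearing in $G_{3r}^{(t_2,t_3]}$ relative to $G_3$, which is absorbed using $r(t_3-t_2)\ll 1$ from (\ref{r(t3-t2)}) in both cases, and keeping track of which version of $A_{21}$ and which value of $C_3$ is in force in which regime — this is exactly why the case split in the definition of $C_3$ is needed.
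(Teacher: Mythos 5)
Your proposal is correct and follows essentially the same route as the paper, which itself presents Lemma \ref{PA21} as a near-verbatim repetition of Lemma \ref{PA20} with $M_3^{(t_2,t_3]}$ replaced by $R_3^{(t_2,t_3]}$, the bound (\ref{R3<}), and part 4 of Lemma \ref{Gt2-3} in place of part 3. All the details you flag — absorbing the extra $r\mathbf{1}_{(t_2,t_3]}$ term via $r(t_3-t_2)\ll 1$, and the identity $e^{3+(C_3-C_2)}=\delta^2/K_{2r}^+$ (resp.\ $\delta^2/K_{2m}^+$) built into the definition of $C_3$ — match the paper's argument exactly.
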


	\begin{proof}
	The proof is almost exactly the same as that of Lemma \ref{PA20}. Recall from Proposition \ref{ZMar} that the process $\big(Z_{3r}^{(t_2,t_{3,r}]}(t\wedge T_{(3)}),t\geq t_2\big)$ is a mean-zero martingale. By (\ref{Zr}), for all $t\geq t_2$, we have
		$$
		E\Big[e^{-\int_{t_2}^{t\wedge T_{(3)}}G_{3r}^{(t_2,t_3]}(v)dv}X_{3r}^{(t_2,t_3]}(t\wedge T_{(3)})\Big|\mathcal{F}_{t_2}\Big]
		=E\bigg[\int_{t_2}^{t\wedge T_{(3)}}R_3^{(t_2,t_3]}(u)e^{-\int_{t_2}^uG_{3r}^{(t_2,t_3]}(v)dv}du\bigg|\mathcal{F}_{t_2}\bigg].
		$$
	From (\ref{R3<}), we have that $R_3^{(t_2,t_3]}(u)\leq Nr$ for all $u\geq t_2$. Using the same reason as in (\ref{*3.-1}), for every $t\in[t_2,t_{3,r}]$,
		$$
		E\bigg[\int_{t_2}^{t\wedge T_{(3)}}R_3^{(t_2,t_3]}(u)e^{-\int_{t_2}^uG_{3r}^{(t_2,t_3]}(v)dv}du\bigg|\mathcal{F}_{t_2}\bigg]
\leq e^{1+r(t_3-t_2)}\cdot \frac{Nr}{s}. 
		$$
	Also, by Lemma \ref{Gt2-3} part 4, we have that for all $t\in[t_2,t_3]$,
		\begin{align*}
		E\Big[e^{-\int_{t_2}^{t\wedge T_{(3)}}G_{3r}^{(t_2,t_3]}(v)dv}X_{3r}^{(t_2,t_3]}(t\wedge T_{(3)})\Big|\mathcal{F}_{t_2}\Big]
		&\geq e^{-\int_{t_2}^t \big(s(1+\delta e^{-s(1-3\delta)(v-t_2)})+r\big)dv}\cdot E\Big[X_{3r}^{(t_2,t_3]}(t\wedge T_{(3)})\Big|\mathcal{F}_{t_2}\Big]\\
		&\geq e^{-s(t-t_2)-\frac{\delta}{1-3\delta}-r(t_3-t_2)}\cdot E\Big[X_{3r}^{(t_2,t_3]}(t\wedge T_{(3)})\Big|\mathcal{F}_{t_2}\Big].
		\end{align*}
	Therefore, using that $\delta<\frac{1}{4}$, from (\ref{r(t3-t2)}), we have that if $t\in[t_2,t_3]$, then 
		$$
		E\Big[X_{3r}^{(t_2,t_3]}(t\wedge T_{(3)})\Big|\mathcal{F}_{t_2}\Big]\leq e^{1+2r(t_3-t_2)+\frac{\delta}{1-3\delta}}\cdot \frac{Nr}{s}\cdot e^{s(t-t_2)}
		\leq\frac{e^3Nr}{s}\cdot e^{s(t-t_2)}.
		$$
	It follows from this inequality, along with (\ref{*3.1}), (\ref{*3.2}) and the definition of $C_3$ in (\ref{C3}) that in the recombination dominating case, for sufficiently large $N$, 
		$$
		E\Big[X_{3r}^{(t_2,t_3]}(t_3\wedge T_{(3)})\Big|\mathcal{F}_{t_2}\Big]\leq \frac{e^3Nr}{s}\cdot e^{s(t_3-t_2)}
		=e^{3+(C_3-C_2)}\cdot\frac{N}{\ln(Nr)}
		=\bigg(\frac{\delta^2}{K_{2r}^+}\bigg)\cdot \frac{N}{\ln(Nr)},
		$$
	while in the mutation dominating case, for sufficiently large $N$, 
		$$
		E\Big[X_{3r}^{(t_2,t_3]}(t_3\wedge T_{(3)})\Big|\mathcal{F}_{t_2}\Big]\leq \frac{e^3Nr}{s}\cdot e^{s(t_3-t_2)}
		=\bigg(\frac{\delta^2}{K_{2m}^+}\bigg)\cdot \frac{r}{\mu^2},
		$$
	Thus, by Markov's inequality, in both cases, we have that $P(A_{21}^c|\mathcal{F}_{t_2})\leq \epsilon$. 
	\end{proof}

	Next, we will bound the probabilities of the events $A_{16}, A_{18}$ and $A_{22}$, but we will need an upper bound for the term $E\big[X_3^{[t_2]}(t\wedge T_{(3)})|\mathcal{F}_{t_2}\big]$ first.

	\begin{lemma}\label{EX3[t2]}
	For sufficiently large $N$, for all $t\geq t_2$, on the event $A_{(2)}$, we have
		$$
		E\Big[X_3^{[t_2]}(t\wedge T_{(3)})\Big|\mathcal{F}_{t_2}\Big]\leq
			\begin{cases}
		   	\displaystyle{\frac{eK_{2r}^+Nr\ln(Nr)}{s}\cdot e^{s(t-t_2)}} &\mbox{in the recombination dominating case}\\
		   	&\\
		   	\displaystyle{\frac{eK_{2m}^+N^2\mu^2}{s}\cdot e^{s(t-t_2)}} &\mbox{in the mutation dominating case}.
		   	\end{cases}
		$$
	\end{lemma}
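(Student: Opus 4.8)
The plan is to run the same martingale argument used for Lemma~\ref{EX3[t1]}, but now carried out for the stopped process at $T_{(3)}$ and exploiting the sharper control of $G_3$ available in phase~3. By Proposition~\ref{ZiMar} and the fact that a stopped martingale is a martingale, the process $\big(Z_3^{[t_2]}(t\wedge T_{(3)}),t\geq t_2\big)$ is a martingale, so for every $t\geq t_2$,
\[
E\Big[e^{-\int_{t_2}^{t\wedge T_{(3)}}G_3(v)dv}X_3^{[t_2]}(t\wedge T_{(3)})\,\Big|\,\mathcal{F}_{t_2}\Big]
=E\big[Z_3^{[t_2]}(t\wedge T_{(3)})\,\big|\,\mathcal{F}_{t_2}\big]
=Z_3^{[t_2]}(t_2)=X_3(t_2),
\]
where the last equality uses that $X_3^{[t_2]}(t_2)=X_3(t_2)$ directly from the definition of $X_i^{[a]}$.

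The next step is to bound the exponential weight from below by a deterministic quantity. Since $T_{(3)}=T_4\wedge T_5\wedge T_6\leq T_6$, every $v\in[t_2,t\wedge T_{(3)})$ satisfies $v<T_6$, so Lemma~\ref{Gt2-3}(3) gives $G_3(v)\leq s\big(1+\delta e^{-s(1-3\delta)(v-t_2)}\big)$. Integrating this bound,
\[
\int_{t_2}^{t\wedge T_{(3)}}G_3(v)dv\leq s\,(t\wedge T_{(3)}-t_2)+\frac{\delta}{1-3\delta}\leq s(t-t_2)+\frac{\delta}{1-3\delta},
\]
using $t\wedge T_{(3)}\leq t$ in the second inequality. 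Hence $e^{-\int_{t_2}^{t\wedge T_{(3)}}G_3(v)dv}\geq e^{-s(t-t_2)-\delta/(1-3\delta)}$ \emph{deterministically}, and combining this with the identity above yields
\[
E\big[X_3^{[t_2]}(t\wedge T_{(3)})\,\big|\,\mathcal{F}_{t_2}\big]\leq e^{s(t-t_2)+\delta/(1-3\delta)}X_3(t_2).
\]
Since $\delta<\tfrac14$ by (\ref{delta}), we have $\delta/(1-3\delta)<1$, so the prefactor is at most $e$, giving $E\big[X_3^{[t_2]}(t\wedge T_{(3)})\,\big|\,\mathcal{F}_{t_2}\big]\leq e\,e^{s(t-t_2)}X_3(t_2)$.

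Finally, on the event $A_{(2)}$ I would invoke Proposition~\ref{@t2}: in the recombination dominating case $X_3(t_2)\leq K_{2r}^+Nr\ln(Nr)/s$, and in the mutation dominating case $X_3(t_2)\leq K_{2m}^+N^2\mu^2/s$. Substituting these into the last bound produces exactly the two displayed estimates of the lemma. There is no substantial difficulty here; the only point requiring attention is that the exponential weight $e^{-\int_{t_2}^{t\wedge T_{(3)}}G_3}$ must be controlled by a deterministic constant so it can be factored out of the conditional expectation, which is precisely why the stopping time $T_6$ is built into $T_{(3)}$ and why the $G_3$-bound of Lemma~\ref{Gt2-3}(3) is integrated only up to $t\wedge T_{(3)}$.
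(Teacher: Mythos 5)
Your proposal is correct and follows essentially the same route as the paper: both use the martingale property of $Z_3^{[t_2]}(t\wedge T_{(3)})$ from Proposition \ref{ZiMar}, bound $\int_{t_2}^{t\wedge T_{(3)}}G_3(v)\,dv$ above by $s(t-t_2)+\delta/(1-3\delta)\leq s(t-t_2)+1$ via Lemma \ref{Gt2-3}(3) and $\delta<\tfrac14$, factor out the resulting deterministic weight, and finish with the upper bound on $X_3(t_2)$ from Proposition \ref{@t2} on $A_{(2)}$. No gaps.
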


	\begin{proof}
	From Proposition \ref{ZiMar}, we know that $\big(Z_3^{[t_2]}(t\wedge T_{(3)}),t\geq t_2\big)$ is a martingale. So, from (\ref{Zi}), Lemma \ref{Gt2-3} part 3, and the fact that $\delta <\frac{1}{4}$, for all $t\geq t_2$,
		\begin{align*}
		E\Big[Z_3^{[t_2]}(t\wedge T_{(3)})\Big|\mathcal{F}_{t_2}\Big]
		&=E\bigg[X_3^{[t_2]}(t\wedge T_{(3)})e^{-\int_{t_2}^{t\wedge T_{(3)}}G_3(v)dv}\bigg|\mathcal{F}_{t_2}\bigg]\\
		&\geq E\bigg[X_3^{[t_2]}(t\wedge T_{(3)})e^{-\int_{t_2}^{t\wedge T_{(3)}}s\big(1+\delta e^{-s(1-3\delta)(v-t_2)}\big)dv}\bigg|\mathcal{F}_{t_2}\bigg]\\
		&\geq e^{-\int_{t_2}^t s\big(1+\delta e^{-s(1-3\delta)(v-t_2)}\big)dv}E\Big[X_3^{[t_2]}(t\wedge T_{(3)})\Big|\mathcal{F}_{t_2}\Big]\\
		&\geq e^{-s(t-t_2)-\frac{\delta}{1-3\delta}}E\Big[X_3^{[t_2]}(t\wedge T_{(3)})\Big|\mathcal{F}_{t_2}\Big]\\
		&\geq e^{-s(t-t_2)-1}E\Big[X_3^{[t_2]}(t\wedge T_{(3)})\Big|\mathcal{F}_{t_2}\Big].
		\end{align*}
	Therefore, for all $t\geq t_2$,
		$$
		E\Big[X_3^{[t_2]}(t\wedge T_{(3)})\Big|\mathcal{F}_{t_2}\Big]
		\leq e^{s(t-t_2)+1} E\Big[Z_3^{[t_2]}(t\wedge T_{(3)})\Big|\mathcal{F}_{t_2}\Big]=e^{s(t-t_2)+1} Z_3^{[t_2]}(t_2)=e^{s(t-t_2)+1} X_3(t_2),
		$$
	and from the upper bound  of $X_3(t_2)$ on the event $A_{(2)}$ in Proposition \ref{@t2}, the result follows.
\end{proof}

	\begin{lemma}\label{PA18}
	For sufficiently large $N$, on the event $A_{(2)}$, we have $P(A_{18}|\mathcal{F}_{t_2})\geq 1-\delta^2$.
	\end{lemma}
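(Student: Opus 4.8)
The plan is to control $s\int_{t_2}^{t_3\wedge T_{(3)}}\tilde X_3(v)\,dv$ in conditional expectation given $\mathcal F_{t_2}$ and then apply Markov's inequality. Since, by (\ref{A18}) and (\ref{T5}), the complement $A_{18}^c$ is exactly the event $\{s\int_{t_2}^{t_3\wedge T_{(3)}}\tilde X_3(v)\,dv\ge 1\}$ (equivalently, $T_5=T_{(3)}\le t_3$), a bound of the form $E[s\int_{t_2}^{t_3\wedge T_{(3)}}\tilde X_3(v)\,dv\mid\mathcal F_{t_2}]\le c\,\delta^2$ with $c<1$ on $A_{(2)}$ finishes the proof. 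The whole point is that the constant $C_3$ in (\ref{C3}) has been chosen precisely so that the exponential factor $e^{s(t_3-t_2)}$ cancels the growth of $E[X_3(t\wedge T_{(3)})\mid\mathcal F_{t_2}]$, leaving a bound that is a small \emph{constant} multiple of $\delta^2$.

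First I would decompose $X_3(t\wedge T_{(3)})=X_3^{[t_2]}(t\wedge T_{(3)})+X_{3m}^{(t_2,t_3]}(t\wedge T_{(3)})+X_{3r}^{(t_2,t_3]}(t\wedge T_{(3)})$ for $t\in[t_2,t_3]$ and invoke Lemma \ref{EX3[t2]}, Lemma \ref{PA20}, and Lemma \ref{PA21}, which give, on the event $A_{(2)}$,
$$
E\big[X_3(t\wedge T_{(3)})\,\big|\,\mathcal F_{t_2}\big]\le\Big(eK_{2r}^+\tfrac{Nr\ln(Nr)}{s}+e^3\tfrac{N\mu}{s}+e^3\tfrac{Nr}{s}\Big)e^{s(t-t_2)}
$$
in the recombination dominating case, and the analogue with $Nr\ln(Nr)$ replaced by $N^2\mu^2$ and $K_{2r}^+$ by $K_{2m}^+$ in the mutation dominating case. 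Using (\ref{Con2.1}) together with $1\ll N\mu$ (so $\mu\ll N\mu^2\ll r\ln(Nr)$ and $r\ll r\ln(Nr)$ since $Nr\gg1$) in the first case, and (\ref{Con2.2}) together with $1\ll N\mu$ (so $\mu\ll N\mu^2$ and $r\ll N\mu^2$) in the second, the last two terms are negligible relative to the first for large $N$; hence the right-hand side is at most $2eK_{2r}^+\frac{Nr\ln(Nr)}{s}e^{s(t-t_2)}$ (resp. $2eK_{2m}^+\frac{N^2\mu^2}{s}e^{s(t-t_2)}$).

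Next, by Tonelli and the inequality $\tilde X_3(v)\mathbf 1_{\{v\le T_{(3)}\}}\le\tilde X_3(v\wedge T_{(3)})$,
$$
E\bigg[s\int_{t_2}^{t_3\wedge T_{(3)}}\tilde X_3(v)\,dv\,\bigg|\,\mathcal F_{t_2}\bigg]\le s\int_{t_2}^{t_3}\frac1N\,E\big[X_3(v\wedge T_{(3)})\,\big|\,\mathcal F_{t_2}\big]\,dv\le \frac{2eK_{2r}^+r\ln(Nr)}{s}\,e^{s(t_3-t_2)}.
$$
Now (\ref{*3.1}) gives $e^{s(t_3-t_2)}=\frac{s}{r\ln(Nr)}e^{C_3-C_2}$, while (\ref{C3}) gives $e^{C_3-C_2}=e^{-3}\delta^2/K_{2r}^+$; multiplying these in, the bound collapses to $2e^{-2}\delta^2<\delta^2$. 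The mutation dominating case is handled identically, using (\ref{*3.2}) and the mutation-case line of (\ref{C3}), again giving $2e^{-2}\delta^2$. Since $A_{18}^c=\{s\int_{t_2}^{t_3\wedge T_{(3)}}\tilde X_3(v)\,dv\ge1\}$, Markov's inequality yields $P(A_{18}^c\mid\mathcal F_{t_2})\le 2e^{-2}\delta^2<\delta^2$ on $A_{(2)}$, which is the claim.

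The only genuinely delicate point — the part I would be most careful with — is the constant bookkeeping: one must verify that the three separate constants $eK_{2r}^+$, $e^3$, $e^3$ (resp. their mutation-case analogues) can be absorbed into the single constant $2eK_{2r}^+$ using exactly the right asymptotic relations among $\mu$, $r$ and $N$, and that the definition of $C_3$ in (\ref{C3}) is designed so that the final numerical constant is strictly below $1$ (this is why the explicit $-3$ appears there). No new martingale or variance estimates are needed beyond those already established in Lemmas \ref{EX3[t2]}, \ref{PA20}, and \ref{PA21}.
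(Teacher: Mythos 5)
Your proposal is correct and follows essentially the same route as the paper: the same decomposition of $X_3(t\wedge T_{(3)})$ into $X_3^{[t_2]}+X_{3m}^{(t_2,t_3]}+X_{3r}^{(t_2,t_3]}$, the same appeal to Lemmas \ref{EX3[t2]}, \ref{PA20}, and \ref{PA21}, the same cancellation of $e^{s(t_3-t_2)}$ via the choice of $C_3$ in (\ref{C3}), and the same conditional Markov inequality at the end. The only difference is cosmetic bookkeeping of the absorbed lower-order terms (your final bound $2e^{-2}\delta^2$ versus the paper's $\delta^2$), both of which suffice.
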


	\begin{proof}
	In the recombination dominating case, from Lemmas \ref{PA20}, \ref{PA21} and \ref{EX3[t2]}, we have
		\begin{align}
		&E\bigg[s\int_{t_2}^{t_3\wedge T_{(3)}}\tilde X_3(v)dv\hspace{0.1 cm}\bigg|\hspace{0.1 cm}\mathcal{F}_{t_2}\bigg]\nonumber\\
		&\hspace{0.3 cm}\leq \int_{t_2}^{t_3}s\bigg(E\Big[\tilde X_3^{[t_2]}(v\wedge T_{(3)})\Big|\mathcal{F}_{t_2}\Big]+E\Big[\tilde X_{3m}^{(t_2,t_3]}(v\wedge T_{(3)})\Big|\mathcal{F}_{t_2}\Big]+E\Big[\tilde X_{3r}^{(t_2,t_3]}(v\wedge T_{(3)})\Big|\mathcal{F}_{t_2}\Big]\bigg)dv\nonumber\\
		&\hspace{0.3 cm}\leq \int_{t_2}^{t_3} \Big(eK_{2r}^+r\ln(Nr)\cdot e^{s(v-t_2)}+e^3\cdot \mu\cdot e^{s(v-t_2)}+e^3\cdot r\cdot e^{s(v-t_2)}\Big)dv.\label{*3.3}
		\end{align}
	Because $\mu \ll r$ and $1\ll Nr$, along with the definition of $C_3$ in (\ref{C3}), for sufficiently large $N$, on the event $A_{(2)}$, we have
		\begin{align*}
		E\bigg[s\int_{t_2}^{t_3\wedge T_{(3)}}\tilde X_3(v)dv\hspace{0.1 cm}\bigg|\hspace{0.1 cm}\mathcal{F}_{t_2}\bigg]
		&\leq \int_{t_2}^{t_3} e^3K_{2r}^+r\ln(Nr)\cdot e^{s(v-t_2)}dv\\
		&\leq \frac{e^3K_{2r}^+r\ln(Nr)}{s}\cdot e^{s(t_3-t_2)}\\
		&=e^{3+(C_3-C_2)}K_{2r}^+\\
		&=\delta^2.
		\end{align*}
	Thus, by Markov's inequality, we have $P(A_{18}^c)\leq \delta^2$.
	
	For the mutation dominating case, we can follow the same argument. Note that in this case, instead of getting (\ref{*3.3}), Lemma \ref{EX3[t2]} gives that
		$$
		E\bigg[s\int_{t_2}^{t_3\wedge T_{(3)}}\tilde X_3(v)dv\hspace{0.1 cm}\bigg|\hspace{0.1 cm}\mathcal{F}_{t_2}\bigg]
		\leq \int_{t_2}^{t_3} \Big(eK_{2m}^+N\mu^2\cdot e^{s(v-t_2)}+e^3\cdot \mu\cdot e^{s(v-t_2)}+e^3\cdot r\cdot e^{s(v-t_2)}\Big)dv.
		$$
	Because $1\ll N\mu$ and $r\ll N\mu^2$, for sufficiently large $N$, on the event $A_{(2)}$, we have
		$$
		E\bigg[s\int_{t_2}^{t_3\wedge T_{(3)}}\tilde X_3(v)dv\hspace{0.1 cm}\bigg|\hspace{0.1 cm}\mathcal{F}_{t_2}\bigg]
		\leq \int_{t_2}^{t_3} e^3K_{2m}^+N\mu^2\cdot e^{s(v-t_2)}dv,
		$$
	and by following the previous argument, we prove the result.
	\end{proof}

	\begin{lemma}\label{PA22}
	For sufficiently large $N$, on the event $A_{(2)}$, we have $P(A_{22}|\mathcal{F}_{t_2})\geq 1-\epsilon$.	
	\end{lemma}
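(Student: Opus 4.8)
The plan is to observe that, by Proposition \ref{ZiMar}, the process $(Z_3^{[t_2]}(t\wedge T_{(3)}),t\geq t_2)$ is a martingale with $Z_3^{[t_2]}(t_2)=X_3(t_2)$, so that $(Z_3^{[t_2]}(t\wedge T_{(3)})-X_3(t_2),t\geq t_2)$ is, conditionally on $\mathcal{F}_{t_2}$, a mean-zero martingale; the event $A_{22}$ is then controlled by the conditional $L^2$-maximal inequality, for which we need a good bound on the terminal conditional variance. By the variance formula in Proposition \ref{ZiMar} applied with $a=t_2$ and the stopping time $T_{(3)}$ (note $T_{(3)}\geq t_2$),
\[
\textup{Var}\big(Z_3^{[t_2]}(t_3\wedge T_{(3)})\,\big|\,\mathcal{F}_{t_2}\big)
=E\bigg[\int_{t_2}^{t_3\wedge T_{(3)}} e^{-2\int_{t_2}^u G_3(v)dv}\big(B_3^{[t_2]}(u)+D_3^{[t_2]}(u)\big)X_3^{[t_2]}(u)\,du\,\bigg|\,\mathcal{F}_{t_2}\bigg].
\]
From (\ref{B3a}) and (\ref{D3a}), $B_3^{[t_2]}(u)+D_3^{[t_2]}(u)\leq 2$. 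For $u<T_{(3)}\leq T_5\wedge T_6$, Lemma \ref{Gt2-3}(3) gives $G_3(v)\geq s(1-\tilde X_3(v))-r$ for $v\in[t_2,u]$, while the definition (\ref{T5}) of $T_5$ gives $s\int_{t_2}^{u}\tilde X_3(v)\,dv\leq 1$; hence $e^{-2\int_{t_2}^u G_3(v)dv}\leq e^{2+2r(t_3-t_2)}e^{-2s(u-t_2)}$ on the range of integration.

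Next I would insert the bound on $E[X_3^{[t_2]}(u\wedge T_{(3)})\,|\,\mathcal{F}_{t_2}]$ from Lemma \ref{EX3[t2]} (valid on $A_{(2)}$), extend the range of integration from $[t_2,t_3\wedge T_{(3)}]$ to $[t_2,t_3]$ (legitimate since the integrand is nonnegative), apply Fubini, and integrate. In the recombination dominating case this yields, on $A_{(2)}$,
\[
\textup{Var}\big(Z_3^{[t_2]}(t_3\wedge T_{(3)})\,\big|\,\mathcal{F}_{t_2}\big)
\leq 2e^{2+2r(t_3-t_2)}\int_{t_2}^{t_3}e^{-2s(u-t_2)}\cdot\frac{eK_{2r}^+Nr\ln(Nr)}{s}e^{s(u-t_2)}\,du
\leq \frac{2e^{3+2r(t_3-t_2)}K_{2r}^+Nr\ln(Nr)}{s^2},
\]
and since $r(t_3-t_2)\ll 1$ by (\ref{r(t3-t2)}), for sufficiently large $N$ the right side is at most $e^4K_{2r}^+Nr\ln(Nr)/s^2$. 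The mutation dominating case is handled identically, replacing the bound from Lemma \ref{EX3[t2]} by its mutation-case form and using the mutation-case version of (\ref{r(t3-t2)}) established in the proof of Lemma \ref{PA19}, giving $\textup{Var}(\,\cdot\,|\,\mathcal{F}_{t_2})\leq e^4 K_{2m}^+N^2\mu^2/s^2$. Finally I would apply the conditional $L^2$-maximal (Doob) inequality to the mean-zero martingale $Z_3^{[t_2]}(t\wedge T_{(3)})-X_3(t_2)$ over $t\in[t_2,t_3]$ at the level appearing in the definition of $A_{22}$; the variance bounds above then give $P(A_{22}^c\,|\,\mathcal{F}_{t_2})\leq\epsilon$ on $A_{(2)}$, with the factor $2e^4$ (resp.\ $\sqrt{2e^4K_{2m}^+}$) in the threshold leaving room for the maximal-inequality constant and the $e^{2r(t_3-t_2)}\to 1$ slack.

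The main obstacle is the treatment of the exponential weight $e^{-2\int_{t_2}^u G_3(v)dv}$: the crude lower bound $G_3(v)\geq -r$ would inflate the variance by a factor $e^{2s(u-t_2)}$, which is of the same exponential order as the quantity $X_3$ we are trying to pin down, so the estimate would be useless. The argument succeeds only because the martingale is stopped at $T_{(3)}\leq T_5$, which forces $s\int_{t_2}^u\tilde X_3(v)\,dv\leq 1$ — this is exactly why $T_5$ was introduced — and because Lemma \ref{EX3[t2]} already controls $E[X_3^{[t_2]}]$ at the correct order on $A_{(2)}$; combining these two facts is the delicate part, and everything else is routine integration.
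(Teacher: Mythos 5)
Your proposal is correct and follows essentially the same route as the paper: the conditional variance formula of Proposition \ref{ZiMar}, the bound $B_3^{[t_2]}+D_3^{[t_2]}\leq 2$, the lower bound $G_3(v)\geq s(1-\tilde X_3(v))-r$ from Lemma \ref{Gt2-3} combined with the stopping at $T_5$ to tame the exponential weight, the insertion of Lemma \ref{EX3[t2]}, and the $L^2$-maximal inequality, with the mutation case handled by the same substitutions. Your identification of the role of $T_5$ as the non-routine step matches exactly what the paper's estimate relies on.
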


	\begin{proof}
	We first consider the recombination dominating case. From Proposition \ref{ZiMar},  part 3 of Lemma \ref{Gt2-3}, Lemma \ref{EX3[t2]}, and (\ref{r(t3-t2)}), for sufficiently large $N$, on the event $A_{(2)}$, we have that
		\begin{align}
\textup{Var}\Big(Z_3^{[t_2]}(t_3\wedge T_{(3)})\hspace{0.1 cm}\bigg|\hspace{0.1 cm}\mathcal{F}_{t_2}\Big)
		&=E\bigg[\int_{t_2}^{t_3\wedge T_{(3)}} e^{-2\int_{t_2}^u G_3(v)dv}\Big(B_3^{[t_2]}(u)+D_3^{[t_2]}(u)\Big)X_3^{[t_2]}(u) du\hspace{0.1 cm}\bigg|\hspace{0.1 cm}\mathcal{F}_{t_2}\bigg]\nonumber\\
		&\leq E\bigg[\int_{t_2}^{t_3\wedge T_{(3)}}e^{-2\int_{t_2}^u\big(s(1-\tilde X_3(v))-r\big)dv}\cdot 2X_3^{[t_2]}(u\wedge T_{(3)}) du\hspace{0.1 cm}\bigg|\hspace{0.1 cm}\mathcal{F}_{t_2}\bigg]\nonumber\\
		&\leq E\bigg[\int_{t_2}^{t_3\wedge T_{(3)}}e^{-2s(u-t_2)+2s\int_{t_2}^u\tilde X_3(v)dv+2r(t_3-t_2)}\cdot 2X_3^{[t_2]}(u\wedge T_{(3)}) du\hspace{0.1 cm}\bigg|\hspace{0.1 cm}\mathcal{F}_{t_2}\bigg]\nonumber\\
		&\leq \int_{t_2}^{t_3}e^{-2s(u-t_2)+2+2r(t_3-t_2)}\cdot 2E\Big[X_3^{[t_2]}(u\wedge T_{(3)})\Big|\mathcal{F}_{t_2}\Big] du\nonumber\\
		&\leq 2e^{2+2r(t_3-t_2)}\cdot \int_{t_2}^{t_3}e^{-2s(u-t_2)}\cdot \frac{eK_{2r}^+Nr\ln(Nr)}{s}\cdot e^{s(u-t_2)} du\label{*3.4}\\
		&=\frac{2e^4K_{2r}^+Nr\ln(Nr)}{s}\int_{t_2}^{t_3}e^{-s(u-t_2)}du\nonumber\\
		&\leq \frac{2e^4K_{2r}^+Nr\ln(Nr)}{s^2}.\nonumber
		\end{align}
	It follows from this inequality and the $L^2$ maximal inequality 		that
		$$
		P(A_{22}^c)=P\bigg(\sup_{t\in[t_2,t_3]}\Big|Z_3^{[t_2]}(t\wedge T_{(3)})-X_3(t_2)\Big|\geq\sqrt{\frac{2e^4K_{2r}^+}{\epsilon}\cdot\frac{Nr\ln(Nr)}{s^2}}\bigg)\leq\epsilon.
		$$
	
	For the mutation dominating case, the argument is exactly the same except at (\ref{*3.4}), the upper bound from Lemma \ref{EX3[t2]} gives
		\begin{align*}
\textup{Var}\Big(Z_3^{[t_2]}(t_3\wedge T_{(3)})\hspace{0.1 cm}\Big|\hspace{0.1 cm}\mathcal{F}_{t_2}\Big)
		&\leq 2e^{2+2r(t_3-t_2)}\cdot \int_{t_2}^{t_3}e^{-2s(u-t_2)}\cdot \frac{eK_{2m}^+N^2\mu^2}{s}\cdot e^{s(u-t_2)} du\\
		&\leq \frac{2e^4K_{2m}^+N^2\mu^2}{s^2},
		\end{align*}
	and the result follows by applying the $L^2$ maximal inequality.
	\end{proof}

	\begin{lemma}\label{PA16}
	For sufficiently large $N$, on the event $A_{(2)}$, we have $P(A_{16}|\mathcal{F}_{t_2})\geq 1-\delta$.
	\end{lemma}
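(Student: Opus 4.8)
The plan is to translate $A_{16}^{c}$ into the statement that $X_{3}$ is atypically large at the (random) time $t_{3}\wedge T_{(3)}$, and then rule this out by the conditional expectation bounds already in hand. Since $T_{(3)}=T_{4}\wedge T_{5}\wedge T_{6}\leq T_{4}$ for every outcome, we have $t_{3}\wedge T_{(3)}\leq T_{4}$ always, so $A_{16}$ can fail only when $T_{4}=t_{3}\wedge T_{(3)}$, i.e. when $T_{4}\leq t_{3}$, $T_{4}\leq T_{5}$ and $T_{4}\leq T_{6}$; thus $A_{16}^{c}=\{T_{4}=T_{(3)}\leq t_{3}\}$. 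On this event $T_{4}$ is finite and $X_{1}(T_{4})+X_{2}(T_{4})\leq(1-3\delta)N$; since $T_{4}\leq T_{6}$ and $X_{0}$ has unit jumps, the definition of $T_{6}$ in (\ref{T6}) gives $X_{0}(T_{4})\leq\delta N+1$. Because $X_{0}+X_{1}+X_{2}+X_{3}=N$, it follows that $X_{3}(T_{4})\geq 3\delta N-X_{0}(T_{4})\geq 2\delta N-1$, and since $t_{3}\wedge T_{(3)}=T_{(3)}=T_{4}$ on $A_{16}^{c}$ we obtain
$$
A_{16}^{c}\subseteq\big\{X_{3}(t_{3}\wedge T_{(3)})\geq 2\delta N-1\big\}.
$$

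Next I would bound $E[X_{3}(t_{3}\wedge T_{(3)})\mid\mathcal F_{t_{2}}]$ on $A_{(2)}$ using the decomposition $X_{3}(t)=X_{3}^{[t_{2}]}(t)+X_{3m}^{(t_{2},t_{3}]}(t)+X_{3r}^{(t_{2},t_{3}]}(t)$ valid for $t\in[t_{2},t_{3}]$. Lemma \ref{EX3[t2]} evaluated at $t=t_{3}$, together with the identity $e^{s(t_{3}-t_{2})}=\tfrac{s}{r\ln(Nr)}e^{C_{3}-C_{2}}$ from (\ref{*3.1}) in the recombination dominating case (resp. $e^{s(t_{3}-t_{2})}=\tfrac{s}{N\mu^{2}}e^{C_{3}-C_{2}}$ from (\ref{*3.2}) in the mutation dominating case) and the choice of $C_{3}$ in (\ref{C3}), gives $E[X_{3}^{[t_{2}]}(t_{3}\wedge T_{(3)})\mid\mathcal F_{t_{2}}]\leq e^{-2}\delta^{2}N$. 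Lemmas \ref{PA20} and \ref{PA21} evaluated at $t=t_{3}$ bound the remaining two terms, in the recombination dominating case, by $(\delta^{2}/K_{2r}^{+})\cdot N\mu/(r\ln(Nr))$ and $(\delta^{2}/K_{2r}^{+})\cdot N/\ln(Nr)$, and in the mutation dominating case by $(\delta^{2}/K_{2m}^{+})/\mu$ and $(\delta^{2}/K_{2m}^{+})\,r/\mu^{2}$; in each case the relations $r\ln(Nr)\gg\mu$, $\ln(Nr)\to\infty$, $1\ll N\mu$ and $r\ll N\mu^{2}$ show these two terms are $o(N)$. Hence, on $A_{(2)}$, $E[X_{3}(t_{3}\wedge T_{(3)})\mid\mathcal F_{t_{2}}]\leq e^{-2}\delta^{2}N+o(N)$.

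The conclusion then follows from the conditional Markov inequality: on $A_{(2)}$, for $N$ sufficiently large,
$$
P(A_{16}^{c}\mid\mathcal F_{t_{2}})\leq P\big(X_{3}(t_{3}\wedge T_{(3)})\geq 2\delta N-1\bigm|\mathcal F_{t_{2}}\big)\leq\frac{e^{-2}\delta^{2}N+o(N)}{2\delta N-1}<\delta,
$$
because $e^{-2}\delta^{2}/(2\delta)=(e^{-2}/2)\,\delta<\delta$. The computation is identical in both cases, only the scaling of the dominant contributions and the constants $K_{2r}^{+},K_{2m}^{+}$ change.

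I expect the main obstacle to be the constant bookkeeping rather than the probability estimate. One has to be careful that $A_{16}^{c}$ really forces $X_{3}(t_{3}\wedge T_{(3)})$ above a level comparable to $2\delta N$ — in particular handling the edge case $T_{4}=T_{6}$ and the $O(1)$ overshoot of the hitting levels — and, more importantly, one must check that the specific definition of $C_{3}$ in (\ref{C3}) indeed produces the gain factor $e^{-2}\delta^{2}N$ in the bound for $E[X_{3}^{[t_{2}]}(t_{3}\wedge T_{(3)})\mid\mathcal F_{t_{2}}]$, since it is precisely the inequality $e^{-2}\delta^{2}/(2\delta)<\delta$ that makes the claimed bound $1-\delta$ (and not a weaker $1-O(\delta)$) come out.
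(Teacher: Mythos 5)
Your proposal is correct and follows essentially the same route as the paper: both reduce $A_{16}^c$ to the event that $X_3(t_3\wedge T_{(3)})$ is of order $\delta N$ (using the $T_6$ constraint to control $X_0$), then apply Markov's inequality to the decomposition $X_3 = X_3^{[t_2]}+X_{3m}^{(t_2,t_3]}+X_{3r}^{(t_2,t_3]}$ with the bounds from Lemmas \ref{EX3[t2]}, \ref{PA20} and \ref{PA21}, with the choice of $C_3$ delivering the factor $e^{-2}\delta^2 N$. The only cosmetic difference is that the paper splits $X_0+X_3\geq 3\delta N$ by a union bound at level $\tfrac{3}{2}\delta N$ each, while you subtract the $X_0$ bound directly to get $X_3\geq 2\delta N-1$; both yield a final bound strictly below $\delta$.
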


	\begin{proof}
	First, by the definition of $T_6$ in (\ref{T6}), we have that
		$$
		X_0(t_3\wedge T_{(3)})\leq \delta N+1<\frac{3\delta N}{2}.
		$$
	It follows from this inequality, Markov's inequality, Lemma \ref{PA20}, and Lemma \ref{PA21} that for sufficiently large $N$, on the event $A_{(2)}$, we have
		\begin{align}
		&P(T_4=t_3\wedge T_{(3)}|\mathcal{F}_{t_2})\nonumber\\
		&\hspace{0.2cm}=P\big(X_1(t_3\wedge T_{(3)})+X_2(t_3\wedge T_{(3)})\leq (1-3\delta)N\hspace{0.1 cm}\big|\hspace{0.1 cm}\mathcal{F}_{t_2}\big)\nonumber\\
		&\hspace{0.2cm}=P\big(X_0(t_3\wedge T_{(3)})+X_3(t_3\wedge T_{(3)})\geq 3\delta N\hspace{0.1 cm}\big|\hspace{0.1 cm}\mathcal{F}_{t_2}\big)\nonumber\\
		&\hspace{0.2cm}\leq P\bigg(X_0(t_3\wedge T_{(3)})\geq \frac{3\delta N}{2}\hspace{0.1 cm}\bigg|\hspace{0.1 cm}\mathcal{F}_{t_2}\bigg)+P\bigg(X_3(t_3\wedge T_{(3)})\geq \frac{3\delta N}{2}\hspace{0.1 cm}\bigg|\hspace{0.1 cm}\mathcal{F}_{t_2}\bigg)\nonumber\\
		&\hspace{0.2cm}\leq 0+E\Big[X_3(t_3\wedge T_{(3)})\hspace{0.1 cm}\Big|\hspace{0.1 cm}\mathcal{F}_{t_2}\Big]\cdot\frac{2}{3\delta N}\nonumber\\
		&\hspace{0.2cm}=\bigg(E\Big[X_3^{[t_2]}(t_3\wedge T_{(3)})\hspace{0.1 cm}\Big|\hspace{0.1 cm}\mathcal{F}_{t_2}\Big]+E\Big[X_{3m}^{(t_2,t_3]}(t_3\wedge T_{(3)})\hspace{0.1 cm}\Big|\hspace{0.1 cm}\mathcal{F}_{t_2}\Big]+E\Big[X_{3r}^{(t_2,t_3]}(t_3\wedge T_{(3)})\hspace{0.1 cm}\Big|\hspace{0.1 cm}\mathcal{F}_{t_2}\Big]\bigg)\cdot\frac{2}{3\delta N}\nonumber\\
		&\hspace{0.2cm} \leq \bigg(E\Big[X_3^{[t_2]}(t_3\wedge T_{(3)})\hspace{0.1 cm}\Big|\hspace{0.1 cm}\mathcal{F}_{t_2}\Big]+\frac{e^3N\mu}{s}\cdot e^{s(t_3-t_2)}+\frac{e^3Nr}{s}\cdot e^{s(t_3-t_2)}\bigg)\cdot\frac{2}{3\delta N}\label{*3.6}
		\end{align}
	At this point, the calculation splits between the two cases. In the recombination dominating case, by (\ref{*3.6}), (\ref{*3.1}), Lemma \ref{EX3[t2]}, and the definition of $C_3$ in (\ref{C3}), we have
		\begin{align*}
		&P(T_4=t_3\wedge T_{(3)}|\mathcal{F}_{t_2})\\
		&\hspace{0.2cm} \leq \bigg(\frac{eK_{2r}^+Nr\ln(Nr)}{s}\cdot e^{s(t_3-t_2)}+\frac{e^3N\mu}{s}\cdot e^{s(t_3-t_2)}+\frac{e^3Nr}{s}\cdot e^{s(t_3-t_2)}\bigg)\cdot\frac{2}{3\delta N}\\
		&\hspace{0.2cm}= \bigg(eK_{2r}^+e^{C_3-C_2}N+\frac{e^{3+(C_3-C_2)}N\mu}{r\ln(Nr)}+\frac{e^{3+(C_3-C_2)}N}{\ln(Nr)}\bigg)\cdot\frac{2}{3\delta N}\\
		&\hspace{0.2cm}= \frac{2e^{-2}\delta}{3} +\frac{2e^{3+(C_3-C_2)}}{3\delta}\cdot \bigg(\frac{\mu}{r\ln(Nr)}+\frac{1}{\ln(Nr)}\bigg).\nonumber
		\end{align*}
	Because $1 \ll Nr$ and $\mu \ll N\mu^2 \ll r\ln(Nr)$ , when $N$ is sufficiently large, on the event $A_{(2)}$, we have that $P(A_{16}^c|\mathcal{F}_{t_2})=P(T_4=t_3\wedge T_{(3)}|\mathcal{F}_{t_2})\leq \delta$. 
	
	The proof for the mutation dominating case is almost the same, except at (\ref{*3.6}), where Lemma \ref{EX3[t2]} gives 
		\begin{align*}
		P(T_4=t_3\wedge T_{(3)}|\mathcal{F}_{t_2})
		&\leq \bigg(\frac{eK_{2m}^+N^2\mu^2}{s}\cdot e^{s(t_3-t_2)}+\frac{e^3N\mu}{s}\cdot e^{s(t_3-t_2)}+\frac{e^3Nr}{s}\cdot e^{s(t_3-t_2)}\bigg)\cdot\frac{2}{3\delta N}\\
		&= \frac{2e^{-2}\delta}{3} +\frac{2e^{3+(C_3-C_2)}}{3\delta}\cdot \bigg(\frac{1}{N\mu}+\frac{r}{N\mu^2}\bigg).
		\end{align*}
	The result follows from the facts that $1\ll N\mu$ and $r\ll N\mu^2$.
	\end{proof}
	
	We have just finished showing that each of the events $A_{16}$ to $A_{21}$ conditioned on $\mathcal{F}_{t_2}$ occurs with probability close to 1 on the event $A_{(2)}$. In the next step, before we eventually prove Proposition \ref{@t3}, we are going to show that on the event $A_{(3)}$, we have that $T_{(3)}>t_3$.

	\begin{lemma}\label{T(3)>t3}
	For sufficiently large $N$, on the event $A_{(3)}$, we have that $T_{(3)}>t_3$.
	\end{lemma}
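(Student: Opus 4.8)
The plan is to argue by contradiction: suppose that on the event $A_{(3)}$ we have $T_{(3)} \le t_3$ (so in particular $T_{(3)} < \infty$). Since $T_{(3)} = T_4 \wedge T_5 \wedge T_6$, at least one of $T_4, T_5, T_6$ equals $T_{(3)}$, and it suffices to derive a contradiction, for $N$ large, in each of the three cases $T_{(3)} = T_4$, $T_{(3)} = T_5$, $T_{(3)} = T_6$, all under the standing assumption $T_{(3)} \le t_3$. (On $A_{(2)} \supseteq A_{(3)}$, Proposition \ref{@t2} gives $X_1(t_2)+X_2(t_2) > (1-3\delta)N$ and $X_0(t_2) < \delta N$, so each of the three stopping times is in fact strictly greater than $t_2$.)

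The case $T_{(3)} = T_4$ is excluded directly by $A_{16}$: since $T_{(3)} \le t_3$, we have $t_3 \wedge T_{(3)} = T_{(3)}$, so $A_{16}$ asserts $T_4 > T_{(3)}$, contradicting $T_4 = T_{(3)}$. The case $T_{(3)} = T_5$ is excluded by $A_{18}$: the function $t \mapsto s\int_{t_2}^t \tilde X_3(v)\,dv$ is continuous, so by the definition (\ref{T5}) of $T_5$ it equals exactly $1$ at $t = T_5$, whereas $A_{18}$ gives $s\int_{t_2}^{t_3 \wedge T_{(3)}} \tilde X_3(v)\,dv < 1$ and here $t_3 \wedge T_{(3)} = T_{(3)} = T_5$.

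The case $T_{(3)} = T_6 \le t_3$ is the only one that requires an estimate. Since $T_6 \le t_3$, the decomposition $X_0(T_6) = X_0^{[t_2]}(T_6) + X_{0r}^{(t_2,t_3]}(T_6)$ from section \ref{rate} is valid, and evaluating the processes appearing in $A_{17}$ and $A_{19}$ at $t = T_6$ (for which $t \wedge T_{(3)} = T_6$) yields
	$$
	X_0(T_6) \le \Big(\frac{\delta}{2} + \frac{e^{3+(C_3-C_2)}}{\epsilon \ln(Nr)}\Big) N e^{-s(1-3\delta)(T_6-t_2)}
	$$
in the recombination dominating case, and the analogous bound with the second summand in the parentheses replaced by $\frac{e^{3+(C_3-C_2)} r}{\epsilon N \mu^2}$ in the mutation dominating case. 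Because $\ln(Nr) \to \infty$ by (\ref{1<Nr}) in the recombination dominating case, and $r \ll N\mu^2$ in the mutation dominating case, the parenthesized factor is strictly less than $\delta$ for all $N$ large, so $X_0(T_6) < \delta N e^{-s(1-3\delta)(T_6-t_2)}$. This contradicts the definition (\ref{T6}) of $T_6$, because right-continuity of $X_0$ and continuity of $t \mapsto \delta N e^{-s(1-3\delta)(t-t_2)}$ force $X_0(T_6) \ge \delta N e^{-s(1-3\delta)(T_6-t_2)}$. Ruling out all three cases gives $T_{(3)} > t_3$ on $A_{(3)}$ for $N$ large; the events $A_{20}, A_{21}, A_{22}$ included in $A_{(3)}$ play no role here and enter only later, in the bounds on $X_3(t_3)$ for Proposition \ref{@t3}.

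There is no substantive obstacle here: every inequality needed is immediate from the definitions of the events $A_{16}$--$A_{19}$ (all of which are components of $A_{(3)}$) together with the decomposition of $X_0$ from section \ref{rate}. The only care required is in the bookkeeping — matching $T_4$ to $A_{16}$, $T_5$ to $A_{18}$, and $T_6$ to $A_{17}$ and $A_{19}$, and making sure the ``$t \wedge T_{(3)}$'' truncations in $A_{17}$ and $A_{19}$ reduce to evaluation at $T_6$ under the assumption $T_{(3)} = T_6 \le t_3$ — and in checking that the parameter asymptotics of each regime ($\ln(Nr) \to \infty$, respectively $r \ll N\mu^2$) make the constant comparison $\tfrac{\delta}{2} + o(1) < \delta$ in the $T_6$ case genuinely strict for all large $N$.
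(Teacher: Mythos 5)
Your proof is correct and follows essentially the same route as the paper: $A_{16}$ rules out $T_4$, $A_{18}$ rules out $T_5$, and the combination of $A_{17}$ and $A_{19}$ with the decomposition $X_0 = X_0^{[t_2]} + X_{0r}^{(t_2,t_3]}$ and the asymptotics $1 \ll Nr$ (resp.\ $r \ll N\mu^2$) rules out $T_6$. The only difference is cosmetic — you phrase it as a contradiction at the single time $T_{(3)}$, while the paper argues directly that each of $T_4, T_5, T_6$ exceeds $t_3 \wedge T_{(3)}$ uniformly over $t \in (t_2,t_3]$ — and your extra care about the boundary values at $T_5$ and $T_6$ (continuity of the integral, right-continuity of $X_0$) is a welcome tightening of the same argument.
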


	\begin{proof}
	In this proof, we are working on the event $A_{(3)}$. By the definition of event $A_{16}$ in (\ref{A16}), we know that $T_4>t_3\wedge T_{(3)}$, and from the ways we define $T_5$ and $A_{18}$ as in (\ref{T5}) and (\ref{A18}), we have that $T_5>t_3\wedge T_{(3)}$. So, by the definition of $T_{(3)}$ in (\ref{T(3)}), it is left to show that $T_6>t_3\wedge T_{(3)}$.

	In the recombination dominating case, by the definitions of the events $A_{17}$ and $A_{19}$ in (\ref{A17}) and (\ref{A19r}), if $t\in [t_2,t_3]$, then
		\begin{align*}
		X_0\big(t\wedge T_{(3)}\big)
		&=X_0^{[t_2]}\big(t\wedge T_{(3)}\big)+X_{0r}^{(t_2,t_3]}\big(t\wedge T_{(3)}\big)\\
		&\leq \bigg(\frac{\delta}{2}+\frac{e^{3+(C_3-C_2)}}{\epsilon}\cdot\frac{1}{\ln(Nr)}\bigg)\cdot Ne^{-s(1-3\delta)(t\wedge T_{(3)}-t_2)}.
		\end{align*}
	Since $1 \ll Nr$, for sufficiently large $N$, we have that $X_0\big(t\wedge T_{(3)}\big) < \delta Ne^{-s(1-3\delta)(t\wedge T_{(3)}-t_2)}$, for all $t\in (t_2,t_3]$. Therefore, by the way we define $T_6$ as in (\ref{T6}), we have that $T_6>t_3\wedge T_{(3)}$.
	
	For the mutation dominating case, by following the same argument, we have that for all $t\in[t_2,t_3]$,
		$$
		X_0\big(t\wedge T_{(3)}\big)
		\leq \bigg(\frac{\delta}{2}+\frac{e^{3+(C_3-C_2)}}{\epsilon}\cdot\frac{r}{N\mu^2}\bigg)\cdot Ne^{-s(1-3\delta)(t\wedge T_{(3)}-t_2)},
		$$
	and the result follows because $r\ll N\mu^2$.
\end{proof} 

\subsection{The proof of Proposition \ref{@t3}}

	\begin{proof}
	Recall the definition of $A_{(3)}$ in (\ref{A(3)}). From Lemmas \ref{PA17}, \ref{PA19}, \ref{PA20}, \ref{PA21}, \ref{PA18}, \ref{PA22}, and \ref{PA16}, for sufficiently large $N$, on the event $A_{(2)}$, we have
		$$
		P\bigg(\bigcap_{i=16}^{22} A_i \hspace{0.1 cm}\bigg|\hspace{0.1 cm}\mathcal{F}_{t_2}\bigg)\geq 1-4\epsilon-7\delta-\delta^2.
		$$  
	Thus, by Proposition \ref{@t2}, for sufficiently large $N$,
		$$
		P(A_{(3)})=P\bigg(A_{(2)}\cap\Big(\bigcap_{i=16}^{22} A_i\Big)\bigg)\geq 1-4\epsilon-7\delta-\delta^2 -P\big(A_{(2)}^c\big)\geq 1-25\epsilon-7\delta-\delta^2.
		$$
	 Next, assume that we are on the event $A_{(3)}$. It follows from Lemma \ref{T(3)>t3} that $T_{(3)}>t_3$ when $N$ is sufficiently large. So, by the definition of $T_6$ as in (\ref{T6}), we have $X_0(t_3) < \delta Ne^{-s(1-3\delta)(t_3-t_2)}$, and by using the definition of $t_3$ in (\ref{t3}), we prove the first part of the proposition.

	For the proof of the second part of the proposition, we define
		\begin{equation}\label{K3}
		K_3=
			\begin{cases}
			\displaystyle{\frac{K_{2r}^-e^{(C_3-C_2)-2}}{2}}& \mbox{in the recombination dominating case}\\
			\\
			\displaystyle{\frac{K_{2m}^-e^{(C_3-C_2)-2}}{2}}& \mbox{in the mutation dominating case}.	
			\end{cases}
		\end{equation} 
	We will first prove the recombination dominating case. From (\ref{Zi}), the definition of the event $A_{22}$ in (\ref{A22r}), and Proposition \ref{@t2}, we have
		\begin{align}
		X_3(t_3)
		&\geq X_3^{[t_2]}(t_3)\nonumber\\
		&=Z_3^{[t_2]}(t_3)e^{\int_{t_2}^{t_3}G_3(v)dv}\nonumber\\
		&\geq \bigg(X_3^{[t_2]}(t_2)-\sqrt{\frac{8e^4K_{2r}^+}{\epsilon}\cdot\frac{Nr\ln(Nr)}{s^2}}\bigg)e^{\int_{t_2}^{t_3}G_3(v)dv}\nonumber\\
		&\geq\bigg(\frac{K_{2r}^-Nr\ln(Nr)}{s}-\sqrt{\frac{8e^4K_{2r}^+}{\epsilon}\cdot\frac{Nr\ln(Nr)}{s^2}}\bigg)e^{\int_{t_2}^{t_3}G_3(v)dv}\label{*3.7}\\
		&=\bigg(K_{2r}^--\sqrt{\frac{8e^4K_{2r}^+}{\epsilon}\cdot\frac{1}{Nr\ln(Nr)}}\bigg)\cdot \frac{Nr\ln(Nr)}{s}\cdot e^{\int_{t_2}^{t_3}G_3(v)dv}.\nonumber
		\end{align}
	Since, $1\ll Nr$, for sufficiently large $N$, 
	$$
	K_{2r}^--\sqrt{\frac{8e^4K_{2r}^+}{\epsilon}\cdot\frac{1}{Nr\ln(Nr)}}>\frac{K_{2r}^-}{2}>0.
	$$
	Hence, from Lemma \ref{Gt2-3}, the definition of $T_5$ in (\ref{T5}), inequality (\ref{r(t3-t2)}), and the definition of $K_3$ in (\ref{K3}), for sufficiently large $N$, we have that
		\begin{align*}
		X_3(t_3)
		&\geq\frac{K_{2r}^-}{2}\cdot\frac{Nr\ln(Nr)}{s}\cdot e^{\int_{t_2}^{t_3}\big(s(1-\tilde X_3(v)dv)-r\big)dv}\\
		&=\frac{K_{2r}^-}{2}\cdot\frac{Nr\ln(Nr)}{s}\cdot e^{s(t_3-t_2)-r(t_3-t_2)-s\int_{t_2}^{t_3}\tilde X_3(v)dv}\\
		&\geq \frac{K_{2r}^-}{2}\cdot\frac{Nr\ln(Nr)}{s}\cdot e^{s(t_3-t_2)-2}\\
		&=\frac{K_{2r}^-e^{(C_3-C_2)-2}N}{2}\\
		&=K_3N.
		\end{align*}
	For the upper bound for $X_3(t_3)$, from (\ref{Zi}), the definition of the event $A_{22}$ in (\ref{A22r}), Proposition \ref{@t2}, the fact that $\delta<\frac{1}{4}$, and the definitions of $C_3$ in (\ref{C3}), we have
		\begin{align}
		X_3^{[t_2]}(t_3)
		&=Z_3^{[t_2]}(t_3)e^{\int_{t_2}^{t_3}G_3(v)dv}\nonumber\\
		&\leq \bigg(X_3^{[t_2]}(t_2)+\sqrt{\frac{8e^4K_{2r}^+}{\epsilon}\cdot\frac{Nr\ln(Nr)}{s^2}}\bigg)e^{\int_{t_2}^{t_3}s\big(1+\delta e^{-s(1-3\delta)(v-t_2)}\big)dv}\nonumber\\
		&\leq \bigg(\frac{K_{2r}^+Nr\ln(Nr)}{s}+\sqrt{\frac{8e^4K_{2r}^+}{\epsilon}\cdot\frac{Nr\ln(Nr)}{s^2}}\bigg)e^{s(t_3-t_2)+\frac{\delta}{1-3\delta}}\label{*3.8}\\
		&\leq\bigg(K_{2r}^++\sqrt{\frac{8e^4K_{2r}^+}{\epsilon}\cdot\frac{1}{Nr\ln(Nr)}}\bigg)e^{(C_3-C_2)+1}N\nonumber\\
		&=\bigg(e^{-2}\delta^2+e^{(C_3-C_2)+1}\cdot\sqrt{\frac{8e^4K_{2r}^+}{\epsilon}\cdot\frac{1}{Nr\ln(Nr)}}\bigg)N.\nonumber
		\end{align}
	Since $1 \ll Nr$, for sufficiently large $N$, we have
$X_3^{[t_2]}(t_3)\leq \frac{\delta^2N}{3}$.
It follows from the definitions of the events $A_{20}$ and $A_{21}$ as defined in (\ref{A20r}) and (\ref{A21r}), along with the facts that $\mu \ll N\mu^2 \ll r\ln(Nr)$ and $1\ll Nr$, that for sufficiently large $N$, we have $X_{3m}^{(t_2,t_3]}(t_3)\leq \frac{\delta^2N}{3}$, and $X_{3r}^{(t_2,t_3]}(t_3)\leq \frac{\delta^2N}{3}$. Therefore, for sufficiently large $N$, we have $X_3(t_3)\leq \delta^2N$.

	We will now consider the mutation dominating case. Following the same argument as in the previous case, due to the differences in the definition of $A_{22}$ and the lower bound of $X_3^{[t_2]}(t_3)$ from Proposition \ref{@t2}, instead of having inequality (\ref{*3.7}), we will have
		\begin{align*}
		X_3(t_3)
		&\geq\bigg(\frac{K_{2m}^-N^2\mu^2}{s}-\sqrt{\frac{8e^4K_{2m}^+}{\epsilon}}\cdot\frac{N\mu}{s}\bigg)e^{\int_{t_2}^{t_3}G_3(v)dv}\\
		&=\bigg(K_{2m}^--\sqrt{\frac{8e^4K_{2m}^+}{\epsilon}}\cdot\frac{1}{N\mu}\bigg)\cdot \frac{N^2\mu^2}{s}\cdot e^{\int_{t_2}^{t_3}G_3(v)dv}.
		\end{align*}
	Because $1\ll N\mu$, for sufficiently large $N$, we have
		$$
		K_{2m}^--\sqrt{\frac{8e^4K_{2m}^+}{\epsilon}}\cdot\frac{1}{N\mu}>\frac{K_{2m}^-}{2}>0,
		$$
	and by using the same argument as in the previous case, we have that $X_3(t_3)\geq K_3N$. For the upper bound for $X_3(t_2)$,  due to the differences in the definition of $A_{22}$ and the lower bound of $X_3^{[t_2]}(t_3)$, instead of having inequality (\ref{*3.8}), we will have
		\begin{align*}
		X_3^{[t_2]}(t_3)
		&\leq \bigg(\frac{K_{2m}^+N^2\mu^2}{s}+\sqrt{\frac{8e^4K_{2m}^+}{\epsilon}}\cdot\frac{N\mu}{s}\bigg)e^{s(t_3-t_2)+\frac{\delta}{1-3\delta}}\\
		&\leq\bigg(K_{2m}^++\sqrt{\frac{8e^4K_{2m}^+}{\epsilon}}\cdot\frac{1}{N\mu}\bigg)e^{(C_3-C_2)+1}N\nonumber\\
		&=\bigg(e^{-2}\delta^2+e^{(C_3-C_2)+1}\cdot\sqrt{\frac{8e^4K_{2m}^+}{\epsilon}}\cdot\frac{1}{N\mu}\bigg)N,
		\end{align*}
	and because $1\ll N\mu$, for sufficiently large $N$, we have $X_3^{[t_2]}(t_3)\leq \frac{\delta^2N}{3}$.
Lastly, it follows from the definitions of the events $A_{20}$ and $A_{21}$ as defined in (\ref{A20m}) and (\ref{A21m}), along with the facts that $1 \ll N\mu$ and $r\ll N\mu^2$, that for sufficiently large $N$, we have $X_{3m}^{(t_2,t_3]}(t_3)\leq \frac{\delta^2N}{3}$, and $X_{3r}^{(t_2,t_3]}(t_3)\leq \frac{\delta^2N}{3}$. Thus, for sufficiently large $N$, we have $X_3(t_3)\leq \delta^2N$.
	\end{proof}

\section{Phase 4 and the proof of Proposition \ref{@t4}}\label{phase4}

	The main result in this phase can be proved using Theorem \ref{DNthm} as we did in phase 2. First, we define $\mathbf{X}(t), q, \alpha, \beta, b$ and $\tilde b$ as in (\ref{X(t)}), (\ref{q}), (\ref{alpha}), (\ref{beta=}), (\ref{b}) and (\ref{b-}), respectively.  Next, we define a random variable $B^*$ such that on the event that $\tilde X_3(t_3)>0$, we have
		\begin{equation}\label{B*}
		B^*=\frac{1}{\tilde{X}(t_3)}-1.
		\end{equation} 
	The definition of $B^*$ when $\tilde{X}_3(t_3)=0$ is not of interest, as we will work only on the event $A_{(3)}$, on which from Proposition \ref{@t3}, we know that $\tilde{X}_3(t_3)>0$. Next, for $t\geq t_3$, we define
		\begin{equation}\label{f*}
		f^*(t)=\frac{1}{1+B^*e^{-s(t-t_3)}}.
		\end{equation}
	and define 
		\begin{equation}\label{x*}
		x^*(t)=(x^*_1(t),x^*_2(t),x^*_3(t))
		=f^*(t)\bigg(\bigg(\frac{1-\tilde X_2(t_3)-\tilde X_3(t_3)}{\tilde X_3(t_3)}\bigg)e^{-s(t-t_3)},\bigg(\frac{\tilde X_2(t_3)}{\tilde X_3(t_3)}\bigg)e^{-s(t-t_3)},1\bigg).
		\end{equation}
	One can check that 
		\begin{equation}\label{*4.1}
			\begin{cases}
			x^*_1(t_3)=1-\tilde{X}_2(t_3)-\tilde{X}_3(t_3)
				=\tilde{X}_0(t_3)+\tilde{X}_1(t_3),\\
			x^*_2(t_3)=\tilde{X}_2(t_3),\\
			x^*_3(t_3)=\tilde{X}_3(t_3),
			\end{cases}
		\end{equation}
	and, for all $t\geq t_3$, we have 
		\begin{equation}\label{*4.1.1}
		x^*_1(t)+x^*_2(t)+x^*_3(t)=1.
		\end{equation} 
	By computation, we obtain that
		$$
		\frac{d}{dt}f^*(t)=\frac{sB^*e^{-s(t-t_3)}}{(1+B^*e^{-s(t-t_3)})^2}=sB^*e^{-s(t-t_3)}(f^*(t))^2,
		$$
	and 
		$$
		\frac{d}{dt}\big(e^{-s(t-t_3)}f^*(t)\big)=-\frac{se^{-s(t-t_1)}}{(1+B^*e^{-s(t-t_1)})^2}=-se^{-s(t-t_3)}(f^*(t))^2,
		$$
	which along with (\ref{x*}) imply that 
		$$
		\frac{d}{dt}x^*(t)=se^{-s(t-t_3)}(f^*(t))^2\bigg(-\frac{1-\tilde X_2(t_3)-\tilde X_3(t_3)}{\tilde X_3(t_3)},-\frac{\tilde X_2(t_3)}{\tilde X_3(t_3)}, B^*\bigg).		
		$$
	From (\ref{b}), (\ref{f*}), (\ref{x*}) and (\ref{*4.1.1}), for $t\geq t_3$,
		\begin{align*}
		b(x^*(t)	)&=s\big(-x_3^*(t)x_1^*(t), -x_3^*(t)x_2^*(t), (1-x_3^*(t))x_3^*(t)\big)	\\
		&=se^{-s(t-t_3)}(f^*(t))^2\bigg(-\frac{1-\tilde X_2(t_3)-\tilde X_3(t_3)}{\tilde X_3(t_3)},-\frac{\tilde X_2(t_3)}{\tilde X_3(t_3)}, B^*\bigg).
		\end{align*}
	Therefore, for $t\geq t_3$, we have $\frac{d}{dt}x^*(t)=b(x^*(t))$, and
		$$
		x^*(t)=x^*(t_3)+\int_{t_3}^tb(x^*(s))ds.
		$$

	Lastly, we define
		\begin{align}
		C_4&=C_3+\ln\bigg(\Big(\frac{1}{\delta^2}-1\Big)\Big(\frac{1}{K_3}-1\Big)\bigg),\label{C4}\\
		t_4&=
			\begin{cases} 
				\displaystyle{\frac{1}{s}\ln\bigg(\frac{s^2}{\mu r\ln(Nr)}\bigg)+\frac{C_4}{s}} &\hspace{ 1 cm}\mbox{in the recombination dominating case} \\
				\\
				\displaystyle{\frac{1}{s}\ln\bigg(\frac{s^2}{N\mu^3}\bigg)+\frac{C_4}{s}} &\hspace{ 1 cm}\mbox{in the mutation dominating case},
			\end{cases}\label{t4}\\
		A_{23}&=\bigg\{\sup_{t\in [t_3,t_4]}|X_i(t)-x^*_i(t)N|\leq \frac{K_3^2N}{4\delta^2}\hspace{0.2 cm}\textup{for $i=1,2,3$}\bigg\},\label{A23}\\
		A_{(4)}&=A_{(3)}\cap A_{23},\label{A(4)}
		\end{align}
	where $K_3$ is a positive constant that was defined in (\ref{K3}).

	\begin{lemma}\label{PA23}
	For sufficiently large $N$, on the event $A_{(3)}$, we have $P(A_{23}|\mathcal{F}_{t_3})\geq 1-\epsilon$.
	\end{lemma}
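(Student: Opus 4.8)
The plan is to mimic the proof of Lemma~\ref{DNt12} from Phase~2, applying Theorem~\ref{DNthm} to the Markov chain $(\mathbf{X}(t),t\geq 0)$ on the interval $[t_3,t_4]$, comparing it with the ODE solution $x^*(t)$ from (\ref{x*}). Since $f^*$ and $x^*$ depend on the random quantities $\tilde X_2(t_3)$ and $\tilde X_3(t_3)$, I would first condition on $\mathcal{F}_{t_3}$ and fix $\mathbf{X}(t_3)=(\xi_1,\xi_2,\xi_3)$; by the Markov property the evolution after $t_3$ restarts from this deterministic initial condition, which makes $x^*$ non-random and lets us invoke Theorem~\ref{DNthm}. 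Set $T=t_4-t_3=(C_4-C_3)/s$ (by (\ref{t3}) and (\ref{t4})), take the Lipschitz constant of $b$ to be $ks$ as in Phase~2, let $\Delta=\big(K_3^2/(4\delta^2)\big)e^{-k(C_4-C_3)}/3$, which is exactly $\epsilon_0 e^{-(ks)T}/3$ with $\epsilon_0=K_3^2/(4\delta^2)$, and let $L=48/N$.

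Next I would check the three events $\Omega_0,\Omega_1,\Omega_2$. For $\Omega_2$: by (\ref{alpha<}), $\alpha(\xi)\leq 48/N$ for large $N$, so $\int_{t_3}^{t_4}\alpha(\mathbf{X}(t))\,dt\leq(48/N)(t_4-t_3)=LT$ and $\Omega_2^c=\emptyset$. For $\Omega_1$: exactly as in the computation leading to (\ref{I1}), the difference $\beta(\mathbf{X}(t))-b(\mathbf{X}(t))$ consists only of the recombination and mutation terms in (\ref{beta=}), so $|\beta(\mathbf{X}(t))-b(\mathbf{X}(t))|\leq Dr+D'\mu$ for constants $D,D'$, whence $\int_{t_3}^{t_4}|\beta-b|\,dt\leq(Dr+D'\mu)(t_4-t_3)$; since $r(t_4-t_3)\ll 1$ and $\mu(t_4-t_3)\ll 1$ (by the same reasoning as in Phase~3, using $r\ll s$, $\mu\ll s$ and the logarithmic bounds), this is $\leq\Delta$ for large $N$, so $\Omega_1^c=\emptyset$. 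For $\Omega_0$: from (\ref{*4.1}), $x^*(t_3)$ and $\mathbf{X}(t_3)$ agree in their second and third coordinates and differ in the first coordinate by exactly $\tilde X_0(t_3)$, so $|\mathbf{X}(t_3)-x^*(t_3)|=\tilde X_0(t_3)$; on $A_{(3)}$, Proposition~\ref{@t3}(1) bounds $X_0(t_3)$ by $\delta e^{-(1-3\delta)(C_3-C_2)}N(r\ln(Nr)/s)^{1-3\delta}$ in the recombination case (resp.\ with $N\mu^2/s$ in the mutation case), and since $r\ln(Nr)/s\ll 1$ by (\ref{Con1.3}) (resp.\ $N\mu^2/s\ll 1$ by (\ref{Con1.2})), we get $\tilde X_0(t_3)\to 0$, so $\Omega_0$ holds for large $N$ whenever $\mathbf{X}(t_3)$ satisfies the bounds of Proposition~\ref{@t3}.

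For starting states satisfying those bounds we thus have $\Omega_0^c\cup\Omega_1^c\cup\Omega_2^c=\emptyset$, so Theorem~\ref{DNthm} gives
$$
P\Big(\sup_{t\in[t_3,t_4]}|\mathbf{X}(t)-x^*(t)|>\tfrac{K_3^2}{4\delta^2}\ \Big|\ \mathbf{X}(t_3)=(\xi_1,\xi_2,\xi_3)\Big)\leq\frac{4LT}{\Delta^2}=\frac{192(C_4-C_3)}{\Delta^2}\cdot\frac{1}{Ns},
$$
and since $\Delta$ depends only on the fixed constants while $Ns\to\infty$ (because $Ns\gg N\mu\gg 1$), the right side is $\leq\epsilon$ for large $N$, uniformly in $(\xi_1,\xi_2,\xi_3)$. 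By Proposition~\ref{@t3}, on $A_{(3)}$ the state $\mathbf{X}(t_3)$ satisfies the required bounds for large $N$; integrating this conditional bound against $\mathcal{F}_{t_3}$ and recalling the definition (\ref{A23}) of $A_{23}$ then yields $P(A_{23}^c\mid\mathcal{F}_{t_3})\leq\epsilon$ on $A_{(3)}$.

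I expect the only real subtlety to be the verification of $\Omega_0$: one must confirm that the Phase~3 estimate on $X_0(t_3)$, together with conditions (\ref{Con1.2})--(\ref{Con1.3}), pushes $\tilde X_0(t_3)$ below the fixed threshold $\Delta$. Everything else is a routine repetition of the Phase~2 argument.
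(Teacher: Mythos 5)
Your proposal is correct and follows essentially the same route as the paper: apply Theorem \ref{DNthm} on $[t_3,t_4]$ with $\Delta^*=K_3^2e^{-k(C_4-C_3)}/(12\delta^2)$ and $L=48/N$, show $\Omega_1^c=\Omega_2^c=\emptyset$ exactly as in Phase 2, and verify $\Omega_0$ via $|\mathbf{X}(t_3)-x^*(t_3)|=\tilde X_0(t_3)$ together with the Proposition \ref{@t3} bound on $X_0(t_3)$ and the conditions $r\ln(Nr)\ll s$, $N\mu^2\ll s$. Your explicit handling of the conditioning on $\mathcal{F}_{t_3}$ (to make $x^*$ deterministic before invoking the theorem) is the same device used in Lemma \ref{DNt12}, which the paper's proof here merely cites; note only that $t_4-t_3=(C_4-C_3)/s$ makes $r(t_4-t_3)\ll 1$ immediate from $r\ll s$, with no logarithmic estimate needed.
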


	\begin{proof}
	The proof is almost exactly the same as the proof of \ref{DNt12}. Recall from section \ref{phase2} that $k$ is a constant not depending on $N$ such that $ks$ is a Lipschitz constant of the function $b$. We define
		$$
		\Delta^*=\frac{K_3^2e^{-k(C_4-C_3)}}{12\delta^2},
		$$
	and $L=48/N$. We also define
		\begin{align*}
			\Omega^*_0&=\{|\mathbf{X}(t_3)-x^*(t_3)|\leq\Delta^*\}\\
			\Omega^*_1&=\bigg\{\int_{t_3}^{t_4}|\beta(\mathbf{X}(t))-b(\mathbf{X}(t))|dt\leq \Delta^*\bigg\}\\
			\Omega^*_2&=\bigg\{\int_{t_3}^{t_4}\alpha(\mathbf{X}(t))dt\leq L(t_4-t_3)\bigg\}.
		\end{align*}
	First, we consider the event $\Omega^*_0$. From Proposition \ref{@t3}, for sufficiently large $N$, on the event $A_{(3)}$, we have $X_3(t_3)>0$, which means $x^*(t)$ is well-defined. So, by (\ref{*4.1}), for sufficiently large $N$, on the event $A_{(3)}$, we have
		\begin{align*}
			|\mathbf{X}(t_3)-x^*(t_3)|
			&\leq |\tilde{X_1}(t_3)-x^*_1(t_3)|+|\tilde{X_2}(t_3)-x^*_2(t_3)|+|\tilde{X_3}(t_3)-x^*_3(t_3)| \\
			&=\tilde X_0(t_3).
		\end{align*}
	From the upper bound of $X_0(t_3)$ in Proposition \ref{@t3}, along with the facts that $r\ln(Nr)\ll s$ in the recombination dominating case and  $N\mu^2\ll s$ in the mutation dominating case, for sufficiently large $N$, on the event $A_{(3)}$ we have $|\mathbf{X}(t_3)-x^*(t_3)|\leq \Delta^*$. So, for sufficiently large $N$, we have $\Omega_0^{*c} \subseteq A_{(3)}^c$. 
	
	Next, by similar arguments to those used to prove that $\Omega_1^c=\emptyset$ and $\Omega_2^c=\emptyset$ in Proposition \ref{@t3}, for sufficiently large $N$, we have that $\Omega_1^{*c}=\emptyset$ and $\Omega_2^{*c}=\emptyset$.
	Therefore, by Theorem \ref{DNthm}, the definitions of $t_3$ and $t_4$ in (\ref{t3}) and (\ref{t4}), along with the fact that $1\ll Ns$, for sufficiently large $N$, on the event $A_{(3)}$, we have
		$$
			P(A_{23}^c|\mathcal{F}_{t_4})
			\leq \frac{4A(t_4-t_3)}{\Delta^{*2}}
			=\bigg(\frac{192(C_4-C_3)}{\Delta^{*2}}\bigg)\bigg(\frac{1}{Ns}\bigg)
			\leq \epsilon,
		$$
	which proves the result.
	\end{proof}

	Here, we will give a proof for Proposition \ref{@t4}.
	\begin{proof}[Proof of Proposition \ref{@t4}] 
	First, from the definition of $A_{(4)}$ in (\ref{A(4)}), and Propositions \ref{@t3} and \ref{PA23}, for sufficiently large $N$, we have
		$$
		P(A_{(4)})=P(A_{(3)}\cap A_{23})\geq 1-\epsilon-P(A_{(3)}^c)\geq 1-26\epsilon-7\delta-\delta^2.
		$$
	From this point, we will work on the event $A_{(4)}$. From the definition of $B^*$ in (\ref{B*}) and Proposition (\ref{@t3}), we have
		\begin{equation}\label{*4.2}
		\frac{1}{\delta^2}-1\leq B^*\leq \frac{1}{K_3}-1.
		\end{equation}
	By the definitions of $f^*(t), t_3, t_4, C_3$ and $C_4$ in (\ref{f*}), (\ref{t3}), (\ref{t4}), (\ref{C3}) and (\ref{C4}), respectively, along with the inequality (\ref{*4.2}), we obtain that
		\begin{equation}\label{*4.3}
		f^*(t_4)
		=\frac{1}{1+B^*e^{-(C_4-C_3)}}=\frac{1}{1+B^*\Big(\frac{1}{\delta^2}-1\Big)^{-1}\Big(\frac{1}{K_3}-1\Big)^{-1}}\\
		\leq\frac{1}{1+\Big(\frac{1}{K_3}-1\Big)^{-1}}=1-K_3,
		\end{equation}
	and
		\begin{equation}\label{*4.4}
		f^*(t_4)=\frac{1}{1+B^*\Big(\frac{1}{\delta^2}-1\Big)^{-1}\Big(\frac{1}{K_3}-1\Big)^{-1}}\geq\frac{1}{1+\Big(\frac{1}{\delta^2}-1\Big)^{-1}}=1-\delta^2.
		\end{equation}
	Note that from Proposition \ref{@t3}, it is clear that $K_3\leq \delta^2$. Using this fact, the definitions of $A_{23}$ in (\ref{A23}), the definition of $x^*_3(t)$ in (\ref{x*}), along with  (\ref{*4.3}) and (\ref{*4.4}) , we have
		$$
		X_3(t_4)\leq x^*_3(t_4)N+\frac{K_3^2N}{4\delta^2}= f^*_3(t_4)N+\frac{K_3^2N}{4\delta^2}\leq \bigg(1-K_3+\frac{K_3^2}{4\delta^2}\bigg)N\leq\bigg(1-\frac{3K_3}{4}\bigg)N,
		$$
	and
		$$
		X_3(t_4)\leq x^*_3(t_4)N-\frac{K_3^2N}{4\delta^2}= f^*_3(t_4)N-\frac{K_3^2N}{4\delta^2}\geq \bigg(1-\delta^2-\frac{K_3^2}{4\delta^2}\bigg)N\geq\bigg(1-\frac{5\delta^2}{4}\bigg)N.
		$$
	Lastly, using that $K_3\leq \delta^2$, the definitions $x^*_3(t)$ and $A_{23}$ in (\ref{x*}) and (\ref{A23}), along with (\ref{*4.1.1}) and (\ref{*4.3}), we obtain that
		\begin{align*}
		X_1(t_4)+X_2(t_4)
		&\geq (x^*_1(t_4)+x^*_2(t_4))N-\frac{K_3^2N}{2\delta^2}\\
		&=(1-x^*_3(t_4))N-\frac{K_3^2N}{2\delta^2}\\
		&=(1-f^*(t_4))N-\frac{K_3^2N}{2\delta^2}\\
		&\geq \bigg(K_3-\frac{K_3^2}{2\delta^2}\bigg)N\\
		&\geq \frac{K_3N}{2}.
		\end{align*}
	This completes the proof of this lemma.
	\end{proof}

\section{Phase 5 and the proof of Theorem \ref{THM}}\label{phase5}

	The technique used in the proof involves coupling with a branching process, similar to the proof of Lemma \ref{3mlemma}. We begin by defining
		\begin{align}
			t_{5+}&=t_4+\frac{1}{1-2\delta^2}\cdot \frac{1}{s}\ln(Ns),\label{t5+}\\
			t_{5-}&=t_4+(1-\delta)\cdot \frac{1}{s}\ln(Ns),\label{t5-}\\
			T_7&=\inf\{t\geq t_4:X_3(t)=N\},\label{T7}\\
			T_8&=\inf\{t\geq t_4:X_3(t)\leq N-\lfloor 2\delta^2 N \rfloor \},\nonumber\\
			A_{(5)}&=A_{(4)}\cap \{t_{5-}<T_7<t_{5+}\}.\nonumber
		\end{align}
	First, we will show that with probability close to 1, $T_7<T_8$ and $T_7\leq t_{5+}$.

	\begin{lemma} \label{T7<t5+}
	The following statements hold: 
		\begin{enumerate}
			\item For sufficiently large $N$, on the event $A_{(4)}$, we have $P(T_7<T_8|\mathcal{F}_{t_4})\geq 1-\epsilon$.
			\item For sufficiently large $N$, on the event $A_{(4)}$, we have $P(T_7\leq t_{5+}|\mathcal{F}_{t_4})\geq 1-\epsilon-\delta$.
		\end{enumerate}
	\end{lemma}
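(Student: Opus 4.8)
The plan is to track $M(t):=N-X_3(t)=X_0(t)+X_1(t)+X_2(t)$, the number of non-type-$3$ individuals, and to compare it, after a random time change, with a subcritical linear birth--death process, exactly in the spirit of the proof of Lemma~\ref{3mlemma}. On $A_{(4)}$, Proposition~\ref{@t4} gives $M(t_4)\le\frac54\delta^2N$, so for $N$ large $M(t_4)<\lfloor2\delta^2N\rfloor$ and hence $T_8>t_4$; throughout I would condition on $\mathcal{F}_{t_4}$ and run the chain from $t_4$ by the Markov property.

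First I would read off the jump rates of $M$. Since type~$3$ undergoes no mutation, $M$ increases by $1$ only when a type-$3$ dies (total rate $(1-2s)X_3$) and is replaced by a non-type-$3$ individual, which has probability $1-p_3$ with $p_3=(1-r)\tilde X_3+r(\tilde X_1+\tilde X_3)(\tilde X_2+\tilde X_3)$. Writing $\tilde M=M/N$ and using $1-(1-\tilde X_0-\tilde X_2)(1-\tilde X_0-\tilde X_1)\le 2\tilde M$, one gets $1-p_3\le(1+r)\tilde M$, so $M$ goes up by $1$ at rate $\le(1-2s)(1+r)\tilde X_3(t)M(t)$. Conversely $M$ decreases by $1$ when a non-type-$3$ dies and is replaced by a type-$3$, or when a type-$1$/$2$ mutates to type~$3$; since every non-type-$3$ dies at rate $\ge1-s$ and the replacement is type-$3$ with probability $p_3\ge(1-r)\tilde X_3$, this rate is $\ge(1-s)(1-r)\tilde X_3(t)M(t)$. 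On $[t_4,T_7\wedge T_8)$ we have $\tilde X_3(t)>1-2\delta^2>0$, so $\Lambda(t):=\int_{t_4}^{t}\tilde X_3(v)\,dv$ is a valid time change there and $M^{*}(u):=M(\Lambda^{-1}(u)+t_4)$ is a birth--death process with birth rate $\le\bar\lambda M^{*}$ and death rate $\ge\underline\mu M^{*}$, where $\bar\lambda=(1-2s)(1+r)$ and $\underline\mu=(1-s)(1-r)$. Since $r\ll s$ (see~(\ref{r<s})), for large $N$ this is strictly subcritical: $\underline\mu-\bar\lambda=s-2r+3sr\ge s/2$ and $\theta:=\underline\mu/\bar\lambda\ge1+s/2$. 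I would then couple $M^{*}$ with the linear birth--death process $Z$ of birth rate $\bar\lambda Z$, death rate $\underline\mu Z$, $Z(0)=M(t_4)$, so that $Z(u)\ge M^{*}(u)$ for all $u$ (possible because, when $Z=M^{*}$, the birth rate of $Z$ dominates and its death rate is dominated by those of $M^{*}$).

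For part~1, the event $\{T_8<T_7\}$ forces $M^{*}$, hence $Z$, to reach $b:=\lfloor2\delta^2N\rfloor$ before $0$; the embedded jump chain of $Z$ is a downward-biased simple random walk, so the gambler's ruin formula bounds this probability by $(\theta^{a}-1)/(\theta^{b}-1)\le2\theta^{-(b-a)}$ with $a=M(t_4)$. As $b-a\ge\lfloor2\delta^2N\rfloor-\frac54\delta^2N\ge\frac12\delta^2N$ and $\theta\ge1+s/2$, this is $\le2e^{-\delta^2Ns/8}$, and since $Ns\ge N\mu\to\infty$ it is $\le\epsilon$ for large $N$; hence $P(T_7<T_8\mid\mathcal{F}_{t_4})\ge1-\epsilon$ on $A_{(4)}$.

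For part~2, write $P(T_7>t_{5+}\mid\mathcal{F}_{t_4})\le P(T_7>t_{5+},\,T_7\le T_8\mid\mathcal{F}_{t_4})+P(T_8<T_7\mid\mathcal{F}_{t_4})$, the last term being $\le\epsilon$ by part~1. On the first event $[t_4,t_{5+}]\subseteq[t_4,T_7\wedge T_8)$, so the time change is valid there and, by the definition~(\ref{t5+}) of $t_{5+}$, $\Lambda(t_{5+}-t_4)\ge(1-2\delta^2)(t_{5+}-t_4)=\frac1s\ln(Ns)=:u$ while $M^{*}(u)>0$, hence $Z(u)>0$. Thus this probability is $\le P(Z(u)>0\mid Z(0)=M(t_4))\le M(t_4)\,q(u)$, where $q(u)=(\underline\mu-\bar\lambda)/(\underline\mu e^{(\underline\mu-\bar\lambda)u}-\bar\lambda)$ is the one-lineage survival probability (the formula of \cite{A-N} already used at~(\ref{-5.2.1})). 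Now $(\underline\mu-\bar\lambda)u=\bigl(1-\frac{2r}{s}+3r\bigr)\ln(Ns)$, and~(\ref{rslogNs}) gives $\frac{r}{s}\ln(Ns)\to0$, whence also $r\ln(Ns)\to0$, so $(\underline\mu-\bar\lambda)u=\ln(Ns)-o(1)$ and $q(u)=(1+o(1))/N$. Therefore $M(t_4)q(u)\le(1+o(1))\frac54\delta^2<\delta$ for large $N$ (as $\delta<\frac14$), which yields $P(T_7\le t_{5+}\mid\mathcal{F}_{t_4})\ge1-\epsilon-\delta$ on $A_{(4)}$. The main obstacle is precisely the rate analysis and the time change in the second step: a direct comparison of $M$ with a linear birth--death process breaks the $\tilde X_3$ factor asymmetrically and leaves only a subcriticality gap of order $\delta^2\gg s$, which is worthless; the time change restores the order-$s$ gap, and it is exactly~(\ref{rslogNs}) that makes $(\underline\mu-\bar\lambda)u$ equal $\ln(Ns)$ up to $o(1)$ rather than $c\ln(Ns)$ with $c<1$, the latter making the union bound $M(t_4)q(u)$ diverge.
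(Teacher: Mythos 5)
Your proposal is correct and follows essentially the same route as the paper: track $N-X_3(t)$ with the same per-individual rate bounds (your $(1-2s)(1+r)$ and $(1-s)(1-r)$ versus the paper's $(1-2s+r)$ and $(1-s-r)$), couple with a dominating birth--death process, use gambler's ruin on the embedded jump chain for part 1, and use the time change $\int \tilde X_3$ together with the one-lineage survival formula and (\ref{rslogNs}) for part 2. The only cosmetic differences are that you bound the $k$-lineage survival probability by the union bound $kq(u)$ rather than $1-(1-q(u))^k$, and you make the validity of the time change and the role of (\ref{rslogNs}) slightly more explicit.
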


	\begin{proof}
	We are going to consider the process $(N-X_3(t),t\geq t_4)$. For $t\geq 0$, let $B(t)$ and $D(t)$ be the rates the this process increases and decreases by 1 at time $t$. This process increases by 1 when a type 3 individual dies and is replaced by an individual that is not type 3. Type 3 individuals die at total rate of $(1-2s)X_3(t)$, and the probability that the replacement is a type 3 individual is 
		$$
		(1-r)\tilde X_3(t)+r(\tilde X_1(t)+\tilde X_3(t))(\tilde X_2(t)+\tilde X_3(t)).
		$$
Hence, this process increases by 1 at rate
		$$
			B(t)=(1-2s)X_3(t)\big(1-(1-r)\tilde X_3(t)-r(\tilde X_1(t)+\tilde X_3(t))(\tilde X_2(t)+\tilde X_3(t))\big).
		$$
	The process decreases by 1 when an individual that is not of type 3 dies and is replaced by a type 3, or a mutation occurs on a type 1 or 2 individual. This occurs at rate
		\begin{align*}
			D(t)
			&=\big(X_0(t)+(1-s) X_1(t)+(1-s) X_2(t)\big)\cdot\big((1-r)\tilde X_3(t)+r(\tilde X_1(t)+\tilde X_3(t))(\tilde X_2(t)+\tilde X_3(t))\big)\nonumber\\
			&\hspace{0.5 cm}+\mu(X_1(t)+ X_2(t)).
		\end{align*}
	Then, for all $t\geq 3$, we have
		\begin{align*}
			B(t)
			&=(1-2s)X_3(t)\big(1-\tilde X_3(t)+r(\tilde 	X_0(t)\tilde X_3(t)-\tilde X_1(t)\tilde X_2(t))\big)\nonumber\\
			&\leq (1-2s)X_3(t)\big(1-\tilde X_3(t)+r\tilde X_0(t)\big)\nonumber\\
			&\leq (1-2s)(1+r)X_3(t)(1-\tilde X_3(t))\nonumber\\
			&\leq (1-2s+r)X_3(t)(1-\tilde X_3(t)),
		\end{align*}
	and
		\begin{align*}
			D(t)&\geq(1-s)(X_0(t)+X_1(t)+X_2(t))\cdot(1-r)\tilde X_3(t)\nonumber\\
			&=(1-s)(1-r)X_3(t)(1-\tilde X_3(t))\nonumber\\
			&\geq (1-s-r)X_3(t)(1-\tilde X_3(t)).
		\end{align*}
	Hence, we can think of the process $(N-X_3(t),t\in [t_4,T_7])$ as a birth-death process in which each individual gives birth at rate bounded above by $(1-2s+r)\tilde X_3(t)$ and dies at rate bounded below by $(1-s-r)\tilde X_3(t)$.
	
	Let $(Y(t),t\geq t_4)$ be a birth-death process in which each individual gives birth at rate $b(t)=(1-2s+r)\tilde X_3(t)$, and dies at rate $d(t)=(1-s-r)\tilde X_3(t)$, and $Y(0)=N-X_3(t_4)$. It is possible to couple the process $(Y(t),t\geq t_4)$ with the process $(N-X_3(t),t\geq t_4)$ such that for any time $t\geq t_4$, we have $Y(t)\geq N-X_3(t)$. This implies that if the process $Y$ reaches 0 before $\lfloor 2\delta^2 N \rfloor$, then the process $N-X_3$ will also reach 0 before $\lfloor 2\delta^2 N\rfloor$, which means that $T_7<T_8$.

	Here, since we are only interested in the probability that the process $Y$ reaches 0 before $\lfloor 2\delta^2 N\rfloor$, we will consider the induced discrete-time jump process of $(Y(t),t\in[t_4,T_7\wedge T_8))$. It is an asymmetric random walk process that jumps up by 1 with probability
		$$
		\frac{b(t)}{b(t)+d(t)}=\frac{1-2s+r}{2-3s},
		$$
	and jumps down by 1 with probability
		$$
		\frac{d(t)}{b(t)+d(t)}=\frac{1-s-r}{2-3s}.
		$$
	On the event $A_{(4)}$, we have from Proposition \ref{@t4} that $N-X_3(t_4)\leq 5\delta^2N/4$. Let $q=(1-s-r)/(1-2s+r)$, and note that because $r\ll s$, for sufficiently large $N$, we have 
		$$
		q\geq\frac{1-1.1s}{1-1.9s}>1.
		$$ 
	For sufficiently large $N$, on the event $A_{(4)}$, conditioning on the event $N-X_3(t_4)=k$, the probability that this asymmetric random walk reaches 0 before $\lfloor 2\delta^2 N\rfloor$ is
		$$
		1-\frac{q^k-1}{q^{\lfloor 2\delta^2 N\rfloor}-1}
		\geq 1- q^{k-2\delta^2 N}
		\geq 1- q^{(\frac{5\delta^2}{4}-2\delta^2)N}
		=1-q^{-3\delta^2N/4}
		\geq1-\bigg(\frac{(1-1.9s)^{1/s}}{(1-1.1s)^{1/s}}\bigg)^{3\delta^2Ns/4},
		$$
	and note that this upper bound is no longer depends on $k$. Since $s\ll 1$, when $N\rightarrow \infty$, we have
		$$
		\frac{(1-1.9s)^{1/s}}{(1-1.1s)^{1/s}}\rightarrow \frac{e^{-1.9}}{e^{-1.1}}=e^{-0.8}.
		$$
	Also, because $Ns \gg 1$, it follows that when $N\rightarrow \infty$, we have
		$$
		\bigg(\frac{(1-1.9s)^{1/s}}{(1-1.1s)^{1/s}}\bigg)^{(\eta-2\delta)Ns}\rightarrow 0.
		$$
	Thus, on the event $A_{(4)}$, for sufficiently large $N$, the probability that the asymmetric random walk reaches 0 before $\lfloor 2\delta^2 N\rfloor$ is bounded below by $1-\epsilon$. Therefore, through the coupling, for sufficiently large $N$, on the event $A_{(4)}$, we have $P(T_7<T_8|\mathcal{F}_{t_4})\geq 1-\epsilon$ .

	We will now prove part 2 of this lemma. It follows from part 1 that, for sufficiently large $N$, on the event $A_{(4)}$, 
		\begin{align*}
			P(T_7\leq t_{5+}|\mathcal{F}_{t_4})
			&\geq P(\{T_7\leq t_{5+}\}\cap \{T_7<T_8\}|\mathcal{F}_{t_4})\\
			&=P(T_7<T_8|\mathcal{F}_{t_4})-P(t_{5+}<T_7<T_8|\mathcal{F}_{t_4})\\
			&\geq 1-\epsilon-P(t_{5+}<T_7\wedge T_8|\mathcal{F}_{t_4}).
		\end{align*}
	So, we only need to show that for sufficiently large $N$, on the event $A_{(4)}$, 
		\begin{equation}\label{*5.1}
		P(t_{5+}<T_7\wedge T_8|\mathcal{F}_{t_4})\leq \delta.
		\end{equation}

	Now, for $t\in[0,(T_7\wedge T_8)-t_4]$, we define
		$$
		\lambda (t)=\int_0^t \tilde X_3(t_4+v) dv,
		$$
	and for $t\in[0,\lambda((T_7\wedge T_8)-t_4))$, we define $Y^*(t)=Y(\lambda^{-1}(t))$. The process $(Y^*(t),t\in[0,\lambda((T_7\wedge T_8)-t_4))$ is a birth-death process satisfying $Y^*(0)=N-X_3(t_4)$, where each individual gives birth at rate
		$$
		b^*(t)=b(\lambda^{-1}(t))(\lambda^{-1}(t))'=1-2s+r,
		$$
and each individual dies at rate
		$$
		d^*(t)=d(\lambda^{-1}(t))(\lambda^{-1}(t))'=1-s-r.
		$$
	For sufficiently large $N$, on the event that $t_{5+}<T_7\wedge T_8$, we have
		$$
		\lambda(t_{5+}-t_4)
		=\int_0^{t_{5+}-t_4}\tilde X_3(t_4+v)dv
		>\bigg(1-\frac{\lfloor 2\delta^2N \rfloor}{N}\bigg)(t_{5+}-t_4)\geq(1-2\delta^2)(t_{5+}-t_4)=\frac{1}{s}\ln(Ns).
		$$
	It follows that,
		\begin{align*}
			P(t_{5+}<T_7\wedge T_8|\mathcal{F}_{t_4})
			&=P(\{Y(t_{5+}-t_4)>0\}\cap \{t_{5+}<T_7\wedge T_8\}|\mathcal{F}_{t_4})\\
			&=P(\{Y^*(\lambda(t_{5+}-t_4))>0\}\cap \{t_{5+}<T_7\wedge T_8\}|\mathcal{F}_{t_4})\\
			&\leq P\bigg(Y^*\bigg(\frac{1}{s}\ln(Ns)\bigg)>0\bigg|\mathcal{F}_{t_4}\bigg).
		\end{align*}
	By the same reason we obtain (\ref{-5.2.1}) which gives the probability that the birth and death process survives until time $t$, if the process starts with one individual, we can generalize to the process that starts with any finite number of individuals. If $k\leq 5\delta^2N/4$, then
		\begin{align}
		P\bigg(Y^*\bigg(\frac{1}{s}\ln(Ns)\bigg)>0\bigg|Y^*(0)=k\bigg)
		&=1-\bigg(1-\frac{(1-2s+r)-(1-s-r)}{(1-2s+r)-(1-s-r)e^{-((1-2s+r)-(1-s-r))\cdot \frac{1}{s}\ln(Ns)}}\bigg)^k\nonumber\\
		&=1-\bigg(1-\frac{s-2r}{(1-s-r)e^{-\frac{2r}{s}\ln(Ns)}Ns-(1-2s+r)}\bigg)^k\nonumber\\
		&\leq 1-\bigg(1-\frac{s}{(1-s-r)e^{-\frac{2r}{s}\ln(Ns)}Ns-(1-2s+r)}\bigg)^{5\delta^2N/4},\label{-9.1}
		\end{align}
	and note that this upper bound does not depend on $k$. Now, by using the facts that $r\ll s\ll 1$ and $1 \ll Ns$ along with (\ref{rslogNs}), when $N$ is sufficiently large, on the event $A_{(4)}$, on which we know from Proposition \ref{@t4} that $Y^*(0)=N-X_3(t_4)\leq 5\delta^2N/4$, we have
		\begin{equation}\label{*5.2}
		P\bigg(Y^*\bigg(\frac{1}{s}\ln(Ns)\bigg)>0\bigg|\mathcal{F}_{t_4}\bigg)
		\leq 1-\bigg(1-\frac{s}{0.5Ns}\bigg)^{5\delta^2 N/4}=1-\bigg(1-\frac{2}{N}\bigg)^{5\delta^2 N/4}.
		\end{equation}
	Note that when $N\rightarrow\infty$, by using that $\delta\in(0,\frac{1}{4})$, we have
		$$
		1-\bigg(1-\frac{2}{N}\bigg)^{5\delta^2 N/4}\rightarrow 1-e^{-5\delta^2 N/2}\leq \frac{5\delta^2}{2}<\delta.
		$$
	This fact along with (\ref{*5.2}) prove the inequality (\ref{*5.1}).
	\end{proof}

	Next, we are going to show that $t_{5-}<T_7\wedge T_8$ with probability close to 1.

	\begin{lemma}\label{PA(5)}
	The following statements hold:
		\begin{enumerate}
			\item For sufficiently large $N$, on the event $A_{(4)}$, we have $P(t_{5-}<T_7\wedge T_8|\mathcal{F}_{t_4})\geq 1-2\epsilon $ .
			\item For sufficiently large $N$, we have $P(A_{(5)})\geq 1-29\epsilon-8\delta-\delta^2 $.
			\end{enumerate}
	\end{lemma}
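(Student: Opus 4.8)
The plan is to deduce statement~2 quickly from statement~1 and to reserve the real work for statement~1. For statement~2, note that $A_{(5)}=A_{(4)}\cap\{t_{5-}<T_7\}\cap\{T_7<t_{5+}\}$ with $A_{(4)}\in\mathcal F_{t_4}$. On $A_{(4)}$, statement~1 gives $\{t_{5-}<T_7\wedge T_8\}\subseteq\{t_{5-}<T_7\}$ of conditional probability at least $1-2\epsilon$, while Lemma~\ref{T7<t5+}(2) together with the fact that $T_7$ a.s.\ does not coincide with the fixed time $t_{5+}$ (no jump occurs at a prescribed deterministic time) gives $P(T_7<t_{5+}\mid\mathcal F_{t_4})\geq1-\epsilon-\delta$. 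Hence $P(\{t_{5-}<T_7\}\cap\{T_7<t_{5+}\}\mid\mathcal F_{t_4})\geq1-3\epsilon-\delta$ on $A_{(4)}$, and multiplying by $\mathbf 1_{A_{(4)}}$ and taking expectations, $P(A_{(5)})\geq(1-3\epsilon-\delta)P(A_{(4)})\geq1-29\epsilon-8\delta-\delta^2$ by Proposition~\ref{@t4}.

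For statement~1, Lemma~\ref{T7<t5+}(1) gives $P(T_7<T_8\mid\mathcal F_{t_4})\geq1-\epsilon$ on $A_{(4)}$, and on $\{T_7<T_8\}$ we have $T_7\wedge T_8=T_7$; so it suffices to show $P(T_7>t_{5-}\mid\mathcal F_{t_4})\geq1-\epsilon$ on $A_{(4)}$. Since $N-X_3(t)\geq X_1(t)+X_2(t)$, it is enough that $W:=X_1+X_2$ does not hit $0$ before $t_{5-}$. I would couple $W$ on $[t_4,T_7\wedge T_8\wedge T_9)$ — where $T_9$ is an auxiliary stopping time, to be introduced, that keeps $X_0$ a vanishing fraction of $W$ — from below with a birth--death process $\hat Y$ with $\hat Y(t_4)=W(t_4)$ in which each individual is born at rate $(1-2s-r)\tilde X_3(t)$ and dies at rate $\bigl(1-s+o(s)\bigr)\tilde X_3(t)$; the transition rates of $W$ are read off exactly as in section~\ref{rate} and the proof of Lemma~\ref{T7<t5+}, namely $W$ decreases at rate $(1-s)W(f_0+f_3)+\mu W$ and increases at rate $(X_0+(1-2s)X_3)(f_1+f_2)$, and the governing inequalities are $f_1+f_2\geq(1-r)(\tilde X_1+\tilde X_2)$ and $f_0+f_3\leq(\tilde X_0+\tilde X_3)+2r$, which, once $X_0$ is known to be negligible next to $W$, make the net per-capita downward drift of $W$ equal to $s\tilde X_3(1+o(1))$. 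A time change by $\lambda(t)=\int_{t_4}^t\tilde X_3(v)\,dv$ turns $\hat Y$ into a homogeneous subcritical birth--death process $\hat Y^{*}$ of per-capita decay rate $s(1+o(1))$, independent of $\mathcal F_{t_4}$. Using the explicit survival-probability formula for such processes (as in~(\ref{-5.2.1})), the bound $\hat Y^{*}(0)=W(t_4)\geq K_3N/2$ from Proposition~\ref{@t4}, the fact that $\lambda(t_{5-}-t_4)\leq t_{5-}-t_4=(1-\delta)s^{-1}\ln(Ns)$ from~(\ref{t5-}), and $Ns\to\infty$ (so $(Ns)^{\delta}\to\infty$), I would conclude that $\hat Y^{*}$ is still alive at time $\lambda(t_{5-}-t_4)$ with conditional probability at least $1-\epsilon$; on that event, intersected with the complements of $\{T_8\leq t_{5-}\}$ and $\{T_9\leq t_{5-}\}$ (each of small probability, the former via Lemma~\ref{T7<t5+}(1)), the coupling forces $W(t_{5-})>0$, hence $T_7>t_{5-}$.

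The main obstacle is obtaining the per-capita decay rate $s(1+o(1))$ for the comparison process rather than $2s$, or any multiple of $s$ exceeding roughly $1+\delta$: because type~1 and type~2 individuals are themselves fit (dying at rate $1-s$, not $1$), the genuine decay rate of $X_1+X_2$ against the type-3 background is $\approx s\tilde X_3$, whereas a careless bound treating all non-type-3 individuals as dying at rate $1$ doubles it, and under our hypotheses $s$ may tend to $0$ so slowly that a decay rate $2s$ would make $\hat Y^{*}$ die out well before $t_{5-}$. Recovering the correct factor requires retaining the $(1-s)$ weighting in the death rate of $X_1+X_2$, and hence requires establishing that $X_0$ stays negligible compared to $X_1+X_2$ throughout phase~5 — the definition and control of $T_9$, presumably via the submartingale and Markov-inequality machinery of section~\ref{M} applied to $X_0^{[t_4]}$ and $X_{0r}^{(t_4,\,\cdot\,]}$, extending the phase-3 bounds $A_{17}$ and $A_{19}$ beyond $t_3$ — which is the technical heart of the argument.
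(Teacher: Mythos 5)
Your derivation of statement 2 from statement 1 and Lemma \ref{T7<t5+} is correct and coincides with the paper's. Your outline for statement 1, however, has a genuine gap in the choice of comparison process. You propose to dominate $W=X_1+X_2$ from below by a birth--death process whose per-capita death rate is $(1-s+o(s))\tilde X_3(t)$. But the actual per-capita rate at which $W$ decreases is $(1-s)(f_0+f_3)+\mu\le(1-s)\bigl(\tilde X_0+\tilde X_3+r(\tilde X_1+\tilde X_2)\bigr)+\mu$, so for your comparison death rate to dominate it pointwise you need $(1-s)\tilde X_0\le o(s)\tilde X_3$, i.e.\ $\tilde X_0=o(s)$ --- not merely ``$X_0$ a vanishing fraction of $W$,'' which (since $W(t_4)\ge K_3N/2$) only amounts to $\tilde X_0=o(1)$. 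And $\tilde X_0(t_4)=o(s)$ is simply not available: Proposition \ref{@t3} gives $\tilde X_0(t_3)\lesssim\delta\,(r\ln(Nr)/s)^{1-3\delta}$ (resp.\ $\delta\,(N\mu^2/s)^{1-3\delta}$), and when $r\ln(Nr)/s\to0$ slowly this is far larger than $s$; phase 4 only reduces it by a constant factor. So the auxiliary stopping time $T_9$ you propose cannot be shown to exceed $t_{5-}$ near the start of phase 5, and the coupling breaks down exactly where you locate ``the technical heart of the argument.''

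The difficulty is an artifact of normalizing by $\tilde X_3$ alone, and the paper removes it rather than solving it. Keep the type-0 contribution in the birth rate as well: $W$ increases at per-capita rate at least $\bigl(X_0+(1-2s)X_3\bigr)(1-r)(\tilde X_1+\tilde X_2)/W\ge(1-2s-r)(\tilde X_0+\tilde X_3)$, while on $[t_4,T_7\wedge T_8]$ one has $\tilde X_1+\tilde X_2\le 2\delta^2\le 1-2\delta^2\le\tilde X_3\le\tilde X_0+\tilde X_3$, so the $r(\tilde X_1+\tilde X_2)$ and $\mu$ corrections can be absorbed to give a per-capita decrease rate at most $(1-s+r+2\mu)(\tilde X_0+\tilde X_3)$. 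Both rates now carry the common factor $\tilde X_0+\tilde X_3$; time-changing by $\lambda(t)=\int_0^t\bigl(\tilde X_0(t_4+v)+\tilde X_3(t_4+v)\bigr)dv$ yields a homogeneous subcritical birth--death process with net per-capita decay $s+2r+2\mu\approx s$ (your key point --- the $(1-s)$ weighting on type 1 and 2 deaths --- is retained), and $\lambda(t)\le t$, which follows from $\tilde X_0+\tilde X_3\le1$, is exactly the inequality needed to transfer the survival bound at time $(1-\delta)s^{-1}\ln(Ns)$ back to time $t_{5-}$. No control of $X_0$ beyond what $T_8$ already provides is required, and no new stopping time is needed.
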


	\begin{proof} 
	The proof is similar to the proof of Lemma \ref{T7<t5+}. In this proof, we are going to consider the process $(X_1(t)+X_2(t),t\geq t_4)$. For $t\geq t_4$, let $B(t)$ and $D(t)$ be the rates at which the process increases or decreases by 1. We will now give a lower bound for $B(t)$ and an upper bound for $D(t)$. For the increasing rate, one way to increase $X_1(t)+X_2(t)$ is by having a type 0 or type 3 individual die, which occurs at the total rate $X_0(t)+(1-2s)X_3(t)$, and the new individual is type 1 or 2 that is created without recombination, which occurs with probability $(1-r)(\tilde X_1(t)+\tilde X_2(t))$. Then,
		\begin{align*}
			B(t)
			&\geq (X_0(t)+(1-2s)X_3(t))\cdot(1-r)(\tilde X_1(t)+\tilde X_2(t))\\
			&\geq (1-2s)(1-r)(X_0(t)+ X_3(t))(\tilde X_1(t)+\tilde X_2(t))\\
			&\geq (1-2s-r)(\tilde X_0(t)+ \tilde X_3(t))(X_1(t)+X_2(t)).
		\end{align*}
	To decrease $X_1(t)+X_2(t)$, one way is by having a type 1 or type 2 die, and this occurs at total rate $(1-s)(X_1(t)+X_2(t))$, and the new individual cannot be type 1 or 2, which occurs with probability bounded above by $1-(1-r)(\tilde X_1(t)+\tilde X_2(t))$. Another way to decrease $X_1(t)+X_2(t)$ by having a type 1 or 2 mutate to type 3, which occurs at rate $\mu(X_1(t)+X_2(t))$. So,
		\begin{align*}
			D(t)
			&\leq (1-s)(X_1(t)+X_2(t))\cdot (1-(1-r)(\tilde X_1(t)+\tilde X_2(t)))+\mu(X_1(t)+X_2(t))\\
			&=\big((1-s)(\tilde X_0(t)+\tilde X_3(t)+r(\tilde X_1(t)+\tilde X_2(t)))+\mu\big)\cdot(X_1(t)+X_2(t)).
		\end{align*}
	When $t\in[t_4,T_7\wedge T_8]$, we have 
		$$
		\tilde X_1(t)+\tilde X_2(t)
		\leq 1-\tilde X_3 (t) \leq 
		\frac{\lfloor 2\delta^2N \rfloor}{N} 
		\leq 1-\frac{\lfloor 2\delta^2N \rfloor}{N}
		\leq \tilde X_3 (t)
		\leq \tilde X_0 (t)+\tilde X_3 (t),
		$$
	and 
		$$
		\mu\leq 2\bigg(1-\frac{\lfloor 2\delta^2N \rfloor}{N}\bigg)\mu \leq 2(\tilde X_0 (t)+\tilde X_3(t))\mu.
		$$
Hence, when $t\in[t_4,T_7\wedge T_8]$, 
		\begin{align*}
			D(t)
			&\leq (1-s)(1+r+2\mu)(\tilde X_0(t)+ \tilde X_3(t))(X_1(t)+X_2(t))\\
			&\leq (1-s+r+2\mu)(\tilde X_0(t)+ \tilde X_3(t))(X_1(t)+X_2(t)).
		\end{align*}

	Let $(Y(t),t\geq t_4)$ be a birth-death process such that $Y(t_4)=X_1(t_4)+X_2(t_4)$, in which each individual gives birth at rate $b(t)=(1-2s-r)(\tilde X_0(t)+ \tilde X_3(t))$ and each individual dies at rate $d(t)=(1-s+r+2\mu)(\tilde X_0(t)+ \tilde X_3(t))$. We can couple this process with $(X_1(t)+X_2(t),t\geq t_4)$ such that for any $t\in[t_4,T_7\wedge T_8]$, we have $Y(t)\leq X_1(t)+X_2(t)$, which means that if $Y(t)>0$, the $X_1(t)+X_2(t)>0$. Now, we consider the induced discrete time jump process of $(Y(t),t\in[t_4, T_7\wedge T_8])$. It is an asymmetric walk that jumps up with probability
		$$
		\frac{b(t)}{b(t)+d(t)}=\frac{1-2s-r}{2-3s+2\mu},
		$$
	and jumps down with probability 
		$$
		\frac{d(t)}{b(t)+d(t)}=\frac{1-s+r+2\mu}{2-3s+2\mu}.
		$$

	Next, for $t\in[0,(T_7\wedge T_8)-t_4]$, we define
		$$
		\lambda(t)=\int_0^t\big(\tilde X_0(t_4+v)+ \tilde X_3(t_4+v)\big)dv.
		$$
	Since $\tilde X_0(t_4+v)+ \tilde X_3(t_4+v)\leq 1$ for all $v\geq 0$, it follows that for $t\in[0,(T_7\wedge T_8)-t_4]$, we have $\lambda(t)\leq t$. Now, we define $Y^*(t)=Y(\lambda^{-1}(t))$. It follows that the process $(Y^*(t), t\in[0,\lambda((T_7\wedge T_8)-t_4])$ is a birth-death process such that $Y^*(0)=X_1(t_4)+X_2(t_4)$, in which each individual gives birth at rate
		\begin{equation}\label{*5.3}
		b^*(t)=b(\lambda^{-1}(t))(\lambda^{-1}(t))'=1-2s-r,
		\end{equation}
	and each individual dies at rate
		\begin{equation}\label{*5.4}
		d^*(t)=d(\lambda^{-1}(t))(\lambda^{-1}(t))'=1-s+r+2\mu.
		\end{equation}
	With these birth and death rates, we can extend the process $Y^*$ to be the birth-death process that is defined for all times $t\in[0,\infty)$, where the rates at which each individual gives birth and dies are given in (\ref{*5.3}) and (\ref{*5.4}), respectively.

	We will first show that for sufficiently large $N$, on the event $A_{(4)}$, 
		\begin{equation}\label{*5.5}
		P\bigg(Y^*\bigg((1-\delta)\cdot\frac{1}{s}\ln(Ns)\bigg)>0\bigg| \mathcal{F}_{t_4}\bigg)\geq 1-\epsilon.
		\end{equation}
	Similar to the way we get (\ref{-9.1}), if $k\geq \frac{K_3N}{2}$, then
		\begin{align*}
			&P\bigg(Y^*\bigg((1-\delta)\cdot\frac{1}{s}\ln(Ns)\bigg)>0\bigg|Y^*(0)=k\bigg)\\
			&\hspace{0.5 cm}=1-\bigg(1-\frac{(1-2s-r)-(1-s+r+2\mu)}{(1-2s-r)-(1-s+r+2\mu)e^{-((1-2s-r)-(1-s+r+2\mu))\cdot \frac{1-\delta}{s}\ln(Ns)}}\bigg)^k\\
			&\hspace{0.5 cm}=1-\bigg(1-\frac{s+2r+2\mu}{(1-s+r+2\mu)e^{\frac{2(1-\delta)r}{s}\ln(Ns)+\frac{2(1-\delta)\mu}{s}\ln(Ns)}(Ns)^{1-\delta}-(1-2s-r)}\bigg)^k\\
			&\hspace{0.5 cm}\geq 1-\bigg(1-\frac{s}{(1-s+r+2\mu)e^{\frac{2(1-\delta)r}{s}\ln(Ns)+\frac{2(1-\delta)\mu}{s}\ln(Ns)}(Ns)^{1-\delta}-(1-2s-r)}\bigg)^{\frac{K_3 N}{2}},
		\end{align*}
	and note that this lower bound does not depend on $k$. Note that from  Proposition \ref{@t4}, we know that on the event $A_{(4)}$, we have $Y^*(0)=Y(t_4)=X_1(t_4)+X_2(t_4)\geq K_3N/2$. Using the facts that $\mu\ll s$, $r\ll s$, $s \ll 1$ and using (\ref{rslogNs}), when $N$ is sufficiently large, on the event $A_{(4)}$,
		\begin{equation}\label{*5.6}
		P\bigg(Y^*\bigg((1-\delta)\cdot\frac{1}{s}\ln(Ns)\bigg)>0\bigg|\mathcal{F}_{t_4}\bigg)
		\geq 1-\bigg(1-\frac{s}{2(Ns)^{1-\delta}}\bigg)^{\frac{K_3 N}{2}}=1-\bigg(1-\frac{0.5(Ns)^{\delta}}{N}\bigg)^{\frac{K_3 N}{2}}.
		\end{equation}
	Note that because $1\ll Ns$, when $N\rightarrow\infty$,
		$$
		1-\bigg(1-\frac{0.5(Ns)^{\delta}}{N}\bigg)^{\frac{K_3 N}{2}}\rightarrow 1.
		$$
	This fact along with (\ref{*5.6}) proves (\ref{*5.5}).

	Lastly, by using the couplings and from part 1, the fact that $\lambda(t)\leq t$, part 1 of Lemma \ref{T7<t5+}, and the definition of $T_7$ in (\ref{T7}), for sufficiently large $N$, on the event $A_{(4)}$,
		\begin{align*}
			&P\bigg(Y^*\bigg((1-3\delta)\cdot\frac{1}{s}\ln(Ns)\bigg)>0\bigg| \mathcal{F}_{t_4}\bigg)\\
			&\hspace{0.5cm}=P(Y^*(t_{5-}-t_4)>0| \mathcal{F}_{t_4})\\
			&\hspace{0.5cm}=P(\{Y^*(t_{5-}-t_4)>0\}\cap\{t_{5-}<T_7\wedge T_8\}| \mathcal{F}_{t_4})+P(\{Y^*(t_{5-}-t_4)>0\}\cap\{T_7\wedge T_8\leq t_{5-}\}| \mathcal{F}_{t_4})\\
			&\hspace{0.5cm}\leq P(t_{5-}<T_7\wedge T_8|\mathcal{F}_{t_4})+P(\{Y^*(T_7-t_4)>0\}\cap\{T_7<T_8\}| \mathcal{F}_{t_4})+P(T_7\geq T_8|\mathcal{F}_{t_4})\\
			&\hspace{0.5cm}\leq P(t_{5-}<T_7\wedge T_8|\mathcal{F}_{t_4})+P(\{Y^*(\lambda(T_7-t_4))>0\}\cap\{T_7<T_8\}| \mathcal{F}_{t_4})+\epsilon\\
			&\hspace{0.5cm}= P(t_{5-}<T_7\wedge T_8|\mathcal{F}_{t_4})+P(\{Y(T_7-t_4)>0\}\cap\{T_7<T_8\}| \mathcal{F}_{t_4})+\epsilon\\
			&\hspace{0.5cm}\leq P(t_{5-}<T_7\wedge T_8|\mathcal{F}_{t_4})+P(\{X_1(T_7)+X_2(T_7)>0\}\cap\{T_7<T_8\}| \mathcal{F}_{t_4})+\epsilon\\
			&\hspace{0.5cm}= P(t_{5-}<T_7\wedge T_8|\mathcal{F}_{t_4})+\epsilon.				
		\end{align*}
	Therefore, for sufficiently large $N$, on the event $A_{(4)}$, we have $P(t_{5-}<T_7\wedge T_8|\mathcal{F}_{t_4})\geq 1-2\epsilon $.
	
	Lastly, to prove part 2, by using part 2 of Lemma \ref{T7<t5+} and part 1 of this lemma, for sufficiently large $N$, on the event $A_{(4)}$, we have that $P(t_{5-}<T_7<t_{5+}|\mathcal{F}_{t_4})\geq 1-3\epsilon-\delta$. With this fact and Proposition \ref{@t4}, for sufficiently large $N$, we have $P(A_{(5)})=P(A_{(4)}\cap\{t_{5-}<T_7<t_{5+}\})\geq1-29\epsilon-8\delta-\delta^2$.
	\end{proof}
	
	\begin{proof}[Proof of Theorem \ref{THM}]
	First, for every subsequence $(N_k)_{k=1}^{\infty}$, there is a further subsequence that satisfies (\ref{Con2.1}), or there is a further subsequence that satisfies (\ref{Con2.2}). By a subsequence argument, it is enough to prove Theorem \ref{THM} in the recombination dominating case and the mutation dominating case. 
	Now, recall that the stopping time $T$ defined in Theorem \ref{THM} is the first time that type 3 individuals have fixated in the population. We will show that if $\theta\in(0,1)$, then for sufficiently large $N$, we have
		$$
		P\big((1-\theta)t^*_N(r_N) \leq T \leq (1+\theta)t^*_N(r_N)\big)\geq 1-38\epsilon.
		$$
	We choose $\delta$ to be small enough so that 1) $\delta<\epsilon$, 2) $(1-\delta^2)^{-1}<1+\theta$ and 3) $1-2\delta>1-\theta$. From part 2 of Lemma \ref{PA(5)}, for sufficiently large $N$, we have $P(A_{(5)})\geq 1-29\epsilon-8\delta-\delta^2 \geq 1-38\epsilon$. Note that from the definition of $T_7$ in (\ref{T7}), we have $T_7=T\vee t_4$. Also, by the definition of $t_{5-}$ and the fact that $1\ll N\mu \ll Ns$, for sufficiently large $N$, we have $t_{5-}>t_4$. Thus, for sufficiently large $N$,  we have
		$$
		P(t_{5-}<T<t_{5+})=P(t_{5-}<T_7<t_{5+})\geq P(A_{(5)})\geq 1-38\epsilon.
		$$
	It is enough to show that $(1-\theta)t^*_N(r_N)\leq t_{5-}$ and $t_{5+}<(1+\theta)t^*_N(r_N)$.	
	
	Recall the definition of $t_N^*$ in (\ref{t*}). Because of (\ref{Con2.1}), in the recombination dominating case, for sufficiently large $N$,
		\begin{equation}\label{t*r}
		t_N^*(r_N)=\frac{1}{s_N}\ln\bigg(\frac{Ns_N^3}{\mu_N\cdot r_N\ln(Nr_N)}\bigg).
		\end{equation}
	Next, in the mutation dominating case,
		\begin{equation}\label{t*u<}
		t_N^*(r_N)\leq \frac{1}{s_N}\ln\bigg(\frac{Ns_N^3}{\mu_N\cdot N\mu_N^2}\bigg),
		\end{equation}
	and because of (\ref{Con2.2}), we have
		\begin{equation}\label{t*u>}
		t_N^*(r_N)\geq \frac{1}{s_N}\ln\bigg(\frac{Ns_N^3}{\mu_N\cdot (1\vee C)N\mu_N^2}\bigg)=\frac{1}{s_N}\ln\bigg(\frac{Ns_N^3}{\mu_N\cdot N\mu_N^2}\bigg)-\frac{\ln(1\vee C)}{s_N}.
		\end{equation} 
	
	From the definitions of $t_4$ and $t_{5+}$ in (\ref{t4}) and (\ref{t5+}), we have that 
		\begin{align*}
		t_{5+}
		&=t_4+\frac{1}{1-2\delta^2}\cdot \frac{1}{s_N}\ln(Ns_N)\\
		&=
			\begin{cases}
				\displaystyle{\frac{1}{s_N}\ln\bigg(\frac{s_N^2}{\mu_N r_N\ln(Nr_N)}\bigg)+\frac{C_4}{s_N}} +\frac{1}{1-2\delta^2}\cdot \frac{1}{s_N}\ln(Ns_N)&\hspace{ 0.2 cm}\mbox{in the recombination dominating case} \\
				\\
				\displaystyle{\frac{1}{s_N}\ln\bigg(\frac{s_N^2}{N\mu_N^3}\bigg)+\frac{C_4}{s_N}}+\frac{1}{1-2\delta^2}\cdot \frac{1}{s_N}\ln(Ns_N) &\hspace{ 0.2 cm}\mbox{in the mutation dominating case}
			\end{cases}\\
		&\leq
			\begin{cases}
				\displaystyle{\frac{1}{1-2\delta^2}\cdot \frac{1}{s_N}\ln\bigg(\frac{Ns_N^3}{\mu_N\cdot r_N\ln(Nr_N)}\bigg)+\frac{C_4}{s_N}} &\hspace{ 2.3 cm}\mbox{in the recombination dominating case} \\
				\\
				\displaystyle{\frac{1}{1-2\delta^2}\cdot \frac{1}{s_N}\ln\bigg(\frac{Ns_N^3}{\mu_N\cdot N\mu_N^2}\bigg)+\frac{C_4}{s_N}} &\hspace{ 2.3 cm}\mbox{in the mutation dominating case}.
			\end{cases}
		\end{align*}
	From (\ref{t*r}) and (\ref{t*u>}), we have
		\begin{equation}\label{th-1}
		t_{5+}\leq \frac{1}{1-2\delta^2}t^*_N(r_N)+\frac{1}{s}\bigg(\frac{\ln(1\vee C)}{1-2\delta^2}+C_4\bigg).
		\end{equation}
	Because $1\ll N\mu_N \ll Ns_N$ and $\mu_N\ll N\mu^2_N \ll s_N$, along with $r_N\ln_+(Nr_N)\ll s_N$, we have
		\begin{equation}\label{th-2}
		t^*_N(r_N)=\frac{1}{s_N}\ln\bigg(Ns_N\cdot \frac{s_N}{\mu_N}\cdot\frac{s_N}{\max\{N\mu_N^2,r_N\ln_+(Nr_N)\}}\bigg)\gg \frac{1}{s_N}.
		\end{equation}
	From (\ref{th-1}) and the way we choose $\theta$, for sufficiently large $N$,
		$$
		t_{5+}\leq \frac{1}{1-\delta^2}t^*_N(r_N) \leq (1+\theta)t^*_N(r_N).
		$$
	
	By a similar argument, from the definitions of $t_4$ and $t_{5-}$ in (\ref{t4}) and (\ref{t5-}), we have that 
		\begin{align*}
		t_{5-}
		&=t_4+(1-\delta)\cdot \frac{1}{s_N}\ln(Ns_N)\\
		&\geq
			\begin{cases}
				\displaystyle{(1-\delta)\cdot \frac{1}{s_N}\ln\bigg(\frac{Ns_N^3}{\mu_N\cdot r_N\ln(Nr_N)}\bigg)+\frac{C_4}{s_N}} &\hspace{ 1 cm}\mbox{in the recombination dominating case} \\
				\\
				\displaystyle{(1-\delta)\cdot \frac{1}{s_N}\ln\bigg(\frac{Ns_N^3}{\mu_N\cdot N\mu_N^2}\bigg)+\frac{C_4}{s_N}} &\hspace{ 1 cm}\mbox{in the mutation dominating case}.
			\end{cases}
		\end{align*}
	From (\ref{t*r}), (\ref{t*u<}), and (\ref{th-2}), for sufficiently large $N$, we have
		$$
		t_{5-}\geq (1-\delta)t^*_N(r_N)+\frac{C_4}{s_N}\geq(1-2\delta)t^*_N(r_N)\geq (1-\theta)t^*_N(r_N),
		$$
	which completes the proof.
	\end{proof}
		
\section*{Acknowledgement}
	I am grateful to Professor Jason Schweinsberg for teaching me several techniques, especially the one from one of his previous papers, which was used intensively in my proof. I also want to thank him for spending time reviewing this manuscript several times, which improved this manuscript significantly.



\end{document}